\title{Cone theorem and Mori hyperbolicity}
\author{Osamu Fujino}
\date{2022/12/24, version 0.43}
\subjclass[2010]{Primary 14E30; Secondary 32Q45, 14J45}
\keywords{cone theorem, Mori hyperbolic, 
extremal rational curves, 
quasi-log schemes, adjunction, subadjunction, 
rationally chain connected, 
uniruled}
\address{Department of Mathematics, Graduate School of Science, 
Kyoto University, Kyoto 606-8502, Japan}
\email{fujino@math.kyoto-u.ac.jp}
\DeclareMathOperator{\NE}{\overline{NE}}
\DeclareMathOperator{\Mov}{\overline{Mov}}
\DeclareMathOperator{\codim}{codim}
\DeclareMathOperator{\Spec}{Spec}
\DeclareMathOperator{\Supp}{Supp}
\DeclareMathOperator{\Nqklt}{Nqklt}
\DeclareMathOperator{\Nqlc}{Nqlc}
\DeclareMathOperator{\Nklt}{Nklt}
\DeclareMathOperator{\Nlc}{Nlc}
\DeclareMathOperator{\NLC}{NLC}
\DeclareMathOperator{\Exc}{Exc}
\DeclareMathOperator{\Pic}{Pic}
\DeclareMathOperator{\Div}{Div}
\DeclareMathOperator{\Weil}{Weil}
\DeclareMathOperator{\rank}{rank}
\newtheorem{thm}{Theorem}[section]
\newtheorem{lem}[thm]{Lemma}
\newtheorem{prop}[thm]{Proposition}
\newtheorem{conj}[thm]{Conjecture}
\newtheorem{cor}[thm]{Corollary}
\newtheorem{claim}{Claim}
\newtheorem*{claim-n}{Claim}
\theoremstyle{definition}
\newtheorem{step}{Step}
\newtheorem{case}{Case}
\newtheorem{ex}[thm]{Example}
\newtheorem{defn}[thm]{Definition}
\newtheorem{rem}[thm]{Remark}
\newtheorem*{ack}{Acknowledgments}  
\begin{document}

\begin{abstract}
We discuss the cone theorem for quasi-log schemes 
and the Mori hyperbolicity. 
In particular, we establish that 
the log canonical divisor of a Mori hyperbolic 
projective normal pair is nef if it is nef 
when restricted to the non-lc locus. 
This answers Svaldi's question completely.  
We also treat the uniruledness of the degenerate 
locus of an extremal contraction morphism for 
quasi-log schemes. Furthermore, we prove 
that every fiber of a relative quasi-log Fano 
scheme is rationally chain connected modulo the non-qlc locus. 
\end{abstract}

\maketitle 

\tableofcontents

\section{Introduction}\label{a-sec1}

This paper gives not only new results around the 
cone theorem and Mori hyperbolicity of quasi-log schemes 
but also a new framework and some techniques to 
treat higher-dimensional complex algebraic varieties 
based on the theory of mixed Hodge structures. 
It also shows that the theory of quasi-log schemes 
is very powerful even for the study of log canonical pairs. 
We note that this paper heavily depends 
on \cite[Chapter 6]{fujino-foundations} and 
\cite{fujino-slc-trivial}. 

In his epoch-making paper \cite{mori}, Shigefumi Mori 
established the following cone theorem for 
smooth projective varieties. 

\begin{thm}[Cone theorem for smooth 
projective varieties]\label{a-thm1.1}
Let $X$ be a smooth projective variety defined over an 
algebraically closed field. 
\begin{itemize}
\item[(i)] There are countably many {\em{(}}possibly singular{\em{)}} 
rational curves 
$C_j\subset X$ such that 
\begin{equation*}
0<-(C_j\cdot K_X)\leq \dim X+1
\end{equation*} and 
\begin{equation*} 
\NE(X)=\NE(X)_{K_X\geq 0}+\sum _j 
\mathbb R_{\geq 0}[C_j].  
\end{equation*}
\item[(ii)] For any $\varepsilon >0$ and 
any ample Cartier divisor $H$ on $X$, 
\begin{equation*}
\NE(X)=\NE(X)_{(K_X+\varepsilon H)
\geq 0}+\sum _{\text{finite}} \mathbb R_{\geq 0}[C_j]. 
\end{equation*} 
\end{itemize}
\end{thm}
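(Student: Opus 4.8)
The plan is to deduce the cone decomposition from a single geometric input: whenever some class $z \in \NE(X)$ satisfies $K_X \cdot z < 0$, the variety $X$ carries a rational curve $C$ with $0 < -(K_X \cdot C) \leq \dim X + 1$. Mori's \emph{bend and break} technique produces such curves, and once they are available both (i) and (ii) follow by a formal separation argument on the closed cone $\NE(X)$. So I would first isolate and prove the existence statement, then assemble the cone.

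For the existence of rational curves, I would argue by reduction to positive characteristic. Starting from an irreducible curve $C_0$ on which $K_X$ is negative, deformation theory bounds the dimension of the space of morphisms $f \colon C_0 \to X$ through a fixed point $x$ from below by $-(K_X \cdot C_0) - g\dim X$, where $g = g(C_0)$; this comes from $\chi(C_0, f^*T_X) = -(K_X\cdot C_0) + (1-g)\dim X$ and subtracting $\dim X$ for the point constraint. For high-genus $C_0$ this estimate is useless in characteristic $0$, and the crucial trick is that over a field of characteristic $p>0$ one may precompose with the $e$-th Frobenius, multiplying $-(K_X\cdot C_0)$ by $p^e$ while leaving $g$ fixed, so the bound becomes positive once $e \gg 0$. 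A positive-dimensional family of pointed maps into the projective variety $X$ cannot stay compact without degenerating, and this degeneration forces a rational curve to split off through $x$; this is bend and break.

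Next I would bound the anticanonical degree and return to the original field. Given a rational curve, fixing two general points and deforming again breaks it into strictly shorter pieces whenever $-(K_X\cdot C) > \dim X + 1$, so a curve of minimal anticanonical degree automatically satisfies $-(K_X \cdot C) \leq \dim X + 1$. To descend from characteristic $p$ to characteristic $0$, I would spread $X$ out over a finitely generated base and specialize: if $X$ admitted no short rational curve through a very general point in a region where $K_X$ is negative, the same would persist after reduction modulo infinitely many primes, contradicting what was just established in characteristic $p$. This \emph{spreading-out and specialization} step, combined with the Frobenius argument, is where the genuine difficulty lies.

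Finally, for the cone structure, set $V = \NE(X)_{K_X \geq 0} + \sum_j \mathbb R_{\geq 0}[C_j]$ with $C_j$ ranging over the rational curves just produced. For (ii), any $C_j$ with $(K_X + \varepsilon H)\cdot C_j < 0$ satisfies $H \cdot C_j < \frac{1}{\varepsilon}\bigl(-(K_X\cdot C_j)\bigr) \leq \frac{\dim X+1}{\varepsilon}$, so these curves have bounded $H$-degree and hence span only finitely many rays; this finiteness also gives local finiteness of the extremal rays away from $\{K_X = 0\}$ and thus closedness of $V$. For (i) I would argue by contradiction: if $z \in \NE(X)\setminus V$, Hahn--Banach separation of $z$ from the closed convex cone $V$ yields a divisor class nonnegative on $V$ and negative on $z$; perturbing it into the shape $K_X + (\text{ample})$ and applying the existence of short rational curves to a class on which it is negative produces a $C_j$ contradicting the separation. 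I expect the bend-and-break degeneration together with the characteristic-$p$ and specialization bootstrapping to be the main obstacle, the cone assembly being comparatively formal.
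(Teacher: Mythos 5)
The paper does not prove Theorem \ref{a-thm1.1}: it quotes it as Mori's classical result and describes precisely the strategy you outline, namely existence of low-degree rational curves via bend and break in characteristic $p>0$ together with spreading out and specialization, followed by a formal assembly of the cone. Your sketch is a correct outline of that standard argument (cf.\ \cite{mori} and \cite[Sections 1.1--1.3]{kollar-mori}), so it coincides with the approach the paper attributes to Mori and relies on.
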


In particular, 
we have: 

\begin{thm}\label{a-thm1.2}
Let $X$ be a smooth projective variety defined 
over an algebraically closed field. 
Assume that there are no rational curves on $X$. 
Then $K_X$ is nef. 
\end{thm}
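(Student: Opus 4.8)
The plan is to read off the statement directly from the cone theorem, Theorem~\ref{a-thm1.1}(i), so that essentially no new work is required beyond correctly interpreting the hypothesis. First I would invoke part~(i), which furnishes countably many rational curves $C_j\subset X$ with $0<-(C_j\cdot K_X)\leq \dim X+1$, together with the cone decomposition
\begin{equation*}
\NE(X)=\NE(X)_{K_X\geq 0}+\sum_j \mathbb R_{\geq 0}[C_j].
\end{equation*}
The crucial point is that every $C_j$ appearing here is, by construction, a rational curve on $X$. Under the standing hypothesis that $X$ carries no rational curves whatsoever, the index set $\{j\}$ must therefore be empty, and the extremal summand vanishes.

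Next I would use this vanishing to collapse the decomposition to the single equality $\NE(X)=\NE(X)_{K_X\geq 0}$. By the very definition of the subcone on the right, namely $\NE(X)_{K_X\geq 0}=\{z\in \NE(X):(K_X\cdot z)\geq 0\}$, this equality asserts that $(K_X\cdot z)\geq 0$ for every class $z\in \NE(X)$. Since $\NE(X)$ is the closed convex cone spanned by the classes of effective curves, and the intersection pairing extends continuously to $N_1(X)$, this is precisely the statement that $K_X$ is nef, which is what we want.

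The honest assessment of difficulty is that there is no genuine obstacle left at this stage: all of the substance has been absorbed into the cone theorem itself, whose proof (the construction of the bounded $K_X$-negative rational curves $C_j$ via Mori's bend-and-break and characteristic-$p$ reduction techniques) is the hard part and is taken as given here. The only thing one must be careful about is the logical direction of the contrapositive flavour of the argument: rather than producing a rational curve from failure of nefness, we observe that the cone theorem has already packaged every potential obstruction to nefness as a $K_X$-negative \emph{rational} curve, so ruling out rational curves rules out all such obstructions at once.
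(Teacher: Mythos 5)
Your proposal is correct and matches the paper's own derivation: Theorem \ref{a-thm1.2} is presented there simply as an immediate consequence of Theorem \ref{a-thm1.1} (i), exactly via the observation that absence of rational curves empties the extremal summand and forces $\NE(X)=\NE(X)_{K_X\geq 0}$. The paper's only additional remark is historical, namely that Mori's actual logical order was the reverse (bend-and-break first produces a rational curve when $K_X$ is not nef, and the cone theorem is deduced afterwards), which you also acknowledge.
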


Precisely speaking, Mori proved the existence of 
rational curves on $X$ under the 
assumption that $K_X$ is not nef (see Theorem \ref{a-thm1.2}) 
by his ingenious method of {\em{bend and break}}. 
Then he obtained the above cone theorem 
for smooth projective varieties (see Theorem \ref{a-thm1.1}). 
For the details, see \cite{mori},  
\cite[Sections 1.1, 1.2, and 1.3]{kollar-mori}, 
\cite{debarre}, \cite{kollar-rational}, \cite[Chapter 10]{matsuki}, 
and so on.  

\medskip 

From now on, we will work over $\mathbb C$, the complex number 
field. Our arguments in this paper 
heavily depend on 
Hironaka's resolution of singularities and its generalizations 
and several Kodaira type vanishing 
theorems. Hence they do not work over a field of characteristic 
$p>0$. Let us recall the notion of {\em{Mori hyperbolicity}} following 
\cite{lz} and \cite{svaldi}. 

\begin{defn}[Mori hyperbolicity]\label{a-def1.3} 
Let $(X, \Delta)$ be a normal pair such that $\Delta$ is 
effective. This means that 
$X$ is a normal variety and $\Delta$ 
is an effective $\mathbb R$-divisor on $X$ such that 
$K_X+\Delta$ is $\mathbb R$-Cartier. 
Let $W$ be an lc stratum of $(X, \Delta)$. 
This means that $W$ is an lc center of $(X, \Delta)$ 
or $W$ is $X$ itself. 
We put 
\begin{equation*}
U:=W\setminus \left\{ \left(W\cap \Nlc(X, \Delta)\right)\cup 
\bigcup_{W'}W'\right\}, 
\end{equation*}  
where $W'$ runs over lc centers of $(X, \Delta)$ strictly contained 
in $W$ and $\Nlc(X, \Delta)$ denotes 
the non-lc locus of $(X, \Delta)$, 
and call it the {\em{open lc stratum of 
$(X, \Delta)$ associated to $W$}}. 
We say that $(X, \Delta)$ is {\em{Mori hyperbolic}} 
if there is no non-constant morphism 
\begin{equation*} 
f\colon \mathbb A^1\longrightarrow U
\end{equation*}  
for any open lc stratum $U$ of $(X, \Delta)$. 
\end{defn}

The following theorem is a generalization of 
Theorem \ref{a-thm1.2} for normal pairs and 
is an answer to \cite[Question 6.6]{svaldi}. 

\begin{thm}\label{a-thm1.4}
Let $X$ be a normal projective variety and let $\Delta$ be 
an effective $\mathbb R$-divisor on $X$ such that 
$K_X+\Delta$ is $\mathbb R$-Cartier. 
Assume that $(X, \Delta)$ is Mori hyperbolic and that 
$K_X+\Delta$ is nef when restricted to 
$\Nlc(X, \Delta)$. 
Then $K_X+\Delta$ is nef. 
\end{thm}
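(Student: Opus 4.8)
The plan is to argue by contraposition: assuming $K_X+\Delta$ is not nef, I will produce a non-constant morphism $\mathbb A^1\to U$ into some open lc stratum $U$ of $(X,\Delta)$, contradicting Mori hyperbolicity. The starting point is to equip the normal pair $(X,\Delta)$ with its natural quasi-log structure, for which $\omega=K_X+\Delta$ and the non-qlc locus $\Nqlc(X,\Delta)$ coincides with the non-lc locus $\Nlc(X,\Delta)$, while the qlc strata are precisely the lc strata of $(X,\Delta)$; this is available from \cite[Chapter 6]{fujino-foundations}. The cone theorem for quasi-log schemes then yields a decomposition of $\NE(X)$ in which the part not lying in $\NE(X)_{\omega\geq 0}$ is generated by the class of the non-qlc locus together with rational curves $C_j$ contained in qlc strata.

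Next I would exploit the hypothesis that $K_X+\Delta$ is nef when restricted to $\Nlc(X,\Delta)$. Because $\omega$ is nef along the non-qlc locus, the contribution of $\Nqlc(X,\Delta)$ to the cone lies in $\NE(X)_{\omega\geq 0}$; hence if $\omega$ is not nef there must exist an extremal rational curve $C=C_j$ with $\omega\cdot C<0$ lying in a qlc stratum and not contained in $\Nlc(X,\Delta)$. Choosing $W$ to be a minimal lc stratum containing $C$, the curve $C$ is contained in no lc center strictly smaller than $W$, and it is not contained in $\Nlc(X,\Delta)$ either; consequently $C$ lies generically in the open lc stratum $U$ associated to $W$.

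The crucial step is a refined intersection estimate: I would show that the normalization $\nu\colon \mathbb P^1\to C\subset W$ meets the complement $W\setminus U=(W\cap\Nlc(X,\Delta))\cup\bigcup_{W'}W'$ in at most one point. This is where the length estimate for extremal rational curves on quasi-log schemes, combined with the adjunction that endows $W$ with an induced quasi-log structure and a bend-and-break argument carried out on $W$, must be invoked to guarantee that the curve generating the extremal ray can be chosen to break off so as to avoid the smaller strata and the non-qlc locus except at a single point. I expect this to be the main obstacle, since it requires transporting Mori's bend-and-break to the singular, non-lc setting through the quasi-log machinery rather than through a direct calculation.

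Granting the one-point estimate, $\nu$ restricts to a non-constant morphism $\mathbb P^1\setminus\{\mathrm{pt}\}=\mathbb A^1\to U$, which contradicts the Mori hyperbolicity of $(X,\Delta)$. Therefore $K_X+\Delta$ must be nef, completing the proof.
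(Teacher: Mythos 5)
Your overall strategy coincides with the paper's: Theorem \ref{a-thm1.4} is deduced from the cone theorem for normal pairs (Theorem \ref{a-thm1.5}), using that nefness of $(K_X+\Delta)|_{\Nlc(X,\Delta)}$ places $\NE(X)_{-\infty}$ inside $\NE(X)_{(K_X+\Delta)\geq 0}$, so that failure of nefness forces a $(K_X+\Delta)$-negative extremal ray that is rational and relatively ample at infinity, and part (iii) of the cone theorem then yields a non-constant morphism $\mathbb A^1\to U_j$ contradicting Mori hyperbolicity. Up to that point your reduction is correct.

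The gap is the step you yourself flag as ``the main obstacle'': showing that the extremal rational curve can be chosen so that its normalization meets the preimage of $W\setminus U$ in at most one point. This is precisely the content of Theorem \ref{a-thm1.5} (iii), i.e.\ the hard half of the paper, and the mechanism you propose (bend-and-break on $W$ together with a length estimate) is neither carried out nor adequate: bend-and-break controls the degree of the rational curve, not the number of points in which it meets a prescribed closed subset. There is also an ordering problem in your setup: you take $W$ to be a minimal lc stratum containing the curve $C_j$ supplied by part (i) of the cone theorem, but that curve is merely some integral curve spanning the ray and carries no control near the smaller strata; the paper instead first chooses $W$ minimal among the qlc strata on which the contraction $\varphi_{R_j}$ fails to be finite and only then constructs a new curve inside it. The actual one-point statement is obtained after a long chain of reductions --- adjunction to the union of $W$ with the non-qlc locus, normalization (Theorem \ref{a-thm1.9}), and the subadjunction Theorem \ref{a-thm1.10}, which rests on basic slc-trivial fibrations and variations of mixed Hodge structure --- reducing to a normal pair whose non-klt locus is finite over the base. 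There Theorem \ref{a-thm1.8} is proved via a dlt blow-up and the minimal model program, and the decisive fact that the resulting $\mathbb P^1$ meets $\Supp\Delta^{\geq 1}$ in a single point comes from the connectedness of $\Supp\Delta^{\geq 1}$ in a neighborhood of every fiber of the contraction, extracted from the relative Kawamata--Viehweg vanishing theorem (Proposition \ref{i-prop9.1}), combined with the estimate $-(K_X+\Delta)\cdot C\leq 1$ obtained by restricting the quasi-log structure to the curve itself (Corollary \ref{p-cor8.2} and Lemma \ref{p-lem8.3}). None of this is supplied by, or replaceable with, a bend-and-break argument, so as written the proof is incomplete at its decisive step.
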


Theorem \ref{a-thm1.4} follows from the following cone 
theorem for normal pairs. We can 
see it as a generalization of Theorem \ref{a-thm1.1} 
for normal pairs. 

\begin{thm}[Cone theorem for normal pairs]\label{a-thm1.5}
Let $(X, \Delta)$ be a normal pair such that $\Delta$ is 
effective and let 
$\pi\colon X\to S$ be a projective morphism 
between schemes. 
\begin{itemize}
\item[(i)] Then 
\begin{equation*} 
\NE (X/S)=\NE(X/S)_{(K_X+\Delta)\geq 0} 
+\NE(X/S)_{-\infty}+\sum _j R_j
\end{equation*}  
holds, where $R_j$'s are the $(K_X+\Delta)$-negative extremal rays of 
$\NE(X/S)$ that are 
rational and relatively ample at infinity. 
In particular, each $R_j$ is spanned by an integral 
curve $C_j$ on $X$ such that 
$\pi(C_j)$ is a point. 
Note that an extremal ray $R$ of $\NE(X/S)$ is 
rational and relatively ample at infinitely if and 
only if there exists a $\pi$-nef $\mathbb Q$-line bundle 
$\mathcal L$ on $X$ such that 
$R=\NE(X/S)\cap \mathcal L^{\perp}$ and 
that $\mathcal L|_{\Nlc(X, \Delta)}$ is 
$\pi|_{\Nlc(X, \Delta)}$-ample. 
\item[(ii)] Let $H$ be a $\pi$-ample $\mathbb R$-divisor 
on $X$. 
Then 
\begin{equation*} 
\NE(X/S)=\NE(X/S)_{(K_X+\Delta+H)\geq 0} 
+\NE(X/S)_{-\infty}+\sum _{\text{finite}} R_j
\end{equation*} 
holds. 
\item[(iii)] For each $(K_X+\Delta)$-negative 
extremal ray $R_j$ of $\NE(X/S)$ that 
is rational and relatively ample at infinity, 
there are an open lc stratum $U_j$ of $(X, \Delta)$ and a non-constant  
morphism 
\begin{equation*} 
f_j\colon \mathbb A^1\longrightarrow U_j
\end{equation*} 
such that 
$C_j$, the closure of $f_j(\mathbb A^1)$ in $X$, 
spans $R_j$ in $N_1(X/S)$ with 
\begin{equation*} 
0<-(K_X+\Delta)\cdot C_j\leq 2\dim U_j. 
\end{equation*} 
\end{itemize}
\end{thm}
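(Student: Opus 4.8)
The plan is to prove this cone theorem for normal pairs by reducing it to the corresponding cone theorem for quasi-log schemes. The key mechanism is that a normal pair $(X,\Delta)$ with $K_X+\Delta$ $\mathbb{R}$-Cartier carries a natural quasi-log structure: setting $\omega = K_X + \Delta$, the scheme $[X, \omega]$ becomes a quasi-log scheme whose non-qlc locus $\Nqlc(X,\omega)$ coincides with $\Nlc(X,\Delta)$ and whose qlc strata are exactly the lc strata of $(X,\Delta)$. This is the standard dictionary from Fujino's foundations, so I would first invoke that construction to pass freely between the pair-theoretic and quasi-log languages. Once this translation is fixed, parts (i) and (ii) are precisely the cone and contraction theorem for quasi-log schemes applied to $[X,\omega]$ with respect to $\pi\colon X\to S$, identifying $\NE(X/S)_{-\infty}$ with the image of $\NE(\Nqlc(X,\omega)/S)$.

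For the quantitative and geometric content of part (iii), I would argue stratum by stratum. Given a $(K_X+\Delta)$-negative extremal ray $R_j$ that is rational and relatively ample at infinity, the qlc cone theorem already produces a spanning rational curve $C_j$ contracted by $\pi$. The refinement needed here is to locate this curve on an \emph{open} lc stratum and to obtain the bound $0 < -(K_X+\Delta)\cdot C_j \le 2\dim U_j$. The strategy is adjunction: choose a minimal qlc stratum $W$ through which the relevant negativity propagates, and apply the adjunction/subadjunction machinery for quasi-log schemes to equip $W$ with an induced quasi-log structure. On this lower-dimensional $W$ the extremal ray survives as a negative ray, and one runs the length-of-extremal-ray estimate there. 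The factor $2\dim U_j$ rather than $\dim U_j + 1$ reflects that the bend-and-break bound on a quasi-log stratum is weaker than Mori's classical bound; I would cite the existing length estimate for extremal rational curves on quasi-log schemes (from \cite{fujino-foundations} and \cite{fujino-slc-trivial}) rather than reprove bend and break.

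The assertion that the curve lies in the \emph{open} lc stratum $U_j$, and that the parametrizing map factors through a non-constant $f_j\colon\mathbb{A}^1\to U_j$, is the heart of the statement and the step I expect to be most delicate. Here I would use that after adjunction to a minimal stratum $W$, the curve $C_j$ is not contained in the union of the strictly smaller lc centers nor in $\Nlc(X,\Delta)$—otherwise it would already be accounted for by $\NE(X/S)_{-\infty}$ or by a lower stratum, contradicting minimality of $W$. Thus a general such rational curve meets the open locus $U_j = W \setminus (W\cap\Nlc(X,\Delta)\cup\bigcup_{W'}W')$, and normalizing $C_j\cong\mathbb{P}^1$ and removing the finitely many preimages of the complement yields the desired $f_j\colon\mathbb{A}^1\to U_j$. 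The main obstacle will be ensuring that the rational curve produced by the extremal-ray length estimate can be chosen genuinely inside the open stratum rather than trapped in its boundary, which requires controlling how the quasi-log adjunction interacts with the non-qlc locus; I anticipate this is where the deepest input from the theory of quasi-log schemes and mixed Hodge structures is used.
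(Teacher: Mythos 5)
Your proposal follows essentially the same route as the paper: the entire proof of this theorem in the paper is the observation that $[X, K_X+\Delta]$ carries a natural quasi-log structure (Example \ref{d-ex4.11}) whose non-qlc locus is $\Nlc(X,\Delta)$ and whose qlc strata are the lc strata, after which all three parts are literally the specialization of Theorem \ref{a-thm1.6}. Your extra sketch of how (iii) is proved in the quasi-log setting also matches the paper's strategy (minimal stratum, adjunction, normalization, reduction to a normal pair, then Theorem \ref{a-thm1.8}); the only imprecision is that obtaining $\mathbb A^1$ requires the curve to meet $\Nklt$ in at most \emph{one} point --- guaranteed by a connectedness argument for the non-klt locus near the fiber, not by merely ``removing the finitely many preimages'' from $\mathbb P^1$.
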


More generally, we establish the following cone theorem 
for quasi-log schemes. We note that Theorem \ref{a-thm1.5} is a 
very special case of Theorem \ref{a-thm1.6}. 
Since the precise definition of quasi-log schemes may look 
technical and artificial, we omit it here. 
For the details, see Section \ref{d-sec4} below. 
Here, we only explain a typical example of quasi-log schemes. 
Let $(V, \Delta)$ be a log canonical pair which is not 
kawamata log terminal. Then the non-klt locus 
$X:=\Nklt(V, \Delta)$ of $(V, \Delta)$ with $\omega:=
(K_V+\Delta)|_X$ naturally has a quasi-log scheme structure. 
In this case, the non-qlc locus $X_{-\infty}=\Nqlc(X, \omega)$ 
of $[X, \omega]$ is empty and $W$ is a qlc stratum of $[X, \omega]$ 
if and only if $W$ is an lc center of $(V, \Delta)$. 
We can define open qlc stratum of $[X, \omega]$ 
similarly to Definition \ref{a-def1.3}. 
In general, $X$ is reducible and is not equidimensional. 

\begin{thm}[Cone theorem for quasi-log schemes]\label{a-thm1.6}
Let $[X, \omega]$ be a quasi-log scheme and let 
$\pi\colon X\to S$ be a projective morphism 
between schemes. 
\begin{itemize}
\item[(i)] Then 
\begin{equation*} 
\NE(X/S)=\NE(X/S)_{\omega\geq 0} 
+\NE(X/S)_{-\infty}+\sum _j R_j
\end{equation*}  
holds, where $R_j$'s are the $\omega$-negative extremal rays of 
$\NE(X/S)$ that are 
rational and relatively ample at infinity. 
In particular, each $R_j$ is spanned by an integral 
curve $C_j$ on $X$ such that 
$\pi(C_j)$ is a point. 
Note that an extremal ray $R$ of $\NE(X/S)$ is 
rational and relatively ample at infinitely if and 
only if there exists a $\pi$-nef $\mathbb Q$-line bundle 
$\mathcal L$ on $X$ such that 
$R=\NE(X/S)\cap \mathcal L^{\perp}$ and 
that $\mathcal L|_{\Nqlc(X, \omega)}$ is 
$\pi|_{\Nqlc(X, \omega)}$-ample. 
\item[(ii)] Let $H$ be a $\pi$-ample $\mathbb R$-line bundle 
on $X$. 
Then 
\begin{equation*} 
\NE(X/S)=\NE(X/S)_{(\omega+H)\geq 0} 
+\NE(X/S)_{-\infty}+\sum _{\text{finite}} R_j 
\end{equation*} 
holds. 
\item[(iii)] For each $\omega$-negative 
extremal ray $R_j$ of $\NE(X/S)$ that 
is rational and relatively ample at infinity, 
there are an open qlc stratum $U_j$ of $[X, \omega]$ and a non-constant  
morphism 
\begin{equation*} 
f_j\colon \mathbb A^1\longrightarrow U_j
\end{equation*} 
such that 
$C_j$, the closure of $f_j(\mathbb A^1)$ in $X$, 
spans $R_j$ in $N_1(X/S)$ with 
\begin{equation*} 
0<-\omega\cdot C_j\leq 2\dim U_j.
\end{equation*}  
\end{itemize}
\end{thm}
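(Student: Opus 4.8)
The plan is to separate the \emph{formal} part (i)--(ii) from the \emph{geometric} part (iii). For (i) and (ii) I would run the Kawamata--Shokurov cone-theorem machinery, which for quasi-log schemes rests on two inputs available from \cite[Chapter 6]{fujino-foundations}: the \emph{rationality theorem} and the \emph{base point free (contraction) theorem} for $[X,\omega]$. First I would use the rationality theorem to show that, given a $\pi$-nef line bundle that is ample on $\Nqlc(X,\omega)$, every $\omega$-negative extremal ray lying off $\NE(X/S)_{-\infty}$ is \emph{rational}; combined with a boundedness-of-denominators estimate this yields that such rays accumulate only along $\omega^{\perp}$ and are locally finite away from $\NE(X/S)_{-\infty}$. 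This gives the decomposition in (i), with the $R_j$ exactly the rays that are rational and relatively ample at infinity, while the stated characterization via a $\pi$-nef $\mathcal L$ with $R=\NE(X/S)\cap\mathcal L^{\perp}$ and $\mathcal L|_{\Nqlc(X,\omega)}$ ample is supplied by the contraction theorem, which produces $\mathcal L$ semiample and hence the contraction $\varphi_{R_j}$. Part (ii) then follows by the standard perturbation argument: replacing $\omega$ by $\omega+H$ pushes the supporting hyperplane into the interior, leaving only finitely many $R_j$ on the negative side.

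The substance of the theorem is (iii), and here I would argue ray by ray. Fix an $\omega$-negative extremal ray $R_j$ that is rational and relatively ample at infinity, with contraction $\varphi_{R_j}\colon X\to Y$ over $S$. Every curve contracted by $\varphi_{R_j}$ has class in $R_j$, so it suffices to produce a single rational curve contracted by $\varphi_{R_j}$ that lies, up to finitely many points, in an open qlc stratum and satisfies the length bound. The key reduction is \emph{adjunction for quasi-log schemes}: a qlc stratum $W$ inherits a quasi-log structure compatible with $\omega$, a fact resting on the slc-trivial-fibration and mixed-Hodge-theoretic techniques of \cite{fujino-slc-trivial}. I would pass to a minimal qlc stratum $W$ dominating a component of the degenerate locus of $\varphi_{R_j}$, so that on its open qlc stratum $U_j$ the induced structure behaves like a kawamata-log-terminal one. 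The point of working with $U_j$ rather than $W$ is precisely that the rational curve we extract may meet the smaller qlc centers and the non-qlc locus only in finitely many points, which is why the conclusion is phrased as a non-constant $f_j\colon\mathbb A^1\to U_j$ whose closure $C_j$ is a rational curve in $X$.

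To manufacture the curve with the bound $0<-\omega\cdot C_j\le 2\dim U_j$ I would run a bend-and-break/deformation argument of Kawamata type on $U_j$. Concretely, a general fiber of $\varphi_{R_j}$ over the image of $W$ is, modulo the non-qlc locus, rationally chain connected via the relative-qlc-Fano mechanism, so through a general point of $U_j$ there passes a rational curve contracted by $\varphi_{R_j}$; deforming this curve while pinning the general point and breaking it (after passing to a resolution, where Theorem \ref{a-thm1.1} supplies the smooth bound $\dim U_j+1$, and transporting the estimate back through adjunction) forces a rational curve whose $\omega$-degree is controlled by $2\dim U_j$, the factor two reflecting the descent from the smooth bound to the singular, boundary-carrying situation. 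Deleting the finitely many points over the smaller centers and over $\Nqlc(X,\omega)$ then exhibits the desired $f_j\colon\mathbb A^1\to U_j$ with closure $C_j$ spanning $R_j$.

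The hard part will be (iii), and within it the interaction between adjunction and the production of rational curves. Establishing that a qlc stratum carries a compatible quasi-log structure, and that the fibers of an extremal contraction are rationally chain connected modulo $\Nqlc(X,\omega)$, is where the mixed-Hodge-theoretic input is essential and where the reducibility and non-equidimensionality of $X$ genuinely complicate matters. One must ensure that the extracted curve lands in the \emph{correct} open qlc stratum $U_j$, so that the length is governed by $\dim U_j$ rather than by $\dim X$, and that the bend-and-break estimate survives the passage from the resolution down through subadjunction. Parts (i) and (ii), by contrast, I expect to be routine once the rationality and contraction theorems are in hand.
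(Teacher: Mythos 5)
Your treatment of (i) and (ii) matches the paper, which simply invokes the already-established cone and contraction theorem for quasi-log schemes (\cite[Theorem 6.7.4]{fujino-foundations}); no issue there. The gap is in (iii), and it sits exactly at the step you compress into ``deleting the finitely many points over the smaller centers and over $\Nqlc(X,\omega)$ then exhibits the desired $f_j\colon\mathbb A^1\to U_j$.'' A non-constant morphism $\mathbb A^1\to U_j$ whose closure is $C_j$ forces the normalization of $C_j$ to meet the preimage of $X\setminus U_j$ in \emph{at most one} point: if the rational curve met the smaller qlc centers or the non-qlc locus in two or more points, you would only obtain a map from $\mathbb G_m$, and every morphism $\mathbb A^1\to\mathbb G_m$ is constant. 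Producing a contracted rational curve of bounded $\omega$-degree through a general point is the comparatively easy part (that is essentially Theorem \ref{a-thm1.12}); the content of (iii) is precisely the ``at most one boundary point'' statement, and neither bend-and-break nor rational chain connectedness of the fibers delivers it. Your appeal to the smooth bound $\dim U_j+1$ on a resolution, ``transported back through adjunction'' with a factor of $2$, is also not a proof: in the paper the factor $2\dim$ comes from the Miyaoka--Mori estimate on a general complete-intersection curve, not from degenerating the smooth cone theorem.

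The paper's route to the missing point is as follows. After adjunction to the minimal relevant stratum, Lemma \ref{d-lem4.20}, normalization (Theorem \ref{a-thm1.9}), and subadjunction (Theorem \ref{a-thm1.10}), one reaches a normal pair with $K_X+\Delta_i\sim_{\mathbb R}\omega+\varepsilon_iH$ and $\Nklt(X,\Delta_i)=\Nqklt(X,\omega)$. Theorem \ref{a-thm1.8}, proved on a dlt modification, then shows that either the curve can be chosen disjoint from $\Nklt$ (the klt/Miyaoka--Mori case, giving the $2\dim$ bound), or, when the contraction touches $\Nklt$, relative Kawamata--Viehweg vanishing forces $\Supp\Delta^{\geq 1}$ to be \emph{connected} near each fiber, while subadjunction to the curve (Lemma \ref{p-lem8.3}) gives $C\simeq\mathbb P^1$ with $-(K_X+\Delta)\cdot C\leq 1$; connectedness plus the degree bound then force $C\cap\Supp\Delta^{\geq 1}$ to be a single point, which is what yields the $\mathbb A^1$. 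Finally a limiting argument in $\varepsilon_i\searrow 0$ with boundedness (Theorem \ref{i-thm9.2}) fixes one curve that works for $\omega$ itself. Without this connectedness-plus-subadjunction mechanism, or some substitute for it, your argument produces a rational curve in the closed stratum but not the required $\mathbb A^1$ inside the open stratum, so the proposal as written does not prove (iii).
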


We make a remark on $U_j$ in Theorem \ref{a-thm1.6}. 

\begin{rem}\label{a-rem1.7} 
In Theorem \ref{a-thm1.6} (iii), 
let $\varphi_{R_j}$ be the extremal contraction morphism associated to 
$R_j$. 
Then the proof of 
Theorem \ref{a-thm1.6} shows that $U_j$ is any open 
qlc stratum of $[X, \omega]$ such that 
$\varphi_{R_j}\colon  \overline {U_j}\to \varphi_{R_j}(\overline {U_j})$ is not 
finite and that $\varphi_{R_j}\colon  W^\dag\to \varphi_{R_j}(W^\dag)$ 
is finite 
for every qlc center $W^\dag$ of $[X, \omega]$ with 
$W^\dag\subsetneq \overline{U_j}$, where 
$\overline{U_j}$ is the closure of $U_j$ in $X$. 
\end{rem}

The main ingredients of the proof of Theorem \ref{a-thm1.6} are 
the following three results. 

\begin{thm}\label{a-thm1.8}
Let $X$ be a normal variety and let $\Delta$ 
be an effective $\mathbb R$-divisor 
on $X$ such that $K_X+\Delta$ is $\mathbb R$-Cartier. 
Let $\pi\colon X\to S$ be a projective morphism 
onto a scheme $S$. 
Assume that $(K_X+\Delta)|_{\Nklt(X, \Delta)}$ is nef over $S$, 
where 
$\Nklt(X, \Delta)$ denotes the non-klt locus of 
$(X, \Delta)$, 
and that $K_X+\Delta$ is not nef over $S$. 
Then there exists a non-constant morphism 
\begin{equation*} 
f\colon \mathbb A^1\longrightarrow 
X\setminus \Nklt(X, \Delta)
\end{equation*} 
such that 
$\pi\circ f(\mathbb A^1)$ is a point and that 
the curve $C$, the closure of $f(\mathbb A^1)$ in 
$X$, is a {\em{(}}possibly singular{\em{)}} rational curve 
with 
\begin{equation*} 
0<-(K_X+\Delta)\cdot C\leq 2\dim X. 
\end{equation*}  
\end{thm}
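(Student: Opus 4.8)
The plan is to adapt Mori's \emph{bend and break} to the logarithmic setting: I will produce a $\pi$-contracted rational curve on which $K_X+\Delta$ is negative, then force its $\mathbb A^1$-part into the klt locus $X\setminus\Nklt(X,\Delta)$ and control its contact with $\Nklt(X,\Delta)$. As a first reduction, note that the two hypotheses --- that $K_X+\Delta$ is not nef over $S$ and that $(K_X+\Delta)|_{\Nklt(X,\Delta)}$ is nef over $S$ --- persist under a small perturbation of the coefficients of $\Delta$ that leaves $\Nklt(X,\Delta)$ unchanged, so I may assume $K_X+\Delta$ is $\mathbb Q$-Cartier. Choosing a log resolution $g\colon Y\to X$ and writing $K_Y+\Delta_Y=g^*(K_X+\Delta)$, the non-klt locus lifts to the divisor $D$ on $Y$ supported on the components of $\Delta_Y$ of coefficient $\geq 1$, with $g(D)=\Nklt(X,\Delta)$. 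Since $g^*(K_X+\Delta)$ is trivial on $g$-exceptional curves and nef over $S$ on curves mapping into $\Nklt(X,\Delta)$, the class $K_Y+\Delta_Y$ is still not nef over $S$ and is still nef over $S$ along $D$; a $(K_Y+\Delta_Y)$-negative rational curve whose $\mathbb A^1$-part avoids $D$ is then not $g$-exceptional and descends to the desired curve on $X$.

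On $Y$ the two properties produce a $\pi$-contracted curve $\Gamma$ with $(K_Y+\Delta_Y)\cdot\Gamma<0$ that is not contained in $D$, for otherwise $K_Y+\Delta_Y$ would be negative on a curve lying in $D$, contradicting its nefness over $S$ along $D$. Reducing the situation modulo a sufficiently large prime $p$ and deforming a Frobenius pull-back of $\Gamma$ through a general point of the klt locus, Mori's deformation estimate forces $\Gamma$ to break off $\pi$-contracted rational curves, at least one of which, say $C_0$, still satisfies $(K_Y+\Delta_Y)\cdot C_0<0$. This is the one step that genuinely requires characteristic $p$.

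The crux is then to control how the rational curve meets $D$ and to extract the length bound. If $C_0$ meets $D$ in at most one point, deleting that point already yields a non-constant $f\colon\mathbb A^1\to Y\setminus D$. Otherwise I would anchor $C_0$ at two of its points on $D$ and invoke the bend-and-break principle in its Kawamata-type form for pairs: once $-(K_Y+\Delta_Y)\cdot C$ exceeds $2\dim Y$, the curve fixed at these anchor points deforms nontrivially and degenerates into a connected chain of $\pi$-contracted rational curves of strictly smaller degree, one of which is again $(K_Y+\Delta_Y)$-negative. Iterating drives the number of intersection points with $D$ down to at most one and simultaneously forces $0<-(K_Y+\Delta_Y)\cdot C\leq 2\dim Y=2\dim X$, so that both the factor $2$ and the surviving $\mathbb A^1$ reflect the anchoring of the curve at the non-klt locus. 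The main obstacle is exactly this interplay on the singular pair $(Y,\Delta_Y)$: one must ensure that the successive breaking always retains a negative rational component that escapes $D$ rather than being absorbed into it, and it is precisely the nefness of $K_Y+\Delta_Y$ along $D$ that prevents the negative part from sinking into the non-klt locus. Pushing the resulting curve forward by $g$ finally supplies the required $f\colon\mathbb A^1\to X\setminus\Nklt(X,\Delta)$ with $\pi\circ f(\mathbb A^1)$ a point and $0<-(K_X+\Delta)\cdot C\leq 2\dim X$.
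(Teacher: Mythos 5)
Your plan founders at its very first substantive step: Mori's deformation estimate, and hence bend and break, sees only the honest canonical class $K_Y$, not $K_Y+\Delta_Y$. The dimension count for deformations of a morphism $h\colon \Gamma\to Y$ (after Frobenius twisting in characteristic $p$) is governed by $-K_Y\cdot h_*\Gamma$; the boundary $\Delta_Y$ enters nowhere in the deformation theory, and on a log resolution $\Delta_Y$ typically has negative exceptional coefficients, so $(K_Y+\Delta_Y)\cdot\Gamma<0$ gives no information about $-K_Y\cdot\Gamma$. This is exactly why the cone theorem for pairs is proved by vanishing-theorem methods rather than by bend and break on the pair. The same objection applies to your ``Kawamata-type form for pairs'' of two-point bend and break with the bound $2\dim Y$ for $-(K_Y+\Delta_Y)$: no such statement is available, and you do not prove one. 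The paper's actual route is quite different: it first passes to a $\mathbb Q$-factorial dlt model via an MMP (Lemma \ref{c-lem3.10}, which needs \cite{bchm}, not merely a log resolution), applies the cone and contraction theorem for normal pairs to produce an extremal contraction $\varphi_R$, and only then produces rational curves --- either by subadjunction down to a curve in the fiber (Corollary \ref{p-cor8.2} and Lemma \ref{p-lem8.3}, giving the sharper bound $-(K_X+\Delta)\cdot C\leq 1$ when $C$ meets the non-klt locus), or by writing $\nu^*\omega+H\sim_{\mathbb R}K_{\overline E}+(\text{effective})$ on a normalized fiber component and applying Miyaoka--Mori to $K_{\overline E}$ itself. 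Characteristic $p$ enters only at that last point, where it is legitimately about a canonical divisor.

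Two further gaps are independently fatal. First, the heart of the theorem is that the curve meets $\Nklt(X,\Delta)$ in at most \emph{one} point, so that deleting it leaves $\mathbb A^1$; the paper obtains this from the connectedness of $\Supp\Delta^{\geq 1}$ along fibers of $\varphi_R$, which is a consequence of the relative Kawamata--Viehweg vanishing $R^1\varphi_*\mathcal O_X(-\lfloor\Delta'\rfloor)=0$. Your iterated anchored bend and break does not establish this: when a chain breaks off, nothing guarantees that some component is simultaneously $(K_Y+\Delta_Y)$-negative, $\pi$-contracted, and meets $D$ in at most one point, and your appeal to ``nefness along $D$ preventing the negative part from sinking into $D$'' is an assertion, not an argument. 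Second, your descent step fails for a general log resolution: one has $g(\Supp\Delta_Y^{\geq 1})=\Nklt(X,\Delta)$, but $g^{-1}\Nklt(X,\Delta)$ is in general strictly larger than $\Supp\Delta_Y^{\geq 1}$, so a curve on $Y$ meeting $D$ in one point can still map to a curve meeting $\Nklt(X,\Delta)$ in several points. The equality $g^{-1}\Nklt(X,\Delta)=\Nklt(Y,\Delta_Y)$ is precisely Lemma \ref{c-lem3.10} (iv) and requires the MMP construction, which is why the paper cannot work with a mere log resolution.
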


We prove Theorem \ref{a-thm1.8} with the aid of the minimal model 
theory for higher-dimensional algebraic varieties mainly due to \cite{bchm}. 
Theorem \ref{a-thm1.9} is a slight generalization of 
\cite[Theorem 1.1]{fujino-haidong}, where $[X, \omega]$ is a quasi-log canonical 
pair. 
In Theorem \ref{a-thm1.9}, $[X, \omega]$ is not necessarily 
quasi-log canonical. 

\begin{thm}\label{a-thm1.9}
Let $[X, \omega]$ be a quasi-log scheme such that 
$X$ is irreducible. Let $\nu\colon Z\to X$ be the normalization. 
Then there exists a proper surjective 
morphism $f'\colon (Y', B_{Y'})\to Z$ from a 
quasi-projective globally embedded simple normal 
crossing pair $(Y', B_{Y'})$ such that 
every stratum of $Y'$ is dominant onto $Z$ and that 
\begin{equation*} 
\left(Z, \nu^*\omega, f'\colon  (Y', B_{Y'})\to Z\right)
\end{equation*} 
naturally becomes a quasi-log scheme 
with $\Nqklt(Z, \nu^*\omega)=\nu^{-1}\Nqklt(X, \omega)$. 
More precisely, the following equality 
\begin{equation*} 
\nu_*\mathcal I_{\Nqklt(Z, \nu^*\omega)}=
\mathcal I_{\Nqklt(X, \omega)}
\end{equation*} 
holds, 
where $\mathcal I_{\Nqklt(X, \omega)}$ and $\mathcal I_{\Nqklt(Z, 
\nu^*\omega)}$ are the defining ideal sheaves 
of $\Nqklt(X, \omega)$ and 
$\Nqklt(Z, \nu^*\omega)$ respectively. 
\end{thm}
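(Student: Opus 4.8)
The plan is to follow the strategy of \cite[Theorem 1.1]{fujino-haidong}, which settles the quasi-log canonical case, and to upgrade the bookkeeping so as to carry along the part of the boundary with coefficients $>1$, i.e.\ the non-qlc locus. I fix the quasi-log structure $f\colon (Y,B_Y)\to X$ defining $[X,\omega]$, so that $K_Y+B_Y\sim_{\mathbb R}f^*\omega$ and the defining ideal sheaves of $\Nqklt(X,\omega)$ and $\Nqlc(X,\omega)$ are computed as suitable pushforwards $f_*\mathcal O_Y(\,\cdot\,)$ of line bundles twisting down the coefficient-$\geq 1$ part of $B_Y$. Since $\nu$ is finite we have $\nu_*=R\nu_*$, which is what will ultimately let me compare ideal sheaves on $X$ with those on $Z$.

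First I would decompose $Y=\bigcup_i Y_i$ into irreducible components and, after replacing $(Y,B_Y)$ by a suitable blow-up keeping it a globally embedded simple normal crossing pair, separate the components dominating $X$ from those that do not. Each dominating $Y_i$ is smooth, hence normal, and dominates the irreducible $X$, so $f|_{Y_i}$ factors uniquely through $\nu\colon Z\to X$; gluing these factorizations produces a proper surjective morphism from the union $Y^{*}$ of dominating components to $Z$. Restricting the structure to $Y^{*}$ by adjunction, $(K_Y+B_Y)|_{Y^{*}}=K_{Y^{*}}+B_{Y^{*}}$, gives a candidate structure on $Z$ whose maximal strata all dominate $Z$.

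The heart of the argument is to arrange that \emph{every} stratum dominates $Z$ while correctly recording the non-qklt locus. The obstruction is that the non-dominating components meet $Y^{*}$ along divisors which, under the naive adjunction, enter $B_{Y^{*}}$ with coefficient one and would thereby create proper qlc centers of $[Z,\nu^*\omega]$ lying over the qlc centers of $[X,\omega]$. I would resolve this by modifying $(Y^{*},B_{Y^{*}})$ — blowing up further along, and perturbing the coefficients of, these non-dominating directions within the $\mathbb R$-linear equivalence class of $K_{Y^{*}}+B_{Y^{*}}$ — so that those divisors acquire coefficient $>1$ and are absorbed into the non-qlc locus. After an induction on dimension (or on the number of non-dominating strata) this yields the desired $(Y',B_{Y'})$ all of whose strata dominate $Z$; in particular $[Z,\nu^*\omega]$ has no proper qlc center, so $\Nqklt(Z,\nu^*\omega)=\Nqlc(Z,\nu^*\omega)$, which is why the former qlc centers of $X$ reappear inside the non-qlc locus of $Z$.

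Finally I would verify the quasi-log axioms for $(Z,\nu^*\omega,f'\colon (Y',B_{Y'})\to Z)$ and the ideal-sheaf identity. The relation $f'^{*}(\nu^*\omega)\sim_{\mathbb R}K_{Y'}+B_{Y'}$ holds by construction, and the two $\Nqklt$ loci are compared by writing both as pushforwards of the relevant line bundles and applying Fujino's vanishing and torsion-free theorems to control the higher direct images and the associated primes; since $\nu\circ f'$ agrees with $f$ on the dominating part, $\nu_*f'_*\mathcal O_{Y'}(\,\cdot\,)=f_*\mathcal O_Y(\,\cdot\,)$ and one reads off $\nu_*\mathcal I_{\Nqklt(Z,\nu^*\omega)}=\mathcal I_{\Nqklt(X,\omega)}$. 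The hard part, as indicated, will be the coefficient bookkeeping of the third step: ensuring simultaneously that all strata become dominant, that the perturbation leaves $\omega$ and its linear-equivalence class unchanged, and that the former qlc centers are absorbed into the non-qlc locus in exactly the way that makes the ideal-sheaf equality hold on the nose.
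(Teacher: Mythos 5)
Your opening moves match the paper's: discard the components of $Y$ that map into $\Nqklt(X,\omega)$ (equivalently, after applying \cite[Proposition 6.3.1]{fujino-foundations}, the non-dominant components), and factor the remaining part through the normalization. The paper does the factorization by taking the Stein factorization $Y'\to V\to X$ and proving $V$ is normal via Serre's $S_2$ condition for the simple normal crossing variety $Y'$ (Claim 1 in Section \ref{e-sec5}); your per-component lifting plus gluing needs essentially the same input to see that the lifts agree along $Y_i\cap Y_j$, so that step is recoverable.

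The central step of your proposal, however, is both unnecessary and unworkable. You have misread the conclusion: the theorem asks that every stratum of the simple normal crossing \emph{variety} $Y'$ (i.e.\ every intersection of its irreducible components) dominate $Z$, not every stratum of the \emph{pair} $(Y',B_{Y'})$. The coefficient-one divisors of $B_{Y'}$ — including the conductor $Y''|_{Y'}$ coming from the discarded components — are allowed to be non-dominant, and their images are precisely the qlc centers of $[Z,\nu^*\omega]$; the paper's proof of Theorem \ref{g-thm7.1} explicitly works with a non-trivial vertical part $(B^v_{Y'})^{\geq 1}$ mapping onto $\Nqklt$. The equality $\Nqklt(Z,\nu^*\omega)=\nu^{-1}\Nqklt(X,\omega)$ is an equality of the union of the non-qlc locus \emph{with these qlc centers}; nothing is absorbed into $\Nqlc(Z,\nu^*\omega)$. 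Moreover the mechanism you propose cannot be carried out: raising the coefficient of a fixed divisor $D$ from $1$ to $>1$ while keeping $K_{Y'}+B_{Y'}\sim_{\mathbb R}f'^{*}\nu^{*}\omega$ would require adding $\varepsilon D-\varepsilon D'$ with $D\sim_{\mathbb R,f'}D'$, which is not available, and blow-ups leave the coefficient of the strict transform of $D$ unchanged under crepant pullback. Finally, your last paragraph passes over where the actual content lies: one must show that $f'_*\mathcal O_{Y'}(\lceil -(B^{<1}_{Y'})\rceil-\lfloor B^{>1}_{Y'}\rfloor)$ and its twist by $-Y''|_{Y'}$ really are \emph{ideal} sheaves on $Z$ (Claims 2--4 of the paper). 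This is done not by vanishing theorems but by a codimension-one analysis on $Z$ — using that over each prime divisor of $\nu^{-1}\Nqklt(X,\omega)$ some component of $B^{=1}_{Y'}$ dominates it, so $\lceil -(B^{<1}_{Y'})\rceil$ misses the whole fiber — together with the surjectivity of $\nu^{*}\nu_*\mathcal S\to\mathcal S$ for the finite morphism $\nu$. As written, your proof does not go through.
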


Theorem \ref{a-thm1.10} is similar to \cite[Theorem 1.1]{fujino-subadjunction}. 
The proof of Theorem \ref{a-thm1.10} needs some deep results on 
basic slc-trivial fibrations obtained in \cite{fujino-slc-trivial} and 
\cite{fujino-fujisawa-liu}. 
Therefore, Theorem \ref{a-thm1.10} 
depends on the theory of variations of mixed Hodge structure 
(see \cite{fujino-fujisawa} and \cite{fujino-fujisawa-saito}). 

\begin{thm}\label{a-thm1.10}
Let $[X, \omega]$ be a quasi-log scheme such that 
$X$ is a normal quasi-projective variety. 
Let 
$H$ be an ample $\mathbb R$-divisor on $X$. 
Then there exists an effective $\mathbb R$-divisor 
$\Delta$ on $X$ such that 
\begin{equation*} 
K_X+\Delta\sim 
_{\mathbb R}\omega+H
\end{equation*} 
and 
that 
\begin{equation*}
\Nklt(X, \Delta)=\Nqklt(X, \omega) 
\end{equation*} 
holds set theoretically, 
where $\Nklt(X, \Delta)$ denotes 
the non-klt locus of $(X, \Delta)$. 
Furthermore, if $[X, \omega]$ has a $\mathbb Q$-structure 
and $H$ is an ample $\mathbb Q$-divisor on $X$, 
then we can make $\Delta$ a $\mathbb Q$-divisor on $X$ such that  
\begin{equation*} 
K_X+\Delta\sim _{\mathbb Q} \omega+H
\end{equation*} 
holds. 

When $X$ is a smooth curve, 
we can take an effective $\mathbb R$-divisor $\Delta$ on 
$X$ such that 
\begin{equation*}
K_X+\Delta\sim _{\mathbb R}\omega
\end{equation*} 
and that 
\begin{equation*} 
\Nklt(X, \Delta)=\Nqklt(X, \omega)
\end{equation*} 
holds 
set theoretically. 
Of course, if we further assume that $[X, \omega]$ has a 
$\mathbb Q$-structure, then we can make 
$\Delta$ an effective $\mathbb Q$-divisor 
on $X$ such that 
\begin{equation*}
K_X+\Delta\sim _{\mathbb Q}\omega
\end{equation*}  
holds. 
\end{thm}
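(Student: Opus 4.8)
The plan is to realize the quasi-log structure through a \emph{basic slc-trivial fibration} and then convert the resulting subadjunction formula into a genuine boundary by absorbing the nef moduli part into $H$. Since $X$ is normal and irreducible, I would first apply Theorem \ref{a-thm1.9} with $\nu=\mathrm{id}_X$ to obtain a proper surjective morphism $f\colon (Y, B_Y)\to X$ from a globally embedded simple normal crossing pair such that every stratum of $Y$ dominates $X$ and $K_Y+B_Y\sim_{\mathbb R} f^*\omega$, with $\Nqklt(X,\omega)$ unchanged; this $f$ is exactly a basic slc-trivial fibration. I would then invoke the canonical bundle formula for such fibrations to descend $\omega$ as
\begin{equation*}
\omega\sim_{\mathbb R} K_X+B_X+M_X,
\end{equation*}
where $B_X$ is the discriminant and $M_X$ the moduli $\mathbb R$-divisor. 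Two structural facts are essential. The defining isomorphism $\mathcal O_X\cong f_*\mathcal O_Y(\lceil -(B_Y^{<1})\rceil)$ of the quasi-log structure forces the discriminant to be \emph{effective}, $B_X\geq 0$ (along a prime divisor $P$ this isomorphism bounds the relevant log canonical threshold by $1$, so the coefficient $1-\gamma_P$ of $B_X$ is nonnegative), and the comparison of thresholds gives $\Nklt(X,B_X)=\Nqklt(X,\omega)$ set-theoretically, the divisorial part of $\Nqklt$, the non-qlc locus, and the higher-codimension qlc centers all matching the non-klt locus of the sub-pair $(X,B_X)$. The moduli part $M_X$ is \emph{nef}: this is the Hodge-theoretic core, supplied by the semipositivity results for basic slc-trivial fibrations in \cite{fujino-slc-trivial} and \cite{fujino-fujisawa-liu}.

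With these in hand the construction of $\Delta$ is immediate. Since $M_X$ is nef and $H$ is ample, $M_X+H$ is ample, so I would choose a general effective $\mathbb R$-divisor $M'\sim_{\mathbb R} M_X+H$ with all coefficients $<1$ and support in general position with respect to $\Nqklt(X,\omega)$; this is possible precisely because $M_X+H$ is ample. Setting $\Delta:=B_X+M'$ gives $K_X+\Delta\sim_{\mathbb R}\omega+H$, effectivity follows from $B_X\geq 0$ and $M'\geq 0$, and the generality of $M'$ guarantees $\Nklt(X,\Delta)=\Nklt(X,B_X)=\Nqklt(X,\omega)$. When $[X,\omega]$ carries a $\mathbb Q$-structure and $H$ is a $\mathbb Q$-divisor, $B_X$ and $M_X$ are $\mathbb Q$-Cartier and one takes $M'$ to be a $\mathbb Q$-divisor, yielding $K_X+\Delta\sim_{\mathbb Q}\omega+H$.

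For the final assertion, where $X$ is a smooth curve, I would avoid adding $H$ altogether. Here $B_X\geq 0$ already pins down the finite set $\Nqklt(X,\omega)$ as the points of coefficient $\geq 1$, and the nef moduli part has $\deg M_X\geq 0$. If $\deg M_X>0$ then $M_X$ is ample and hence $\sim_{\mathbb R}$ to a general effective divisor disjoint from $\Nqklt(X,\omega)$ with small coefficients; if $\deg M_X=0$ the finer semiampleness (torsion) of the moduli part on a curve, again from \cite{fujino-slc-trivial}, gives $M_X\sim_{\mathbb R}0$, and $\sim_{\mathbb Q}0$ under a $\mathbb Q$-structure. In either case $\Delta:=B_X+(\text{such a representative})$ is effective, satisfies $K_X+\Delta\sim_{\mathbb R}\omega$ (respectively $\sim_{\mathbb Q}\omega$), and has $\Nklt(X,\Delta)=\Nqklt(X,\omega)$.

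The main obstacle lies entirely in the second step: the nefness of the moduli part $M_X$, and, for the curve case, its stronger semiampleness or torsion when the degree vanishes. This is precisely the deep semipositivity theorem for basic slc-trivial fibrations, resting on the theory of variations of mixed Hodge structure. Everything after it—the effectivity $B_X\geq 0$ extracted from the quasi-log defining condition, the threshold comparison identifying $\Nklt(X,B_X)$ with $\Nqklt(X,\omega)$, and the absorption of the nef part $M_X$ into the ample divisor $H$—is comparatively formal.
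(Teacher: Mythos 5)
Your overall strategy---realize $[X,\omega]$ as a basic slc-trivial fibration via Theorem \ref{a-thm1.9} and the canonical bundle formula, then absorb the moduli part into $H$---is the same as the paper's, but carrying it out on $X$ itself rather than on a higher birational model breaks down at two points. First, Theorem \ref{f-thm6.10} only gives that the moduli $\mathbb Q$-b-divisor $\mathbf M$ is \emph{b-potentially nef}: there is a proper birational morphism $p\colon X'\to X$ such that $\mathbf M_{X'}$ is potentially nef and $\mathbf M=\overline{\mathbf M_{X'}}$. The trace $M_X=p_*\mathbf M_{X'}$ on $X$ need not be nef, so your assertion that $M_X+H$ is ample is unjustified. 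This is exactly why the paper (Theorem \ref{g-thm7.1} together with Lemma \ref{p-lem8.1}) works on the smooth model $X'$, chooses the general effective divisor $G\sim_{\mathbb R}p^*H-\varepsilon F+M_{X'}$ there, and only pushes forward to $X$ at the very end; even on $X'$ one must convert ``potentially nef'' into something usable, which is the content of Lemma \ref{p-lem8.1}. Second, the identification $\Nklt(X,B_X)=\Nqklt(X,\omega)$ fails as you state it: the discriminant $B_X$ records only codimension-one log canonical thresholds on $X$, so a qlc center of codimension $\geq 2$ arising from a vertical stratum of $(Y,B_Y)$ lying over it contributes nothing to $B_X$ and need not lie in $\Nklt(X,B_X)$ (moreover $K_X+B_X$ alone need not be $\mathbb R$-Cartier, so this non-klt locus is not even well defined without carrying the moduli part as b-divisorial data). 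The paper repairs precisely this in Theorem \ref{g-thm7.1} by invoking the flattening theorem of Raynaud--Gruson so that, on the higher model $X'$, every component of $(B_Y^v)^{\geq 1}$ dominates a prime divisor and hence $p(B^{\geq 1}_{X'})=\Nqklt(X,\omega)$ holds.

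Your treatment of the curve case is sound in spirit (there $p$ is an isomorphism and Theorem \ref{f-thm6.11} gives semi-ampleness of the moduli part, so a degree-zero moduli part is $\mathbb R$-linearly, respectively $\mathbb Q$-linearly, trivial), but the general case needs the intermediate statement you are implicitly assuming, namely Theorem \ref{g-thm7.1}: a decomposition $K_{X'}+B_{X'}+M_{X'}=p^*\omega$ on a smooth model with $B^{<0}_{X'}$ $p$-exceptional, $M_{X'}$ potentially nef, and $p(B^{\geq 1}_{X'})=\Nqklt(X,\omega)$. With that in hand, the rest of your argument goes through essentially as in the paper.
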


Let us briefly explain the idea of the proof of Theorem \ref{a-thm1.6} (iii), 
which is one of the main results of this paper. 
We take an $\omega$-negative extremal ray $R_j$ of $\NE(X/S)$ 
that are rational and relatively ample at infinity. 
Then, by the contraction theorem, 
there exists a contraction morphism 
$\varphi:=\varphi_{R_j}\colon  X\to Y$ over $S$ associated to $R_j$. 
We take a qlc stratum $W$ of $[X, \omega]$ such that 
$\varphi\colon W\to \varphi(W)$ is not finite and that 
$\varphi\colon W^\dag \to \varphi(W^\dag)$ is finite for every 
qlc center $W^\dag$ with $W^\dag\subsetneq W$. 
By adjunction, $W':=W\cup \Nqlc(X, \omega)$ 
with $\omega|_{W'}$ becomes a quasi-log scheme. 
Hence we can replace $[X, \omega]$ with $[W', \omega|_{W'}]$. 
By using Theorem \ref{a-thm1.9}, 
we can reduce the problem to the case where $X$ is 
a normal variety. 
By Theorem \ref{a-thm1.10}, 
we see that it is sufficient to treat normal pairs. 
For normal pairs, by Theorem \ref{a-thm1.8}, 
we can find a non-constant morphism 
\begin{equation*}
f_j\colon  \mathbb A^1\longrightarrow X\setminus 
\Nqklt(X, \omega)
\end{equation*}  
with the desired properties. 

\medskip 

We also treat an ampleness criterion for Mori hyperbolic 
normal pairs. It is a generalization of \cite[Theorem 7.5]{svaldi}. 

\begin{thm}[Ampleness criterion for Mori hyperbolic 
normal pairs]\label{a-thm1.11}
Let $X$ be a normal projective variety and 
let $\Delta$ be an effective $\mathbb R$-divisor 
on $X$ such that 
$K_X+\Delta$ is $\mathbb R$-Cartier. 
Assume that $(X, \Delta)$ is Mori hyperbolic, 
$(K_X+\Delta)|_{\Nlc(X, \Delta)}$ is ample, and 
$K_X+\Delta$ is log big with respect to 
$(X, \Delta)$, 
that is, $(K_X+\Delta)|_W$ is big for every 
lc stratum $W$ of $(X, \Delta)$. 
Then $K_X+\Delta$ is ample. 
\end{thm}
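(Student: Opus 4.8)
The plan is to deduce ampleness from nefness together with the log bigness hypothesis, via the Nakai--Moishezon-type / basepoint-free strategy that underlies the cone theorem machinery developed above. First I would apply Theorem \ref{a-thm1.4}: since $(X,\Delta)$ is Mori hyperbolic and $(K_X+\Delta)|_{\Nlc(X,\Delta)}$ is ample (hence in particular nef), that theorem gives that $K_X+\Delta$ is nef. So it remains to upgrade nefness to ampleness, and here the log bigness hypothesis ($(K_X+\Delta)|_W$ big for every lc stratum $W$) should be exactly what is needed to rule out the existence of any curve $C$ with $(K_X+\Delta)\cdot C=0$ that is not contracted to a point, and more precisely to force the absence of a nontrivial nef-but-not-ample behaviour.

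The key step will be to argue by contradiction. Suppose $K_X+\Delta$ is nef but not ample. Consider the null locus, i.e. the union of all subvarieties $V\subseteq X$ on which $(K_X+\Delta)|_V$ fails to be big. By a Nakai--Moishezon criterion for nef $\mathbb R$-divisors (Campana--Peternell / Kleiman type), $K_X+\Delta$ being nef but not ample means there is some positive-dimensional subvariety $V$ with $\bigl((K_X+\Delta)|_V\bigr)^{\dim V}=0$, so $(K_X+\Delta)|_V$ is not big. The strategy is to show that any such $V$ must be, or must be closely tied to, an lc stratum of $(X,\Delta)$, whereupon the log bigness hypothesis yields a contradiction. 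To make this work I would invoke the cone theorem for normal pairs (Theorem \ref{a-thm1.5}): the failure of ampleness produces a $(K_X+\Delta)$-trivial but nonample face, and relatively ample-at-infinity / extremal-ray analysis localizes the obstruction onto a stratum. Since $(K_X+\Delta)|_{\Nlc(X,\Delta)}$ is ample, the non-lc locus is already excluded as a source of degeneracy, so the obstructing $V$ lives in the lc part and is governed by the open lc strata.

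The natural tool to bridge from ``$(K_X+\Delta)$ nef, trivial on some $V$'' to ``$V$ is an lc stratum'' is adjunction/subadjunction, in the form of Theorem \ref{a-thm1.10}: writing $K_X+\Delta\sim_{\mathbb R}\omega+H$ style perturbations, or restricting to lc strata, one controls the positivity of $K_X+\Delta$ on each stratum through the quasi-log structure. I would restrict attention to a minimal lc stratum $W$ meeting the degeneracy locus; by log bigness $(K_X+\Delta)|_W$ is big, while nefness plus the hypothesized triviality on $V\subseteq \overline W$ would contradict bigness on $W$ after running an induction on dimension of strata (using adjunction to endow $W$ with an induced quasi-log/pair structure that is again Mori hyperbolic with an ample non-lc restriction). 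Thus no such $V$ exists, and Nakai--Moishezon gives ampleness.

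The main obstacle I expect is the passage from the global statement to the strata via adjunction while preserving all three hypotheses (Mori hyperbolicity of the induced structure on $W$, ampleness of the restriction to the non-lc/non-qlc locus of that induced structure, and log bigness), and correctly handling the quasi-log bookkeeping so that the induction closes; in particular, verifying that restricting $K_X+\Delta$ to an lc stratum yields a divisor to which the nef-but-not-big dichotomy applies, and that the inductive hypothesis is genuinely of strictly smaller dimension, will be the delicate part.
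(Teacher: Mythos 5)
Your first step --- nefness of $K_X+\Delta$ via Theorem \ref{a-thm1.4} --- is correct and is exactly how the paper begins. The rest of the proposal, however, has a genuine gap. You try to derive a contradiction from nefness plus log bigness alone, by arguing that a positive-dimensional $V$ with $\bigl((K_X+\Delta)|_V\bigr)^{\dim V}=0$ must be ``tied to'' an lc stratum and then contradict the bigness of $(K_X+\Delta)|_W$ on a stratum $W\supseteq V$. This cannot work: a nef and big divisor on $W$ is perfectly free to be numerically trivial on a proper subvariety $V\subsetneq W$ --- that is precisely the difference between ``nef and big'' and ``ample'' --- so log bigness puts no constraint on the null locus, and the null locus need not be related to the lc stratification at all (already for a klt pair, where the only lc stratum is $X$ itself, nef and big does not imply ample). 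Moreover, your suggestion to localize the obstruction with the cone theorem fails because Theorem \ref{a-thm1.5}/\ref{a-thm1.6} only controls $(K_X+\Delta)$-\emph{negative} extremal rays; it says nothing about the structure of $\NE(X)_{(K_X+\Delta)\geq 0}$, so a $(K_X+\Delta)$-trivial face is invisible to it.

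The missing idea is that Mori hyperbolicity must be used a \emph{second} time, and to make that possible one first needs semi-ampleness. The paper's route (via Theorems \ref{k-thm11.1} and \ref{k-thm11.2}) is: nef plus log big gives that $K_X+\Delta$ is semi-ample by the basepoint-free theorem of Reid--Fukuda type, hence defines a birational contraction $\Phi\colon X\to Y$; if $\Phi$ is not an isomorphism one picks an effective Cartier divisor $G$ with $-G$ $\Phi$-ample, so that $\omega+\varepsilon G$ is genuinely \emph{not} nef while $\Nqklt(X,\omega+\varepsilon G)=\Nqklt(X,\omega)$ for $0<\varepsilon\ll 1$ (Lemma \ref{d-lem4.24}); the cone theorem applied to this perturbed quasi-log structure then produces a non-constant morphism $\mathbb A^1\to X\setminus\Nqklt(X,\omega)$ contracted by $\pi$, contradicting Mori hyperbolicity. (The induction over strata that you worry about is handled in Theorem \ref{k-thm11.1} by successively passing to $\Nqklt$, using that the restriction of $\omega$ there is already known to be ample by induction.) Without the semi-ampleness step and the perturbation by $-G$, there is no negative extremal ray to feed into the cone theorem, and the argument does not close.
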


Theorem \ref{a-thm1.11} is a very special case of 
the ampleness criterion for quasi-log schemes 
(see Theorem \ref{k-thm11.1}). 
We omit the precise statement of Theorem \ref{k-thm11.1} 
here since it looks technical. 
We note that $K_X+\Delta$ is nef by Theorem \ref{a-thm1.4} 
since $(X, \Delta)$ is Mori hyperbolic and 
$(K_X+\Delta)|_{\Nlc(X, \Delta)}$ is ample. 
Therefore, $K_X+\Delta$ is nef and log big with respect 
to $(X, \Delta)$ in 
Theorem \ref{a-thm1.11}. 
Hence we can see that $K_X+\Delta$ is semi-ample with 
the aid of the basepoint-free theorem of Reid--Fukuda type 
(see \cite{fujino-reid-fukuda}). Then we prove that $K_X+\Delta$ is ample. 

By using the method established for the proof of 
Theorem \ref{a-thm1.6}, we can prove the 
following theorems. Note that 
Theorems \ref{a-thm1.12}, \ref{a-thm1.13}, 
and \ref{a-thm1.14} are free from the 
theory of minimal models. Theorem \ref{a-thm1.12} is 
a generalization of Kawamata's famous theorem 
(see \cite{kawamata1}). 

\begin{thm}\label{a-thm1.12} 
Let $[X, \omega]$ be a quasi-log scheme and let 
$\varphi\colon X\to W$ be a projective morphism 
between schemes such that $-\omega$ is $\varphi$-ample. 
Let $P$ be an arbitrary closed point of $W$. 
Let $E$ be any positive-dimensional irreducible component of 
$\varphi^{-1}(P)$ such that $E\not\subset X_{-\infty}$. 
Then $E$ is covered by {\em{(}}possibly singular{\em{)}} 
rational curves $\ell$ with 
\begin{equation*} 
0<-\omega\cdot \ell \leq 2\dim E. 
\end{equation*}  
In particular, $E$ is uniruled. 
\end{thm}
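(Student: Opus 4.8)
The plan is to imitate the structure of the proof of Theorem \ref{a-thm1.6} (iii) as sketched in the introduction, but in the purely local setting of a fiber of $\varphi$ with $-\omega$ relatively ample. First I would reduce to studying a single qlc stratum dominating the point. Concretely, after passing to a suitable neighborhood of $P$ and restricting, I would choose a qlc stratum $W$ of $[X, \omega]$ such that $W \subset E$, $W$ meets $E$ in the appropriate way (so that $W$ controls the uniruledness of $E$), and $W$ is minimal among those qlc centers surjecting onto a positive-dimensional subset of $E$. Using adjunction for quasi-log schemes, $W' := W \cup \Nqlc(X, \omega)$ with $\omega|_{W'}$ inherits a quasi-log scheme structure, so I may replace $[X, \omega]$ by $[W', \omega|_{W'}]$ and thereby assume $X$ is irreducible with a distinguished minimal stratum.

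Next I would apply Theorem \ref{a-thm1.9} to the normalization $\nu \colon Z \to X$, which produces a quasi-log scheme structure on $Z$ with every stratum dominant and with $\Nqklt(Z, \nu^*\omega) = \nu^{-1}\Nqklt(X, \omega)$; since $-\omega$ is $\varphi$-ample and $E \not\subset X_{-\infty}$, the pullback $-\nu^*\omega$ remains relatively ample and the preimage of $E$ stays outside the non-qlc locus. This reduces the problem to the case where $X$ is a normal quasi-projective variety. At this stage I would invoke Theorem \ref{a-thm1.10}: choosing $H$ an appropriate ample $\mathbb R$-divisor, I obtain an effective $\mathbb R$-divisor $\Delta$ with $K_X + \Delta \sim_{\mathbb R} \omega + H$ and $\Nklt(X, \Delta) = \Nqklt(X, \omega)$ set theoretically. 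Since the relative ampleness of $-\omega$ over $W$ gives that $K_X + \Delta$ is $\varphi$-negative on the fiber, I am reduced to the uniruledness statement for a genuine normal pair $(X, \Delta)$ over a point, where the fiber $E$ is not contained in the non-klt locus.

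For the final input I would appeal to Theorem \ref{a-thm1.8}, or more precisely to the bend-and-break mechanism underlying it, to produce through a general point of $E$ a non-constant morphism $f \colon \mathbb A^1 \to X \setminus \Nklt(X, \Delta)$ contracted by $\varphi$, whose closure $\ell$ is a rational curve with
\begin{equation*}
0 < -\omega \cdot \ell \leq 2 \dim E.
\end{equation*}
Covering $E$ by such curves (one through each general point) yields that $E$ is uniruled. The main obstacle I anticipate is the \emph{localization} in the first step: Theorems \ref{a-thm1.8}, \ref{a-thm1.9}, and \ref{a-thm1.10} are stated globally, so I must carefully arrange that restricting to a neighborhood of $P$ and to the chosen stratum $W$ preserves all the quasi-log hypotheses, that the bound $2 \dim E$ (rather than $2 \dim W$ or $2 \dim X$) is the one that survives, and that the covering family of rational curves genuinely sweeps out all of $E$ and not merely a proper subvariety. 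Ensuring that the relative ampleness of $-\omega$ is enough to force $K_X + \Delta$ negativity on the fiber after the adjunction and normalization steps, without appealing to global nefness, is where the argument requires the most care.
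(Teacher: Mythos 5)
There is a genuine gap, and it occurs at both ends of your argument. First, your opening reduction --- choosing a qlc stratum $W$ of $[X,\omega]$ with $W\subset E$ and replacing $[X,\omega]$ by $[W\cup\Nqlc(X,\omega),\omega|_{\cdot}]$ --- does not get off the ground: an irreducible component $E$ of $\varphi^{-1}(P)$ need not contain \emph{any} qlc stratum of $[X,\omega]$ (e.g.\ $X$ smooth, $\omega=K_X$, $\varphi$ a birational contraction, where the only qlc stratum is $X$ itself), and even when such a $W$ exists, passing to it discards $E$ and can only produce rational curves inside $W$, not a family sweeping out $E$. The paper's key move, which you omit, is the opposite one: it enlarges $\omega$ rather than shrinking $X$, taking general very ample divisors $B_1,\dots,B_{n+1}$ on $W$ through $P$ and a suitable $0<c<1$ so that $E$ \emph{itself} becomes a qlc stratum of the perturbed quasi-log scheme $[X,\omega+\varphi^*B]$ with $B=\varphi^*(c\sum_i B_i)$ (Lemma \ref{d-lem4.23} together with \cite[Lemma 6.3.13]{fujino-foundations}); since $\varphi(E)=P$, the perturbation is $\mathbb R$-linearly trivial on $E$, so one loses nothing.

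Second, your final step applies Theorem \ref{a-thm1.10} to $X$ (after normalization) and then invokes Theorem \ref{a-thm1.8} or bend-and-break on $(X,\Delta)$. This gives negativity of $K_X+\Delta$ on the fiber and hence \emph{one} rational curve somewhere in the fiber, but it does not give a rational curve through a general point of $E$: Miyaoka--Mori requires $K_{\overline E}\cdot C<0$ for a covering family of curves $C$ on $\overline E$, i.e.\ negativity of the canonical divisor of $E$ itself, and $(K_X+\Delta)|_E$ is not $K_{\overline E}$ plus an effective divisor without a subadjunction statement. That subadjunction is exactly the content of the paper's Lemma \ref{l-lem12.1}: after making $E$ a qlc stratum as above, adjunction, Lemma \ref{d-lem4.20}, Theorem \ref{a-thm1.9}, and Theorem \ref{a-thm1.10} are applied \emph{on the normalization $\overline E$} to produce $\nu^*\omega+H\sim_{\mathbb R}K_{\overline E}+\Delta_{\overline E,H}$ with $\Delta_{\overline E,H}\geq 0$, whence $(\nu^*\mathcal A)^{\dim E-1}\cdot K_{\overline E}<0$ for a $\varphi$-ample $\mathcal A$; Miyaoka--Mori applied to general complete intersection curves on $\overline E$ then yields the covering family with $0<-\omega\cdot\ell\leq 2\dim E$. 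Your intermediate reductions (Theorem \ref{a-thm1.9} to normalize, Theorem \ref{a-thm1.10} to produce $\Delta$) are the right tools, but they must be applied to $\overline E$ after the perturbation, not to $X$.
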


For the reader's convenience, 
let us explain the idea of the proof of Theorem \ref{a-thm1.12}. 
We take an effective $\mathbb R$-Cartier divisor 
$B$ on $W$ passing through $P$ such that 
$E$ is a qlc stratum of $[X, \omega+\varphi^*B]$. 
Let $\nu\colon \overline E\to E$ be the normalization. 
By adjunction for quasi-log schemes, 
Theorems \ref{a-thm1.9}, \ref{a-thm1.10}, and so on, 
for any ample $\mathbb R$-divisor $H$ on $\overline E$, 
we obtain an effective $\mathbb R$-divisor 
$\Delta_{\overline E, H}$ on $\overline E$ 
such that 
\begin{equation*} 
\nu^*\omega+H\sim _{\mathbb R} K_{\overline E} +\Delta_{\overline E, H}
\end{equation*} 
holds. 
This 
implies that $C\cdot K_{\overline E}<0$ holds for any general 
curve $C$ on $\overline E$. 
Thus, it is not difficult to see that $\overline E$ is covered 
by rational curves (see \cite{miyaoka-mori}). 
Our approach is different from Kawamata's original one, which 
uses a relative Kawamata--Viehweg 
vanishing theorem for projective bimeromorphic 
morphisms between complex analytic spaces. 
Kawamata's approach does not work for our setting.  

\medskip 

As a direct consequence of Theorem \ref{a-thm1.12}, 
we have: 

\begin{thm}[Lengths of extremal rational curves]\label{a-thm1.13}
Let $[X, \omega]$ be a quasi-log scheme and let 
$\pi\colon X\to S$ be a projective morphism 
between schemes. 
Let $R$ be an $\omega$-negative extremal ray of 
$\NE(X/S)$ that are 
rational and relatively ample at infinity. 
Let $\varphi_R\colon X\to W$ be the contraction morphism 
over $S$ associated to $R$. 
We put 
\begin{equation*} 
d=\min _E \dim E, 
\end{equation*}  
where $E$ runs over positive-dimensional 
irreducible components of $\varphi^{-1}_R(P)$ for all $P\in W$. 
Then $R$ is spanned by a {\em{(}}possibly singular{\em{)}} rational 
curve $\ell$ with 
\begin{equation*} 
0<-\omega\cdot \ell\leq 2d. 
\end{equation*} 
\end{thm}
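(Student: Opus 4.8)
The plan is to deduce the statement directly from Theorem \ref{a-thm1.12} applied to the extremal contraction morphism $\varphi_R\colon X\to W$ itself. First I would record the properties of $\varphi_R$ furnished by the contraction theorem attached to the cone theorem (Theorem \ref{a-thm1.6} and Remark \ref{a-rem1.7}): (a) an integral curve $C$ on $X$ with $\pi(C)$ a point is contracted by $\varphi_R$, i.e.\ $\varphi_R(C)$ is a point, if and only if $[C]\in R$; (b) $-\omega$ is $\varphi_R$-ample, since $\varphi_R$ is the contraction of the $\omega$-negative ray $R$; and (c) the restriction $\varphi_R|_{X_{-\infty}}\colon X_{-\infty}\to W$ is finite, where $X_{-\infty}=\Nqlc(X,\omega)$. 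Property (c) is precisely where the hypothesis that $R$ is \emph{relatively ample at infinity} enters: by Theorem \ref{a-thm1.6}~(i) there is a $\pi$-nef $\mathbb Q$-line bundle $\mathcal L$ with $R=\NE(X/S)\cap \mathcal L^{\perp}$ such that $\mathcal L|_{X_{-\infty}}$ is $\pi|_{X_{-\infty}}$-ample; since $\varphi_R$ is the contraction defined by $\mathcal L$, no curve lying inside $X_{-\infty}$ can be contracted, so $\varphi_R|_{X_{-\infty}}$ is finite.

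An immediate consequence of (c) is that every positive-dimensional irreducible component $E$ of every fiber $\varphi_R^{-1}(P)$ satisfies $E\not\subset X_{-\infty}$; otherwise such an $E$ would be a positive-dimensional fiber of the finite morphism $\varphi_R|_{X_{-\infty}}$, which is impossible. Hence the minimum $d=\min_E \dim E$ is attained by some component $E_0$ to which Theorem \ref{a-thm1.12} applies, taking $\varphi=\varphi_R$ and $P=\varphi_R(E_0)$. Invoking that theorem then produces a (possibly singular) rational curve $\ell\subset E_0$ with
\begin{equation*}
0<-\omega\cdot \ell\leq 2\dim E_0=2d.
\end{equation*}

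Finally I would verify that $\ell$ spans $R$. Since $\ell\subset E_0\subset \varphi_R^{-1}(P)$, the curve $\ell$ is contracted by $\varphi_R$, so $[\ell]\in R$ by property~(a). Because $-\omega\cdot \ell>0$, the class $[\ell]$ is nonzero, and as $R$ is an extremal ray, that is, a one-dimensional face of $\NE(X/S)$, any nonzero class lying on it generates it; therefore $\ell$ spans $R$, which yields the asserted conclusion $0<-\omega\cdot \ell\leq 2d$.

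The only genuinely substantive point is property~(c): that relative ampleness of $\mathcal L|_{X_{-\infty}}$ forces $\varphi_R|_{X_{-\infty}}$ to be finite and thereby excludes positive-dimensional fiber components from the non-qlc locus, so that the hypothesis $E\not\subset X_{-\infty}$ of Theorem \ref{a-thm1.12} is met at the minimizing component. Everything else is a formal combination of the contraction theorem with Theorem \ref{a-thm1.12}, which is why the result is legitimately stated as a direct consequence.
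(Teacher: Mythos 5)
Your proposal is correct and follows essentially the same route as the paper: the paper's proof likewise notes that $\varphi_R\colon \Nqlc(X,\omega)\to\varphi_R(\Nqlc(X,\omega))$ is finite (so positive-dimensional fiber components are not contained in $X_{-\infty}$) and then applies Theorem \ref{a-thm1.12} to $\varphi_R\colon X\to W$ at a minimizing component. You have merely made explicit the verification that $\ell$ spans $R$, which the paper leaves implicit.
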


If $(X, \Delta)$ is a log canonical pair, then 
$[X, K_X+\Delta]$ naturally becomes 
a quasi-log canonical pair. Hence we can apply Theorems \ref{a-thm1.12} 
and \ref{a-thm1.13} to log canonical pairs. 
Note that Theorems \ref{a-thm1.12} and \ref{a-thm1.13} 
are new even for log canonical pairs (see 
also Corollary \ref{l-cor12.3}). 
We can prove the following result 
on rational chain connectedness 
for relative quasi-log Fano schemes. 

\begin{thm}[Rational chain connectedness]\label{a-thm1.14}
Let $[X, \omega]$ be a quasi-log scheme and let $\pi\colon X\to S$ be a 
projective morphism between schemes with $\pi_*\mathcal O_X\simeq 
\mathcal O_S$. 
Assume that $-\omega$ is ample over $S$. Then 
$\pi^{-1}(P)$ is rationally chain connected modulo 
$\pi^{-1}(P)\cap X_{-\infty}$ for every closed point $P\in S$. 
In particular, if further $\pi^{-1}(P)\cap X_{-\infty}=\emptyset$ holds, 
that is, $[X, \omega]$ is quasi-log canonical in a neighborhood of 
$\pi^{-1}(P)$, 
then $\pi^{-1}(P)$ is rationally chain connected. 
\end{thm}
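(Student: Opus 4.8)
The plan is to reduce the statement to Theorem \ref{a-thm1.12} by a dimension induction on the fibers, using adjunction for quasi-log schemes to propagate the quasi-log structure onto strata. First I would fix a closed point $P\in S$ and note that since $-\omega$ is ample over $S$, the fiber $F:=\pi^{-1}(P)$ is a qlc stratum of $[X, \omega+\pi^*B]$ for a suitable effective $\mathbb R$-Cartier divisor $B$ on $S$ passing through $P$; this is the standard device already used in the sketch of Theorem \ref{a-thm1.12}. By Theorem \ref{a-thm1.12} applied to $\varphi=\pi$, every positive-dimensional irreducible component $E$ of $F$ with $E\not\subset X_{-\infty}$ is covered by rational curves $\ell$ with $0<-\omega\cdot\ell\le 2\dim E$. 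The task is then to glue these curves into chains connecting arbitrary points of $F$ modulo $F\cap X_{-\infty}$.

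The key steps, in order, are as follows. I would argue by induction on $\dim F$. The base case $\dim F=0$ is trivial. For the inductive step, take two general points $x_1, x_2\in F\setminus X_{-\infty}$. By Theorem \ref{a-thm1.12} each lies on a rational curve contracted by $\pi$; the content is to connect these curves. Here I would invoke the standard rationally-chain-connected gluing argument (as in Kollár's treatment): the family of rational curves through a general point sweeps out a subvariety, and because $-\omega$ is ample over $S$ the curves move in positive-dimensional families. The crucial reduction is to pass to qlc strata: when the rational curves through $x_1$ and $x_2$ do not yet meet, I would use adjunction for quasi-log schemes (Theorems \ref{a-thm1.9} and \ref{a-thm1.10}) to restrict the quasi-log structure to a lower-dimensional qlc center $W\subsetneq F$ on which the two chains can be joined, applying the induction hypothesis to $[W, \omega|_W]$ together with $-\omega|_W$ being ample. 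The non-qlc locus $X_{-\infty}$ is handled throughout by always working ``modulo $X_{-\infty}$,'' i.e.\ allowing chains to terminate on $F\cap X_{-\infty}$, which is consistent because $X_{-\infty}$ itself inherits a quasi-log structure by adjunction.

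The hard part will be the gluing/induction step, specifically ensuring that when two rational chains fail to meet inside $F$, one can genuinely descend to a qlc center of strictly smaller dimension carrying an ample anti-qlc-canonical class, rather than merely a subvariety with no quasi-log structure. This is where the full strength of adjunction for quasi-log schemes is needed: one must verify that the relevant stratum $W$ is a qlc center of $[X, \omega+\pi^*B]$ so that $[W, \omega|_W]$ is again a relative quasi-log Fano over a point, allowing the induction hypothesis to apply. A secondary technical point is the reducibility and non-equidimensionality of $X$ (and hence of $F$): the connectivity must be established across distinct irreducible components of $F$, which again is arranged by choosing $W$ to be a connected qlc stratum meeting the relevant components, using that $\pi_*\mathcal O_X\simeq\mathcal O_S$ forces $F$ to be connected. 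Once the connectivity across components and the descent to strata are in place, the conclusion follows by assembling the rational curves of Theorem \ref{a-thm1.12} into chains, and the final assertion for the quasi-log canonical case is immediate since then $F\cap X_{-\infty}=\emptyset$.
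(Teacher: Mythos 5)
There is a genuine gap at the heart of your proposal: the gluing step. Theorem \ref{a-thm1.12} only gives that each positive-dimensional component $E$ of the fiber with $E\not\subset X_{-\infty}$ is \emph{covered} by rational curves of bounded $(-\omega)$-degree, i.e.\ $E$ is uniruled. Uniruledness is very far from rational chain connectedness, and the paper itself warns that ``the rational chain connectedness of $\varphi^{-1}(P)$ does not directly follow from Theorem \ref{a-thm1.12}.'' Your proposed mechanism for joining two chains that fail to meet --- descending to a qlc center $W\subsetneq F$ and applying induction there --- does not work as stated: the locus swept out by rational chains through a general point $x_1$ is just some closed subvariety of $F$, and there is no reason for it (or for the locus where the chains ``get stuck'') to be a qlc center of $[X,\omega+\pi^*B]$, so adjunction gives you no quasi-log Fano structure to induct on. Establishing rational chain connectedness from a covering family of rational curves is exactly the hard content of the Campana/Koll\'ar--Miyaoka--Mori and Hacon--M\textsuperscript{c}Kernan arguments (MRC fibrations, bend-and-break on the quotient), none of which is supplied here.

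The paper's actual proof takes a different and essentially unavoidable route: it never glues the curves of Theorem \ref{a-thm1.12}. Instead it reduces, via adjunction, Lemma \ref{d-lem4.20}, normalization (Theorem \ref{a-thm1.9}), and the subadjunction-type Theorem \ref{a-thm1.10}, to a \emph{normal pair} $(X,\Delta)$ with $-(K_X+\Delta)$ ample and $\Nklt(X,\Delta)=\Nqklt(X,\omega)$, and then invokes the known theorem of Zhang, Hacon--M\textsuperscript{c}Kernan, and Broustet--Pacienza (Theorem \ref{m-thm13.3}) that such a pair is rationally chain connected modulo its non-klt locus. This is fed into an inverse induction along the tower $X_{i+1}=\Nqklt(X_i,\omega_i)$ (Lemma \ref{m-lem13.5}), and the relative statement is then handled by building the increasing sequence of closed subschemes $Z_i$ using $\omega+c_i\pi^*B$ with $B$ a sum of general very ample divisors through $P$ --- note that $\pi^{-1}(P)$ itself is reducible and is not a single qlc stratum, so one genuinely needs this filtration rather than a single perturbation making ``$F$ a qlc stratum.'' Your outline correctly identifies the adjunction/perturbation devices, but without importing Theorem \ref{m-thm13.3} (or reproving its content) the chain-connecting step cannot be completed.
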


Let us see the idea of the proof of Theorem \ref{a-thm1.14}. 
We assume that $\pi^{-1}(P)\cap X_{-\infty}\ne \emptyset$ for simplicity. 
By using the framework of quasi-log schemes, 
we construct a good finite increasing sequence of 
closed subschemes 
\begin{equation*} 
Z_{-1}:=\Nqlc(X, \omega)\subset Z_0\subsetneq Z_1
\subsetneq \cdots \subsetneq Z_k
\end{equation*} 
of $X$ such that $\pi^{-1}(P)\subset Z_k$ after shrinking 
$X$ around $\pi^{-1}(P)$. 
It is well known that if $(V, \Delta)$ is a projective normal 
pair such that $\Delta$ is effective and that 
$-(K_V+\Delta)$ is ample then 
$V$ is rationally chain connected modulo 
$\Nklt(V, \Delta)$ (see \cite{hacon-mckernan} 
and \cite{bp}). 
By this fact, adjunction for quasi-log schemes, 
Theorems \ref{a-thm1.9}, \ref{a-thm1.10}, and so on, 
we prove that 
$Z_{i+1}\cap \pi^{-1}(P)$ is rationally chain connected 
modulo $Z_i\cap \pi^{-1}(P)$ for every $-1\leq i\leq k-1$. 
Since $Z_k\cap \pi^{-1}(P)=\pi^{-1}(P)$ and 
$Z_{-1}\cap \pi^{-1}(P)=\pi^{-1}(P)\cap X_{-\infty}$, 
we obtain that $\pi^{-1}(P)$ 
is rationally chain connected modulo $\pi^{-1}(P)\cap X_{-\infty}$. 

\medskip 

Theorems \ref{a-thm1.6}, \ref{a-thm1.12}, and 
\ref{a-thm1.14} are closely related one another. 
Let us see these theorems for extremal birational contraction 
morphisms of log canonical pairs. 
Let $(X, \Delta)$ be a projective log canonical pair 
and let $R$ be a $(K_X+\Delta)$-negative extremal ray of 
$\NE(X)$. 
Assume that the contraction morphism 
$\varphi_R\colon X\to W$ associated to $R$ is birational. 
We take a closed point $P$ of $W$ such that 
$\dim \varphi^{-1}_R(P)>0$. 
Then Theorem \ref{a-thm1.14} says that  
$\varphi^{-1}_R(P)$ is rationally chain connected. 
However, Theorem \ref{a-thm1.14} gives no information on 
degrees of rational curves on $\varphi^{-1}_R(P)$ with 
respect to $-(K_X+\Delta)$. 
On the other hand, Theorem \ref{a-thm1.12} shows that 
every irreducible component of $\varphi^{-1}_R(P)$ is covered 
by rational curves $\ell$ with $
0<-(K_X+\Delta)\cdot \ell \leq 2\dim \varphi^{-1}_R(P)$.  
In particular, every irreducible component of 
the exceptional locus of $\varphi_R$ is uniruled. 
Note that the rational chain connectedness of $\varphi^{-1}(P)$ does 
not directly follow from Theorem \ref{a-thm1.12}. 
Theorem \ref{a-thm1.6} (see also 
Theorem \ref{a-thm1.5}) 
shows that there exist a rational curve $C$ on $X$ 
and an open lc stratum $U$ of 
$(X, \Delta)$ such that $\varphi_R(C)$ is a point and that 
the normalization of $C\cap U$ contains $\mathbb A^1$. 

\medskip 

We pose a conjecture related to \cite[Theorem 3.1]{lz}. 

\begin{conj}\label{a-conj1.15} 
Let $[X, \omega]$ be a quasi-log scheme 
and let $\pi\colon  X\to S$ be a projective 
morphism between schemes such that 
$-\omega$ is $\pi$-ample 
and that 
\begin{equation*} 
\pi\colon \Nqklt (X, \omega)\to \pi(\Nqklt(X, \omega))
\end{equation*}  
is finite. Let $P$ be a closed point of $S$ such that 
there exists a curve $C^\dag\subset \pi^{-1}(P)$ with 
$\Nqklt(X, \omega)\cap C^\dag\ne \emptyset$. 
Then there exists a non-constant morphism 
\begin{equation*} 
f\colon \mathbb A^1\longrightarrow \left(X\setminus \Nqklt(X, \omega)\right)
\cap \pi^{-1}(P) 
\end{equation*}  
such that $C$, the closure of $f(\mathbb A^1)$ in $X$, satisfies 
$C\cap \Nqklt(X, \omega)\ne \emptyset$ with 
\begin{equation*} 
0<-\omega\cdot C\leq 1. 
\end{equation*} 
\end{conj}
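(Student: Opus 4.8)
The plan is to imitate the reduction machinery developed for Theorem \ref{a-thm1.6}(iii) and then to run a bend-and-break argument anchored along the non-qklt locus. First I would localize: since the assertion concerns a single fiber $\pi^{-1}(P)$, I would shrink $S$ around $P$ and, using adjunction for quasi-log schemes together with the finiteness of $\pi\colon \Nqklt(X,\omega)\to \pi(\Nqklt(X,\omega))$, replace $[X,\omega]$ by a suitable qlc stratum $W$ that meets $\Nqklt(X,\omega)$ and carries the given curve $C^\dag$. Because $\Nqklt(X,\omega)$ is finite over the base while $C^\dag$ lies in a single fiber, $C^\dag$ cannot be contained in $\Nqklt(X,\omega)$; thus there is a genuine point $x\in C^\dag\cap\Nqklt(X,\omega)$ through which an $\mathbb A^1$ should emanate.

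Next, following the pattern by which Theorems \ref{a-thm1.9} and \ref{a-thm1.10} are used in the proof of Theorem \ref{a-thm1.6}(iii), I would pass to the normalization via Theorem \ref{a-thm1.9}, which identifies $\Nqklt$ precisely through the ideal-sheaf equality $\nu_*\mathcal I_{\Nqklt(Z,\nu^*\omega)}=\mathcal I_{\Nqklt(X,\omega)}$, and then convert the quasi-log structure into an honest normal pair by Theorem \ref{a-thm1.10}: after adding a small ample $H$ one obtains $K_X+\Delta\sim_{\mathbb R}\omega+H$ with $\Nklt(X,\Delta)=\Nqklt(X,\omega)$. Taking $H$ relatively small keeps $-(K_X+\Delta)$ ample over the base and keeps the non-klt locus finite over it, so the problem becomes: on a normal pair with $-(K_X+\Delta)$ relatively ample and a curve in the fiber through a point $x$ of an lc center, produce a rational curve in $X\setminus\Nklt(X,\Delta)$ whose closure passes through $x$ and has $-(K_X+\Delta)$-degree at most $1$.

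Now I would attempt bend and break anchored at $x$. Theorem \ref{a-thm1.8}, and the Miyaoka--Mori deformation underlying Theorem \ref{a-thm1.12}, already yield rational curves through $x$ in the fiber with $0<-\omega\cdot\ell\le 2\dim$, so mere existence is not the issue. The real content is the sharp bound $0<-\omega\cdot C\le 1$, together with the requirement that $C$ meet $\Nqklt(X,\omega)$ while the image of $\mathbb A^1$ stays in the open locus. I would try to extract the extra unit of positivity from adjunction along the lc center: a rational curve meeting the center transversally at $x$ ought to pick up a contribution of at least $1$ from the different, and I would deform such a curve keeping $x$ fixed and repeatedly break it, each break preserving the anchor $x\in\Nqklt(X,\omega)$ while lowering the degree, with the lc-center geometry obstructing stabilization above $1$.

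The hard part will be exactly this last step. Standard bend and break and the BCHM-based argument of Theorem \ref{a-thm1.8} deliver only the coefficient $2\dim$, and none of Theorems \ref{a-thm1.12}--\ref{a-thm1.14} improves the length below $2d$; sharpening the bound to $1$ demands a genuinely new estimate exploiting the presence of $\Nqklt(X,\omega)$ on the curve, presumably a refinement in the spirit of \cite[Theorem 3.1]{lz}. Moreover, it is not clear that the reduction to normal pairs preserves enough information about the different to furnish such an estimate, since passing through Theorems \ref{a-thm1.9} and \ref{a-thm1.10} can blur the precise boundary coefficients along the center. This is precisely why the statement is posed as a conjecture rather than proved: I expect the reduction to normal pairs to be routine, while the length-$1$ bend-and-break through the center is the decisive obstacle.
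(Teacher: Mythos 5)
You have correctly recognized that this statement is posed as a conjecture, and your assessment of where the difficulty lies is sound: the reduction to normal pairs is routine, and the whole content is the sharp bound $0<-\omega\cdot C\leq 1$. The paper itself does not prove Conjecture \ref{a-conj1.15} unconditionally; it proves it only under the hypothesis that klt flips terminate (Theorem \ref{n-thm14.2}), deferring the general case to the joint work with Hashizume. So there is no gap to fault you for --- but it is worth comparing your plan with the conditional argument the paper actually gives, because the mechanism for the bound $\leq 1$ is different from what you propose.

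Your reduction steps match the paper's (Steps 1--2 of the proof of Theorem \ref{n-thm14.2}, plus Lemma \ref{n-lem14.5}, which cuts by general very ample divisors through $P$ to manufacture a positive-dimensional qlc center $W$ in the fiber whose normalization $[W^\nu,\nu^*\omega]$ is a quasi-log Fano scheme with $\Nqklt(W^\nu,\nu^*\omega)$ a single point). Where the approaches diverge is in producing the curve of degree at most $1$. You propose a bend-and-break anchored at a point of $\Nqklt(X,\omega)$, hoping to squeeze an extra unit out of the different; as you note, bend and break by itself only gives the coefficient $2\dim$. The paper instead takes a dlt blow-up (Lemma \ref{c-lem3.10}), runs a $(K_Y+\Delta_Y)$-MMP over the base, and uses the termination hypothesis to land in one of two situations: either every step is an isomorphism near $\Nklt(Y_i,\Delta_{Y_i})$ and one terminates at a Mori fiber space with one-dimensional fibers, or some step fails to be an isomorphism there. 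In both cases (Cases 1 and 3 of Proposition \ref{i-prop9.1}) the relevant curve $C$ is forced to be $\mathbb P^1$ by the vanishing $R^1\varphi_*\mathcal O_X=0$, and the bound then comes from the subadjunction statement Corollary \ref{p-cor8.2} applied via Lemma \ref{p-lem8.3}: one gets $(K_X+\Delta)|_C\sim_{\mathbb R}K_C+\Delta_C$ with $\Supp\Delta_C^{\geq 1}$ equal to $C\cap\Nklt(X,\Delta)\neq\emptyset$, whence $-(K_X+\Delta)\cdot C=2-\deg\Delta_C\leq 1$. So the ``unit from the different'' you intuit is indeed the engine of the estimate, but it is applied to a curve already produced by the MMP and the contraction geometry, not to a curve obtained by breaking a family anchored at a point. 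Your worry that passing through Theorems \ref{a-thm1.9} and \ref{a-thm1.10} blurs the boundary coefficients is addressed in the paper by the fact that only the set-theoretic identification $\Nklt(X,\Delta)=\Nqklt(X,\omega)$ is needed: Corollary \ref{p-cor8.2} reconstructs the divisor $\Delta_C$ on the curve directly from the quasi-log structure, so no fine control of coefficients upstairs is required.
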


In this paper, we solve 
Conjecture \ref{a-conj1.15} under the assumption that 
any sequence of klt flips 
terminates. 

\begin{thm}[see Theorem \ref{n-thm14.2}]\label{a-thm1.16} 
Assume that any sequence of klt flips terminates after 
finitely many steps. 
Then Conjecture \ref{a-conj1.15} holds true. 
\end{thm}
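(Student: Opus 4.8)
The plan is to reduce Conjecture~\ref{a-conj1.15} to a statement about normal pairs to which higher-dimensional minimal model theory applies, and then to extract the curve by combining a relative MMP—whose termination is exactly the hypothesis we are granting—with a bend-and-break argument anchored at the non-qlc center. First I would localize around $P$. Since $-\omega$ is $\pi$-ample and $\pi\colon \Nqklt(X,\omega)\to \pi(\Nqklt(X,\omega))$ is finite, the locus $\Nqklt(X,\omega)\cap \pi^{-1}(P)$ is finite, so $C^\dag$ meets it in finitely many points; I fix one such point $Q$ lying on a qlc center $W$ that is minimal among those meeting $C^\dag$ at $Q$. By adjunction for quasi-log schemes, $W':=W\cup \Nqlc(X,\omega)$ with $\omega|_{W'}$ is again a quasi-log scheme, and after replacing $[X,\omega]$ by $[W',\omega|_{W'}]$ I may assume $W$ is the generic qlc stratum through $Q$. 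Theorem~\ref{a-thm1.9} lets me pass to the normalization and reduce to the case where $X$ is a normal quasi-projective variety, the ideal-sheaf identity there preserving $\Nqklt$. Applying Theorem~\ref{a-thm1.10} with a sufficiently small ample $H$ (small enough that $-(\omega+H)$ stays $\pi$-ample) produces an effective $\Delta$ with $K_X+\Delta\sim_{\mathbb R}\omega+H$ and $\Nklt(X,\Delta)=\Nqklt(X,\omega)$ set-theoretically. The problem thus becomes: for a normal pair $(X,\Delta)$ with $-(K_X+\Delta)$ relatively ample, $\Nklt(X,\Delta)$ finite over $S$, and a curve through $Q\in \Nklt(X,\Delta)\cap \pi^{-1}(P)$, find $f\colon \mathbb A^1\to (X\setminus \Nklt(X,\Delta))\cap \pi^{-1}(P)$ whose closure meets $\Nklt(X,\Delta)$ with $0<-(K_X+\Delta)\cdot C\le 1$.

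The mere existence of some non-constant $\mathbb A^1\to X\setminus \Nklt(X,\Delta)$ over $P$ is already furnished by Theorem~\ref{a-thm1.8}: $K_X+\Delta$ is anti-ample hence not nef over $S$, while it is trivially nef over $S$ on the finite, vertically curve-free locus $\Nklt(X,\Delta)$. The two genuinely new demands are (i) that the closure $C$ meet $\Nklt(X,\Delta)$ and (ii) that the length drop from $2\dim X$ to $1$. For (i) I would run a relative MMP for a klt modification of the pair in which the coefficients are lowered so as to keep the curve tied to the center while destroying the bigness of the boundary; here \cite{bchm} no longer guarantees termination, and this is precisely where the assumption that every sequence of klt flips terminates enters. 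Termination produces a model carrying a Mori fiber structure, or an extremal contraction of the ray containing $[C^\dag]$, exhibiting a covering family of rational curves through $Q$; a general member, pulled back, is an honest rational curve meeting $\Nklt(X,\Delta)$ only along its closure, so its open part maps into the klt locus. For (ii) I would anchor a bend-and-break at $Q$ and exploit that $C$ crosses the coefficient-one boundary (equivalently the minimal non-klt center): the boundary contribution $\Delta\cdot C$ absorbs the excess anticanonical degree, collapsing $-(K_X+\Delta)\cdot C$ to at most $1$, the bound being sharp already for $(\mathbb P^1,\mathrm{pt})$. Finally I transport $C$ back through the normalization of Theorem~\ref{a-thm1.9} and the subadjunction of Theorem~\ref{a-thm1.10}, checking that $0<-\omega\cdot C\le 1$ and $C\cap \Nqklt(X,\omega)\ne\emptyset$ survive.

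The main obstacle is step (ii): manufacturing a single rational curve that simultaneously meets the non-qlc center and has anticanonical degree at most $1$. Rational chain connectedness modulo $\Nqlc(X,\omega)$ (in the spirit of Theorem~\ref{a-thm1.14}) yields a chain reaching the center but gives no control on degree, while bend-and-break alone caps $-K_X\cdot C$ only by a dimension-dependent constant; neither delivers the sharp bound on its own. Reconciling them—extracting from the chain a \emph{minimal} component through $Q$ whose length is governed solely by the coefficient-one boundary it crosses, as in \cite[Theorem 3.1]{lz}—is the crux, and it is exactly the termination of klt flips that allows the relative MMP to isolate such a minimal extremal rational curve rather than an uncontrolled chain.
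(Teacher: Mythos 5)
Your overall reduction strategy (adjunction to a stratum, normalization via Theorem \ref{a-thm1.9}, conversion to a normal pair via Theorem \ref{a-thm1.10}, then a relative MMP whose termination is the granted hypothesis) matches the paper's skeleton, but the proposal leaves the two decisive steps unproved, and you in fact flag one of them yourself as "the crux."

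First, you never supply a mechanism forcing the output curve to lie in $\pi^{-1}(P)$. Theorem \ref{a-thm1.8} (and its sharpened variant) only produces a curve contracted to \emph{some} point of $S$. The paper resolves this with Lemma \ref{n-lem14.5}: one perturbs $\omega$ by $\pi^*B$, where $B$ is a carefully scaled sum of general very ample divisors on $S$ through $P$, so that the new quasi-log structure $[X,\omega+\pi^*B]$ acquires a positive-dimensional qlc center $W$ with $\pi(W)=P$ while $\Nqlc(X,\omega+\pi^*B)$ stays finite over $S$ and $\nu^{-1}(\Nqklt(X,\omega)\cap\pi^{-1}(P))\subset \Nqklt(W^\nu,\nu^*\omega)$. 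Restricting to $W^\nu$, which is a quasi-log Fano scheme, makes the fiber condition automatic. Your choice of "a minimal qlc center through $Q$" does not do this job: that center may well be zero-dimensional (indeed $\Nqklt(X,\omega)\cap\pi^{-1}(P)$ is finite), and without the $\pi^*B$-perturbation there is no positive-dimensional stratum over $P$ to restrict to.

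Second, the bound $0<-\omega\cdot C\leq 1$ is not obtained by "bend-and-break anchored at $Q$" with the boundary "absorbing the excess degree"; bend-and-break only yields the dimension-dependent bound, as you note. In the paper the bound comes from a different source: after the (klt-)terminating MMP one lands either on a Fano contraction or on the first step that is not an isomorphism near $\Nklt$; in either case the contraction $\varphi$ has one-dimensional fibers, $R^1\varphi_*\mathcal O_X=0$ forces fiber curves $C\simeq\mathbb P^1$, connectedness of $\Supp\Delta^{\geq 1}$ near fibers makes $C\cap\Supp\Delta^{\geq1}$ a single point, and then Lemma \ref{p-lem8.3} together with Corollary \ref{p-cor8.2} --- subadjunction onto the curve, $(K_X+\Delta)|_C\sim_{\mathbb R}K_C+\Delta_C$ with $\Delta_C$ effective and carrying a point of coefficient $\geq 1$ --- gives $-(K_X+\Delta)\cdot C=2-\deg\Delta_C\leq 1$. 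This step rests on Theorem \ref{a-thm1.10}, hence on the Hodge-theoretic results for basic slc-trivial fibrations, and is not something termination of klt flips can supply; termination only gets you to the final model where the subadjunction argument applies. As written, the proposal identifies the right ingredients in outline but does not close either gap.
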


For the precise statement of Theorem \ref{a-thm1.16}, see Theorem 
\ref{n-thm14.2}. 
In a joint paper with Kenta Hashizume (see \cite{fujino-hashizume1}), 
we will prove the following theorem, 
which is a very special case of Conjecture \ref{a-conj1.15}, by using 
some deep results in the theory of minimal models for log canonical 
pairs obtained in \cite{hashizume2}.  

\begin{thm}[{see \cite[Theorem1.7]{fujino-hashizume1}}]\label{a-thm1.17} 
Let $X$ be a normal variety and let $\Delta$ be an effective 
$\mathbb R$-divisor on $X$ such that 
$K_X+\Delta$ is $\mathbb R$-Cartier. 
Let $\pi\colon X\to S$ be a projective morphism 
onto a scheme $S$ such that 
$-(K_X+\Delta)$ is $\pi$-ample. 
We assume 
that 
\begin{equation*} 
\pi\colon \Nklt(X, \Delta)\to \pi(\Nklt(X, \Delta))
\end{equation*}  
is finite. Let $P$ be a closed point of $S$ 
such that there exists a curve 
$C^\dag \subset \pi^{-1}(P)$ with 
$\Nklt(X, \Delta)\cap C^\dag\ne \emptyset$. 
Then there exists a non-constant morphism 
\begin{equation*} 
f\colon  \mathbb A^1\longrightarrow 
\left(X\setminus \Nklt(X, \Delta)\right)\cap \pi^{-1}(P)
\end{equation*}  
such 
that 
the curve $C$, the closure of $f(\mathbb A^1)$ in $X$, 
is a {\em{(}}possibly singular{\em{)}} rational curve 
satisfying  
$C\cap \Nklt(X, \Delta)\ne \emptyset$ with  
\begin{equation*} 
0<-(K_X+\Delta)\cdot C\leq 1. 
\end{equation*} 
\end{thm}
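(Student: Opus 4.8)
The plan is to produce the required curve by combining Mori's bend-and-break with the minimal model theory for log canonical pairs, using the finiteness of $\Nklt(X,\Delta)\to S$ to \emph{anchor} the curve at the non-klt locus and thereby sharpen the usual length estimate to $\leq 1$. Since the assertion concerns a single fiber $\pi^{-1}(P)$ and is local on the base, I would first shrink $S$ around $P$, so that $\pi(\Nklt(X,\Delta))$ is concentrated near the point $\pi(Q)$, where $Q\in \Nklt(X,\Delta)\cap C^\dag\cap\pi^{-1}(P)$ is a fixed closed point. The target is then a rational curve $C\subset \pi^{-1}(P)$ whose normalization $\nu\colon \mathbb P^1\to C$ meets $\nu^{-1}(\Nklt(X,\Delta))$ in a single point (so that $\mathbb A^1=\mathbb P^1\setminus\nu^{-1}(\Nklt(X,\Delta))$ maps into the open part) and which satisfies $0<-(K_X+\Delta)\cdot C\leq 1$.

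First I would establish the \emph{existence} of rational curves in the fiber that touch the non-klt locus. Because $-(K_X+\Delta)$ is $\pi$-ample and $\Nklt(X,\Delta)\to S$ is finite, the fiber $\pi^{-1}(P)$ is rationally chain connected modulo $\pi^{-1}(P)\cap\Nklt(X,\Delta)$ by the rational chain connectedness results (Theorem \ref{a-thm1.14}, \cite{hacon-mckernan}, \cite{bp}); feeding in the seed curve $C^\dag$, which already meets $\Nklt(X,\Delta)$, yields at least one rational curve in $\pi^{-1}(P)$ whose closure meets $\Nklt(X,\Delta)$. This gives a curve but not the length bound, so next I would pass to a dlt model $g\colon (Y,\Gamma)\to(X,\Delta)$, on which the non-klt locus is the reduced boundary $\lfloor\Gamma\rfloor$, and apply Mori's bend-and-break to a family of rational curves through a point of $Y$ lying over $Q$.

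The technical heart is the bound $\leq 1$, and this is where the deep minimal model theory enters. I would run a suitable relative $(K_Y+\Gamma)$-MMP over $S$, whose termination and output in the log canonical (non-klt) category are supplied by \cite{hashizume2} rather than by \cite{bchm}, until I reach a Mori fiber space structure $Y'\to T$ over $S$ for which a fiber rational curve $\ell'$ meets the birational transform of $\lfloor\Gamma\rfloor$. Since $\lfloor\Gamma\rfloor$ has coefficient exactly $1$ and since the finiteness of $\Nklt(X,\Delta)\to S$ prevents it from being contracted, its intersection with $\ell'$ is at least one point; adjunction along $\lfloor\Gamma\rfloor$ together with the relative ampleness then forces $0<-(K_{Y'}+\Gamma')\cdot\ell'\leq 1$. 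Pushing $\ell'$ forward through $g$ and the MMP produces a rational curve $C$ on $X$ with $C\subset\pi^{-1}(P)$, $C\cap\Nklt(X,\Delta)\neq\emptyset$, and $0<-(K_X+\Delta)\cdot C\leq 1$, which is exactly the desired $f\colon\mathbb A^1\to(X\setminus\Nklt(X,\Delta))\cap\pi^{-1}(P)$ after deleting the preimage of the non-klt point.

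\textbf{The main obstacle} is twofold. First, one must guarantee termination of the relevant MMP for log canonical pairs and control the behavior of $\lfloor\Gamma\rfloor$ under it; this is precisely the role of the deep results of \cite{hashizume2}, since the coarse estimate of Theorem \ref{a-thm1.8} only yields length $\leq 2\dim X$ and produces curves \emph{away} from $\Nklt(X,\Delta)$, so it cannot deliver the sharp statement. Second, one must arrange the bend-and-break and the fiber-space reduction so that the limiting rational curve meets the non-klt locus in a single point and the anchored component realizes the sharp value $1$; the $\leq 1$ bound is genuinely sharp and must be extracted from the coefficient-$1$ structure of the non-klt locus under adjunction on the Mori fiber space, not from any intersection-theoretic estimate on $X$ alone.
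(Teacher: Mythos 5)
First, a caveat: the paper does not actually prove Theorem \ref{a-thm1.17}; it is explicitly deferred to the joint paper \cite{fujino-hashizume1}, and what the present paper proves are the close analogues Theorem \ref{n-thm14.3} (assuming termination of klt flips) and Theorem \ref{n-thm14.6} (for dlt pairs, using \cite{hashizume}). Your overall skeleton --- dlt blow-up via Lemma \ref{c-lem3.10}, a relative MMP over $S$ whose termination is the deep input, ending either at a Mori fiber space or at a step that is not an isomorphism near the non-klt locus, then pushing the curve back with the negativity lemma --- does match the strategy of those theorems. But there is a genuine gap at the point you yourself call the technical heart: the bound $-(K_X+\Delta)\cdot C\leq 1$. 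You propose to get it from ``adjunction along $\lfloor\Gamma\rfloor$ together with the relative ampleness'' on the Mori fiber space. Adjunction along the boundary divisor computes $(K_{Y'}+\Gamma')|_{\lfloor\Gamma'\rfloor}$ and gives no control over the degree of $-(K_{Y'}+\Gamma')$ on a fiber curve $\ell'$ that merely meets $\lfloor\Gamma'\rfloor$ in a point. The mechanism the paper actually uses (Lemma \ref{p-lem8.3} via Corollary \ref{p-cor8.2} and Theorem \ref{a-thm1.10}) is quite different: one first shows $C\simeq\mathbb P^1$ from $R^1\varphi_*\mathcal O=0$ and the one-dimensionality of the fibers, then makes $C$ itself a log canonical center of an auxiliary pair $(X,\Delta+\varphi^*B)$ by the tie-breaking trick, and applies \emph{subadjunction to the curve} --- which rests on the basic slc-trivial fibration machinery and variations of mixed Hodge structure --- to write $(K_X+\Delta)|_C\sim_{\mathbb R}K_C+\Delta_C$ with $\Delta_C$ effective and $\Delta_C^{\geq 1}\neq\emptyset$, whence $-(K_X+\Delta)\cdot C=2-\deg\Delta_C\leq 1$. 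Without this step your argument only yields the coarse bound $2\dim X$.

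Two further points. The statement requires $C\subset\pi^{-1}(P)$, not merely that $C$ is contracted over $S$; running an MMP over $S$ and taking a Mori fiber space does not place the resulting curve in the fiber over $P$. The paper's route to this (in the proof of Theorem \ref{n-thm14.2}, via Lemma \ref{n-lem14.5}) is to add $\pi^*B$ for $B$ a sum of general very ample divisors on $S$ through $P$, creating a positive-dimensional qlc center inside $\pi^{-1}(P)$ to which one restricts before producing the curve; your proposal has no analogue of this reduction. Finally, the rational chain connectedness results and bend-and-break play no role in the paper's argument for this statement --- the existence of the curve comes for free from the one-dimensional fibers of the Fano contraction at the end of the MMP --- so that portion of your plan is extraneous rather than wrong.
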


Although Theorem \ref{a-thm1.17} looks very similar to 
Theorem \ref{a-thm1.8}, the proof of Theorem \ref{a-thm1.17} is 
much harder. 
By using Theorem \ref{a-thm1.17}, 
we will establish:  

\begin{thm}[{see \cite[Theorem 1.8]{fujino-hashizume1}}]\label{a-thm1.18} 
Conjecture \ref{a-conj1.15} holds true. 
\end{thm}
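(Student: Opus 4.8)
The plan is to deduce Conjecture \ref{a-conj1.15} from its normal-pair incarnation, Theorem \ref{a-thm1.17}, by the same reduction strategy already used for Theorem \ref{a-thm1.6} (iii), now keeping careful track of the sharper numerical bound $-\omega\cdot C\leq 1$ and of the requirement that $C$ meet $\Nqklt(X,\omega)$. First I would pass to an affine neighborhood of $P$ in $S$, since the assertion only concerns $\pi^{-1}(P)$. Because $\pi|_{\Nqklt(X,\omega)}$ is finite, the set $\Nqklt(X,\omega)\cap \pi^{-1}(P)$ is finite, so the given curve $C^\dagger$ meets but is not contained in $\Nqklt(X,\omega)$. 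Among all qlc strata $W$ of $[X,\omega]$ for which $W\cap \pi^{-1}(P)$ contains a curve meeting $\Nqklt(X,\omega)$, I would choose one minimal with respect to inclusion. Using adjunction for quasi-log schemes I would replace $[X,\omega]$ by the induced quasi-log structure on $W\cup \Nqlc(X,\omega)$; after this replacement every proper qlc center of $W$ together with the non-qlc locus is finite over $S$, the class $-\omega$ stays $\pi$-ample, and $W$ is the (irreducible) top stratum.

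Next I would make $W$ normal and convert the quasi-log structure into an honest pair. Applying Theorem \ref{a-thm1.9} to the irreducible quasi-log scheme with underlying space $W$, I normalize $\nu\colon Z\to W$ and obtain a quasi-log structure on $Z$ with $\Nqklt(Z,\nu^*\omega)=\nu^{-1}\Nqklt(W,\omega|_W)$; here $-\nu^*\omega$ remains ample over $S$ and $\pi\circ\nu$ is still finite on $\Nqklt(Z,\nu^*\omega)$. Then, for a small ample $\mathbb R$-divisor $H$ on $Z$, Theorem \ref{a-thm1.10} yields an effective $\mathbb R$-divisor $\Delta$ with $K_Z+\Delta\sim_{\mathbb R}\nu^*\omega+H$ and $\Nklt(Z,\Delta)=\Nqklt(Z,\nu^*\omega)$. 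Since the relative ample cone is open, $-(K_Z+\Delta)\sim_{\mathbb R}-\nu^*\omega-H$ stays ample over $S$ for $H$ sufficiently small, and the $\nu$-preimage of $C^\dagger$ furnishes a curve in the fiber over $P$ meeting $\Nklt(Z,\Delta)$. Theorem \ref{a-thm1.17} then produces, for each such $H$, a non-constant $\mathbb A^1\to Z\setminus \Nklt(Z,\Delta)$ whose closure is a rational curve $C$ meeting $\Nklt(Z,\Delta)$ with $0<-(K_Z+\Delta)\cdot C\leq 1$.

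The remaining, genuinely delicate, step is to remove $H$ while preserving the sharp bound. Because $H$ is strictly positive, one only gets $-\nu^*\omega\cdot C=-(K_Z+\Delta)\cdot C+H\cdot C$, which a priori exceeds $1$. I would therefore take a sequence $H=H_n\to 0$ and let $C_n$ be the resulting rational curves; each meets the finite set $\Nqklt(Z,\nu^*\omega)\cap(\pi\circ\nu)^{-1}(P)$, so after passing to a subsequence all $C_n$ pass through one fixed point $Q$. Dominating $H_n$ numerically by $\varepsilon_n(-\nu^*\omega)$ with $\varepsilon_n\to 0$ gives $-\nu^*\omega\cdot C_n\leq (1-\varepsilon_n)^{-1}$, which bounds the $(-\nu^*\omega)$-degrees, so the $C_n$ lie in a bounded family of rational curves through $Q$ in the projective fiber $(\pi\circ\nu)^{-1}(P)$. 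Extracting a limit and applying a bend-and-break/connectedness argument, I would isolate an irreducible rational curve through $Q$ with $0<-\nu^*\omega\cdot C\leq 1$ meeting $\Nqklt(Z,\nu^*\omega)$ in a single point; it is automatically not contained in that set, which is finite over the base. Pushing this curve down by $\nu$ and by the inclusion $W\hookrightarrow X$, the projection formula keeps $-\omega\cdot C\leq 1$, and removing the preimage of the incidence point yields the desired $f\colon \mathbb A^1\to (X\setminus \Nqklt(X,\omega))\cap\pi^{-1}(P)$.

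The main obstacle is precisely this last limiting argument: Theorem \ref{a-thm1.10} unavoidably introduces a positive perturbation $H$, and recovering the exact length bound $-\omega\cdot C\leq 1$ requires both the boundedness of the perturbed rational curves through a fixed point of $\Nqklt(X,\omega)$ and a bend-and-break argument guaranteeing that the limit cycle still contains an honest rational curve meeting $\Nqklt(X,\omega)$ in one point, rather than degenerating to a constant or escaping into the non-qlc locus. Keeping the finiteness of $\pi|_{\Nqklt}$ and the incidence with $\Nqklt$ intact through the adjunction and normalization reductions is routine by comparison, the real content having been absorbed into Theorem \ref{a-thm1.17}.
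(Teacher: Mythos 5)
First, a point of reference: this paper does not actually prove Theorem \ref{a-thm1.18}; it is quoted from the forthcoming joint paper \cite{fujino-hashizume1}, and the present paper only proves the conditional version, Theorem \ref{n-thm14.2}, under the termination hypothesis. The closest thing to ``the paper's proof'' is therefore the proof of Theorem \ref{n-thm14.2} in Section \ref{n-sec14}, and your reduction scheme (adjunction to a suitable stratum, Lemma \ref{d-lem4.20}, normalization via Theorem \ref{a-thm1.9}, conversion to a normal pair via Theorem \ref{a-thm1.10}, then the normal-pair statement Theorem \ref{a-thm1.17} in place of Theorem \ref{n-thm14.3}) is indeed the intended route. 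Two caveats on the reduction itself: the paper does not simply pick a minimal qlc stratum of $[X,\omega]$; when $\dim S>0$ it first perturbs by $\pi^*B$ for general very ample divisors $B_i$ through $P$ (Lemma \ref{n-lem14.5}) so as to manufacture a positive-dimensional qlc center $W$ with $\pi(W)=P$, and then verifies the nontrivial facts that $\Nqklt(W^\nu,\nu^*\omega)$ is a single point and contains $\nu^{-1}(\Nqklt(X,\omega)\cap\pi^{-1}(P))$. Your ``choose a minimal stratum whose fiber contains a curve meeting $\Nqklt$'' skips this step, and without it the finiteness of the new non-klt locus over the base and the persistence of a test curve are not automatic.

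The genuine gap is in your final limiting step. After Theorem \ref{a-thm1.10} introduces the perturbation $H_n$, you propose to bound the curves $C_n$, extract a limit cycle, and run bend-and-break to recover an irreducible rational curve meeting $\Nqklt$ with $-\omega\cdot C\leq 1$. This will not work as stated: a limit of the $C_n$ in a bounded family is a priori a connected chain of rational curves, and there is no reason an individual irreducible component of that chain still meets the (finite) set $\Nqklt(Z,\nu^*\omega)\cap(\pi\circ\nu)^{-1}(P)$, nor that it inherits the degree bound; worse, bend-and-break only ever yields bounds of the shape $\leq 2\dim$, not $\leq 1$, so it cannot be the tool that preserves the sharp estimate. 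The paper's device (see the proof of Theorem \ref{i-thm9.2}, reused in Theorem \ref{n-thm14.4}) is different and simpler: the intersection numbers $A\cdot C_n$ with a fixed ample Cartier divisor are bounded positive integers, so the $C_n$ lie in a bounded family, and one passes to a subsequence on which $f_n$ and $C_n$ are \emph{constant}; the degree computation $-\nu^*\omega\cdot C=\lim_n-(\nu^*\omega+H_n)\cdot C\leq 1$ is then exact for that single fixed curve, and the incidence with $\Nqklt$ is preserved because it holds for every term of the subsequence. Replacing your limit-cycle/bend-and-break paragraph by this constancy argument closes the gap; the remaining content is, as you say, entirely absorbed into Theorem \ref{a-thm1.17}, whose proof lies outside this paper.
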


As an application of Theorem \ref{a-thm1.18}, we will prove the 
following statement in \cite{fujino-hashizume1}, which supplements 
Theorem \ref{a-thm1.6} (iii). 

\begin{thm}[{see \cite[Theorem 1.9]{fujino-hashizume1}}]\label{a-thm1.19} 
Let $[X, \omega]$ be a quasi-log scheme and let 
$\pi\colon X\to S$ be a projective 
morphism between schemes. 
Let $R_j$ be an $\omega$-negative extremal ray of $\NE(X/S)$ 
that are rational and relatively ample at infinity and 
let $\varphi_{R_j}$ be the contraction morphism associated to $R_j$. 
Let $U_j$ be any open qlc 
stratum of $[X, \omega]$ such that 
$\varphi_{R_j}\colon \overline {U_j}\to \varphi_{R_j}(\overline {U_j})$ 
is not finite and that $\varphi_{R_j}\colon W^\dag\to \varphi_{R_j}
(W^\dag)$ is finite for every qlc center $W^\dag$ 
of $[X, \omega]$ with $W^\dag 
\subsetneq \overline {U_j}$, where 
$\overline {U_j}$ is the closure of $U_j$ in $X$. 
Let $P$ be a closed point of $\varphi_{R_j}(U_j)$. 
If there exists a curve $C^\dag$ such that $\varphi_{R_j}(C^\dag)=P$, 
$C^\dag\not \subset U_j$,  
and $C^\dag\subset \overline {U_j}$, 
then there exists a non-constant 
morphism 
\begin{equation*} 
f_j\colon \mathbb A^1\longrightarrow U_j\cap \varphi^{-1}_{R_j}(P)
\end{equation*} 
such that 
$C_j$, the closure of $f_j(\mathbb A^1)$ in $X$, 
spans $R_j$ in $N_1(X/S)$ and satisfies 
$C_j\not\subset U_j$ with  
\begin{equation*} 
0<-\omega\cdot C_j\leq 1.
\end{equation*}  
\end{thm}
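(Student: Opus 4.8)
The plan is to deduce Theorem \ref{a-thm1.19} from Theorem \ref{a-thm1.18} (equivalently, from the now-settled Conjecture \ref{a-conj1.15}) by reducing to the single open qlc stratum $U_j$ via adjunction, exactly as in the proof strategy sketched for Theorem \ref{a-thm1.6} (iii). First I would set $W:=\overline{U_j}$, the closure of the prescribed open qlc stratum, and form $W':=W\cup \Nqlc(X,\omega)$. By adjunction for quasi-log schemes, $[W', \omega|_{W'}]$ naturally carries a quasi-log structure whose qlc strata contained in $W$ are precisely the qlc centers of $[X,\omega]$ contained in $W$, and whose non-qlc locus is $W\cap \Nqlc(X,\omega)$. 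The hypotheses on $\varphi_{R_j}$ say exactly that $\varphi_{R_j}\colon W\to \varphi_{R_j}(W)$ is not finite while $\varphi_{R_j}\colon W^\dag\to \varphi_{R_j}(W^\dag)$ is finite for every qlc center $W^\dag\subsetneq W$. Thus on $[W', \omega|_{W'}]$ the map $\varphi_{R_j}$ restricted to $\Nqklt(W', \omega|_{W'})$ is finite: the non-qklt locus of $[W',\omega|_{W'}]$ consists of $\Nqlc$ together with the proper qlc centers, and on each of these $\varphi_{R_j}$ is finite by assumption.

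Next I would set up the relative quasi-log Fano situation needed to invoke Conjecture \ref{a-conj1.15}. Let $\varphi:=\varphi_{R_j}$ be the extremal contraction. Since $R_j$ is $\omega$-negative and spanned by curves contracted by $\varphi$, we have that $-\omega$ is $\varphi$-ample, hence $-\omega|_{W'}$ is ample relative to $\varphi|_{W'}\colon W'\to \varphi(W')$. Restricting the target to a neighborhood of $P$ and shrinking, I would take $\pi$ in the statement of Conjecture \ref{a-conj1.15} to be this restricted contraction $\varphi|_{W'}$, with $[W',\omega|_{W'}]$ playing the role of the quasi-log scheme and with $P\in \varphi(U_j)$ the chosen closed point. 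The finiteness hypothesis $\pi\colon \Nqklt\to \pi(\Nqklt)$ finite is verified by the previous paragraph, and the existence of the curve $C^\dag$ with $\varphi(C^\dag)=P$, $C^\dag\subset \overline{U_j}=W$, $C^\dag\not\subset U_j$ guarantees that $C^\dag$ meets $\Nqklt(W',\omega|_{W'})$: indeed $C^\dag\not\subset U_j$ forces $C^\dag$ to meet the union of proper qlc centers and $\Nqlc$ within $W$, which is exactly $\Nqklt(W',\omega|_{W'})$.

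Applying Theorem \ref{a-thm1.18} to this data produces a non-constant morphism $f_j\colon \mathbb A^1\to \left(W'\setminus \Nqklt(W',\omega|_{W'})\right)\cap \varphi^{-1}(P)$ whose closure $C_j$ meets $\Nqklt(W',\omega|_{W'})$ and satisfies $0<-\omega\cdot C_j\leq 1$. The open set $W'\setminus \Nqklt(W',\omega|_{W'})$ is precisely the open qlc stratum $U_j$ of $[X,\omega]$ associated to $W$, so $f_j$ maps $\mathbb A^1$ into $U_j\cap \varphi^{-1}_{R_j}(P)$ as required; and $C_j\cap \Nqklt(W',\omega|_{W'})\ne\emptyset$ together with $C_j\subset W=\overline{U_j}$ gives $C_j\not\subset U_j$. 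Finally, since $\varphi(C_j)=P$ is a point, $C_j$ is contracted by the extremal contraction and therefore its class lies in $R_j$; being a nonzero effective class in the ray, it spans $R_j$ in $N_1(X/S)$.

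The main obstacle I anticipate is the careful bookkeeping of the adjunction step and the identification of loci. One must verify that the quasi-log structure on $W'$ furnished by adjunction has its $\Nqklt$ locus matching the union of $\Nqlc(X,\omega)$ with the proper qlc centers inside $W$, so that the finiteness of $\varphi_{R_j}$ on proper qlc centers translates into the finiteness hypothesis of Conjecture \ref{a-conj1.15}, and that the open qlc stratum on $[W',\omega|_{W'}]$ coincides with $U_j$. A secondary point requiring care is confirming that the curve $C^\dag$ genuinely meets $\Nqklt(W',\omega|_{W'})$ rather than merely failing to lie inside $U_j$ for some spurious reason; here one uses that $W\setminus U_j$ is set-theoretically the union of the proper qlc centers and the non-qlc part, which is exactly $\Nqklt(W',\omega|_{W'})$. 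Once these identifications are in place, the conclusion is a direct transcription of the output of Theorem \ref{a-thm1.18}.
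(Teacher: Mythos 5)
The paper itself does not prove Theorem \ref{a-thm1.19}: it explicitly defers the proof to the forthcoming joint paper \cite{fujino-hashizume1}, stating only that the result will be obtained there ``as an application of Theorem \ref{a-thm1.18},'' and it proves in this paper only the conditional version (Theorem \ref{n-thm14.2}) and the weaker dlt analogue (Theorem \ref{a-thm1.20}). Your proposal carries out exactly the intended deduction, and it is correct: the reduction to $W'=\overline{U_j}\cup \Nqlc(X,\omega)$ by adjunction is the same device the paper uses in Step \ref{j-step1.6.2} of the proof of Theorem \ref{a-thm1.6} and in the proof of Theorem \ref{n-thm14.2}; the identifications $\Nqklt(W',\omega|_{W'})=\Nqlc(X,\omega)\cup\bigcup_{W^\dag\subsetneq \overline{U_j}}W^\dag$ and $W'\setminus\Nqklt(W',\omega|_{W'})=U_j$ are what make the hypotheses and conclusion of Conjecture \ref{a-conj1.15} translate into those of Theorem \ref{a-thm1.19}, and you verify both. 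Two small imprecisions, neither of which affects the argument: by Theorem \ref{d-thm4.6} (i) the non-qlc locus of $[W',\omega|_{W'}]$ is all of $\Nqlc(X,\omega)$ (not just $W\cap\Nqlc(X,\omega)$), and the finiteness of $\varphi_{R_j}$ on that locus comes from $R_j$ being relatively ample at infinity rather than from the hypothesis on proper qlc centers. With those noted, the only content of the theorem not supplied by your argument is Theorem \ref{a-thm1.18} itself, which is precisely the dependency the paper intends.
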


We note that Theorem \ref{a-thm1.19} is 
a generalization of \cite[Theorem 3.1]{lz}. 
In this paper, we prove the following simpler statement for 
dlt pairs for the 
reader's convenience since 
Theorems \ref{a-thm1.17}, \ref{a-thm1.18}, and 
\ref{a-thm1.19} are difficult. Theorem \ref{a-thm1.20} is much 
weaker than Theorem \ref{a-thm1.19}. However, 
it contains a generalization of \cite[Theorem 3.1]{lz}.  

\begin{thm}\label{a-thm1.20} 
Let $(X, \Delta)$ be a dlt pair and let $\pi\colon X\to S$ be a projective 
morphism between schemes. Let $R_j$ be a $(K_X+\Delta)$-negative 
extremal ray of $\NE(X/S)$ and let 
$\varphi_{R_j}$ be the contraction morphism associated to $R_j$. 
Let $U_j$ be any open lc 
stratum of $(X, \Delta)$ such that 
$\varphi_{R_j}\colon \overline {U_j}\to \varphi_{R_j}(\overline {U_j})$ 
is not finite and that $\varphi_{R_j}\colon W^\dag\to \varphi_{R_j}
(W^\dag)$ is finite for every lc center $W^\dag$ of $(X, \Delta)$ with $W^\dag 
\subsetneq \overline {U_j}$, 
where $\overline {U_j}$ is the closure of $U_j$ in $X$. 
If there exists a curve $C^\dag$ such that $\varphi_{R_j}(C^\dag)$ is a 
point, 
$C^\dag\not \subset U_j$,  
and $C^\dag\subset \overline {U_j}$, 
then there exists a non-constant 
morphism 
\begin{equation*} 
f_j\colon \mathbb A^1\longrightarrow U_j
\end{equation*} 
such that 
$C_j$, the closure of $f_j(\mathbb A^1)$ in $X$, 
spans $R_j$ in $N_1(X/S)$ and satisfies 
$C_j\not\subset U_j$ with 
\begin{equation*} 
0<-\omega\cdot C_j\leq 1.
\end{equation*}  
\end{thm}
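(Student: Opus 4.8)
The plan is to reduce the statement to a relative, logarithmic bend‑and‑break on the closure $\overline{U_j}$, where the decisive input is a bend‑and‑break adapted to the boundary divisor that forces the length bound $1$ (the symbol $\omega$ in the displayed inequality should of course be read as $K_X+\Delta$). First I would pass from $(X,\Delta)$ to the lc center $\overline{U_j}$. Since $(X,\Delta)$ is dlt, $\overline{U_j}$ is normal, and by adjunction there is an effective $\mathbb R$‑divisor $\Delta_{Y}$ on $Y:=\overline{U_j}$ with $(Y,\Delta_Y)$ dlt and $(K_X+\Delta)|_Y=K_Y+\Delta_Y$. Writing $g:=\varphi_{R_j}|_Y\colon Y\to V$ with $V:=\varphi_{R_j}(\overline{U_j})$, the hypotheses translate into: $-(K_Y+\Delta_Y)$ is $g$‑ample, the non‑klt locus $\Nklt(Y,\Delta_Y)=\lfloor \Delta_Y\rfloor$ is finite over $V$, and the given curve $C^\dag$ is contracted by $g$ and meets $\lfloor\Delta_Y\rfloor$ without being contained in it. Because $\lfloor\Delta_Y\rfloor$ is finite over $V$, no curve contracted by $g$ can lie inside $\lfloor\Delta_Y\rfloor$; in particular the open lc stratum $U_j$ is exactly the klt locus $Y\setminus\lfloor\Delta_Y\rfloor$, and it suffices to produce a rational curve $C_j$ contracted by $g$, meeting $\lfloor\Delta_Y\rfloor$ transversally in a single point, with $0<-(K_Y+\Delta_Y)\cdot C_j\le 1$: removing that one point from $C_j$ gives the desired $f_j\colon\mathbb A^1\to U_j$, and $[C_j]$ automatically spans $R_j$ since every $g$‑contracted curve has class in the single ray $R_j$.

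Second, I would guarantee the existence of at least one rational curve in the relevant fiber that meets the boundary. Let $P=\varphi_{R_j}(C^\dag)$ and let $F$ be the irreducible component of $g^{-1}(P)$ containing $C^\dag$; then $F\cap\lfloor\Delta_Y\rfloor$ is a nonempty finite set. Since $-(K_Y+\Delta_Y)$ is $g$‑ample and $X_{-\infty}=\emptyset$ for a dlt pair, Theorem \ref{a-thm1.12} shows that $F$ is uniruled, and the rational chain connectedness modulo the non‑klt locus (\cite{hacon-mckernan}, \cite{bp}) applied to $F$ produces a chain of rational curves joining a general point of $F$ to a point of $F\cap\lfloor\Delta_Y\rfloor$. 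The last link of such a chain is a rational curve $\ell\subset F$ meeting $\lfloor\Delta_Y\rfloor$; it is not contained in $\lfloor\Delta_Y\rfloor$ by the finiteness above, and by moving the chain I would arrange that $\ell$ meets $\lfloor\Delta_Y\rfloor$ at a point lying in the log smooth (snc) locus of the dlt pair, whose complement has codimension at least two.

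The main step, and the genuine obstacle, is to cut the length of $\ell$ down to $1$. Here I would run a bend‑and‑break marked at the boundary point, using the logarithmic tangent sheaf $T_Y(-\log\lfloor\Delta_Y\rfloor)$ along the snc locus. Among all $g$‑contracted rational curves meeting $\lfloor\Delta_Y\rfloor$ transversally in a single snc point, choose one, still called $\ell$, with marked point $x\in\ell\cap\lfloor\Delta_Y\rfloor$, minimizing $-(K_Y+\Delta_Y)\cdot\ell=:d$. Suppose $d>1$. Since $\ell$ does not lie in $\Supp\{\Delta_Y\}$ we have $\{\Delta_Y\}\cdot\ell\ge 0$, so the logarithmic degree satisfies $-(K_Y+\lfloor\Delta_Y\rfloor)\cdot\ell\ge -(K_Y+\Delta_Y)\cdot\ell=d>1$. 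By log deformation theory the space of $\mathbb A^1$‑deformations of $\ell$ fixing $x$ is positive‑dimensional whenever $-(K_Y+\lfloor\Delta_Y\rfloor)\cdot\ell>1$, so $\ell$ moves, and by bend‑and‑break it degenerates to a connected union of $g$‑contracted rational curves, one component $\ell_1$ still passing through $x$ and meeting $\lfloor\Delta_Y\rfloor$ transversally there. Minimality forces $-(K_Y+\Delta_Y)\cdot\ell_1\ge d$, while every component of the degenerate curve is $(K_Y+\Delta_Y)$‑negative because $-(K_Y+\Delta_Y)$ is $g$‑ample on the fiber; summing the contributions of all components gives $d=-(K_Y+\Delta_Y)\cdot\ell\ge d+(\text{something positive})$, a contradiction. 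Hence $d\le 1$, and this minimal $\ell$ is the curve $C_j$.

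The hardest part to make rigorous is precisely this last logarithmic bend‑and‑break. Two points require care. First, $T_Y(-\log\lfloor\Delta_Y\rfloor)$ is only well behaved on the snc locus of the dlt pair, and individual divisors such as $K_Y$ and $\lfloor\Delta_Y\rfloor$ need not be $\mathbb Q$‑Cartier on a non‑$\mathbb Q$‑factorial $Y$, so I must keep the marked point $x$ — and ideally the whole degeneration — inside the snc locus and justify the deformation‑theoretic dimension estimate there; the codimension‑two bound for the non‑snc locus of a dlt pair is what makes this feasible, but controlling where the broken components travel is delicate. Second, I must ensure that the bend‑and‑break keeps the intersection with $\lfloor\Delta_Y\rfloor$ transverse and concentrated at the single marked point, so that the resulting $C_j$ is genuinely the closure of an $\mathbb A^1$ mapping into $U_j$; this is exactly the feature that distinguishes the logarithmic (length $1$) bend‑and‑break of \cite{lz} from the classical one, which would only yield the weaker bound $2\dim$ of Theorem \ref{a-thm1.8}.
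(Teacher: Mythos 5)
Your reduction to the lc centre $Y=\overline{U_j}$ via dlt adjunction, and the translation of the hypotheses into ``$-(K_Y+\Delta_Y)$ relatively ample, $\lfloor\Delta_Y\rfloor$ finite over the image, $C^\dag$ contracted and meeting the boundary'', is exactly the paper's first step (its proof of Theorem \ref{a-thm1.20} reduces to Theorem \ref{n-thm14.6} in precisely this way). After that the two arguments diverge completely, and your route has a gap at its decisive point.

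The gap is the ``logarithmic bend-and-break marked at the boundary point''. You assert that if $-(K_Y+\lfloor\Delta_Y\rfloor)\cdot\ell>1$ then $\ell$ moves in a positive-dimensional family of $\mathbb A^1$-deformations fixing $x$. No such statement is available in characteristic zero: the expected-dimension count for $\mathrm{Hom}$-schemes (even in its logarithmic form with $T_Y(-\log\lfloor\Delta_Y\rfloor)$) is only a lower bound after reduction modulo $p$ and Frobenius twisting, and even as an expected dimension it must exceed the dimension of the automorphism group of $(\mathbb P^1,x)$ before any actual breaking occurs; a threshold of $1$ is far too low. This is not a technical detail to be patched --- it is the entire content of the length-one bound, and the paper obtains it by a completely different mechanism. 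In the paper the bound $-(K_X+\Delta)\cdot C\le 1$ never comes from deformation theory: one first runs a $(K_Y+\Delta_Y)$-MMP over the base (after a dlt blow-up as in Lemma \ref{c-lem3.10}), using \cite{hashizume} to guarantee termination when $K_Y+\Delta_Y$ is relatively pseudo-effective; the MMP either ends at a Fano contraction or has a step that touches the non-klt locus, and in both cases one lands in the situation of Lemma \ref{p-lem8.3}, where $R^1\varphi_*\mathcal O_X=0$ and $\dim\varphi^{-1}(y)\le 1$ force $C\simeq\mathbb P^1$ and the connectedness of $\Supp\Delta^{\ge 1}$ near the fibre forces $C\cap\Supp\Delta^{\ge 1}$ to be a single point. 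The inequality itself is then subadjunction: by Corollary \ref{p-cor8.2} (which rests on Theorem \ref{a-thm1.10} and hence on basic slc-trivial fibrations and variations of mixed Hodge structure), $(K_X+\Delta)|_C\sim_{\mathbb R}K_C+\Delta_C$ with $\Delta_C$ effective and $\deg\Delta_C\ge 1$, so $-(K_X+\Delta)\cdot C=2-\deg\Delta_C\le 1$.

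Two smaller points. First, your intermediate curve $\ell$, produced as the last link of a rational chain from \cite{bp}, comes with no control on how it meets $\lfloor\Delta_Y\rfloor$: nothing forces a single transversal intersection point, nor that the preimage of $\ell\cap\lfloor\Delta_Y\rfloor$ in the normalization $\mathbb P^1\to\ell$ is one point, which is what you need for $\ell\setminus\lfloor\Delta_Y\rfloor$ to receive a non-constant map from $\mathbb A^1$. In the paper this single-point property is again a vanishing-theorem consequence ($R^1\varphi_*\mathcal O_X(-\lfloor\Delta'\rfloor)=0$ gives connectedness of the boundary in a neighbourhood of the fibre), not a transversality arrangement. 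Second, even granting a breaking step, your minimality argument needs the component through $x$ to retain the single transversal boundary intersection after degeneration, which is exactly the kind of control that bend-and-break does not provide. You correctly flagged this step as the hardest to make rigorous; as it stands it is not a proof but a restatement of what must be shown.
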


Although we need some deep results on the minimal model 
program for log canonical pairs in \cite{hashizume} 
in the proof of Theorem \ref{a-thm1.20}, 
the proof of Theorem \ref{a-thm1.20} is much simpler 
than that of Theorems \ref{a-thm1.17}, 
\ref{a-thm1.18} and \ref{a-thm1.19} 
in \cite{fujino-hashizume1} and 
will help the reader understand \cite{fujino-hashizume1}. 

Finally, we make a 
conjecture on lengths of extremal rational curves 
(see \cite[Remark-Question 10-3-6]{matsuki}). 

\begin{conj}\label{a-conj1.21} 
If $\varphi_{R_j}\colon  U_j \to \varphi_{R_j} (U_j)$ is 
proper in Theorem \ref{a-thm1.6} (iii), where $\varphi_{R_j}$ 
is the contraction 
morphism associated to $R_j$, then there exists a 
{\em{(}}possibly singular{\em{)}} rational curve 
$C_j\subset U_j$ which 
spans $R_j$ in $N_1(X/S)$ and satisfies 
\begin{equation*} 
0<-\omega\cdot C_j\leq d_j+1 
\end{equation*}  
with 
\begin{equation*} 
d_j=\min_E \dim E, 
\end{equation*}  
where $E$ runs over positive-dimensional 
irreducible components of $(\varphi_{R_j}|_{U_j})^{-1}(P)$ for all 
$P\in \varphi_{R_j}(U_j)$. 
\end{conj}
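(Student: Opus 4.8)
The plan is to re-run the reduction used for Theorem \ref{a-thm1.6} (iii), but to replace the bound $2\dim U_j$ coming from Theorem \ref{a-thm1.8} by the fiber-dimension-sharp bound $d_j+1$. One should read Conjecture \ref{a-conj1.21} as the sharp refinement of the estimate $0<-\omega\cdot\ell\le 2d$ of Theorem \ref{a-thm1.13}, with the extra requirement that the curve lie inside $U_j$; this containment is precisely what the properness of $\varphi_{R_j}|_{U_j}$ is meant to provide.

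First I would fix the contraction $\varphi:=\varphi_{R_j}\colon X\to W$ over $S$ and choose $U_j$ as in Remark \ref{a-rem1.7}, so that $\varphi$ is finite on every qlc center strictly contained in $\overline{U_j}$ but is not finite on $\overline{U_j}$ itself. By adjunction I would replace $[X,\omega]$ by the quasi-log scheme on $\overline{U_j}\cup\Nqlc(X,\omega)$, apply Theorem \ref{a-thm1.9} to reduce to the case where $X$ is a normal variety, and then apply Theorem \ref{a-thm1.10} with a sufficiently small ample $\mathbb R$-divisor $H$ to write $\omega+H\sim_{\mathbb R}K_X+\Delta$ for an effective $\Delta$ with $\Nklt(X,\Delta)=\Nqklt(X,\omega)$; the perturbation by $H$ is then removed in the limit exactly as in the proof of Theorem \ref{a-thm1.6}. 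The decisive point is that the properness of $\varphi|_{U_j}$ survives all of these reductions, so that for $P\in\varphi(U_j)$ the fiber $(\varphi|_{U_j})^{-1}(P)$ is proper and any bend-and-break carried out on it produces rational curves that do not escape the loci we have discarded.

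What remains is the classical problem of bounding the length of the $(K_X+\Delta)$-negative extremal ray by $d_j+1$. I would pick $P$ realizing the minimum $d_j$, let $E$ be a $d_j$-dimensional fiber component, and, as in the proof of Theorem \ref{a-thm1.12}, pass to the normalization $\nu\colon\overline E\to E$ where general curves satisfy $C\cdot K_{\overline E}<0$. Then $E$ is swept out by rational curves spanning $R_j$; fixing a general point and degenerating by Mori's bend-and-break, a family of such curves through that point covers the $d_j$-dimensional $E$, so the dimension count forces a member with $0<-(K_X+\Delta)\cdot C_j\le d_j+1$. Since $C_j$ lies in the fiber it is automatically contracted by $\varphi_{R_j}$ and hence spans $R_j$, and properness keeps it inside $U_j$.

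The hard part will be upgrading bend-and-break from the factor-of-two estimate to the sharp bound $d_j+1$ in the singular setting. In Theorems \ref{a-thm1.8} and \ref{a-thm1.12} the factor $2$ is forced by the Miyaoka--Mori covering-family argument, which only reads off $\dim X$ or $\dim E$; to reach $d_j+1$ one must instead control the dimension of the deformation space of $C_j$ relative to the single fiber component $E$ and guarantee that the breaking takes place along curves lying in that one $d_j$-dimensional component, i.e. one needs the length-of-extremal-ray form of bend-and-break rather than the uniruledness criterion. The genuine obstruction is that deformations of a rational curve meeting a proper qlc center or the non-qlc locus are hard to bound; this is exactly why the properness of $\varphi|_{U_j}$ and a careful choice of $P$ and $E$ achieving $d_j$ are indispensable, and confining the whole bend-and-break process to $U_j\cap\varphi^{-1}(P)$ while keeping the estimate sharp is, I expect, the crux of the matter.
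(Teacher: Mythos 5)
This statement is posed in the paper as an open conjecture: the paper offers no proof of it, and the only result in its direction is Remark \ref{a-rem1.22}, which observes that Theorem \ref{a-thm1.12} yields the weaker bound $0<-\omega\cdot C_j\leq 2d_j$ when $\varphi_{R_j}|_{U_j}$ is proper. Your proposal does not close this gap. The reductions you describe (adjunction to $\overline{U_j}\cup\Nqlc(X,\omega)$, normalization via Theorem \ref{a-thm1.9}, perturbation via Theorem \ref{a-thm1.10}, and the Miyaoka--Mori argument on $\overline E$) are exactly the machinery the paper already runs in the proofs of Theorems \ref{a-thm1.6} (iii) and \ref{a-thm1.12}, and that machinery is structurally limited to the factor-of-two estimate: the covering-family/bend-and-break argument of \cite{miyaoka-mori} bounds the degree by $2\dim \overline E\cdot \frac{-\nu^*\omega\cdot C}{-K_{\overline E}\cdot C}$, and nothing in the reductions improves the constant. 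Your assertion that ``the dimension count forces a member with $0<-(K_X+\Delta)\cdot C_j\le d_j+1$'' is precisely the unproven claim; you acknowledge this yourself in the final paragraph, so what you have written is a restatement of the difficulty rather than a proof.

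Two further points deserve attention. First, even in the classical smooth setting the bound $\dim X+1$ in Theorem \ref{a-thm1.1} is obtained by Mori's characteristic-$p$ bend-and-break applied directly to $K_X$, not by the Miyaoka--Mori uniruledness criterion; transplanting that sharper argument to the present setting fails because the reductions replace $\omega$ by $K_X+\Delta$ only up to an ample perturbation $H$ and an $\mathbb R$-linear equivalence, and because $\overline E$ is singular and the relevant divisor is not a canonical divisor of $\overline E$ but $K_{\overline E}+\Delta_{\overline E,H}$. Second, the claim that ``properness keeps it inside $U_j$'' is not automatic: the rational curves produced by bend-and-break sweep out the component $E$ and may a priori meet qlc centers or $\Nqklt$; controlling this intersection (so that the normalization of $C_j\cap U_j$ contains $\mathbb A^1$, or so that $C_j\subset U_j$ as the conjecture requires) is an additional unaddressed step. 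Since the paper itself leaves the statement open and only records the $2d_j$ bound, you should present this material as a discussion of the conjecture, not as a proof.
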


The following remark on Conjecture \ref{a-conj1.21} is 
obvious. 

\begin{rem}\label{a-rem1.22} 
We use the same notation as in Conjecture \ref{a-conj1.21}. 
If $\varphi_{R_j}\colon  U_j \to \varphi_{R_j}(U_j)$ is proper 
in Theorem \ref{a-thm1.6} (iii), 
we can make $C_j$ satisfy 
\begin{equation*} 
0<-\omega\cdot C_j \leq 2d_j
\end{equation*}  
by Theorem \ref{a-thm1.12}. 
\end{rem}

Of course, we hope that the following sharper estimate 
\begin{equation*} 
0<-\omega\cdot \ell \leq \dim E+1
\end{equation*}  
should hold true in Theorem \ref{a-thm1.12}. 

In \cite{fujino-hashizume2}, we will generalize the framework of 
basic slc-trivial fibrations for $\mathbb R$-divisors and 
establish adjunction and inversion of adjunction for 
log canonical centers of arbitrary codimension in full generality. 
We strongly recommend the interested reader to see 
\cite{fujino-hashizume1} 
and \cite{fujino-hashizume2} after reading this paper. 

\medskip 

We briefly look at the organization of this paper. 
In Section \ref{b-sec2}, 
we recall some basic definitions and results. 
Then we treat the notion of uniruledness, 
rational connectedness, and rational chain connectedness. 
In Section \ref{c-sec3}, we treat some basic definitions and 
results on normal pairs and 
then discuss dlt blow-ups for quasi-projective 
normal pairs. 
In Section \ref{d-sec4}, we briefly review the theory 
of quasi-log schemes and prepare some useful and 
important lemmas. 
In Section \ref{e-sec5}, we give a detailed proof of 
Theorem \ref{a-thm1.9}. Theorem \ref{a-thm1.9} 
plays a crucial role since a quasi-log scheme is 
not necessarily normal even when it is a variety. 
In Section \ref{f-sec6}, we quickly explain 
basic slc-trivial fibrations. 
The results in \cite{fujino-slc-trivial} 
make the theory of quasi-log schemes very powerful. 
In Section \ref{g-sec7}, we prove a very important 
result on normal quasi-log schemes, which is a 
slight generalization of \cite[Theorem 1.7]{fujino-slc-trivial}. 
In Section \ref{p-sec8}, we prove Theorem \ref{a-thm1.10} 
by using the result explained in Section \ref{g-sec7}. 
Hence Theorem \ref{a-thm1.10} heavily depends on 
some deep results on the theory of variations of mixed Hodge 
structure. 
In Section \ref{i-sec9}, we prove Theorem \ref{a-thm1.8}. 
Note that Theorem \ref{a-thm1.8} was essentially obtained 
in \cite{lz} and \cite{svaldi} under some extra assumptions. 
In Section \ref{j-sec10}, we prove Theorems \ref{a-thm1.4}, 
\ref{a-thm1.5}, and 
\ref{a-thm1.6}. We note that Theorem \ref{a-thm1.5} is a 
special case of Theorem \ref{a-thm1.6}. 
In Section \ref{k-sec11}, we discuss an 
ampleness criterion for quasi-log schemes. 
As a very special case, we prove Theorem \ref{a-thm1.11}. 
In Section \ref{l-sec12}, we treat Theorems \ref{a-thm1.12} and 
\ref{a-thm1.13}. They are generalizations of Kawamata's famous 
result for quasi-log schemes. 
In Section \ref{m-sec13}, we prove Theorem \ref{a-thm1.14}, 
which is well known for normal pairs. 
In Section \ref{n-sec14}, we discuss several results 
related to Conjecture \ref{a-conj1.15}. 

\begin{ack}
The author was partially 
supported by JSPS KAKENHI Grant Numbers 
JP16H03925, JP16H06337, JP19H01787, JP20H00111, JP21H00974. 
He thanks Kenta Hashizume very much for many useful 
comments and suggestions. 
He also thanks the referees very much for many useful comments. 
\end{ack}

\section{Preliminaries}\label{b-sec2}

We will work over $\mathbb C$, the complex number field, 
throughout this paper. In this paper, a {\em{scheme}} means 
a separated scheme of finite type over $\mathbb C$.  
A {\em{variety}} means an integral scheme, that is, 
an irreducible and reduced separated scheme of 
finite type over $\mathbb C$. 
Note that $\mathbb Z$, $\mathbb Q$, and 
$\mathbb R$ denote the set of 
{\em{integers}}, 
{\em{rational numbers}}, and {\em{real numbers}}, respectively. 
We also note that $\mathbb Q_{>0}$ and $\mathbb R_{>0}$ 
are the set of {\em{positive rational numbers}} and 
{\em{positive real numbers}}, respectively. 

\subsection{Basic definitions}\label{b-subsec2.1} 

We collect some basic definitions and several useful results. 
Let us start with the definition of {\em{$\mathbb Q$-line bundles}} and 
{\em{$\mathbb R$-line bundles}}. 

\begin{defn}[$\mathbb Q$-line bundles and 
$\mathbb R$-line bundles]\label{b-def2.1} 
Let $X$ be a scheme and let $\Pic(X)$ be 
the group of line bundles on $X$, that is, 
the {\em{Picard group}} of $X$. 
An element of $\Pic(X)\otimes _{\mathbb Z} \mathbb R$ 
(resp.~$\Pic(X)\otimes _{\mathbb Z} \mathbb Q$) is called an 
{\em{$\mathbb R$-line bundle}} (resp.~a 
{\em{$\mathbb Q$-line bundle}}) 
on $X$. 
\end{defn}

In this paper, we write the group law of $\Pic(X)\otimes _{\mathbb Z} 
\mathbb R$ additively for simplicity of notation. 
The notion of {\em{$\mathbb R$-Cartier divisors}} and 
{\em{$\mathbb Q$-Cartier 
divisors}} also plays a crucial role for the study of higher-dimensional 
algebraic varieties. 

\begin{defn}
[$\mathbb Q$-Cartier divisors and $\mathbb R$-Cartier 
divisors]\label{b-def2.2}
Let $X$ be a scheme and let $\Div(X)$ be the group of 
Cartier divisors on $X$. 
An element of $\Div(X)\otimes _{\mathbb Z} \mathbb R$ 
(resp.~$\Div(X)\otimes _{\mathbb Z} \mathbb Q$) is called an 
{\em{$\mathbb R$-Cartier divisor}} (resp.~a 
{\em{$\mathbb Q$-Cartier divisor}}) on $X$. 
Let $\Delta_1$ and $\Delta_2$ be $\mathbb R$-Cartier 
(resp.~$\mathbb Q$-Cartier) divisors on $X$. 
Then $\Delta_1\sim _{\mathbb R} \Delta_2$ (resp.~$\Delta_1\sim 
_{\mathbb Q}\Delta_2$) means that $\Delta_1$ is $\mathbb R$-linearly 
(resp.~$\mathbb Q$-linearly) equivalent to $\Delta_2$. 
Let $f\colon X\to Y$ be a morphism between schemes and let 
$D$ be an $\mathbb R$-Cartier divisor on $X$. 
Then $D\sim _{\mathbb R, f}0$ means that 
there exists an $\mathbb R$-Cartier divisor $G$ on $Y$ such that 
$D\sim _{\mathbb R} f^*G$. 
\end{defn}

The following remark is very important. 

\begin{rem}[{see \cite[Remark 6.2.3]{fujino-foundations}}]\label{b-rem2.3}
Let $X$ be a scheme. 
We have the following group homomorphism 
\begin{equation*} 
\Div(X)\to \Pic (X) 
\end{equation*}  
given by $A\mapsto \mathcal O_X(A)$, where $A$ is a Cartier divisor 
on $X$. 
Hence it induces a homomorphism 
\begin{equation*} 
\delta_X\colon  \Div(X)\otimes _{\mathbb Z} \mathbb R\to 
\Pic(X)\otimes _{\mathbb Z} \mathbb R. 
\end{equation*}  
Note that 
\begin{equation*} 
\Div(X)\to \Pic(X)
\end{equation*}  
is not always surjective. 
We write 
\begin{equation*} 
A+\mathcal L\sim _{\mathbb R} B+\mathcal M
\end{equation*}  
for $A, B\in \Div(X)\otimes _{\mathbb Z} \mathbb R$ and 
$\mathcal L, \mathcal M\in \Pic(X)\otimes _{\mathbb Z} 
\mathbb R$. 
This means that 
\begin{equation*} 
\delta_X(A)+\mathcal L=\delta_X(B)+\mathcal M
\end{equation*}  
holds in $\Pic(X)\otimes _{\mathbb Z} \mathbb R$. 
We usually use this type of abuse of notation, that is, 
the confusion of $\mathbb R$-line bundles with 
$\mathbb R$-Cartier divisors. 
In the theory of minimal models for higher-dimensional 
algebraic varieties, we sometimes 
use $\mathbb R$-Cartier divisors 
for ease of notation even when they should be 
$\mathbb R$-line bundles. 
\end{rem}

On normal varieties or equidimensional reduced schemes, 
we often treat $\mathbb R$-divisors and $\mathbb Q$-divisors. 

\begin{defn}[Operations for $\mathbb Q$-divisors and 
$\mathbb R$-divisors]\label{b-def2.4} 
Let $X$ be an equidimensional reduced scheme. 
Note that $X$ is not necessarily regular in codimension one. 
Let $D$ be an $\mathbb R$-divisor (resp.~a $\mathbb Q$-divisor), 
that is, 
$D$ is a finite formal sum $\sum _i d_iD_i$, where 
$D_i$ is an irreducible reduced closed subscheme of $X$ of 
pure codimension one and $d_i$ is a real number (resp.~a rational 
number) for every $i$ 
such that $D_i\ne D_j$ for $i\ne j$. 
We put 
\begin{equation*}
D^{<c} =\sum _{d_i<c}d_iD_i, \quad 
D^{\leq c}=\sum _{d_i\leq c} d_i D_i, \quad 
D^{= 1}=\sum _{d_i= 1} D_i, \quad \text{and} \quad
\lceil D\rceil =\sum _i \lceil d_i \rceil D_i, 
\end{equation*}
where $c$ is any real number and 
$\lceil d_i\rceil$ is the integer defined by $d_i\leq 
\lceil d_i\rceil <d_i+1$. Similarly, we put 
\begin{equation*} 
D^{>c} =\sum _{d_i>c}d_iD_i \quad \text{and} \quad 
D^{\geq c}=\sum _{d_i\geq c} d_i D_i 
\end{equation*} 
for any real number $c$. 
Moreover, we put $\lfloor D\rfloor =-\lceil -D\rceil$ and 
$\{D\}=D-\lfloor D\rfloor$. 

Let $D$ be an $\mathbb R$-divisor (resp.~a $\mathbb Q$-divisor) 
as above. 
We call $D$ a {\em{subboundary}} $\mathbb R$-divisor 
(resp.~$\mathbb Q$-divisor) 
if $D=D^{\leq 1}$ holds. 
When $D$ is effective and $D=D^{\leq 1}$ holds, 
we call $D$ a {\em{boundary}} 
$\mathbb R$-divisor (resp.~$\mathbb Q$-divisor). 

We further assume that 
$f\colon X\to Y$ is a surjective morphism onto a variety $Y$. 
Then we put 
\begin{equation*} 
D^v=\sum _{f(D_i)\subsetneq Y}d_i D_i \quad 
\text{and} \quad D^h=D-D^v, 
\end{equation*} 
and call $D^v$ the {\em{vertical part}} 
and $D^h$ the {\em{horizontal part}} of $D$ 
with respect to $f\colon X\to Y$, respectively. 
\end{defn}

Since we mainly treat highly singular schemes, we give an 
important remark. 

\begin{rem}\label{b-rem2.5}
In the theory of minimal models, we are mainly interested in normal 
quasi-projective varieties. 
Let $X$ be a normal variety. 
Then, for $\mathbb K=\mathbb Z, \mathbb Q$, and 
$\mathbb R$, the homomorphism 
\begin{equation*}  
\alpha\colon  \Div(X)\otimes _{\mathbb Z}\mathbb K 
\to \Pic(X)\otimes _{\mathbb Z} \mathbb K
\end{equation*}  
is surjective and the homomorphism 
\begin{equation*}  
\beta\colon  \Div(X)\otimes _{\mathbb Z} \mathbb K\to 
\Weil (X)\otimes _{\mathbb Z}\mathbb K
\end{equation*}  
is injective, where $\Weil (X)$ is the abelian group generated by 
Weil divisors on $X$. We usually use the surjection $\alpha$ and the 
injection $\beta$ implicitly. In this paper, however, 
we frequently treat highly singular schemes $X$. 
Hence we have to be careful when we consider $\alpha\colon \Div(X)
\otimes _{\mathbb Z} \mathbb K\to 
\Pic(X)\otimes _{\mathbb Z} \mathbb K$ and $\beta\colon  
\Div(X)\otimes _{\mathbb Z} \mathbb K\to 
\Weil(X)\otimes _{\mathbb Z} \mathbb K$.  
\end{rem}

Let us recall the following standard notation for the 
sake of completeness. 

\begin{defn}[$N^1(X/S)$, $N_1(X/S)$, $\rho(X/S)$, and so on]\label{b-def2.6}
Let $\pi\colon  X\to S$ be a proper morphism between schemes. 
Let $Z_1(X/S)$ be the free abelian group generated by 
integral complete curves which are mapped 
to points on $S$ by $\pi$. 
Then we obtain a bilinear form 
\begin{equation*} 
\cdot\colon  \Pic(X)\times Z_1(X/S)\to \mathbb Z, 
\end{equation*}  
which is induced by the intersection pairing. 
We have the notion of {\em{numerical equivalence}} 
both in $Z_1(X/S)$ and in $\Pic(X)$, which is denoted by 
$\equiv$, and we obtain a perfect pairing 
\begin{equation*} 
N^1(X/S)\times N_1(X/S)\to \mathbb R, 
\end{equation*}  
where 
\begin{equation*} 
N^1(X/S)=\{\Pic(X)/\equiv\}\otimes _{\mathbb Z}\mathbb R 
\quad \text{and}\quad 
N_1(X/S)=\{Z_1(X/S)/\equiv \}\otimes _{\mathbb Z} \mathbb R. 
\end{equation*} 
It is well known that 
\begin{equation*} 
\dim _\mathbb R N^1(X/S)=\dim _\mathbb R N_1(X/S)<\infty. 
\end{equation*}  
We write 
\begin{equation*} 
\rho(X/S)=\dim _\mathbb R N^1(X/S)=\dim _\mathbb R N_1(X/S)
\end{equation*}  
and call it the {\em{relative Picard number}} of $X$ over $S$. 
When $S=\Spec \mathbb C$, we usually drop $/\Spec\mathbb C$ 
from the notation, for example, 
we simply write $N_1(X)$ instead of $N_1(X/\Spec \mathbb C)$. 
\end{defn}

We will freely use the following useful lemma 
without mentioning 
it explicitly in the subsequent sections. 

\begin{lem}[Relative real Nakai--Moishezon 
ampleness criterion]\label{b-lem2.7}
Let $\pi\colon X\to S$ be a proper morphism between 
schemes and 
let $\mathcal L$ be an $\mathbb R$-line bundle 
on $X$. 
Then $\mathcal L$ is $\pi$-ample if and only if 
$\mathcal L^{\dim Z}\cdot Z>0$ for every 
positive-dimensional closed integral subscheme 
$Z\subset X$ such that $\pi(Z)$ is a point. 
\end{lem}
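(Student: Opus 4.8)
The plan is to reduce the relative statement with real coefficients to the \emph{absolute} real Nakai--Moishezon criterion (Campana--Peternell) applied fiber by fiber, and then to repackage fiberwise ampleness into relative ampleness by the standard fiberwise criterion. Since ampleness is local on the base, I first replace $S$ by an affine (in particular Noetherian, quasi-compact) open set, so that it suffices to treat closed fibers. For each closed point $s\in S$, the fiber $X_s:=\pi^{-1}(s)$ is a (possibly reducible, possibly non-normal) proper scheme over $\mathbb C$, and the intersection number $\mathcal L^{\dim Z}\cdot Z$ of a positive-dimensional integral $Z\subset X_s$ depends only on $\mathcal L|_Z$, hence is computed equally well inside $X_s$ or inside $X$. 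The key observation is that the positive-dimensional integral subschemes $Z\subset X$ with $\pi(Z)$ a point are \emph{exactly} the positive-dimensional integral subschemes of the fibers $X_s$; therefore the hypothesis of the lemma says precisely that $\mathcal L|_{X_s}$ satisfies the Nakai positivity condition on the proper scheme $X_s$ for every closed $s$.

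The forward implication is then routine: if $\mathcal L$ is $\pi$-ample, each restriction $\mathcal L|_{X_s}$ is ample, and writing $\mathcal L|_Z$ as a finite $\mathbb R_{>0}$-combination of ample Cartier classes and expanding $\mathcal L^{\dim Z}\cdot Z$ by multilinearity exhibits it as a positive combination of intersection numbers of ample divisors on $Z$, each of which is positive; hence $\mathcal L^{\dim Z}\cdot Z>0$.

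For the converse, I would invoke the absolute real Nakai--Moishezon criterion on each proper fiber $X_s$ to conclude that $\mathcal L|_{X_s}$ is an ample $\mathbb R$-line bundle for every closed point $s$, and then apply the fiberwise criterion for relative ampleness (the $\mathbb R$-coefficient analogue of \cite[III, 4.7.1]{} in EGA, valid since $\pi$ is proper and $S$ is Noetherian, using that the relative ample cone in $N^1(X/S)$ is open and characterized by fiberwise ampleness) to deduce that $\mathcal L$ itself is $\pi$-ample. The substance of the proof is therefore entirely concentrated in the absolute statement, whose proof I would run as the Campana--Peternell induction on $\dim X_s$: by the inductive hypothesis $\mathcal L|_{X_s}$ is already ample on every proper integral subscheme (its restriction inherits the Nakai inequalities), so after fixing an ample integral $H$ and setting $t_0=\inf\{t\ge 0:\mathcal L|_{X_s}+tH\ \text{is nef}\}$, one expands $(\mathcal L|_{X_s}+t_0H)^{\dim Z}\cdot Z$: the pure term $\mathcal L^{\dim Z}\cdot Z$ is strictly positive by hypothesis while every mixed term, carrying a factor of $H$, reduces after cutting with general members of $|mH|$ to a self-intersection of the already-ample-on-proper-subvarieties class and is $\ge 0$, whence the boundary class $\mathcal L|_{X_s}+t_0H$ is again Nakai-positive, which by Kleiman's criterion is incompatible with lying on the boundary of the nef cone; thus $t_0=0$, $\mathcal L|_{X_s}$ is nef, and a nef Nakai-positive class is ample.

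The main obstacle is the limiting step of this induction: the mixed intersection numbers are only seen to be $\ge 0$ in the limit, so the strict positivity needed to push the nef boundary class back into the ample cone must be extracted entirely from the pure $\mathcal L$-term using the inductively established ampleness on all proper subvarieties; additional bookkeeping is required because the fibers $X_s$ need not be irreducible or equidimensional, so the argument must be carried out component by component together with the positivity of $\mathcal L$ along their mutual intersections. A secondary technical point is the verification of the $\mathbb R$-coefficient fiberwise ampleness criterion, which one deduces from its classical line-bundle form by approximating $\mathcal L$ within the relative N\'eron--Severi space and using the openness of the relative ample cone.
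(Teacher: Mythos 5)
First, a point of comparison: the paper does not actually prove Lemma \ref{b-lem2.7} --- it is quoted from \cite{fujino-miyamoto}, so there is no in-text argument to measure yours against. Your overall strategy (reduce to an absolute real Nakai--Moishezon criterion on the closed fibers, then globalize over the base) is the natural one and is broadly the route of that reference, and your forward implication and the identification of the relevant $Z$ with the positive-dimensional integral subschemes of the fibers are fine.

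The converse, however, has genuine gaps at exactly the points where the statement stops being routine. The lemma assumes only that $\pi$ is \emph{proper}, so a fiber $X_s$ is a complete scheme not known to be projective; your Campana--Peternell induction begins by ``fixing an ample integral $H$'' on $X_s$, which presupposes the projectivity you are trying to establish (the conclusion that $\mathcal L|_{X_s}$ is ample is, among other things, a projectivity criterion for $X_s$). Kleiman's proof of the integral Nakai--Moishezon criterion on complete schemes deliberately avoids any auxiliary ample divisor for precisely this reason, and an $\mathbb R$-coefficient argument must either do the same or first reduce to the projective case. Relatedly, ``component by component bookkeeping'' undersells the reducibility issue: for an $\mathbb R$-line bundle, i.e.\ a formal positive combination $\sum a_i\mathcal L_i$, it is not formal that ampleness on each irreducible component of a reducible complete scheme implies ampleness on the whole scheme, and this must be proved rather than bookkept. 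Finally, the passage from fiberwise to relative ampleness with real coefficients needs a uniformity argument: the rational perturbation of $(a_i)$ preserving ampleness on $X_s$ depends a priori on $s$, and ``openness of the relative ample cone in $N^1(X/S)$'' is not the right tool (that openness concerns perturbing a class already known to be $\pi$-ample). What does work is the EGA-type openness in $S$ of the locus $\{s \mid \mathcal M|_{X_s}\ \text{is ample}\}$ for a genuine line bundle $\mathcal M$, applied to finitely many rational approximations spanning a simplex around $(a_i)$, combined with quasi-compactness of $S$ and locality of relative ampleness on the base. None of these gaps is unfixable, but together they constitute the actual content of \cite{fujino-miyamoto}, and your sketch currently assumes them away.
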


For the details of Lemma \ref{b-lem2.7}, 
see \cite{fujino-miyamoto}. In the theory of quasi-log schemes, 
we mainly treat highly singular reducible schemes. 
Hence Lemma \ref{b-lem2.7} is very useful in order to check 
the ampleness of 
$\mathbb R$-line bundles. 

\subsection{Uniruledness, rational connectedness, and 
rational chain connectedness}\label{b-subsec2.2}
In this subsection, we quickly recall 
the notion of uniruledness, rational connectedness, 
rational chain connectedness, and so on. 
We need it for Theorems \ref{a-thm1.12}, 
\ref{a-thm1.13}, and \ref{a-thm1.14}. 
For the details, see \cite[Chapter IV.]{kollar-rational}. 
We note that a scheme means a separated scheme of 
finite type over $\mathbb C$ 
in this paper. 
Let us start with the definition of uniruled varieties. 

\begin{defn}[{Uniruledness, 
see \cite[Chapter IV.~1.1 Definition]{kollar-rational}}]
\label{b-def2.8}
Let $X$ be a variety. We say that 
$X$ is {\em{uniruled}} if there exist a variety $Y$ of 
dimension $\dim X-1$ and 
a dominant rational map 
\begin{equation*} 
\mathbb P^1\times Y\dashrightarrow X. 
\end{equation*} 
\end{defn}

Although the notion of rational connectedness is 
dispensable for 
Theorem \ref{a-thm1.14}, we explain it for the 
reader's convenience. 

\begin{defn}[{Rational connectedness, see 
\cite[Chapter IV.~3.6 Proposition]{kollar-rational}}]
\label{b-def2.9}
Let $X$ be a projective variety. 
We say that $X$ is {\em{rationally connected}} 
if for general closed points $x_1, x_2\in X$ there 
exists an irreducible rational curve $C$ which 
contains $x_1$ and $x_2$. 
\end{defn}

The following lemma is almost obvious by definition. 

\begin{lem}\label{b-lem2.10}
Let $X\dashrightarrow X'$ be a generically finite dominant 
rational map between varieties. 
If $X$ is uniruled, then $X'$ is also uniruled. 
Furthermore, we assume that 
$X\dashrightarrow X'$ is a birational map between 
projective varieties. 
Then $X$ is rationally connected if and only if 
$X'$ is rationally connected. 
\end{lem}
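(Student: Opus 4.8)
The plan is to prove Lemma \ref{b-lem2.10} in two parts, mirroring its two assertions, and to rely throughout on the definitions of uniruledness and rational connectedness recalled just above, together with the classical fact that these notions are governed by the existence of (families of) rational curves. Since both statements are essentially formal consequences of the definitions, the proof should be short; the only care needed is in handling the rational maps on the loci where they are not defined.

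For the first assertion, suppose $X$ is uniruled, so by Definition \ref{b-def2.8} there is a variety $Y$ with $\dim Y=\dim X-1$ and a dominant rational map $g\colon \mathbb P^1\times Y\dashrightarrow X$. Let $h\colon X\dashrightarrow X'$ be the given generically finite dominant rational map. First I would form the composite $h\circ g\colon \mathbb P^1\times Y\dashrightarrow X'$; this is defined on a dense open subset (the preimage under $g$ of the domain of $h$, intersected with the domain of $g$), and it is dominant since the composite of dominant rational maps is dominant. Because $\dim X'=\dim X$ (a generically finite dominant map preserves dimension) we again have $\dim Y=\dim X'-1$, so $h\circ g$ exhibits $X'$ as uniruled. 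The one point to check is that the source is still $\mathbb P^1\times Y$ with the same $Y$, which is immediate since we have not altered the source at all. This part presents no real obstacle.

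For the second assertion, assume in addition that $h\colon X\dashrightarrow X'$ is a birational map between projective varieties, and suppose $X$ is rationally connected. By Definition \ref{b-def2.9}, for general $x_1,x_2\in X$ there is an irreducible rational curve through both. The idea is to transport such curves across the birational map: choose a common open dense $U\subset X$ on which $h$ is an isomorphism onto an open dense $U'\subset X'$ with inverse defined there. Given general $x_1',x_2'\in X'$, pick them in $U'$, set $x_i=h^{-1}(x_i')\in U$, which are general in $X$; take an irreducible rational curve $C$ through $x_1,x_2$, and note that since $x_1,x_2$ are general we may arrange $C$ to meet $U$ (so $C\not\subset X\setminus U$), whence the birational transform $h(C)\subset X'$ is an irreducible rational curve through $x_1',x_2'$. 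The symmetric argument using $h^{-1}$ gives the converse, so $X$ is rationally connected if and only if $X'$ is.

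The main obstacle, such as it is, lies in making the genericity bookkeeping in the second part precise: one must ensure that the "general points" of $X'$ can be taken in the isomorphism locus $U'$ and that the chosen connecting curve is not contained in the (lower-dimensional) indeterminacy or exceptional loci, so that its strict transform is again an irreducible rational curve of the same type. This is handled by the standard observation that the complement of $U$ in a projective variety is a proper closed subset, so both the general points and a general connecting rational curve avoid it; the irreducibility and rationality of the transformed curve then follow because $h$ restricts to an isomorphism on a dense open subset of $C$. No deeper input is required.
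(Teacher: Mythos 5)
Your proof is correct, and it fills in precisely the argument the paper has in mind: the paper offers no written proof, stating only that the lemma is ``almost obvious by definition.'' Both parts of your argument (composing the ruling $\mathbb P^1\times Y\dashrightarrow X$ with the generically finite dominant map, and transporting an irreducible rational curve through two general points across the isomorphism locus of the birational map) are the standard ones and are carried out with the right care about indeterminacy loci and genericity.
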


Let us define rational chain connectedness for 
projective schemes. 

\begin{defn}[{Rational chain connectedness, see 
\cite[Chapter IV.~3.5 Corollary and 3.6 Proposition]{kollar-rational}}]
\label{b-def2.11}
Let $X$ be a projective scheme. 
We say that $X$ is {\em{rationally chain connected}} 
if for arbitrary closed points $x_1, x_2\in X$ there 
is a connected curve $C$ which contains $x_1$ and $x_2$ 
such that every irreducible component of $C$ is rational. 
\end{defn}

Note that $X$ may be reducible in Definition \ref{b-def2.11}. 
For projective varieties, we have:  

\begin{lem}\label{b-lem2.12} 
Let $X$ be a projective variety. 
If $X$ is rationally connected, then $X$ is rationally chain 
connected. 
\end{lem}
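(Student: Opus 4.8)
The plan is to reduce the statement to a single specialization argument: the only features distinguishing rational chain connectedness from rational connectedness are that the latter joins only \emph{general} pairs of points and does so by a single \emph{irreducible} rational curve. Both gaps will be bridged by degenerating the joining curve. First I would extract from the hypothesis a family realizing the joining: there is an irreducible variety $T$, a family of rational curves over $T$ equipped with a map to $X$, and two marked points on the fibers inducing a morphism $e\colon T\to X\times X$ whose image contains a dense open subset $U\subseteq X\times X$. Composing with the classifying map to the Chow variety (equivalently the Hilbert scheme) of $1$-cycles on $X$, I obtain over $U$ a family of irreducible rational curves $C_{(y_1,y_2)}$ passing through $y_1$ and $y_2$.

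Now fix arbitrary closed points $x_1,x_2\in X$. Since $U$ is dense open in the irreducible projective scheme $X\times X$, a general complete intersection curve through $(x_1,x_2)$ meets $U$; I therefore choose a smooth affine curve $B$, a point $0\in B$, and a morphism $B\to X\times X$ sending $0$ to $(x_1,x_2)$ and the generic point of $B$ into $U$. After a finite base change I may pull back the family above to obtain, over $B\setminus\{0\}$, a family of irreducible rational curves $C_b$ together with two sections landing on the two factors, so that $x_1(b),x_2(b)\in C_b$. By the properness of the Chow variety (equivalently the Hilbert scheme) of $X$, this family extends over $0$ to a limit cycle $C_0$, and the two sections extend so that $x_1,x_2\in \Supp C_0$.

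It remains to analyze the limit $C_0$. By properness together with the irreducibility of the total space, $C_0$ is connected, being the specialization of the connected (indeed irreducible) curves $C_b$. Moreover every irreducible component of $C_0$ is rational, since a flat limit of images of $\mathbb{P}^1$ is a connected union of rational curves; this is exactly the content of the degeneration results for rational curves in \cite[Chapter IV]{kollar-rational}. Thus $C_0$ is a connected curve containing $x_1$ and $x_2$ all of whose irreducible components are rational, so $X$ is rationally chain connected. The main obstacle is precisely this last step: controlling the limit $C_0$, that is, ensuring both that connectedness is preserved under the specialization and that no component acquires positive geometric genus. It is here that I would invoke \cite[Chapter IV.~3.5 Corollary and 3.6 Proposition]{kollar-rational}.
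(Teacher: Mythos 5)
Your argument is correct and amounts to the same thing as the paper, which simply cites \cite[Chapter IV.~3.6 Proposition]{kollar-rational} for this implication; your degeneration-of-the-joining-curve argument is precisely the standard proof of that cited result, and you delegate the one genuinely delicate step (connectedness and rationality of the components of the limit cycle) back to the same reference. No gap.
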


\begin{proof}
This follows from \cite[Chapter IV.~3.6 Proposition]{kollar-rational}. 
\end{proof}

We need the following definition for 
Theorem \ref{a-thm1.14}. 

\begin{defn}[{\cite[Definition 1.1]{hacon-mckernan}}]\label{b-def2.13}
Let $X$ be a projective scheme 
and let $V$ be any closed subset. 
We say that X is {\em{rationally chain connected modulo $V$}} if
\begin{itemize}
\item[(1)] either $V=\emptyset$ and 
$X$ is rationally chain connected, or
\item[(2)] 
$V\ne\emptyset$ and, for every $P\in X$, there is a 
connected pointed curve $0, \infty\in C$ with rational irreducible 
components and a morphism $h_P \colon  C\to X$ such that 
$h_P(0)=P$ and $h_P(\infty)\in V$.
\end{itemize}
\end{defn}

We close this subsection with a 
small remark. 

\begin{rem}\label{b-rem2.14}
Let $X$ be a singular normal projective rationally chain connected 
variety. 
Then the resolution of $X$ is not always rationally chain connected. 
Hence the notion of 
rational chain connectedness is more subtle 
than that of uniruledness and rational connectedness 
(see Lemma \ref{b-lem2.10}). 
\end{rem}

\section{On normal pairs}\label{c-sec3}
 
In this section, we collect some basic definitions 
and then discuss dlt blow-ups 
for normal pairs. 
For the details of normal pairs, see \cite{bchm}, 
\cite{fujino-fund}, and \cite{fujino-foundations}. 
Let us start with the definition of normal pairs in this paper. 

\begin{defn}[Normal pairs]\label{c-def3.1}
A {\em{normal pair}} $(X, \Delta)$ consists of 
a normal variety $X$ and an $\mathbb R$-divisor 
$\Delta$ on $X$ such that $K_X+\Delta$ is 
$\mathbb R$-Cartier. 
Here we do not always assume that $\Delta$ is effective. 
\end{defn}
 
We note the following definition of {\em{exceptional loci}} of 
birational morphisms between varieties. 

\begin{defn}[Exceptional loci]\label{c-def3.2}
Let $f\colon X\to Y$ be a birational morphism 
between varieties. 
Then the {\em{exceptional locus}} $\Exc(f)$ of $f\colon X\to Y$ 
is the set 
\begin{equation*} 
\{x\in X\, | \, {\text{$f$ is not biregular at $x$}}\}. 
\end{equation*} 
\end{defn}
 
\subsection{Singularities of pairs}\label{c-subsec3.1}
Let us explain singularities of pairs and some related 
definitions. 

\begin{defn}\label{c-def3.3} 
Let $X$ be a variety and let $E$ be a prime divisor on $Y$ 
for some birational
morphism $f\colon Y\to X$ from a normal variety $Y$. 
Then $E$ is called a divisor {\em{over}} $X$.
\end{defn}

\begin{defn}[Singularities of pairs]\label{c-def3.4}
Let $(X, \Delta)$ be a normal pair and let 
$f\colon Y\to X$ be a projective 
birational morphism from a normal variety $Y$. 
Then we can write 
\begin{equation*} 
K_Y=f^*(K_X+\Delta)+\sum _E a(E, X, \Delta)E
\end{equation*}  
with 
\begin{equation*} 
f_*\left(\underset{E}\sum a(E, X, \Delta)E\right)=-\Delta, 
\end{equation*}  
where $E$ runs over prime divisors on $Y$. 
We call $a(E, X, \Delta)$ the {\em{discrepancy}} of $E$ with 
respect to $(X, \Delta)$. 
Note that we can define the discrepancy $a(E, X, \Delta)$ for 
any prime divisor $E$ over $X$ by taking a suitable 
resolution of singularities of $X$. 
If $a(E, X, \Delta)\geq -1$ (resp.~$>-1$) for 
every prime divisor $E$ over $X$, 
then $(X, \Delta)$ is called {\em{sub log canonical}} (resp.~{\em{sub 
kawamata log terminal}}). 
We further assume that $\Delta$ is effective. 
Then $(X, \Delta)$ is 
called {\em{log canonical}} and {\em{kawamata log terminal}} 
({\em{lc}} and {\em{klt}}, for short) 
if it is sub log canonical and sub kawamata log terminal, respectively. 

Let $(X, \Delta)$ be a log canonical pair. If there 
exists a projective birational morphism 
$f\colon Y\to X$ from a smooth variety $Y$ such that 
both $\Exc(f)$ and  $\Exc(f)\cup \Supp f^{-1}_*\Delta$ are simple 
normal crossing divisors on $Y$ and that 
$a(E, X, \Delta)>-1$ holds for every $f$-exceptional divisor $E$ on $Y$, 
then $(X, \Delta)$ is called {\em{divisorial log terminal}} ({\em{dlt}}, for short). 

Let $(X, \Delta)$ be a normal pair. 
If there exist a projective birational morphism 
$f\colon Y\to X$ from a normal variety $Y$ and a prime divisor $E$ on $Y$ 
such that $(X, \Delta)$ is 
sub log canonical in a neighborhood of the 
generic point of $f(E)$ and that 
$a(E, X, \Delta)=-1$, then $f(E)$ is called a 
{\em{log canonical center}} (an {\em{lc center}}, for short) 
of 
$(X, \Delta)$. A closed subvariety $W$ of $X$ is called 
a {\em{log canonical stratum}} (an {\em{lc stratum}}, for 
short) of $(X, \Delta)$ if $W$ is a log canonical center 
of $(X, \Delta)$ or $W$ is $X$ itself. 
\end{defn}

Although it is well known, 
we recall the notion of {\em{multiplier ideal 
sheaves}} here for the reader's convenience. 

\begin{defn}[Multiplier ideal sheaves and non-lc ideal sheaves]\label{c-def3.5} 
Let $X$ be a normal variety
and let $\Delta$ be an effective $\mathbb R$-divisor on $X$ 
such that $K_X+\Delta$ is $\mathbb R$-Cartier. 
Let $f\colon Y\to X$ be a resolution with
\begin{equation*} 
K_Y+\Delta_Y=f^*(K_X+\Delta)
\end{equation*} 
such that $\Supp \Delta_Y$ is a simple normal crossing divisor on $Y$. 
We put
\begin{equation*} 
\mathcal J(X, \Delta)=f_*\mathcal O_Y(-\lfloor \Delta_Y\rfloor). 
\end{equation*} 
Then $\mathcal J(X, \Delta)$ is an ideal sheaf on $X$ 
and is known as the {\em{multiplier ideal sheaf}} 
associated to the pair $(X, \Delta)$. 
It is independent 
of the resolution $f\colon Y\to X$. The closed subscheme $\Nklt(X, \Delta)$ 
defined by $\mathcal J(X, \Delta)$ is called 
the {\em{non-klt locus}} of $(X, \Delta)$. 
It is obvious that $(X, \Delta)$ is kawamata log terminal 
if and only if $\mathcal J(X, \Delta)=\mathcal O_X$. 
Similarly, we put 
\begin{equation*} 
\mathcal J_{\NLC}(X, \Delta)=f_*\mathcal O_X(-\lfloor 
\Delta_Y\rfloor+\Delta^{=1}_Y)
\end{equation*} 
and call it the {\em{non-lc ideal sheaf}} associated to the pair $(X, \Delta)$. 
We can check that it is independent of the resolution $f\colon Y\to X$. The
closed subscheme $\Nlc(X, \Delta)$ defined by 
$\mathcal J_{\NLC}(X, \Delta)$ 
is called the {\em{non-lc locus}} of 
$(X, \Delta)$. It is obvious that $(X, \Delta)$ is log canonical if and only if 
$\mathcal J_{\NLC}(X, \Delta)=\mathcal O_X$. 

By definition, the natural inclusion 
\begin{equation*} 
\mathcal J(X, \Delta)\subset \mathcal J_{\NLC}(X, \Delta)
\end{equation*}  
always holds. Therefore, 
we have 
\begin{equation*} 
\Nlc(X, \Delta)\subset \Nklt(X, \Delta). 
\end{equation*} 
\end{defn}
 
For the details of $\mathcal J(X, \Delta)$ and 
$\mathcal J_{\NLC}(X, 
\Delta)$, see \cite{fujino-non-lc}, 
\cite[Section 7]{fujino-fund}, and \cite[Chapter 9]{lazarsfeld}. 
In this paper, we need the notion of {\em{open lc strata}}. 

\begin{defn}[Open lc strata]\label{c-def3.6}
Let $(X, \Delta)$ be a normal pair 
such that $\Delta$ is effective. 
Let $W$ be an lc stratum of $(X, \Delta)$. 
We put 
\begin{equation*} 
U:=W\setminus \left\{ \left(W\cap \Nlc(X, \Delta)\right)\cup 
\bigcup_{W'}W'\right\}, 
\end{equation*} 
where $W'$ runs over lc centers of $(X, \Delta)$ strictly contained 
in $W$, 
and call it the {\em{open lc stratum of 
$(X, \Delta)$ associated to $W$}}. 
\end{defn}

\subsection{Dlt blow-ups revisited}\label{c-subsec3.2}

Let us discuss dlt blow-ups. 
We give a slight generalization of 
\cite[Theorem 4.4.21]{fujino-foundations}. 
Here we use the theory of minimal models 
mainly due to \cite{bchm}. 
Let us start with the definition of movable divisors. 

\begin{defn}[{Movable 
divisors and movable cones, see \cite[Definition 2.4.4]{fujino-foundations}}]
\label{c-def3.7}
Let $f\colon X\to Y$ be a projective morphism from a normal variety $X$ onto a 
variety $Y$. A Cartier divisor $D$ on $X$ is called {\em{$f$-movable}} 
or {\em{movable over $Y$}} if 
$f_*\mathcal O_X(D)\ne 0$ and 
if the cokernel of the natural homomorphism 
\begin{equation*} 
f^*f_*\mathcal O_X(D)\to \mathcal O_X(D)
\end{equation*}  
has a support of codimension $\geq 2$. 

We define $\Mov (X/Y)$ as the closure of 
the convex cone in $N^1(X/Y)$ generated by 
the numerical equivalence classes of $f$-movable Cartier divisors. 
We call $\Mov (X/Y)$ the {\em{movable cone}} of $f\colon X\to Y$. 
\end{defn}

We prepare a negativity lemma. 

\begin{lem}[Negativity lemma]\label{c-lem3.8}
Let $f\colon X\to Y$ be a projective birational morphism 
between normal varieties such that $X$ is $\mathbb Q$-factorial. 
Let $E$ be an $\mathbb R$-Cartier $\mathbb R$-divisor 
on $X$ such that $-f_*E$ is effective and $E\in \Mov (X/Y)$. 
Then $-E$ is effective.  
\end{lem}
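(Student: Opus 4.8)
The plan is to argue by contradiction, isolating the positive part of $E$ and showing that it must vanish. First I would write $E=P-N$, where $P$ and $N$ are effective $\mathbb R$-divisors with no common components (the positive and negative parts of $E$). Since $f$ is birational, $f_*$ sends $f$-exceptional prime divisors to zero and is injective on the remaining primes; hence $f_*P$ and $f_*N$ are effective with disjoint supports and $-f_*E=f_*N-f_*P\geq 0$. Because the supports are disjoint, this forces $f_*P=0$, i.e.\ every prime component of $P$ is $f$-exceptional. Thus it suffices to prove $P=0$, so from now on I assume $P\neq 0$ and seek a contradiction. Note that since $X$ is $\mathbb Q$-factorial, the divisors $P$, $N$, and $E$ are all $\mathbb R$-Cartier, so all the intersection numbers below make sense. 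We may assume $d:=\dim X\geq 2$, the case $d\leq 1$ being trivial.

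Next I would reduce to a surface by cutting down. Fix an $f$-very ample divisor $H$ and choose $d-2$ very general members $H_1,\dots,H_{d-2}\in|H|$; working over $\mathbb C$, such very general choices, avoiding countably many proper closed conditions, are available. Put $S=H_1\cap\cdots\cap H_{d-2}$. By Bertini, $S$ is a normal surface and $g:=f|_S\colon S\to T:=f(S)$ is a proper birational morphism from the normal surface $S$. For very general $S$, each $f$-exceptional prime divisor $E_0\subset\Supp P$ meets $S$ in a union of curves contracted by $g$, so $P|_S=\sum_i a_iC_i$ is a \emph{nonzero} effective combination of $g$-exceptional curves $C_i$ with $a_i>0$; moreover $C_i\not\subset\Supp(N|_S)$ for every $i$, whence $N|_S\cdot C_i\geq 0$.

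The crux is to transfer the membership $E\in\Mov(X/Y)$ into nonnegativity against the specific curves $C_i$. Here I would fix once and for all a sequence of $\mathbb R_{\geq 0}$-combinations $D_k$ of movable Cartier divisors $M_{k,j}$ with $D_k\to E$ in $N^1(X/Y)$; only countably many $M_{k,j}$ occur. For each fixed $M_{k,j}$ the relative base locus over $Y$ has codimension $\geq 2$, so the $g$-exceptional complete-intersection curves contained in that base locus form a \emph{proper} subfamily of the covering family of such curves on each $E_0$. Taking $S$ very general, I arrange that no $C_i$ lies in the base locus of any $M_{k,j}$; then, using an effective relatively linearly equivalent representative of $M_{k,j}$ not containing $C_i$, one gets $M_{k,j}\cdot C_i\geq 0$, hence $D_k\cdot C_i\geq 0$, and passing to the limit $E\cdot C_i\geq 0$ for every $i$.

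Finally I would invoke the classical negativity of the exceptional configuration: on the normal surface $S$ the intersection matrix $(C_i\cdot C_j)$ of the $g$-exceptional curves is negative definite (if $T$ is not normal, factor $g$ through the normalization of $T$, which contracts nothing). Since $P|_S=\sum_i a_iC_i$ is nonzero and supported on these curves, $(P|_S)^2<0$, so $\sum_i a_i\,(P|_S\cdot C_i)<0$ and therefore $P|_S\cdot C_{i_0}<0$ for some $i_0$. On the other hand $P=E+N$ gives $P|_S\cdot C_{i_0}=E\cdot C_{i_0}+N|_S\cdot C_{i_0}\geq 0$, a contradiction. Hence $P=0$ and $-E$ is effective. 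I expect the main obstacle to be the third step: controlling the entire closed movable cone, an element of which is only a limit of combinations of movable Cartier divisors, against the finitely many fixed exceptional curves $C_i$. This is precisely why one passes to a \emph{very} general surface, so as to dodge the countably many relative base loci simultaneously; the secondary point requiring care is guaranteeing that $g$ is birational from a normal surface so that the negative-definiteness input applies.
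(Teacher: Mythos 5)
Your overall strategy is the same as the paper's: write $E=E_+-E_-$, observe that $-f_*E\geq 0$ forces $E_+$ to be $f$-exceptional, cut down to a surface, and play the nonnegativity coming from $E\in\Mov(X/Y)$ against the negative definiteness of the exceptional configuration. The gap is in your reduction to the surface. You cut only with $d-2$ general members of an $f$-very ample linear system on $X$ and then assert that each exceptional component $E_0\subset\Supp P$ meets $S$ in a union of $g$-exceptional curves. This fails whenever $\dim f(E_0)\geq 1$: a general complete intersection curve of ($f$-)ample divisors inside $E_0$ is a multisection of $E_0\to f(E_0)$ and maps \emph{onto} a positive-dimensional subvariety of $Y$, so it is not contracted by $g=f|_S$. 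Concretely, let $f\colon X\to Y$ be the blow-up of a smooth curve $C$ in a smooth threefold and let $E_0$ be the exceptional divisor; then $S=H_1$ is a general ample surface, $E_0\cap S$ is a multisection of the ruling $E_0\to C$, $g(E_0\cap S)=C$ is a curve, and in fact $g\colon S\to T$ is finite birational, with no exceptional curves at all. Your negative-definiteness step then has nothing to apply to, and no amount of ``very general'' choice of the $H_i$ repairs this; the obstruction is structural.

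The paper's proof avoids this by first reducing to the case where $Y$ is affine and then cutting with general pullbacks $f^*A$ of hyperplanes of $Y$ in addition to hyperplanes of $X$: taking $\dim f(E_0)$ such pullbacks collapses $f(E_0)$ to finitely many points, so that on the resulting surface the trace of $E_0$ really is contracted, after which the two inequalities ($\geq 0$ from movability, $<0$ from negativity of the exceptional intersection form) collide exactly as you intended. A secondary remark: your handling of the closed movable cone can be simplified. For a single $f$-movable Cartier divisor $M$ the relative base locus has codimension $\geq 2$, so the intersection number of $M$ with the complete-intersection curve class inside $E_0$ is $\geq 0$ already (intersection numbers depend only on numerical classes, so one good representative curve per $M$ suffices), and this passes to nonnegative combinations and to limits; there is no need to choose $S$ very general so as to dodge countably many base loci. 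The remaining ingredients of your write-up (disjointness of $f_*P$ and $f_*N$, the identity $P=E+N$, normalizing $T$ before invoking surface negativity) are fine.
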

\begin{proof} 
We can write $E=E_+-E_-$ such that 
$E_+$ and $E_-$ are effective $\mathbb R$-divisors 
and have no common irreducible components. 
We assume that $E_+\ne 0$. Since $-f_*E$ is effective, 
$E_+$ is $f$-exceptional. 
Without loss of generality, 
we may assume that $Y$ is affine by taking 
an affine open covering of $Y$. 
Let $A$ be an ample Cartier divisor on $X$. 
Then we can find an irreducible component $E_0$ of $E_+$ such that 
\begin{equation*}
E_0\cdot (f^*A)^k\cdot H^{n-k-2}\cdot E<0
\end{equation*} 
when $\dim X=n$ and $\codim _Y f(E_+)=k$. 
This is a contradiction. Note that 
\begin{equation*}
E_0\cdot (f^*A)^k\cdot H^{n-k-2}\cdot E\geq  0 
\end{equation*} always hold 
since $E\in \Mov (X/Y)$. Therefore, $-E$ is effective. 
\end{proof}

By Lemma \ref{c-lem3.8}, 
we can prove the existence of dlt blow-ups for quasi-projective 
normal pairs. 
We note that $\Delta$ is assumed to be a boundary 
$\mathbb R$-divisor in \cite[Theorem 4.4.21]{fujino-foundations}. 

\begin{thm}[Dlt blow-ups]\label{c-thm3.9}
Let $X$ be a normal quasi-projective variety and let $\Delta=\sum _i 
d_i \Delta_i$ be an effective $\mathbb R$-divisor on $X$ such 
that $K_X+\Delta$ is $\mathbb R$-Cartier. 
In this case, we can construct a projective birational morphism 
$f\colon Y\to X$ from a normal quasi-projective variety $Y$ with the 
following properties. 
\begin{itemize}
\item[(i)] $Y$ is $\mathbb Q$-factorial. 
\item[(ii)] $a(E, X, \Delta)\leq -1$ for every $f$-exceptional 
divisor $E$ on $Y$. 
\item[(iii)] We put 
\begin{equation*} 
\Delta^\dag=\sum _{0<d_i< 1}d_i f^{-1}_*\Delta_i+\sum _{d_i\geq 1} 
f^{-1}_*\Delta_i+\sum _{\text{$E$:~$f$-exceptional}} E. 
\end{equation*} 
Then $(Y, \Delta^\dag)$ is dlt and the following 
equality 
\begin{equation*} 
K_Y+\Delta^\dag=f^*(K_X+\Delta)+\sum _{a(E, X, \Delta)<-1}
(a(E, X, \Delta)+1)E
\end{equation*} 
holds. 
\end{itemize}
\end{thm}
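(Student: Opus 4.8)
The plan is to reduce to the boundary case treated in \cite[Theorem 4.4.21]{fujino-foundations} by running a relative minimal model program, and then to extract the behaviour of the discrepancies with respect to the original (possibly non-boundary) divisor $\Delta$ by means of the negativity lemma (Lemma \ref{c-lem3.8}). First I would take a log resolution $g\colon V\to X$ of $(X,\Delta)$ so that both $\Exc(g)$ and $\Exc(g)\cup \Supp g^{-1}_*\Delta$ are simple normal crossing divisors, and write
\begin{equation*}
K_V=g^*(K_X+\Delta)+\sum_E a(E,X,\Delta)E,
\end{equation*}
the sum running over all prime divisors $E$ on $V$. On $V$ I would then introduce
\begin{equation*}
\Gamma=\sum_{0<d_i<1}d_i\, g^{-1}_*\Delta_i+\sum_{d_i\geq 1}g^{-1}_*\Delta_i+\sum_{E\colon g\text{-exceptional}}E,
\end{equation*}
which is a genuine boundary $\mathbb R$-divisor (all coefficients lie in $[0,1]$) even though $\Delta$ is not, so that $(V,\Gamma)$ is log smooth, hence dlt.

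The second step is to run the $(K_V+\Gamma)$-MMP over $X$ with scaling of an ample divisor, which terminates by \cite{bchm}, and to let $f\colon Y\to X$ be the resulting relative minimal model, so that $K_Y+\Delta^\dag$ is $f$-nef with $\Delta^\dag=f_*\Gamma$. Since $V$ is smooth and $\mathbb Q$-factoriality is preserved along the MMP, this yields (i), and $(Y,\Delta^\dag)$ is dlt as a minimal model of the dlt pair $(V,\Gamma)$, which is the first half of (iii). The direct computation
\begin{equation*}
K_V+\Gamma=g^*(K_X+\Delta)+\sum_{d_i>1}(1-d_i)\,g^{-1}_*\Delta_i+\sum_{E\colon g\text{-exceptional}}\bigl(1+a(E,X,\Delta)\bigr)E,
\end{equation*}
pushed forward to $Y$, gives the asserted identity of (iii), with the understanding that in $\sum_{a(E,X,\Delta)<-1}(a(E,X,\Delta)+1)E$ the index ranges over both the surviving exceptional divisors and the strict transforms $\Delta_i$ with $d_i>1$, for which $a(\Delta_i,X,\Delta)=-d_i<-1$.

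The remaining point (ii) is where Lemma \ref{c-lem3.8} enters decisively. Set $E:=K_Y+\Delta^\dag-f^*(K_X+\Delta)$, which equals the right-hand correction term of (iii). Because $f^*(K_X+\Delta)$ is numerically trivial over $X$, we have $E\equiv_X K_Y+\Delta^\dag$, and the latter is $f$-nef, hence $E\in\Mov(Y/X)$. Moreover $-f_*E=\sum_{d_i>1}(d_i-1)\Delta_i$ is effective, so Lemma \ref{c-lem3.8} forces $-E$ to be effective, i.e.\ every coefficient of $E$ is nonpositive. Since the coefficient of $E$ along a surviving $f$-exceptional divisor $E_j$ is exactly $1+a(E_j,X,\Delta)$, this gives $a(E_j,X,\Delta)\leq -1$, which is (ii); equivalently, any $g$-exceptional divisor with $a>-1$ is necessarily contracted by the program.

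The main obstacle I anticipate is precisely that the coefficients $\geq 1$ make the program non-crepant over $(X,\Delta)$: one has $g_*\Gamma=\Delta^{\wedge 1}\neq\Delta$, and in general $K_X+\Delta^{\wedge 1}$ need not even be $\mathbb R$-Cartier, so the MMP is not run on a pullback of a divisor from $X$, and the difference $K_V+\Gamma-g^*(K_X+\Delta)$ carries a non-exceptional negative contribution coming from the strict transforms of the $\Delta_i$ with $d_i>1$. Keeping this non-exceptional part separate from the exceptional part — both when verifying termination of the relative MMP and when checking the hypotheses of the negativity lemma — is the delicate part of the argument; the computation of $-f_*E$ above is what isolates it and lets Lemma \ref{c-lem3.8} apply cleanly.
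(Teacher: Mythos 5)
Your proposal is correct and follows essentially the same route as the paper: a log resolution, the boundary modification $\Gamma=\widetilde\Delta$, a relative MMP over $X$ citing \cite{bchm}, and Lemma \ref{c-lem3.8} applied to $K_Y+\Delta^\dag-f^*(K_X+\Delta)$ (whose pushforward is anti-effective precisely because of the components with $d_i>1$) to force the surviving exceptional divisors to have discrepancy $\leq -1$. The paper's argument is only a sketch deferring to \cite[Theorem 4.4.21]{fujino-foundations}, and your explicit verification that the correction term lies in $\Mov(Y/X)$ and has anti-effective pushforward is exactly the point the paper flags as the needed modification.
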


We only give a sketch of the proof of Theorem \ref{c-thm3.9} 
since the proof of \cite[Theorem 4.4.21]{fujino-foundations} 
works by Lemma \ref{c-lem3.8}. 

\begin{proof}[Sketch of Proof of Theorem \ref{c-thm3.9}]
Let $g\colon Z\to X$ be a resolution such that 
$\Exc(g)\cup \Supp g^{-1}_*\Delta$ is a simple 
normal crossing divisor on $X$ and 
$g$ is projective. 
We write 
\begin{equation*} 
K_Z+\widetilde \Delta=g^*(K_X+\Delta)+F, 
\end{equation*} 
where 
\begin{equation*} 
\widetilde \Delta=\sum _{0<d_i< 1}d_i g^{-1}_*\Delta_i+\sum _{d_i\geq 1} 
g^{-1}_*\Delta_i+\sum _{\text{$E$:~$g$-exceptional}} E. 
\end{equation*} 
We note that $-g_*F$ is effective by construction. 
Then we apply the same argument as in the proof of 
\cite[Theorem 4.4.21]{fujino-foundations}, that is, 
we run a suitable minimal model program with 
respect to $(Z, \widetilde \Delta)$ over $X$. 
After finitely many steps, we see that 
the effective part of $F$ is contracted. 
Note that all we have to do is to use Lemma \ref{c-lem3.8} 
instead of \cite[Lemma 2.4.5]{fujino-foundations}.  
\end{proof}

When $\Delta$ is a boundary $\mathbb R$-divisor, 
Lemma \ref{c-lem3.10} is nothing but \cite[Theorem 3.4]{svaldi}. 

\begin{lem}\label{c-lem3.10}
Let $X$ be a normal quasi-projective variety and 
let $\Delta$ be an effective 
$\mathbb R$-divisor on $X$ such that 
$K_X+\Delta$ is $\mathbb R$-Cartier. 
Then we can construct a projective 
birational morphism 
$g\colon Y\to X$ from a normal $\mathbb Q$-factorial variety 
$Y$ with the following properties. 
\begin{itemize}
\item[(i)] $K_Y+\Delta_Y:=g^*(K_X+\Delta)$, 
\item[(ii)] the pair 
\begin{equation*} 
\left(Y, \Delta'_Y:=\sum _{d_i <1}d_i D_i+
\sum _{d_i\geq 1}D_i\right)
\end{equation*} 
is dlt, where $\Delta_Y=\sum _i d_i D_i$ is 
the irreducible decomposition of 
$\Delta_Y$, 
\item[(iii)] every $g$-exceptional prime divisor 
is a component of $(\Delta'_Y)^{=1}$, and 
\item[(iv)] $g^{-1}\Nklt(X, \Delta)$ coincides with 
$\Nklt(Y, \Delta_Y)$ and $\Nklt(Y, \Delta'_Y)$ set theoretically. 
\end{itemize}
\end{lem}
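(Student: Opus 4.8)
The plan is to reduce Lemma \ref{c-lem3.10} to the dlt blow-up construction of Theorem \ref{c-thm3.9}, and then promote the conclusion of that theorem to the crepant statement required here. The key structural difference is that Theorem \ref{c-thm3.9} produces $K_Y+\Delta^\dag=f^*(K_X+\Delta)+\sum_{a(E,X,\Delta)<-1}(a(E,X,\Delta)+1)E$, which is crepant only up to the boundary-valued exceptional divisors coming from discrepancies below $-1$, whereas Lemma \ref{c-lem3.10} demands an exactly crepant pullback $K_Y+\Delta_Y=g^*(K_X+\Delta)$ together with a dlt modification $\Delta'_Y$ of $\Delta_Y$. So the first thing I would do is apply Theorem \ref{c-thm3.9} to $(X,\Delta)$ to obtain $f\colon Y\to X$ satisfying (i)--(iii) there, and then \emph{define} $\Delta_Y$ by the crepant equation $K_Y+\Delta_Y:=f^*(K_X+\Delta)$, which gives property (i) of the lemma by fiat.

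Next I would identify $\Delta'_Y$ with the dlt boundary $\Delta^\dag$ produced in Theorem \ref{c-thm3.9}(iii), and verify that it matches the formula $\sum_{d_i<1}d_iD_i+\sum_{d_i\geq 1}D_i$ stated in (ii). Writing $\Delta_Y=\sum_i d_iD_i$ for the irreducible decomposition, the coefficients $d_i$ of the $f$-exceptional divisors are exactly $-a(E,X,\Delta)$, and by Theorem \ref{c-thm3.9}(ii) these satisfy $d_i\geq 1$; hence every $f$-exceptional prime divisor is a component of $(\Delta'_Y)^{=1}$, giving (iii). The non-exceptional (strict transform) components carry the original coefficients, so truncating coefficients $\geq 1$ down to $1$ recovers precisely $\Delta^\dag$. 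Since Theorem \ref{c-thm3.9}(iii) asserts $(Y,\Delta^\dag)$ is dlt, property (ii) follows directly, with $g=f$.

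For property (iv), I would argue that taking $\Delta'_Y=\Delta^\dag$ changes only coefficients that are $\geq 1$ (lowering them to exactly $1$) and adds nothing to the $\geq 1$-support, so $(\Delta_Y)^{\geq 1}$ and $(\Delta'_Y)^{\geq 1}$ have the same support; consequently $\Nklt(Y,\Delta_Y)$ and $\Nklt(Y,\Delta'_Y)$ coincide set-theoretically, since the non-klt locus is governed by where the boundary coefficients reach $1$ together with the non-klt locus pulled back from below. For the comparison with $g^{-1}\Nklt(X,\Delta)$, the crepant relation $K_Y+\Delta_Y=g^*(K_X+\Delta)$ makes the discrepancies of divisors over $Y$ agree with those over $X$, so a divisor over $Y$ computes a log canonical place of $(Y,\Delta_Y)$ if and only if it does so for $(X,\Delta)$; combining this with the fact that $g$ is an isomorphism over the generic points of the non-klt centers not contracted, one gets $g^{-1}\Nklt(X,\Delta)=\Nklt(Y,\Delta_Y)$ set-theoretically.

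The main obstacle I expect is precisely the bookkeeping in property (iv): one must be careful that lowering coefficients strictly greater than $1$ down to $1$ does not enlarge or shrink the non-klt locus, and that the set-theoretic preimage $g^{-1}\Nklt(X,\Delta)$ genuinely equals $\Nklt(Y,\Delta_Y)$ rather than merely containing it. This requires using the negativity lemma (Lemma \ref{c-lem3.8}) implicitly through Theorem \ref{c-thm3.9} to control which exceptional divisors appear, and invoking that $f$-exceptional divisors all have discrepancy $\leq -1$ so that they contribute to $\Nklt$ on the $Y$-side exactly over the locus they dominate. Everything else is a direct translation of Theorem \ref{c-thm3.9} into the language of the lemma, so the proof should be short once (iv) is handled carefully.
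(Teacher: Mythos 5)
Your reduction to Theorem \ref{c-thm3.9} and the verification of (i)--(iii) are fine, and you correctly sense that (iv) is where the difficulty lies; but the argument you sketch for (iv) has a genuine gap, and it is precisely the gap that forces the paper to do more work. The problematic inclusion is $g^{-1}\Nklt(X,\Delta)\subset \Nklt(Y,\Delta_Y)$. Your argument for it implicitly runs: a point $y$ with $g(y)\in \Nklt(X,\Delta)$ either lies outside $\Exc(g)$ (where $g$ is a local isomorphism, so the crepant relation settles it) or lies on a $g$-exceptional prime divisor, which by Theorem \ref{c-thm3.9} (ii) has coefficient $\geq 1$ in $\Delta_Y$. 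The second alternative requires $\Exc(g)$ to be of pure codimension one, and that is a statement about the $\mathbb{Q}$-factoriality of the \emph{target} $X$, not of $Y$. Since $X$ is only assumed normal, the dlt blow-up can be small over parts of $X$: for instance, take $X=\{xy=zw\}\subset \mathbb{A}^4$ and $\Delta$ the non-$\mathbb{Q}$-Cartier plane $\{x=z=0\}$ with coefficient one. A small $\mathbb{Q}$-factorialization $g\colon Y\to X$ contracts a curve $C\cong\mathbb{P}^1$ to the origin, and for one of the two choices the strict transform of $\Delta$ meets $C$ in a single point. Then $C\subset g^{-1}\Nklt(X,\Delta)$ but $C\not\subset \Supp\Delta_Y^{\geq 1}=\Nklt(Y,\Delta_Y)$, so (iv) fails for that dlt blow-up even though (i)--(iii) hold. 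In short, ``exceptional divisors contribute to $\Nklt$ exactly over the locus they dominate'' does not control the full fibers $g^{-1}(x)$ when the exceptional locus has small components.

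The paper closes this gap with an intermediate minimal model program, which is absent from your proposal. After the first dlt blow-up $\alpha\colon Z\to X$ (which already satisfies (i)--(iii)), one runs a $(K_Z+\Delta_Z^{<1})$-MMP over $X$; since $K_{Z'}+\Delta_{Z'}^{<1}\sim_{\mathbb{R}}-\Delta_{Z'}^{\geq 1}+\alpha'^*(K_X+\Delta)$ on the resulting minimal model, the divisor $-\Delta_{Z'}^{\geq 1}$ becomes nef over $X$. Nefness is exactly what rules out the pathology above: a curve in a fiber meeting but not contained in $\Supp\Delta_{Z'}^{\geq 1}$ would intersect that effective divisor positively. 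Hence $\Supp\Delta_{Z'}^{\geq 1}$ is a union of whole fibers and therefore equals the full preimage of $\Nklt(X,\Delta)$. A final dlt blow-up $\beta\colon Y\to Z'$ (now over a $\mathbb{Q}$-factorial base, so your direct argument does apply to $\beta$) restores dlt-ness without disturbing this. If you want to salvage your write-up, you must insert this MMP step; the rest of your bookkeeping for (i)--(iii) and for the comparison of $\Nklt(Y,\Delta_Y)$ with $\Nklt(Y,\Delta'_Y)$ can stay as is.
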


By Theorem \ref{c-thm3.9}, the proof of \cite[Theorem 3.4]{svaldi} 
works without any changes even when $\Delta$ is not 
a boundary $\mathbb R$-divisor. 
We give a proof for the sake of completeness. 

\begin{proof}[Proof of Lemma \ref{c-lem3.10}]
There exists a dlt blow-up $\alpha\colon Z\to X$ with 
$K_Z+\Delta_Z:=\alpha^*(K_X+\Delta)$ satisfying 
(i), (ii), and (iii) by Theorem \ref{c-thm3.9}. 
Note that $(Z, \Delta^{<1}_Z)$ is a $\mathbb Q$-factorial 
kawamata log terminal pair. 
We take a minimal model $(Z', \Delta^{<1}_{Z'})$ of 
$(Z, \Delta^{<1}_Z)$ over $X$ by \cite{bchm}. 
\begin{equation*} 
\xymatrix{
Z \ar[dr]_-\alpha\ar@{-->}[rr]^-\varphi& & Z' \ar[dl]^-{\alpha'}\\ 
&X &
}
\end{equation*} 
Then $K_{Z'}+\Delta^{<1}_{Z'}
\sim _{\mathbb R} -\Delta^{\geq 1}_{Z'}+\alpha'^*(K_X+\Delta)$ 
is nef over $X$. 
Of course, we put $\Delta_{Z'}=\varphi_*\Delta_Z$. 
We take a dlt blow-up 
$\beta\colon Y\to Z'$ of $(Z', \Delta^{<1}_{Z'}+\Supp \Delta^{\geq 1}_{Z'})$ again 
by Theorem \ref{c-thm3.9} (or 
\cite[Theorem 4.4.21]{fujino-foundations}) and put $g:=
\alpha'\circ \beta\colon Y\to X$. 
It is not difficult to see that this birational morphism $g\colon Y\to X$ with 
$K_Y+\Delta_Y:=g^*(K_X+\Delta)$ satisfies the desired properties. 
It is obvious that $g^{-1}\Nklt(X, \Delta)$ contains the 
support of $\beta^*\Delta^{\geq 1}_{Z'}$. 
Since $-\beta^*\Delta^{\geq 1}_{Z'}$ is nef 
over $X$, we see that $\beta^*\Delta^{\geq 1}_{Z'}$ coincides with 
$g^{-1}\Nklt(X, \Delta)$ set theoretically.  
\end{proof}

For the details of the proof of Lemma \ref{c-lem3.10}, see 
\cite[Theorem 3.4]{svaldi}. 
In \cite{fujino-hashizume1}, Theorem \ref{c-thm3.9} and 
Lemma \ref{c-lem3.10} will be generalized completely 
by using the minimal model program for 
log canonical pairs established in \cite{hashizume2}. 

\section{On quasi-log schemes}\label{d-sec4}

In this section, we explain some basic definitions and 
results on quasi-log schemes. 
For the details of the theory of quasi-log schemes, 
we recommend the reader to see \cite[Chapter 6]{fujino-foundations} 
and \cite{fujino-on-quasi-log}. 

\subsection{Definitions and basic 
properties of quasi-log schemes}\label{d-subsec4.1}
The notion of quasi-log schemes was first introduced by 
Florin Ambro (see \cite{ambro}) 
in order to 
establish the cone and contraction theorem for 
$(X, \Delta)$, where 
$X$ is a normal variety and $\Delta$ is an 
effective $\mathbb R$-divisor 
on $X$ such that $K_X+\Delta$ is $\mathbb R$-Cartier. 
Here we use the formulation in \cite[Chapter 6]
{fujino-foundations}, which is slightly different from 
Ambro's original one. 
We recommend the interested reader to see \cite[Appendix A]{fujino-pull} 
for the difference between our definition of quasi-log schemes 
and Ambro's one (see also \cite[Section 8]{fujino-on-quasi-log}). 

\medskip 

In order to define quasi-log schemes, we use the notion of 
{\em{globally embedded simple normal crossing pairs}}. 

\begin{defn}[{Globally embedded simple normal crossing 
pairs, see \cite[Definition 6.2.1]{fujino-foundations}}]\label{d-def4.1} 
Let $Y$ be a simple normal crossing divisor 
on a smooth 
variety $M$ and let $B$ be an $\mathbb R$-divisor 
on $M$ such that 
$\Supp (B+Y)$ is a simple normal crossing divisor on $M$ and that 
$B$ and $Y$ have no common irreducible components. 
We put $B_Y=B|_Y$ and consider the pair $(Y, B_Y)$. 
We call $(Y, B_Y)$ a {\em{globally embedded simple normal 
crossing pair}} and $M$ the {\em{ambient space}} 
of $(Y, B_Y)$. A {\em{stratum}} of $(Y, B_Y)$ is a log canonical  
center of $(M, Y+B)$ that is contained in $Y$. 
\end{defn}

Let us recall the definition of {\em{quasi-log schemes}}. 

\begin{defn}[{Quasi-log 
schemes, see \cite[Definition 6.2.2]{fujino-foundations}}]\label{d-def4.2}
A {\em{quasi-log scheme}} is a scheme $X$ endowed with an 
$\mathbb R$-Cartier divisor 
(or $\mathbb R$-line bundle) 
$\omega$ on $X$, a closed subscheme 
$X_{-\infty}\subsetneq X$, and a finite collection $\{C\}$ of reduced 
and irreducible subschemes of $X$ such that there is a 
proper morphism $f\colon (Y, B_Y)\to X$ from a globally 
embedded simple 
normal crossing pair satisfying the following properties: 
\begin{itemize}
\item[(1)] $f^*\omega\sim_{\mathbb R}K_Y+B_Y$. 
\item[(2)] The natural map 
$\mathcal O_X
\to f_*\mathcal O_Y(\lceil -(B_Y^{<1})\rceil)$ 
induces an isomorphism 
\begin{equation*}
\mathcal I_{X_{-\infty}}\overset{\simeq}{\longrightarrow} 
f_*\mathcal O_Y(\lceil 
-(B_Y^{<1})\rceil-\lfloor B_Y^{>1}\rfloor),  
\end{equation*} 
where $\mathcal I_{X_{-\infty}}$ is the defining ideal sheaf of 
$X_{-\infty}$. 
\item[(3)] The collection of reduced and irreducible subschemes 
$\{C\}$ coincides with the images 
of the strata of $(Y, B_Y)$ that are not included in $X_{-\infty}$. 
\end{itemize}
We simply write $[X, \omega]$ to denote 
the above data 
\begin{equation*}
\left(X, \omega, f\colon (Y, B_Y)\to X\right)
\end{equation*} 
if there is no risk of confusion. 
Note that a quasi-log scheme $[X, \omega]$ is 
the union of $\{C\}$ and $X_{-\infty}$. 
The reduced and irreducible subschemes $C$ 
are called the {\em{qlc strata}} of $[X, \omega]$, 
$X_{-\infty}$ is called the {\em{non-qlc locus}} 
of $[X, \omega]$, and $f\colon (Y, B_Y)\to X$ is 
called a {\em{quasi-log resolution}} 
of $[X, \omega]$. 
We sometimes use $\Nqlc(X, 
\omega)$ or 
\begin{equation*}
\Nqlc(X, \omega, f\colon (Y, B_Y)\to X)
\end{equation*} 
to denote 
$X_{-\infty}$. 
If a qlc stratum $C$ of $[X, \omega]$ is not an 
irreducible component of $X$, then 
it is called a {\em{qlc center}} of $[X, \omega]$. 

We say that $(X, \omega, f\colon (Y, B_Y)\to X)$ or $[X, \omega]$ 
has a {\em{$\mathbb Q$-structure}} 
if $B_Y$ is a $\mathbb Q$-divisor, $\omega$ is a $\mathbb Q$-Cartier 
divisor 
(or $\mathbb Q$-line bundle), and 
$f^*\omega\sim _{\mathbb Q} K_Y+B_Y$ holds in 
the above definition. 
\end{defn}

In Definition \ref{d-def4.1}, we note that 
$f\colon Y\to X$ is not necessarily surjective and 
that $Y$ may be reducible even when $X$ is irreducible. 
In this paper, the notion of {\em{open qlc strata}} is indispensable. 

\begin{defn}[Open qlc strata]\label{d-def4.3}
Let $W$ be a qlc stratum of a quasi-log scheme $[X, \omega]$. 
We put 
\begin{equation*}
U:=W\setminus \left\{ \left(W\cap \Nqlc(X, \omega)\right)\cup 
\bigcup_{W'}W'\right\}, 
\end{equation*} 
where $W'$ runs over qlc centers of $[X, \omega]$ strictly contained 
in $W$, and call it the {\em{open qlc stratum}} of 
$[X, \omega]$ associated to $W$. 
\end{defn}

In Section \ref{k-sec11}, we need the notion of {\em{log bigness}}. 
For the details of relatively big $\mathbb R$-divisors, 
see \cite[Section 2.1]{fujino-foundations}.  

\begin{defn}[Log bigness]\label{d-def4.4}
Let $[X, \omega]$ be a quasi-log scheme and let 
$\pi\colon X\to S$ be a proper morphism 
between schemes. 
Let $D$ be an $\mathbb R$-Cartier divisor 
(or $\mathbb R$-line bundle) on $X$. 
We say that $D$ is {\em{log big over $S$ with respect 
to $[X, \omega]$}} if $D|_W$ is big over $\pi(W)$ for every 
qlc stratum $W$ of $[X,\omega]$. 
\end{defn}

We collect some basic and important properties of 
quasi-log schemes for the reader's convenience. 

\begin{thm}[{\cite[Theorem 6.3.4]{fujino-foundations}}]
\label{d-thm4.5}
In Definition \ref{d-def4.2}, 
we may assume that the ambient space $M$ of the 
globally embedded simple normal crossing pair $(Y, B_Y)$ is 
quasi-projective. 
In particular, $Y$ is quasi-projective and 
$f\colon Y\to X$ is projective. 
\end{thm}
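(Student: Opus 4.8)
The plan is to modify the ambient space of the given quasi-log resolution by a projective birational morphism so that it becomes quasi-projective, while keeping all of conditions (1)--(3) of Definition \ref{d-def4.2} intact. So I start with a quasi-log resolution $f\colon (Y, B_Y)\to X$ with ambient space $M$, where $M$ is a smooth (possibly non-quasi-projective) variety, $Y\subset M$ is a simple normal crossing divisor, and $\Supp(B+Y)$ is simple normal crossing. First, I would apply Chow's lemma together with Hironaka's resolution of singularities to produce a projective birational morphism $p\colon M'\to M$ from a smooth quasi-projective variety $M'$ such that $\Exc(p)\cup \Supp p^{-1}(Y+B)$ is a simple normal crossing divisor on $M'$ and such that $p$ is an isomorphism over a neighborhood of the generic point of every stratum of $(Y, B_Y)$ (in particular over the generic points of the irreducible components of $Y$). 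Such a $p$ exists because the finitely many generic points of strata can be kept inside the dense open locus over which the Chow modification and the subsequent resolution are isomorphisms.

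Next, I would transport the globally embedded simple normal crossing pair to $M'$. Let $Y'=p^{-1}_*Y$ be the strict transform and write $g:=p|_{Y'}\colon Y'\to Y$. I would define $B_{Y'}$ by the adjunction formula $K_{Y'}+B_{Y'}=g^*(K_Y+B_Y)$, arranging the coefficients so that $(Y', B_{Y'})$ is again a globally embedded simple normal crossing pair with ambient space the quasi-projective variety $M'$, and set $f':=f\circ g\colon (Y', B_{Y'})\to X$. Condition (1) is then immediate, since
\[
f'^*\omega=g^*f^*\omega\sim_{\mathbb R}g^*(K_Y+B_Y)=K_{Y'}+B_{Y'}.
\]
Condition (3) is combinatorial bookkeeping: because $g$ is an isomorphism over the generic point of each stratum, the strict transforms of the strata of $(Y, B_Y)$ reproduce exactly the collection $\{C\}$ and the non-qlc locus $X_{-\infty}$, and the only new strata are exceptional ones; the coefficients of $B_{Y'}$ must be chosen so that these exceptional strata map into already-present strata (or into $X_{-\infty}$), creating no new qlc centers.

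The main obstacle is condition (2), the isomorphism
\[
\mathcal I_{X_{-\infty}}\overset{\simeq}{\longrightarrow} f'_*\mathcal O_{Y'}(\lceil -(B_{Y'}^{<1})\rceil-\lfloor B_{Y'}^{>1}\rfloor).
\]
By the factorization $f'=f\circ g$, this reduces to the sheaf equality
\[
g_*\mathcal O_{Y'}(\lceil -(B_{Y'}^{<1})\rceil-\lfloor B_{Y'}^{>1}\rfloor)\simeq \mathcal O_Y(\lceil -(B_Y^{<1})\rceil-\lfloor B_Y^{>1}\rfloor).
\]
Since $K_{Y'}+B_{Y'}=g^*(K_Y+B_Y)$, the $g$-exceptional divisors enter $B_{Y'}$ with discrepancy-determined coefficients, and one must check directly on the simple normal crossing pair that the rounding combination $\lceil-(\,\cdot^{<1})\rceil-\lfloor \,\cdot^{>1}\rfloor$ is preserved under pushforward by $g$. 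This invariance is precisely the sort of statement supplied by the torsion-freeness and Kodaira-type vanishing machinery for simple normal crossing pairs developed in \cite{fujino-foundations}, and it is where the real work lies. Granting it, $f'$ is a quasi-log resolution attached to the same $\omega$, $X_{-\infty}$, and $\{C\}$, now with the quasi-projective ambient space $M'$. Finally, since $Y'$ is a divisor on the quasi-projective variety $M'$, it is quasi-projective, and the proper morphism $f'\colon Y'\to X$ from a quasi-projective scheme is automatically projective, which gives the last assertion of the theorem.
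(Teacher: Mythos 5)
The paper never proves this statement: it is quoted from \cite[Theorem 6.3.4]{fujino-foundations} and the reader is sent to that reference, so your attempt has to be measured against that proof. Your overall strategy --- produce a projective birational morphism $p\colon M'\to M$ from a smooth quasi-projective variety via Chow's lemma plus Hironaka, pull the pair back crepantly, and check that conditions (1)--(3) of Definition \ref{d-def4.2} survive --- is indeed the strategy of the reference, and your reductions of (1) and of (2) to a single pushforward identity are correct. But there are two genuine gaps. The first is your claim that $p$ can be chosen to be an isomorphism over a neighborhood of the generic point of \emph{every} stratum of $(Y,B_Y)$, justified only by saying these finitely many points ``can be kept inside the dense open locus'' of the Chow modification. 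Chow's lemma guarantees an isomorphism over \emph{some} dense open subset of $M$, and the generic points of the strata are non-generic points of $M$ (they lie on the divisor $Y$, and the deeper ones in high codimension), so nothing forces them into that open set; arranging this would need a refined Chow's lemma you do not supply, and the whole point of the theorem is that $M$ is not quasi-projective, so one cannot wave at ``general position''. Your choice of $Y'=p^{-1}_*Y$ and your verification of condition (3) both lean on this: without it, a stratum whose generic point sits under the non-isomorphism locus need not be dominated by any stratum of $(Y',B_{Y'})$, and the collection $\{C\}$ changes. The standard repair --- and what the machinery of \cite[Proposition 6.3.1]{fujino-foundations} does, where the modification deliberately blows up strata --- is to take $Y'$ to be the entire reduced coefficient-one part of the crepant pullback $K_{M'}+\Delta'=p^*(K_M+Y+B)$ lying over $Y$, new exceptional components included, rather than the strict transform; then every stratum of $(Y,B_Y)$ is automatically dominated and no isomorphism condition is needed.

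The second gap is that you explicitly ``grant'' the identity $g_*\mathcal O_{Y'}(\lceil -(B_{Y'}^{<1})\rceil-\lfloor B_{Y'}^{>1}\rfloor)\simeq \mathcal O_Y(\lceil -(B_Y^{<1})\rceil-\lfloor B_Y^{>1}\rfloor)$, which is exactly condition (2) and, as you say yourself, is where the real work lies. This is a genuine lemma, not rounding bookkeeping: one uses the computation $\lceil -(B_{Y'}^{<1})\rceil-\lfloor B_{Y'}^{>1}\rfloor-(K_{Y'}+\{B_{Y'}\}+B_{Y'}^{=1})=-(K_{Y'}+B_{Y'})\sim_{\mathbb R}-g^*(K_Y+B_Y)$ together with the torsion-freeness theorem \cite[Theorem 5.6.2]{fujino-foundations} and an effectivity/exceptionality analysis of the discrepancy divisor (the $\lfloor B_{Y'}^{>1}\rfloor$ part is the delicate one). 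A writeup that defers its central step to ``the machinery'' is an outline, not a proof; you should either carry out this computation or cite the precise statement in \cite{fujino-foundations} that supplies it.
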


For the details of Theorem \ref{d-thm4.5}, 
see the proof of \cite[Theorem 6.3.4]{fujino-foundations}. 
In the theory of quasi-log schemes, we sometimes 
need the projectivity of $f$ in order to use the theory 
of variations of mixed Hodge structure (see \cite{fujino-slc-trivial} and 
\cite{fujino-fujisawa-liu}). 
Hence Theorem \ref{d-thm4.5} plays a crucial role. 
The most important result in the theory of 
quasi-log schemes is as follows. 

\begin{thm}[{\cite[Theorem 6.3.5]{fujino-foundations}}]\label{d-thm4.6}
Let $[X, \omega]$ be a quasi-log scheme 
and let $X'$ be the union of $X_{-\infty}$ with 
a {\em{(}}possibly empty{\em{)}} union of some qlc strata of $[X, \omega]$. 
Then we have the following properties. 
\begin{itemize}
\item[(i)] {\em{(Adjunction).}}~Assume that 
$X'\ne X_{-\infty}$. Then $X'$ naturally becomes 
a quasi-log scheme with 
$\omega'=\omega|_{X'}$ and $X'_{-\infty}=X_{-\infty}$. 
Moreover, the qlc strata of $[X', \omega']$ 
are exactly the qlc strata of $[X, \omega]$ that are included in $X'$. 
\item[(ii)] {\em{(Vanishing theorem).}}~Assume that 
$\pi\colon X\to S$ is a proper morphism 
between schemes. Let $L$ be a Cartier divisor on $X$ such that 
$L-\omega$ is nef and log big over $S$ with 
respect to $[X, \omega]$. 
Then $R^i\pi_*(\mathcal I_{X'}\otimes \mathcal O_X(L))=0$ for 
every $i>0$, 
where $\mathcal I_{X'}$ is the defining ideal sheaf of $X'$ on $X$. 
\end{itemize}
\end{thm}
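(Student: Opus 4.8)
The plan is to prove (i) and (ii) together, treating the vanishing theorem as the engine that drives the adjunction, since both rest on the same Hodge-theoretic input: the injectivity, torsion-freeness, and vanishing theorems for \emph{reducible} simple normal crossing pairs, which come from the $E_1$-degeneration of the Hodge-to-de Rham spectral sequence and the strictness of the Hodge filtration on $Y$. First I would fix a quasi-log resolution $f\colon (Y, B_Y)\to X$ and, invoking Theorem \ref{d-thm4.5}, arrange that the ambient space is quasi-projective, so that $Y$ is quasi-projective and $f$ is projective; after further blow-ups I would also arrange that $f^{-1}X_{-\infty}$ and the preimages of the relevant qlc strata are unions of strata of $(Y, B_Y)$ and that every divisor in sight has simple normal crossing support.

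For part (ii) the strategy is to realize $\mathcal I_{X'}\otimes \mathcal O_X(L)$ as a higher direct image from $Y$. Decomposing $B_Y=B_Y^{<1}+B_Y^{=1}+B_Y^{>1}$ and writing $A=\lceil -(B_Y^{<1})\rceil$ and $N=\lfloor B_Y^{>1}\rfloor$, condition (2) of Definition \ref{d-def4.2} identifies $\mathcal I_{X_{-\infty}}$ with $f_*\mathcal O_Y(A-N)$; I would establish the analogous identity expressing $\mathcal I_{X'}$ as $f_*\mathcal O_Y(A-N-T)$, where $T$ is the reduced sum of those components of $B_Y^{=1}$ whose image lies in $X'$. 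Since $L-\omega$ is nef and log big over $S$ with respect to $[X,\omega]$, its restriction to the image $f(V)$ of every stratum $V$ of $(Y,B_Y)$ is big over the base, which is exactly the positivity hypothesis required by the vanishing theorem for reducible simple normal crossing pairs. Applying that theorem to the sheaf $\mathcal O_Y(f^*L+A-N-T)$, which is $\mathbb R$-linearly equivalent over $S$ to $K_Y$ plus a boundary supported on the simple normal crossing locus plus the nef and log big class $f^*(L-\omega)$, and feeding the accompanying torsion-freeness into the Leray spectral sequence for $\pi\circ f$, then kills the higher direct images and yields $R^i\pi_*(\mathcal I_{X'}\otimes \mathcal O_X(L))=0$ for every $i>0$.

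For part (i) I would build a quasi-log resolution of $X'$ by restriction. Let $Y'$ be the union of the strata of $(Y, B_Y)$ mapping into $X'$; being a union of strata, $Y'$ is again a globally embedded simple normal crossing variety, and simple normal crossing adjunction gives $(K_Y+B_Y)|_{Y'}=K_{Y'}+B_{Y'}$ for a canonically determined $B_{Y'}$, so that $f':=f|_{Y'}$ satisfies $f'^*\omega'\sim_{\mathbb R}K_{Y'}+B_{Y'}$. This is property (1), and property (3) is the purely combinatorial observation that the strata of $(Y', B_{Y'})$ are exactly the strata of $(Y, B_Y)$ lying over $X'$. The content is property (2), namely that $X'_{-\infty}=X_{-\infty}$ and that $\mathcal I_{X_{-\infty}}\cong f'_*\mathcal O_{Y'}(\lceil -(B_{Y'}^{<1})\rceil-\lfloor B_{Y'}^{>1}\rfloor)$. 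I would deduce this from the restriction short exact sequence
\begin{equation*}
0\longrightarrow \mathcal O_{Y''}\bigl((A-N)|_{Y''}-Y'|_{Y''}\bigr)\longrightarrow \mathcal O_Y(A-N)\longrightarrow \mathcal O_{Y'}\bigl((A-N)|_{Y'}\bigr)\longrightarrow 0,
\end{equation*}
where $Y''$ is the complementary union of strata, by pushing it forward along $f$ and using the torsion-freeness and vanishing theorems for the piece supported on $Y''$ to show that the connecting homomorphisms vanish and that the pushforward of the complementary term contributes nothing new to the non-qlc locus.

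The main obstacle is concentrated in the Hodge-theoretic vanishing and torsion-freeness theorems for \emph{reducible} simple normal crossing pairs that underlie both parts. Their proofs require the degeneration at $E_1$ of the Hodge-to-de Rham spectral sequence and the strictness of the Hodge filtration on the highly singular, reducible variety $Y$, and the delicate point is that, because $Y$ is reducible, positivity must be controlled stratum by stratum; this is precisely why the hypotheses are phrased as \emph{nef and log big} rather than merely nef. Propagating this positivity through the restriction to $Y'$, and carrying out the bookkeeping of the rounding operators $\lceil -(B_Y^{<1})\rceil$ and $\lfloor B_Y^{>1}\rfloor$ under adjunction while verifying that $X_{-\infty}$ is left unchanged, is the technically demanding heart of the argument.
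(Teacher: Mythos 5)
Your proposal follows essentially the same route as the paper's sketch of (i): arrange via \cite[Proposition 6.3.1]{fujino-foundations} that the strata over $X'$ form a union of irreducible components $Y'$ of $Y$, push forward the restriction exact sequence, kill the connecting homomorphism with the torsion-freeness theorem for reducible simple normal crossing pairs, and define $\mathcal I_{X'}$ as the pushforward from $Y''=Y-Y'$; part (ii), which the paper only cites, is likewise the standard vanishing-theorem argument. The one imprecision is writing $\mathcal I_{X'}=f_*\mathcal O_Y(A-N-T)$ for a divisor $T$ supported on $B_Y^{=1}$ in (ii) — when $X'$ contains images of whole irreducible components of $Y$ (or of strata that are intersections of components) the ideal cannot be realized by twisting down by a divisor on all of $Y$ and must instead be taken as the pushforward $f''_*\mathcal O_{Y''}(A-N-Y'|_{Y''})$ from the complementary union, exactly as you do in part (i).
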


In this paper, we will repeatedly use adjunction for 
quasi-log schemes in Theorem \ref{d-thm4.6} (i). 
We strongly recommend the reader to see the 
proof of \cite[Theorem 6.3.5]{fujino-foundations}. 
Here, we only explain the main idea of the proof 
of Theorem \ref{d-thm4.6} (i) 
for the reader's convenience. 

\begin{proof}[Idea of Proof of Theorem \ref{d-thm4.6} (i)] 
By definition, $X'$ is the union of $X_{-\infty}$ with 
a union of some qlc strata of $[X, \omega]$ set theoretically. 
We assume that $X'\ne X_{-\infty}$ holds. 
By \cite[Proposition 6.3.1]{fujino-foundations}, 
we may assume that 
the union of all strata of $(Y, B_Y)$ mapped to $X'$ by $f$, which 
is denoted by $Y'$, is a union of some irreducible components of $Y$. 
We note that $Y$ is a simple normal crossing divisor on a smooth 
variety $M$ (see Definition \ref{d-def4.1}). 
We put $Y''=Y-Y'$, 
$K_{Y''}+B_{Y''}=(K_Y+B_Y)|_{Y''}$, and 
$K_{Y'}+B_{Y'}=(K_Y+B_Y)|_{Y'}$. We set $f''=f|_{Y''}$ and 
$f'=f|_{Y'}$. Then we claim that 
\begin{equation*}
\left(X', \omega', f'\colon (Y', B_{Y'})\to X'\right)
\end{equation*} 
becomes a quasi-log scheme satisfying the desired properties. 
Let us consider the 
following short exact sequence:   
\begin{equation*}
\begin{split}
0&\to \mathcal O_{Y''}(\lceil -(B^{<1}_{Y''})\rceil -\lfloor 
B^{>1}_{Y''}\rfloor-Y'|_{Y''})\to 
\mathcal O_Y(\lceil -(B^{<1}_Y)\rceil -\lfloor 
B^{>1}_Y\rfloor)
\\ &\to 
\mathcal O_{Y'}(\lceil -(B^{<1}_{Y'})\rceil -\lfloor 
B^{>1}_{Y'}\rfloor)\to 0, 
\end{split} 
\end{equation*} 
which is induced by 
\begin{equation*}
0\to \mathcal O_{Y''}(-Y'|_{Y''})\to \mathcal O_Y\to 
\mathcal O_{Y'}\to 0. 
\end{equation*}
We take the associated long exact sequence:  
\begin{equation*}
\begin{split}
0&\longrightarrow f''_*\mathcal O_{Y''}(\lceil -(B^{<1}_{Y''})\rceil -\lfloor 
B^{>1}_{Y''}\rfloor-Y'|_{Y''})\longrightarrow 
f_*\mathcal O_Y(\lceil -(B^{<1}_Y)\rceil -\lfloor 
B^{>1}_Y\rfloor)
\\ &\longrightarrow 
f'_*\mathcal O_{Y'}(\lceil -(B^{<1}_{Y'})\rceil -\lfloor 
B^{>1}_{Y'}\rfloor)\overset{\delta}{\longrightarrow} 
R^1f''_* \mathcal O_{Y''}(\lceil -(B^{<1}_{Y''})\rceil -\lfloor 
B^{>1}_{Y''}\rfloor-Y'|_{Y''}) \longrightarrow \cdots. 
\end{split} 
\end{equation*} 
Since 
\begin{equation*}
\begin{split}
\lceil -(B^{<1}_{Y''})\rceil -\lfloor B^{>1}_{Y''}\rfloor -Y'|_{Y''}
-(K_{Y''}+\{B_{Y''}\}+B^{=1}_{Y''}-Y'|_{Y''})
&=-(K_{Y''}+B_{Y''})\\ 
& \sim _{\mathbb R}-(f'')^*\omega, 
\end{split}
\end{equation*}
no associated prime of $R^1f''_*\mathcal O_{Y''}
(\lceil -(B^{<1}_{Y''})\rceil -\lfloor B^{>1}_{Y''}\rfloor -Y'|_{Y''})$ 
is contained in $X'$ by \cite[Theorem 5.6.2 (i)]{fujino-foundations}, 
which is 
a generalization of Koll\'ar's torsion-freeness based on the 
theory of mixed Hodge structures on cohomology with compact support 
(see \cite[Chapter 5]{fujino-foundations}). 
Then the connecting homomorphism 
\begin{equation*}
\delta\colon  f'_*\mathcal O_{Y'}(\lceil -(B^{<1}_{Y'})\rceil -\lfloor 
B^{>1}_{Y'}\rfloor)\to 
R^1f''_* \mathcal O_{Y''}(\lceil -(B^{<1}_{Y''})\rceil -\lfloor 
B^{>1}_{Y''}\rfloor-Y'|_{Y''})
\end{equation*} 
is zero since $f(Y')\subset X'$. 
We put 
\begin{equation*} 
\mathcal I_{X'}:=f''_* \mathcal O_{Y''}(\lceil -(B^{<1}_{Y''})\rceil -
\lfloor B^{>1}_{Y''}\rfloor-Y'|_{Y''}), 
\end{equation*} 
which is an ideal sheaf on $X$ since 
$\mathcal I_{X'}\subset \mathcal I_{X_{-\infty}}$, 
and define a scheme structure on $X'$ by $\mathcal I_{X'}$. 
Then we obtain the following big commutative diagram:  
\begin{equation*} 
\xymatrix{
& 0 \ar[d]& 0\ar[d] & &\\ 
0\ar[r]& f''_* \mathcal O_{Y''}(\lceil -(B^{<1}_{Y''})\rceil -\lfloor 
B^{>1}_{Y''}\rfloor-Y'|_{Y''})\ar[r]^-{=}\ar[d] &\mathcal I_{X'}\ar[d]&&
\\ 
0\ar[r]&f_*\mathcal O_Y(\lceil -(B^{<1}_Y)\rceil -\lfloor 
B^{>1}_Y\rfloor)=\mathcal I_{X_{-\infty}}\ar[r]\ar[d]
&\mathcal O_X\ar[d]\ar[r]&\mathcal O_{X_{-\infty}}\ar@{=}[d]
\ar[r]& 0\\ 
0\ar[r]&f'_* \mathcal O_{Y'}(\lceil -(B^{<1}_{Y'})\rceil -\lfloor 
B^{>1}_{Y'}\rfloor)=\mathcal I_{X'_{-\infty}}
\ar[d]\ar[r]&\mathcal O_{X'}\ar[d]\ar[r]&\mathcal O_{X'_{-\infty}}\ar[r]&0\\ 
&0&0&&
}
\end{equation*} 
by the above arguments. More precisely, 
by the above big commutative diagram, 
\begin{equation*} 
\mathcal I_{X'_{-\infty}}=f'_* \mathcal O_{Y'}(\lceil -(B^{<1}_{Y'})\rceil -\lfloor 
B^{>1}_{Y'}\rfloor)
\end{equation*} 
is an ideal sheaf on $X'$ such that $\mathcal O_X/\mathcal I_{X_{-\infty}}
=\mathcal O_{X'}/\mathcal I_{X'_{-\infty}}$. 
Thus we obtain that 
\begin{equation*} 
\left(X', \omega', f'\colon (Y', B_{Y'})\to X'\right)
\end{equation*}  
is a quasi-log scheme satisfying the desired properties. 
\end{proof}

As an obvious corollary, we have: 

\begin{cor}[{\cite[Notation 6.3.10]{fujino-foundations}}]\label{d-cor4.7} 
Let $[X, \omega]$ be a quasi-log scheme. 
The union of $X_{-\infty}$ with all 
qlc centers of $[X, \omega]$ is denoted by 
$\Nqklt(X, \omega)$, or, more precisely, 
\begin{equation*} 
\Nqklt(X, \omega, f\colon (Y, B_Y)\to X). 
\end{equation*}  
If $\Nqklt(X, \omega)\ne X_{-\infty}$, 
then 
\begin{equation*} 
[\Nqklt(X, \omega), \omega|_{\Nqklt(X, \omega)}]
\end{equation*}  
naturally becomes a quasi-log scheme by adjunction. 
\end{cor}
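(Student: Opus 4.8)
The plan is to apply the adjunction part of Theorem \ref{d-thm4.6} directly, taking $X'=\Nqklt(X, \omega)$. The key observation is that $\Nqklt(X, \omega)$ is, by its very definition, the union of $X_{-\infty}$ with all qlc centers of $[X, \omega]$, and every qlc center is by definition a qlc stratum (namely, a qlc stratum that is not an irreducible component of $X$). Since the collection $\{C\}$ of qlc strata built into Definition \ref{d-def4.2} is finite, this exhibits $\Nqklt(X, \omega)$ as the union of $X_{-\infty}$ with a union of some qlc strata of $[X, \omega]$, which is precisely the shape of the subscheme $X'$ to which Theorem \ref{d-thm4.6} (i) applies.

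First I would record that the hypothesis $\Nqklt(X, \omega)\ne X_{-\infty}$ is exactly the hypothesis $X'\ne X_{-\infty}$ required in Theorem \ref{d-thm4.6} (i); this is the only assumption of the adjunction statement, and here it is supplied by the hypothesis of the corollary. Then I would invoke Theorem \ref{d-thm4.6} (i) to conclude that $X'=\Nqklt(X, \omega)$ carries a natural quasi-log structure with $\omega'=\omega|_{\Nqklt(X, \omega)}$ and $X'_{-\infty}=X_{-\infty}$, which is the assertion of the corollary. As a bonus, the same theorem identifies the qlc strata of the induced quasi-log scheme as exactly the qlc strata of $[X, \omega]$ contained in $\Nqklt(X, \omega)$.

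There is essentially no obstacle here: the content is entirely contained in Theorem \ref{d-thm4.6} (i), and the only work is to recognize that $\Nqklt(X, \omega)$ is of the admissible form. The one point worth spelling out, though it is immediate, is that the union defining $\Nqklt(X, \omega)$ really is a finite union of qlc strata together with $X_{-\infty}$, so that the resulting set is a well-defined closed subscheme and no issues with infinite unions arise; this finiteness is guaranteed by Definition \ref{d-def4.2}.
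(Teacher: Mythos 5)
Your proposal is correct and matches the paper exactly: the paper presents this as an immediate consequence of adjunction (Theorem \ref{d-thm4.6} (i)) applied to $X'=\Nqklt(X,\omega)$, which is by definition the union of $X_{-\infty}$ with the (finitely many) qlc centers, i.e.\ qlc strata, of $[X,\omega]$. Nothing further is needed.
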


In the framework of quasi-log schemes, 
$\Nqklt(X, \omega)$ plays an important role 
by induction on dimension. 
When $\Nqklt(X, \omega)=\emptyset$, we have 
the following lemma. 

\begin{lem}[{\cite[Lemma 6.3.9]{fujino-foundations}}]\label{d-lem4.8}
Let $[X, \omega]$ be a quasi-log scheme with 
$X_{-\infty}=\emptyset$. 
Assume that every qlc stratum of $[X, \omega]$ 
is an irreducible component of $X$, equivalently, 
$\Nqklt(X, \omega)=\emptyset$. 
Then $X$ is normal. 
\end{lem}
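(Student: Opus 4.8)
The plan is to verify normality of $X$ through Serre's criterion, establishing that $X$ is reduced, regular in codimension one ($R_1$), and satisfies Serre's condition $S_2$, with the geometric hypothesis $\Nqklt(X,\omega)=\emptyset$ entering precisely at the codimension-one step. First I would fix a quasi-log resolution $f\colon (Y,B_Y)\to X$ as in Definition \ref{d-def4.2}, which by Theorem \ref{d-thm4.5} may be taken with $Y$ quasi-projective and $f$ projective. Since $X_{-\infty}=\emptyset$ we have $\mathcal I_{X_{-\infty}}=\mathcal O_X$, so condition (2) of Definition \ref{d-def4.2} becomes an isomorphism
\begin{equation*}
\mathcal O_X\overset{\simeq}{\longrightarrow} f_*\mathcal O_Y(\lceil -(B_Y^{<1})\rceil-\lfloor B_Y^{>1}\rfloor)
\end{equation*}
induced by the natural map $\mathcal O_X\to f_*\mathcal O_Y$. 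As $Y$ is a reduced (simple normal crossing) divisor, $f_*\mathcal O_Y(\lceil -(B_Y^{<1})\rceil-\lfloor B_Y^{>1}\rfloor)$ is a subsheaf of $f_*\mathcal M_Y$, the pushforward of the sheaf of total quotient rings of $Y$, which has no nilpotents; since the displayed isomorphism is compatible with the ring structure (it sends $1$ to $1$), the composite $\mathcal O_X\hookrightarrow f_*\mathcal M_Y$ is an injective ring homomorphism, so $X$ is reduced. Let $X=\bigcup_\lambda X_\lambda$ be the decomposition into irreducible components; by condition (3) each $X_\lambda$ is a qlc stratum, and by hypothesis these are the only qlc strata.

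Next I would record the translation of the hypothesis. Because $X_{-\infty}=\emptyset$ and there are no qlc centers, condition (3) says that the image under $f$ of every stratum of $(Y,B_Y)$ is one of the irreducible components $X_\lambda$; in particular no stratum of $Y$ is carried to a proper closed subvariety of a component. This is the only place the assumption $\Nqklt(X,\omega)=\emptyset$ is used, and it is what excludes the two ways in which a reduced scheme can fail to be normal in codimension one. Indeed, a failure of $R_1$ at a codimension-one point $x$ of $X$ is of one of two types: either $x$ lies on two distinct components $X_\lambda,X_\mu$ glued along a divisor through $x$, or $x$ is a codimension-one point at which a single component $X_\lambda$ is non-normal. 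In the first case the gluing divisor is detected by an intersection stratum of $(Y,B_Y)$ (the analogue of a conductor/different locus) whose image is exactly that divisor; in the second case the conductor of the normalization of $X_\lambda$ is likewise the image of a stratum supported in $B_Y^{=1}$. In either case the resulting image is a qlc center, contradicting the hypothesis. Hence $X$ is regular in codimension one.

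For Serre's condition $S_2$ I would invoke the Koll\'ar--Ambro torsion-freeness package that already powers the adjunction theorem: the sheaf $f_*\mathcal O_Y(\lceil -(B_Y^{<1})\rceil-\lfloor B_Y^{>1}\rfloor)\cong\mathcal O_X$ is built from the round-up of an adjoint-type divisor, and the torsion-freeness and vanishing statements of \cite[Chapter 5]{fujino-foundations} (the same results cited in the sketch of Theorem \ref{d-thm4.6}) guarantee that it has no embedded associated primes and has the expected depth, i.e.\ that it satisfies $S_2$. Granting reducedness, $R_1$, and $S_2$, Serre's criterion shows that $X$ is normal, which is the assertion of Lemma \ref{d-lem4.8}.

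The hard part will be the codimension-one step: making precise, inside the combinatorics of the globally embedded simple normal crossing pair, the claim that every codimension-one failure of normality of $X$ is literally the image of a stratum of $(Y,B_Y)$ and hence a qlc center. This requires tracking how the conductor of the normalization of $X$ (along the double loci between components and along the non-normal points of a single component) is reproduced by the divisorial and intersection strata of $(Y,B_Y)$ under $f$, and is the technical heart of the argument; the reducedness and $S_2$ steps are comparatively formal consequences of the defining isomorphism and the torsion-free package.
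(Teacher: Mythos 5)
Your opening moves are fine: with $X_{-\infty}=\emptyset$ the defining isomorphism gives $\mathcal O_X\simeq f_*\mathcal O_Y(\lceil -(B_Y^{<1})\rceil -\lfloor B_Y^{>1}\rfloor)$, hence $\mathcal O_X\hookrightarrow f_*\mathcal O_Y$ and reducedness, and condition (3) of Definition \ref{d-def4.2} does indeed say that every stratum of $(Y,B_Y)$ maps onto an irreducible component of $X$. But both of the remaining steps are asserted rather than proved, and each assertion is exactly where the difficulty lives. For $R_1$, the claim that the conductor of the normalization (the gluing divisor between components, or the non-normal divisor inside one component) ``is detected by a stratum of $(Y,B_Y)$'' does not follow from anything you have set up: condition (3) tells you what the images of the strata are, not that every divisor along which $X$ fails to be normal arises as such an image. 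That implication is essentially equivalent to the lemma itself, so as written the $R_1$ step begs the question. For $S_2$, the torsion-freeness results of \cite[Chapter 5]{fujino-foundations} control the associated primes of the higher direct images $R^qf_*$ of adjoint-type sheaves; this is an $S_1$-type statement about $R^qf_*$ and gives no lower bound on the depth of $f_*\mathcal O_Y$ at points of codimension $\geq 2$, particularly over loci to which $f$ contracts divisors. So neither half of Serre's criterion is actually established.

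The paper does not reprove this lemma (it cites \cite[Lemma 6.3.9]{fujino-foundations}), but the intended argument appears verbatim in this paper as Claim \ref{e-claim1} in the proof of Theorem \ref{a-thm1.9}, and it bypasses Serre's criterion entirely. From $X_{-\infty}=\emptyset$ and the hypothesis one first gets $\mathcal O_X\simeq f_*\mathcal O_Y$ with every stratum of $(Y,B_Y)$ dominant onto a component of $X$. One then lifts the normalization $\nu\colon X^\nu\to X$ through $Y$: a prime divisor of $Y$ that is not dominant onto a component of $X$ cannot be a stratum, hence lies generically on a single smooth component of $Y$, so by the valuative criterion the rational map $Y\dashrightarrow X^\nu$ over $X$ is a morphism outside a closed subset of codimension $\geq 2$ in $Y$; since $Y$ is $S_2$ (being a simple normal crossing divisor on a smooth variety), the graph $\widetilde Y\to Y$ satisfies $q_*\mathcal O_{\widetilde Y}\simeq \mathcal O_Y$, and composing with $f_*\mathcal O_Y\simeq\mathcal O_X$ forces $\nu_*\mathcal O_{X^\nu}\simeq\mathcal O_X$. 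This single argument delivers reducedness, $R_1$, and $S_2$ at once, and it is the place where the hypothesis that the only qlc strata are the irreducible components is genuinely used. I would redo your proof along these lines rather than trying to identify the conductor with a stratum directly.
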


For the proof of Lemma \ref{d-lem4.8}, 
see \cite[Lemma 6.3.9]{fujino-foundations}. 
It is convenient to introduce 
the notion of {\em{quasi-log canonical pairs}}. 

\begin{defn}[{Quasi-log canonical 
pairs, see \cite[Definition 6.2.9]{fujino-foundations}}]
\label{d-def4.9}
Let 
\begin{equation*} 
\left(X, \omega, f\colon (Y, B_Y)\to X\right)
\end{equation*}  
be a quasi-log scheme. 
If $X_{-\infty}=\emptyset$, then 
it is called a {\em{quasi-log canonical pair}} 
({\em{qlc pair}}, for short). 
\end{defn}

By using adjunction, we can prove: 
 
\begin{thm}[{\cite[Theorem 6.3.11 (i)]{fujino-foundations}}]\label{d-thm4.10} 
Let $[X, \omega]$ be a quasi-log canonical pair. 
Then the intersection of two qlc strata is a union of 
qlc strata. 
\end{thm}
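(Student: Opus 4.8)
The plan is to reduce by adjunction to the case where $X$ has exactly two irreducible components, and then to realize the intersection of the two strata as the image of the corresponding intersection of strata on a quasi-log resolution, the passage between the two being governed by a Mayer--Vietoris sequence together with a Koll\'ar-type torsion-freeness theorem.

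First I would dispose of the trivial case. If one of the two given qlc strata $C_1,C_2$ is contained in the other, then $C_1\cap C_2$ is itself a qlc stratum and there is nothing to prove; so assume neither contains the other. Set $X':=C_1\cup C_2$. By adjunction (Theorem \ref{d-thm4.6} (i)), $[X',\omega|_{X'}]$ is again a quasi-log canonical pair whose qlc strata are exactly the qlc strata of $[X,\omega]$ contained in $X'$, and neither $C_1\cap C_2$ nor the qlc strata contained in it change under this replacement. Hence I may assume $X=C_1\cup C_2$, where, $X$ being reduced, $C_1$ and $C_2$ are its two irreducible components.

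Next I would fix a quasi-log resolution $f\colon (Y,B_Y)\to X$ and, invoking \cite[Proposition 6.3.1]{fujino-foundations}, arrange that the union $Y_i$ of the strata of $(Y,B_Y)$ mapping into $C_i$ is a union of irreducible components of $Y$; then $f(Y_i)=C_i$ and $Y=Y_1\cup Y_2$, since every stratum maps to an irreducible qlc stratum and hence into $C_1$ or $C_2$. By simple normal crossing combinatorics $T:=Y_1\cap Y_2$ is again a union of strata of $(Y,B_Y)$, so $f(T)$ is a union of qlc strata. Running the same computation as in the proof of Theorem \ref{d-thm4.6} (i), but for the decomposition $Y=Y_1\cup Y_2$ in place of a single $Y'\subset Y$, the relevant Mayer--Vietoris sequence pushes forward, and the qlc defining-ideal isomorphisms forced by $\Nqlc(X,\omega)=\emptyset$ identify its terms as
\[
0\to \mathcal O_X\to \mathcal O_{C_1}\oplus \mathcal O_{C_2}\to \mathcal O_{f(T)}\xrightarrow{\ \delta\ } R^1f_*\mathcal O_Y(L)\to \cdots,
\]
where $L=\lceil -(B_Y^{<1})\rceil-\lfloor B_Y^{>1}\rfloor$.

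The main obstacle, and the Hodge-theoretic core of the argument, is the vanishing of the connecting homomorphism $\delta$. Here I would argue exactly as in the adjunction proof: since $L-(K_Y+B_Y)\sim_{\mathbb R}-f^*\omega$ is pulled back from $X$, the Koll\'ar-type torsion-freeness \cite[Theorem 5.6.2 (i)]{fujino-foundations} shows that no associated prime of $R^1f_*\mathcal O_Y(L)$ lies in $f(T)$, whereas $\delta$ factors through sections supported on $f(T)$; hence $\delta=0$. With $\delta=0$ the displayed sequence becomes short exact, and comparing it with the ordinary Mayer--Vietoris sequence of $X=C_1\cup C_2$ (exact since $X$ is reduced with irreducible components $C_1,C_2$) identifies $\mathcal O_{f(T)}$ with $\mathcal O_{C_1\cap C_2}$. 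Since moreover $f(T)\subseteq C_1\cap C_2$ by construction, this forces $C_1\cap C_2=f(T)$, a union of qlc strata, as desired.
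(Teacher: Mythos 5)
The paper does not actually prove this statement; it cites \cite[Theorem 6.3.11 (i)]{fujino-foundations} with only the remark that adjunction is the key tool, and your reduction to $X=C_1\cup C_2$ followed by an analysis of the quasi-log resolution is exactly the kind of argument that reference runs. Your setup is essentially sound: the reduction by adjunction is correct; $Y_1$ and $Y_2$ can be arranged simultaneously to be unions of irreducible components by iterating \cite[Proposition 6.3.1]{fujino-foundations} (as is done in the proof of Lemma \ref{d-lem4.24}); $T=Y_1\cap Y_2$ is a union of strata because for a quasi-log \emph{canonical} pair one has $\lfloor B_Y^{>1}\rfloor=0$, so intersections of components of $Y$ really are lc centers of $(M,Y+B)$ (this deserves a sentence, since for general quasi-log schemes it fails); and the identifications $f_*\mathcal O_Y(L)\simeq\mathcal O_X$ and $f_*\mathcal O_{Y_i}(L|_{Y_i})\simeq\mathcal O_{C_i}$ follow from the qlc condition and from adjunction applied to $C_i$.

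The one step that does not survive scrutiny is the one you call the Hodge-theoretic core: the vanishing of $\delta$. \cite[Theorem 5.6.2 (i)]{fujino-foundations} says that every associated prime of $R^1f_*\mathcal O_Y(L)$ is the generic point of the $f$-image of a stratum of $(Y, B_Y^{=1}+\{B_Y\})$ --- but $f(T)$ is itself a union of $f$-images of strata, so torsion-freeness does \emph{not} exclude associated primes inside $f(T)$. In the proof of adjunction the analogous step works only because the $R^1$ there is taken of $\mathcal O_{Y''}\bigl(\cdots-Y'|_{Y''}\bigr)$, where subtracting $Y'|_{Y''}$ from the boundary destroys precisely the strata lying over $X'$; no such twist is available on all of $Y$. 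Fortunately $\delta=0$ is not needed for your conclusion: left-exactness of $f_*$ already gives
\[
\mathcal O_{C_1\cap C_2}\simeq \mathrm{coker}\bigl(\mathcal O_X\to\mathcal O_{C_1}\oplus\mathcal O_{C_2}\bigr)\simeq \mathrm{coker}\bigl(f_*\mathcal O_Y(L)\to f_*\mathcal O_{Y_1}(L)\oplus f_*\mathcal O_{Y_2}(L)\bigr)\hookrightarrow f_*\mathcal O_T(L|_T),
\]
and the right-hand sheaf is supported on $f(T)$, whence $C_1\cap C_2\subset f(T)\subset C_1\cap C_2$. With that substitution your argument closes correctly, and the Hodge-theoretic input is confined to where it genuinely lives, namely inside the proof of adjunction itself.
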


The following example is very important. 
Example \ref{d-ex4.11} shows that we can treat log canonical 
pairs as quasi-log canonical pairs. In some sense, 
Ambro introduced the notion of quasi-log schemes 
in order to treat the following example (see \cite{ambro}). 

\begin{ex}[{\cite[6.4.1]{fujino-foundations}}]\label{d-ex4.11}
Let $(X, \Delta)$ be a normal pair such that 
$\Delta$ is effective. 
Let $f\colon Y\to X$ be a resolution of singularities such that 
\begin{equation*} 
K_Y+B_Y=f^*(K_X+\Delta)
\end{equation*}  
and that $\Supp B_Y$ is a simple normal crossing divisor on $Y$. 
We put $\omega:=K_X+\Delta$. 
Then $K_Y+B_Y\sim _{\mathbb R}f^*\omega$ holds. 
Since $\Delta$ is effective, $\lceil -(B^{<1}_Y)\rceil$ is effective 
and $f$-exceptional. 
Therefore, 
the natural map 
\begin{equation*} 
\mathcal O_X\to f_*\mathcal O_Y(\lceil -(B^{<1}_Y)\rceil)
\end{equation*}  
is an isomorphism. 
We put 
\begin{equation*} 
\mathcal I_{X_{-\infty}}:=\mathcal J_{\NLC}(X, \Delta)
=f_*\mathcal O_Y(\lceil -(B^{<1}_Y)\rceil-\lfloor B^{>1}_Y\rfloor),  
\end{equation*} 
where $\mathcal J_{\NLC}(X, \Delta)$ is the non-lc 
ideal sheaf associated to $(X, \Delta)$ in Definition \ref{c-def3.5}. 
We put $M=Y\times \mathbb C$ and $D=B_Y\times 
\mathbb C$. 
Then $(Y, B_Y)\simeq (Y\times \{0\}, B_Y\times 
\{0\})$ is a globally embedded simple normal crossing pair. 
Thus 
\begin{equation*} 
\left(X, \omega, f\colon(Y, B_Y)\to X\right)
\end{equation*} 
becomes a quasi-log scheme. 
By construction, $(X, \Delta)$ is log canonical 
if and only if $[X, \omega]$ is quasi-log canonical. 
We note that $C$ is a log canonical 
center of $(X, B)$ if and only if $C$ is a qlc center 
of $[X, \omega]$. We also note that 
$X$ itself is a qlc stratum of $[X, \omega]$. 
\end{ex}

We make a useful remark. 

\begin{rem}\label{d-rem4.12} 
Let $Y$ be a smooth variety and let $B_Y$ be 
an $\mathbb R$-divisor 
on $Y$ such that 
$\Supp B_Y$ is a simple normal crossing divisor on $Y$. 
We put $M':=Y\times \mathbb P^1$ and 
\begin{equation*}
D':=Y\times \{0\} +Y\times \{\infty\}+p^*B_Y, 
\end{equation*} 
where $p\colon Y\times \mathbb P^1\to Y$ is the 
first projection. 
Then $K_{M'}+D'=p^*(K_Y+B_Y)$ holds. 
We put 
\begin{equation*}
Z:=Y\times \{0\}+p^*B^{=1}_Y
\end{equation*} 
and 
$K_Z+B_Z:=(K_{M'}+D')|_Z$. 
Then $K_Z+B_Z=g^*(K_Y+B_Y)$ holds, 
where $g:=p|_Z\colon Z\to Y$. 
In this case, $(Z, B_Z)$ is a globally embedded 
simple normal crossing pair. 
We can check that 
\begin{equation*}
g_*\mathcal O_Z(\lceil -(B^{<1}_Z)\rceil -
\lfloor B^{>1}_Z\rfloor)\simeq 
\mathcal O_Y(\lceil -(B^{<1}_Y)\rceil -
\lfloor B^{>1}_Y\rfloor)
\end{equation*} 
holds since $g_*\mathcal O_Z\simeq \mathcal O_Y$ and 
\begin{equation*} 
B_Z=(D'-Z)|_Z=(Y\times \{ \infty \})|_{p^*B^{=1}_Y}+g^*B^{\ne 1}_Y, 
\end{equation*}
where $B^{\ne 1}_Y:=B_Y-B^{=1}_Y$. 
Hence, in Example \ref{d-ex4.11}, 
$f\circ g\colon (Z, B_Z)\to X$ gives another 
quasi-log resolution of $[X, \omega]$. 
Although $Z$ may be reducible, this quasi-log resolution 
is useful when we use 
adjunction (see Theorem \ref{d-thm4.6} (i) and 
\cite[Theorem 6.3.5 (i)]{fujino-foundations}). 
\end{rem}

Example \ref{d-ex4.11} shows that $[X, K_X+\Delta]$ has a natural 
quasi-log scheme structure. In general, however, 
$[X, K_X+\Delta]$ has many different quasi-log scheme structures. 

\begin{rem}\label{d-rem4.13}
In Example \ref{d-ex4.11}, we take an effective 
$\mathbb R$-divisor $\Delta'$ on $X$ such that 
$K_X+\Delta\sim _{\mathbb R} K_X+\Delta'$. 
Let $f'\colon Y'\to X$ be a resolution of singularities such that 
\begin{equation*} 
K_{Y'}+B_{Y'}=(f')^*(K_X+\Delta') 
\end{equation*}  
and that $\Supp B_{Y'}$ is a simple 
normal crossing divisor on $Y'$. 
Then 
\begin{equation*} 
\left(X, \omega, f'\colon  (Y', B_{Y'})\to X\right)
\end{equation*}  
is also a quasi-log scheme since $K_{Y'}+B_{Y'}\sim 
_{\mathbb R} (f')^*\omega$. 
In this case, there is no correspondence between 
qlc strata of $(X, \omega, f'\colon  (Y', B_{Y'})\to X)$ and lc strata of 
$(X, \Delta)$. 
\end{rem}

By combining Theorem \ref{d-thm4.10} with 
Example \ref{d-ex4.11}, we have:  

\begin{cor}[{\cite[Theorem 9.1 (2)]{fujino-fund}}]\label{d-cor4.14} 
Let $(X, \Delta)$ be a log canonical pair. 
Then the intersection of two lc centers is a union of 
lc centers. 
\end{cor}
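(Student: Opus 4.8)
The plan is to reduce everything to Theorem \ref{d-thm4.10} by equipping $X$ with the quasi-log structure furnished by Example \ref{d-ex4.11}. First I would take a resolution $f\colon Y\to X$ with $K_Y+B_Y=f^*(K_X+\Delta)$ and $\Supp B_Y$ a simple normal crossing divisor, and set $\omega:=K_X+\Delta$. By Example \ref{d-ex4.11}, the data $\left(X,\omega,f\colon (Y,B_Y)\to X\right)$ is a quasi-log scheme, and since $(X,\Delta)$ is log canonical, that same example guarantees $[X,\omega]$ is quasi-log canonical, i.e. $X_{-\infty}=\emptyset$. This is exactly the hypothesis needed to apply Theorem \ref{d-thm4.10}.

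Next I would record the dictionary between the two frameworks, again from Example \ref{d-ex4.11}: a subvariety $C$ is an lc center of $(X,\Delta)$ if and only if $C$ is a qlc center of $[X,\omega]$, and $X$ itself is a qlc stratum. Because $X$ is a normal variety, it is irreducible, so $X$ is its only irreducible component; hence the qlc strata of $[X,\omega]$ are precisely the qlc centers together with $X$, which coincide with the lc strata of $(X,\Delta)$. In particular, any two lc centers $W_1,W_2$ of $(X,\Delta)$ are qlc strata of $[X,\omega]$.

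With this in hand, I would apply Theorem \ref{d-thm4.10} to $W_1$ and $W_2$ to conclude that $W_1\cap W_2$ is a union of qlc strata of $[X,\omega]$. The one point that needs care is to exclude the stratum $X$ from this union: since every lc center is a proper closed subvariety of $X$, we have $W_1\cap W_2\subseteq W_1\subsetneq X$, so the irreducible component $X$ cannot appear among the strata occurring in the decomposition. Therefore each qlc stratum in the union is in fact a qlc center, hence an lc center of $(X,\Delta)$ by the dictionary above, and $W_1\cap W_2$ is a union of lc centers.

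I do not expect a genuine obstacle here: all the substance is carried by Theorem \ref{d-thm4.10}, and the corollary is a formal consequence. The only steps requiring attention are the translation of vocabulary between qlc strata and lc centers via Example \ref{d-ex4.11}, and the observation that the ambient component $X$ automatically drops out because the intersection of two lc centers is properly contained in $X$.
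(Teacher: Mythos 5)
Your proposal is correct and is exactly the paper's argument: the corollary is stated there as an immediate consequence of combining Theorem \ref{d-thm4.10} with the quasi-log structure from Example \ref{d-ex4.11}, using the same dictionary between lc centers and qlc centers. Your extra observation that the stratum $X$ itself cannot occur in the decomposition because $W_1\cap W_2\subsetneq X$ is a sensible bit of care that the paper leaves implicit.
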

 
For the basic properties of quasi-log schemes, 
see \cite[Chapter 6]{fujino-foundations} and 
\cite{fujino-on-quasi-log}. 
We also recommend the reader to see \cite{fujino-intro}, which is a 
gentle introduction to the theory of quasi-log schemes. 
In \cite{fujino-slc}, we establish that every quasi-projective 
semi-log canonical pair naturally becomes 
a quasi-log canonical pair. Hence 
we can use the theory of quasi-log schemes for the study 
of semi-log canonical pairs. 
For the details, see \cite{fujino-slc}. 

\subsection{Kleiman--Mori cones}\label{d-subsec4.2}
In this subsection, we discuss basic definitions and 
results around Kleiman--Mori cones of 
quasi-log schemes. Let us start with the definition of 
Kleiman--Mori cones. 

\begin{defn}[Kleiman--Mori cones]\label{d-def4.15}
Let $\pi\colon X\to S$ be a proper morphism between schemes. 
Let $NE(X/S)$ be the convex cone in $N_1(X/S)$ generated 
by effective $1$-cycles on $X$ mapped to points by 
$\pi$. 
Let $\NE(X/S)$ be the closure of $NE(X/S)$ in $N_1(X/S)$. 
We call it the {\em{Kleiman--Mori cone}} of $\pi\colon X\to S$. 
As usual, we drop $/\Spec \mathbb C$ from the notation 
when $S=\Spec\mathbb C$. 
\end{defn}

Let us explain some basic definitions. 

\begin{defn}[{\cite[Definition 6.7.1]{fujino-foundations}}]\label{d-def4.16}
Let $[X, \omega]$ be a quasi-log scheme with the 
non-qlc locus $X_{-\infty}$. 
Let $\pi\colon X\to S$ be a projective 
morphism between schemes. 
We put 
\begin{equation*} 
\NE(X/S)_{-\infty}=
\mathrm{Im}\left(\NE(X_{-\infty}/S)\to 
\NE(X/S)\right). 
\end{equation*}  
We sometimes use $\NE(X/S)_{\Nqlc(X/S)}$ 
to denote $\NE(X/S)_{-\infty}$. 
For an $\mathbb R$-Cartier divisor 
(or $\mathbb R$-line bundle) $D$, we 
define 
\begin{equation*} 
D_{\geq 0} =\{z \in N_1(X/S) \, | \, D\cdot z\geq 0\}. 
\end{equation*}  
Similarly, we can define $D_{>0}$, 
$D_{\leq 0}$, and $D_{<0}$. 
We also define 
\begin{equation*} 
D^\perp =\{ z\in N_1(X/S) \, | \, D\cdot z=0\}. 
\end{equation*}  
We use the following notation 
\begin{equation*} 
\NE(X/S)_{D\geq 0}=\NE(X/S) 
\cap D_{\geq 0}, 
\end{equation*} 
and similarly for $>0$, $\leq 0$, and $<0$. 
\end{defn}

In order to treat the cone and contraction theorem, 
we need the following definition. 

\begin{defn}[{\cite[Definition 6.7.2]{fujino-foundations}}]\label{d-def4.17} 
An {\em{extremal face}} 
of $\NE(X/S)$ is a non-zero 
subcone $F\subset \NE(X/S)$ such that 
$z, z'\in \NE(X/S)$ and $z+z'\in F$ imply that $z, z'\in F$. 
Equivalently, $F=\NE(X/S)\cap H^\perp$ for 
some $\pi$-nef $\mathbb R$-divisor 
(or $\pi$-nef $\mathbb R$-line bundle) $H$, which 
is called a {\em{support function}} 
of $F$. An {\em{extremal ray}} is a one-dimensional 
extremal face. 
\begin{itemize}
\item[(1)] An extremal face $F$ is called 
{\em{$\omega$-negative}} if $F\cap 
\NE(X/S)_{\omega\geq 0}=\{0\}$. 
\item[(2)] An extremal face $F$ is called {\em{rational}} 
if we can choose a $\pi$-nef 
$\mathbb Q$-divisor (or $\mathbb Q$-line bundle) 
$H$ as a support function of $F$. 
\item[(3)] An extremal face $F$ is called {\em{relatively 
ample at infinity}} if $F\cap \NE(X/S)_{-\infty}
=\{0\}$. 
Equivalently, $H|_{X_{-\infty}}$ is $\pi|_{X_{-\infty}}$-ample 
for any supporting function 
$H$ of $F$. 
\end{itemize}
\end{defn}

The contraction theorem for quasi-log schemes 
plays an important role 
in this paper. 

\begin{thm}[{Contraction theorem, see 
\cite[Theorem 6.7.3]{fujino-foundations}}]
\label{d-thm4.18}
Let $[X, \omega]$ be a quasi-log scheme and let 
$\pi\colon X\to S$ be a projective morphism between schemes. 
Let $R$ be an $\omega$-negative extremal 
ray of $\NE(X/S)$ that is 
rational and relatively ample at infinity. 
Then there exists a projective morphism 
$\varphi_R\colon  X\to Y$ over $S$ with 
the following properties. 
\begin{itemize}
\item[(i)] Let $C$ be an integral curve on $X$ such that 
$\pi(C)$ is a point. 
Then $\varphi_R(C)$ is a point if and only if 
$[C]\in R$, where $[C]$ denotes the numerical 
equivalence class of $C$ in $N_1(X/S)$. 
\item[(ii)] $\mathcal O_Y\simeq (\varphi_R)_*\mathcal O_X$. 
\item[(iii)] Let $\mathcal L$ be a line bundle 
on $X$ such that $\mathcal L\cdot C=0$ for every 
curve $C$ with $[C]\in R$. 
Then there is a line bundle $\mathcal L_Y$ on $Y$ such 
that $\mathcal L\simeq \varphi^*_R\mathcal L_Y$. 
\end{itemize}
\end{thm}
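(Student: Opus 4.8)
The plan is to realize $\varphi_R$ as the contraction attached to a $\pi$-nef line bundle supporting $R$, proving that this line bundle is $\pi$-semiample by the basepoint-free method built on the vanishing theorem \ref{d-thm4.6}~(ii). Since $R$ is rational and relatively ample at infinity, I would first fix a support function: a $\pi$-nef $\mathbb Q$-line bundle $H$ with $\NE(X/S)\cap H^{\perp}=R$ and with $H|_{X_{-\infty}}$ ample over $S$. After replacing $H$ by a positive multiple I may assume $H$ is an honest line bundle. The morphism $\varphi_R$ will then be $\mathrm{Proj}_S$ of the section ring $\bigoplus_{m\ge 0}\pi_*\mathcal O_X(mH)$, so everything reduces to showing that $mH$ is $\pi$-generated for $m\gg 0$ and sufficiently divisible.

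The first step toward basepoint-freeness is to verify that $aH-\omega$ is $\pi$-nef and $\pi$-log big with respect to $[X,\omega]$ for all $a\gg 0$. Nefness follows from a Kleiman-type compactness argument: because $H$ is $\pi$-nef with $\NE(X/S)\cap H^{\perp}=R$ and $-\omega$ is positive on $R\setminus\{0\}$ (as $R$ is $\omega$-negative while $H\cdot R=0$), the class $aH-\omega$ is nonnegative on all of $\NE(X/S)$ once $a$ is large. Log bigness, i.e. bigness of $(aH-\omega)|_W$ over $\pi(W)$ for every qlc stratum $W$, is where relative ampleness at infinity enters: on strata disjoint from $X_{-\infty}$ one argues in the usual way, while for strata whose closures meet $X_{-\infty}$ the ampleness of $H|_{X_{-\infty}}$ over $S$ supplies the needed positivity.

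With $aH-\omega$ $\pi$-nef and $\pi$-log big, I would run the standard $X$-method. The key input is \ref{d-thm4.6}~(ii) applied to $X'=X_{-\infty}$: it gives $R^1\pi_*(\mathcal I_{X_{-\infty}}\otimes\mathcal O_X(mH))=0$, so the restriction $\pi_*\mathcal O_X(mH)\to\pi_*\mathcal O_{X_{-\infty}}(mH|_{X_{-\infty}})$ is surjective, and since $H|_{X_{-\infty}}$ is $\pi$-ample, $mH$ is already $\pi$-free along $X_{-\infty}$. To clear the rest of the relative base locus I would induct on dimension through the qlc strata: if the base locus were nonempty, a general member of a large multiple of $|mH|$ would create a new qlc center, and restricting to it by adjunction \ref{d-thm4.6}~(i) (using Theorem \ref{a-thm1.9} to pass to a normal stratum) lets the inductive hypothesis produce a local section that is nonvanishing there, which the vanishing theorem lifts back to $X$. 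This shows $mH$ is $\pi$-generated for suitable $m$, and I set $\varphi_R\colon X\to Y$ to be the morphism over $S$ defined by the $\pi$-free system $mH$, so that $mH\simeq\varphi_R^*A$ for some $\pi$-ample line bundle $A$ on $Y$.

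Finally I would read off the three properties. Replacing $Y$ by the Stein factorization of $\varphi_R$ gives $\mathcal O_Y\simeq(\varphi_R)_*\mathcal O_X$, which is (ii). For (i), a curve $C$ with $\pi(C)$ a point is contracted by $\varphi_R$ if and only if $H\cdot C=0$, i.e. if and only if $[C]\in\NE(X/S)\cap H^{\perp}=R$. For (iii), given a line bundle $\mathcal L$ with $\mathcal L\cdot C=0$ for every $C$ with $[C]\in R$, both $(bH+\mathcal L)-\omega$ and $(bH-\mathcal L)-\omega$ are $\pi$-nef and $\pi$-log big for $b\gg 0$, so $bH\pm\mathcal L$ are $\pi$-semiample by the same argument and contract exactly $R$; hence each factors through $\varphi_R$, and subtracting shows $\mathcal L$ descends to a line bundle $\mathcal L_Y$ on $Y$ with $\mathcal L\simeq\varphi_R^*\mathcal L_Y$. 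The main obstacle is the basepoint-free step: carrying out the inductive clearing of the relative base locus in the presence of a nonempty non-qlc locus, and in particular checking log bigness on those strata that meet $X_{-\infty}$, is exactly where the full force of adjunction, Theorem \ref{a-thm1.9}, and the vanishing theorem \ref{d-thm4.6}~(ii) is needed.
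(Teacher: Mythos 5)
Your argument is correct in outline, but it is not what the paper does: the paper's entire proof is a two-line reduction to \cite[Theorem 6.7.3]{fujino-foundations}, whose only content is the observation that relative ampleness at infinity forces $\varphi_R|_{X_{-\infty}}$ to be finite, so that $\mathcal L^{\otimes m}|_{X_{-\infty}}$ is automatically relatively generated and the hypothesis of the cited theorem is satisfied. What you have written is, in effect, a sketch of the proof of that cited theorem itself (contraction via the basepoint-free theorem for quasi-log schemes, \cite[Theorem 6.5.1]{fujino-foundations}), so the comparison is really ``reprove the black box'' versus ``quote it.'' Your route is sound: the compactness argument on a cross-section of $\NE(X/S)$ does give that $aH-\omega$ is positive on $\NE(X/S)\setminus\{0\}$, hence $\pi$-ample by Kleiman (which already subsumes your separate log-bigness discussion), and the treatment of $X_{-\infty}$ via Theorem \ref{d-thm4.6} (ii) plus $\pi$-ampleness of $H|_{X_{-\infty}}$ is exactly right. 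Two steps are thinner than they read, though neither is fatal. First, in (iii) the nefness of $bH\pm\mathcal L$ does \emph{not} follow from the same compactness argument, because $bH\pm\mathcal L$ vanishes on $R$ rather than being positive there; near $R$ both $H$ and $\mathcal L$ tend to zero and you must control the rates, which is done by invoking the finite decomposition of $\NE(X/S)$ from the cone theorem and checking nonnegativity on each generator. Second, ``subtracting'' only descends a multiple of $\mathcal L$; you need the standard coprime-multiples trick to descend $\mathcal L$ itself. Finally, the inductive clearing of the base locus that you correctly identify as the main obstacle is precisely the nontrivial core of \cite[Theorem 6.7.3]{fujino-foundations}; your sketch names the right ingredients (adjunction, Theorem \ref{a-thm1.9}, the vanishing theorem), but a complete argument would have to carry out the non-vanishing and lifting steps on the qlc strata in detail rather than by analogy with the classical $X$-method.
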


\begin{proof}
Since $R$ is relatively ample at infinity, $\varphi_R\colon  X_{-\infty}
\to \varphi_R(X_{-\infty})$ is finite. 
Hence $\mathcal L^{\otimes m}|_{X_{-\infty}}$ 
is $\varphi_R|_{X_{-\infty}}$-generated for 
every $m\geq 0$. 
Therefore, this theorem is a 
special case of \cite[Theorem 6.7.3]{fujino-foundations}. 
\end{proof}

Theorem \ref{d-thm4.18} 
is a generalization of the famous Kawamata--Shokurov 
basepoint-free theorem. 

\subsection{Lemmas on quasi-log schemes}\label{d-subsec4.3}

In this subsection, we treat useful lemmas on quasi-log schemes. 
The first two lemmas were already proved in \cite{fujino-reid-fukuda}. 
We will repeatedly use Lemma \ref{d-lem4.20} throughout this paper. 

\begin{lem}[{\cite[Lemma 3.12]{fujino-reid-fukuda}}]\label{d-lem4.19}
Let $\left(X, \omega, f\colon (Y, B_Y)\to X\right)$ be a quasi-log 
scheme. 
Then we can construct a proper morphism 
$f'\colon (Y', B_{Y'})\to X$ from a globally embedded simple 
normal crossing pair $(Y', B_{Y'})$ such that 
\begin{itemize}
\item[(i)] $f'\colon  (Y', B_{Y'})\to X$ gives the same 
quasi-log scheme structure as one given by $f\colon (Y, B_Y)\to X$, and 
\item[(ii)] every irreducible component of $Y'$ is 
mapped by $f'$ to $\overline {X\setminus X_{-\infty}}$, 
the closure of $X\setminus X_{-\infty}$ in $X$. 
\end{itemize}
\end{lem}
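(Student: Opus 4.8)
The plan is to obtain $(Y', B_{Y'})$ simply by discarding from $Y$ the irreducible components lying over $X_{-\infty}$, and then to verify that the three conditions of Definition \ref{d-def4.2} are preserved, imitating the proof of adjunction (Theorem \ref{d-thm4.6} (i)) carried out above. Concretely, I would let $Y'$ be the union of those irreducible components $Y_i$ of $Y$ with $f(Y_i)\not\subseteq X_{-\infty}$, and let $Y''=Y-Y'$ be the union of the remaining components. Since each $Y_i$ is irreducible, $f(Y_i)\not\subseteq X_{-\infty}$ forces $f(Y_i)=\overline{f(Y_i)\cap (X\setminus X_{-\infty})}\subseteq \overline{X\setminus X_{-\infty}}$; thus every component of $Y'$ maps into $\overline{X\setminus X_{-\infty}}$, which is exactly (ii), while every component of $Y''$ maps into $X_{-\infty}$. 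I would set $f'=f|_{Y'}$ and define $B_{Y'}$ by adjunction $K_{Y'}+B_{Y'}=(K_Y+B_Y)|_{Y'}$, so that condition (1) of Definition \ref{d-def4.2}, namely $f'^*\omega\sim_{\mathbb R}K_{Y'}+B_{Y'}$, is immediate from $f^*\omega\sim_{\mathbb R}K_Y+B_Y$.

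For condition (3) I would argue that the qlc strata are unchanged: a stratum of $(Y,B_Y)$ whose image is not contained in $X_{-\infty}$ is cut out by components all lying in $Y'$, hence is a stratum of $(Y',B_{Y'})$, whereas the extra strata introduced along the double locus $Y''|_{Y'}\subseteq B_{Y'}^{=1}$ all map into $f(Y'')\subseteq X_{-\infty}$ and so contribute nothing new. The heart of the argument is therefore condition (2). Here I would restrict the two sheaves $\mathcal O_Y(\lceil -(B_Y^{<1})\rceil)$ and $\mathcal O_Y(\lceil -(B_Y^{<1})\rceil-\lfloor B_Y^{>1}\rfloor)$ from $Y$ to $Y'$ by twisting the short exact sequence
$$0\to \mathcal O_{Y''}(-Y'|_{Y''})\to \mathcal O_Y\to \mathcal O_{Y'}\to 0,$$
pushing forward by $f$, and comparing with $\mathcal O_X\cong f_*\mathcal O_Y(\lceil -(B_Y^{<1})\rceil)$ and $\mathcal I_{X_{-\infty}}=f_*\mathcal O_Y(\lceil -(B_Y^{<1})\rceil-\lfloor B_Y^{>1}\rfloor)$. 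The two points to check are that the kernel terms $f''_*\mathcal O_{Y''}(\cdots-Y'|_{Y''})$ vanish and that the connecting homomorphisms into $R^1f''_*\mathcal O_{Y''}(\cdots-Y'|_{Y''})$ are zero; granting these, the restriction maps become the required isomorphisms and $X_{-\infty}$ is left unchanged. Both vanishings I would deduce from the generalized Koll\'ar-type torsion-freeness theorem \cite[Theorem 5.6.2 (i)]{fujino-foundations}, exactly as the connecting map was killed in the proof of Theorem \ref{d-thm4.6} (i): writing the twist on $Y''$ as $-(K_{Y''}+B_{Y''})\sim_{\mathbb R}-(f'')^*\omega$ up to its reduced and fractional parts, the theorem controls the associated primes of $R^qf''_*$ by the $f''$-images of strata of $Y''$.

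The step I expect to be the main obstacle is precisely this vanishing of the connecting homomorphism. Unlike in the adjunction proof, where the source sheaf was supported on $X'$ and the target had no associated prime in $X'$, here $f(Y')$ and $f(Y'')$ can overlap inside $X_{-\infty}$, so the cleanest support-based separation is not immediately available. The delicate point will be to use that removing the double locus $Y'|_{Y''}$ from the coefficient divisor changes which strata of $Y''$ survive, and to feed exactly those strata into \cite[Theorem 5.6.2 (i)]{fujino-foundations}, so that no associated prime of $R^1f''_*(\cdots-Y'|_{Y''})$ receives a nonzero image from $f'_*\mathcal O_{Y'}(\cdots)$. Once the connecting map is shown to vanish, assembling the resulting commutative diagram, as in the proof of Theorem \ref{d-thm4.6} (i), yields $\mathcal O_X\cong f'_*\mathcal O_{Y'}(\lceil -(B_{Y'}^{<1})\rceil)$ and $\mathcal I_{X_{-\infty}}\cong f'_*\mathcal O_{Y'}(\lceil -(B_{Y'}^{<1})\rceil-\lfloor B_{Y'}^{>1}\rfloor)$, completing the verification that $f'\colon (Y',B_{Y'})\to X$ defines the same quasi-log scheme structure.
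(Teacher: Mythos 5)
Your overall strategy (discard the superfluous components, restrict the defining exact sequence, and kill the connecting homomorphism by the torsion-freeness theorem) is the right shape, but there is a genuine gap exactly where you flag ``the main obstacle,'' and your choice of decomposition makes that obstacle worse rather than better. The paper takes $Y''$ to be the union of the components \emph{not} mapped into $\overline{X\setminus X_{-\infty}}$, so that $Y'$ retains the components whose image lies in $\overline{X\setminus X_{-\infty}}\cap X_{-\infty}$; note that conclusion (ii) only asks that every component of $Y'$ map into $\overline{X\setminus X_{-\infty}}$, not that no component of $Y'$ map into $X_{-\infty}$. You instead discard every component mapping into $X_{-\infty}$. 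With your $Y''$, a component $Y''_j$ with $f(Y''_j)\subseteq\overline{X\setminus X_{-\infty}}\cap X_{-\infty}$ is a stratum of $Y''$ that survives the twist by $-Y'|_{Y''}$ and whose image can lie inside $f(Y')$; the torsion-freeness theorem then permits an associated prime of $R^1f''_*(\cdots)$ inside the support of $f'_*\mathcal O_{Y'}(A-N)$, and the support-separation argument for $\delta=0$ is no longer available.

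Moreover, even with the paper's decomposition the separation does not come for free: the missing ingredient is a preliminary application of \cite[Proposition 6.3.1]{fujino-foundations}, by which one may assume that the union of all strata of $(Y,B_Y)$ mapped into $\overline{X\setminus X_{-\infty}}\cap X_{-\infty}$ is a union of irreducible components of $Y$. With the paper's definition of $Y''$ those components necessarily lie in $Y'$, so every stratum of $Y''$ mapping into $f(Y')\cap X_{-\infty}$ is contained in $Y'\cap Y''$ and is removed by the twist $-Y'|_{Y''}$; only after this reduction does \cite[Theorem 5.6.2 (i)]{fujino-foundations} yield $\delta=0$. Your proposal never performs this reduction and leaves the key vanishing as a stated hope. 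Finally, the vanishing of the kernel term $f''_*\mathcal O_{Y''}(A-N-Y'|_{Y''})$ is not obtained from torsion-freeness in the paper: it is an elementary support argument (the sheaf injects into $\mathcal I_{X_{-\infty}}$, hence dies when restricted to $X_{-\infty}$, while it is supported on $f(Y'')\subseteq X_{-\infty}$, hence vanishes off $X_{-\infty}$), and I do not see how \cite[Theorem 5.6.2 (i)]{fujino-foundations} alone would give it.
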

We give the proof for the sake of completeness.
\begin{proof} 
Let $Y''$ be the union of all irreducible components 
of $Y$ that are not mapped to $\overline{X\setminus X_{-\infty}}$. 
We put $Y'=Y-Y''$ and $K_{Y''}+B_{Y''}=(K_Y+B_Y)|_{Y''}$. 
Let $M$ be the ambient space of $(Y, B_Y)$. By taking some 
blow-ups of $M$, we may assume that the union of 
all strata of $(Y, B_Y)$ mapped to 
$\overline {X\setminus X_{-\infty}}
\cap X_{-\infty}$ is a union of some irreducible components of $Y$ 
(see \cite[Proposition 6.3.1]{fujino-foundations}). 
We consider the short exact sequence 
\begin{equation*} 
0\to \mathcal O_{Y''}(-Y')\to \mathcal O_Y\to \mathcal O_{Y'}\to 0. 
\end{equation*}  
We put $A=\lceil -(B_Y^{<1})\rceil$ and $N=\lfloor B_Y^{>1}\rfloor$. By 
applying $\otimes \mathcal O_Y(A-N)$, we have 
\begin{equation*} 
0\to \mathcal O_{Y''}(A-N-Y')\to \mathcal O_Y(A-N)\to \mathcal O_{Y'}
(A-N)\to 0. 
\end{equation*} 
By taking $R^if_*$, we obtain 
\begin{align*}
0&\to f_* \mathcal O_{Y''}(A-N-Y')\to f_*\mathcal O_Y(A-N)\to f_* \mathcal O_{Y'}
(A-N)\\
&\to R^1f_*\mathcal O_{Y''}(A-N-Y')\to \cdots.
\end{align*} 
Note that 
\begin{align*}
(A-N-Y')|_{Y''}-(K_{Y''}+\{B_{Y''}\}+B_{Y''}^{=1}
-Y'|_{Y''})
&=-(K_{Y''}+B_{Y''})
\\&\sim _{\mathbb R} -(f^*\omega)|_{Y''}. 
\end{align*}
Hence, by \cite[Theorem 5.6.2]{fujino-foundations}, 
no associated prime of $R^1f_*\mathcal O_{Y''}(A-N-Y')$ is contained 
in $f(Y')\cap X_{-\infty}$. 
Therefore, the connecting homomorphism 
\begin{equation*} 
\delta\colon f_*\mathcal O_{Y'}(A-N)\to 
R^1f_*\mathcal O_{Y''}(A-N-Y')
\end{equation*} 
is zero. 
This implies that 
\begin{equation*} 
0\to f_*\mathcal O_{Y''}(A-N-Y')\to \mathcal I_{X_{-\infty}}\to f_*\mathcal O_{Y'}
(A-N)\to 0 
\end{equation*}  
is exact. 
The ideal sheaf $\mathcal J=f_*\mathcal O_{Y''}
(A-N-Y')$ is zero 
when it is restricted to $X_{-\infty}$ because 
$\mathcal J\subset \mathcal I_{X_{-\infty}}$. 
On the other hand, $\mathcal J$ is zero on 
$X\setminus X_{-\infty}$ because 
$f(Y'')\subset X_{-\infty}$. 
Therefore, we obtain $\mathcal J=0$. 
Thus we have $\mathcal I_{X_{-\infty}}=f_*\mathcal O_{Y'}(A-N)$. 
So $f'=f|_{Y'}\colon (Y', B_{Y'})\to X$, where 
$K_{Y'}+B_{Y'}=(K_Y+B_Y)|_{Y'}$, 
gives the same quasi-log scheme structure as one 
given by $f\colon (Y, B_Y)\to X$ with the property (ii). 
\end{proof}

By using Lemma \ref{d-lem4.19}, we establish the following very 
useful lemma. 

\begin{lem}[{\cite[Lemma 3.14]{fujino-reid-fukuda}}]\label{d-lem4.20}
Let $[X, \omega]$ be a quasi-log 
scheme. 
Let us consider 
$X^\dag=\overline {X\setminus X_{-\infty}}$, the closure 
in $X$, 
with the reduced scheme structure. 
Then $[X^\dag, \omega^\dag]$, where 
$\omega^\dag=\omega|_{X^\dag}$, has a 
natural quasi-log scheme structure induced by $[X, \omega]$. 
This means that 
\begin{itemize}
\item[(i)] $C$ is a qlc stratum of $[X, \omega]$ if and 
only if $C$ is a qlc stratum of $[X^\dag, \omega^\dag]$, and 
\item[(ii)] $\mathcal I_{\Nqlc(X, \omega)}=\mathcal I_{\Nqlc(X^\dag, 
\omega^\dag)}$ holds. 
\end{itemize} 
Moreover, we consider a set of some qlc strata $\{C_i\}_{i\in I}$ of $[X, \omega]$. 
We put 
\begin{equation*} 
\left(X^\dag\right)'=\Nqlc(X^\dag, \omega^\dag)\cup 
\left(\bigcup _{i\in I} C_i\right)
\end{equation*} 
and 
\begin{equation*}
X'=\Nqlc(X, \omega)\cup 
\left(\bigcup _{i\in I} C_i\right). 
\end{equation*}  
Then $\left[(X^\dag)', \omega^\dag|_{(X^\dag)'}\right]$ and 
$\left[X', \omega|_{X'}\right]$ 
naturally become quasi-log schemes by adjunction and 
$\mathcal I_{(X^\dag)'}=\mathcal I_{X'}$ holds, where 
$\mathcal I_{(X^\dag)'}$ and $\mathcal I_{X'}$ are the 
defining ideal sheaves of $(X^\dag)'$ and $X'$ on 
$X^\dag$ and $X$, respectively. 
In particular, $\mathcal I_{\Nqklt(X^\dag, \omega^\dag)}
=\mathcal I_{\Nqklt(X, \omega)}$ holds. 
\end{lem}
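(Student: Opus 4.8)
The plan is to reduce everything to a single application of Lemma \ref{d-lem4.19}, since that lemma already produces a quasi-log resolution whose irreducible components all map into $X^\dag=\overline{X\setminus X_{-\infty}}$. So first I would invoke Lemma \ref{d-lem4.19} to replace the given quasi-log resolution $f\colon(Y,B_Y)\to X$ by one $f'\colon(Y',B_{Y'})\to X$ satisfying property (ii) there, namely every irreducible component of $Y'$ is mapped by $f'$ into $X^\dag$. Because $f'(Y')\subset X^\dag$ set-theoretically, the morphism $f'$ factors through the closed immersion $X^\dag\hookrightarrow X$, giving $f^\dag\colon(Y',B_{Y'})\to X^\dag$. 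The first task is then to check that the defining data $(X^\dag,\omega^\dag,f^\dag)$ satisfy conditions (1), (2), (3) of Definition \ref{d-def4.2}. Condition (1) is immediate since $f^\dag{}^*\omega^\dag=f'^*\omega\sim_{\mathbb R}K_{Y'}+B_{Y'}$. For condition (2) I would observe that the sheaf $f'_*\mathcal O_{Y'}(\lceil-(B^{<1}_{Y'})\rceil-\lfloor B^{>1}_{Y'}\rfloor)=\mathcal I_{X_{-\infty}}$ computed on $X$ agrees, after pushing forward along the closed immersion, with the corresponding sheaf on $X^\dag$; the key point is that $\mathcal I_{X_{-\infty}}$ is already supported on $X^\dag$ since $\Nqlc(X,\omega)\subset X^\dag$ (every component of $Y'$ meeting the non-qlc locus already lies over $X^\dag$). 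Condition (3) follows because the strata of $(Y',B_{Y'})$ not mapped into $X_{-\infty}$ have images which are exactly the qlc strata of $[X,\omega]$, and all of these lie in $X^\dag$ by construction.

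Once $[X^\dag,\omega^\dag]$ is established as a quasi-log scheme, statements (i) and (ii) are essentially bookkeeping. For (i), a subscheme $C$ is a qlc stratum of $[X,\omega]$ exactly when it is the image of a stratum of $(Y',B_{Y'})$ not contained in $X_{-\infty}$; since these strata and their images are unchanged when we regard $f^\dag$ as landing in $X^\dag$, the qlc strata coincide. For (ii), the equality $\mathcal I_{\Nqlc(X,\omega)}=\mathcal I_{\Nqlc(X^\dag,\omega^\dag)}$ reduces to the identification of the pushforward sheaf $f'_*\mathcal O_{Y'}(\lceil-(B^{<1}_{Y'})\rceil-\lfloor B^{>1}_{Y'}\rfloor)$ on $X$ with its counterpart on $X^\dag$, which is precisely what the factorization of $f'$ through $X^\dag$ gives.

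For the ``moreover'' part I would apply adjunction (Theorem \ref{d-thm4.6} (i)) to both $[X,\omega]$ and $[X^\dag,\omega^\dag]$ with the same collection $\{C_i\}_{i\in I}$ of qlc strata. By part (i) the chosen $C_i$ are qlc strata in both quasi-log structures, so in each case adjunction equips $X'=\Nqlc(X,\omega)\cup\bigcup_{i\in I}C_i$ and $(X^\dag)'=\Nqlc(X^\dag,\omega^\dag)\cup\bigcup_{i\in I}C_i$ with induced quasi-log structures. The defining ideal sheaf produced by adjunction is built from the same pushforward construction $f'_*\mathcal O_{Y'}(\lceil-(B^{<1}_{Y'})\rceil-\lfloor B^{>1}_{Y'}\rfloor-Y'_{\mathrm{extra}}|_{Y'})$ in both cases, and since the underlying resolution $f'$ factors through $X^\dag$, these ideal sheaves agree. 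Hence $\mathcal I_{(X^\dag)'}=\mathcal I_{X'}$. Taking $\{C_i\}$ to be all the qlc centers yields the final assertion $\mathcal I_{\Nqklt(X^\dag,\omega^\dag)}=\mathcal I_{\Nqklt(X,\omega)}$.

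The main obstacle I anticipate is the careful verification of condition (2) of Definition \ref{d-def4.2} together with the ideal-sheaf identity (ii): one must confirm that pushing forward the relevant sheaf along $f^\dag\colon Y'\to X^\dag$ and then along the closed immersion $X^\dag\hookrightarrow X$ recovers the original $\mathcal I_{X_{-\infty}}$, with no discrepancy arising from components or strata of $Y'$ that might a priori map to points of $X_{-\infty}$ lying outside $X^\dag$. This is exactly where property (ii) of Lemma \ref{d-lem4.19}---that no component of $Y'$ maps outside $X^\dag$---does the essential work, so the proof hinges on arranging that reduction first and then tracking the sheaves through the factorization.
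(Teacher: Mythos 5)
Your proposal follows essentially the same route as the paper: invoke Lemma \ref{d-lem4.19} to discard the components of $Y$ not mapping into $X^\dag$, factor the resulting quasi-log resolution through the closed immersion $X^\dag\hookrightarrow X$, verify the conditions of Definition \ref{d-def4.2}, and then prove the ``moreover'' part by comparing the explicit pushforward formulas for the ideal sheaves coming from the proof of adjunction. One small correction: your justification ``$\Nqlc(X,\omega)\subset X^\dag$'' is false in general (an irreducible component of $X$ may lie entirely inside $X_{-\infty}$); the correct reason that $\mathcal I_{X_{-\infty}}=f'_*\mathcal O_{Y'}(\lceil -(B^{<1}_{Y'})\rceil-\lfloor B^{>1}_{Y'}\rfloor)$ can be viewed as an ideal sheaf on $X^\dag$ is the factorization of $f'$ through $X^\dag$, which you also cite, combined with the paper's explicit check that $\mathcal I_{X^\dag}\cap \mathcal I_{X_{-\infty}}=\{0\}$.
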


We include the proof for the benefit of the reader.

\begin{proof} 
In this proof, we use the same notation as in the proof of 
Lemma \ref{d-lem4.19}. 
Let $\mathcal I_{X^\dag}$ be the defining ideal sheaf of $X^\dag$ on $X$. 
Let $f'\colon (Y', B_{Y'})\to X$ be the 
quasi-log resolution constructed in the proof of 
Lemma \ref{d-lem4.19}. 
By construction, $f'\colon Y'\to X$ factors through $X^\dag$.  
Note that 
\begin{equation*} 
\mathcal I_{X_{-\infty}}=f_*\mathcal O_Y(A-N)=f'_*
\mathcal O_{Y'}(A-N)= f'_*\mathcal O_{Y'}(-N)
\end{equation*} 
and that 
\begin{equation*} 
f'(N)=X_{-\infty}\cap f'(Y')=X_{-\infty}\cap X^\dag
\end{equation*}  
set theoretically, where 
$A=\lceil -(B_{Y}^{<1})\rceil$ and $N=\lfloor B_{Y}^{>1}\rfloor$ 
(see \cite[Remark 6.2.10]{fujino-foundations}). 
Therefore, we obtain 
\begin{equation*} 
\mathcal I_{X^\dag}\cap \mathcal I_{X_{-\infty}}
=\mathcal I_{X^\dag}\cap f_*\mathcal O_Y(A-N)
\subset f_*\mathcal O_{Y''}(A-N-Y')=\{0\}.
\end{equation*}  
Thus we can construct the following big commutative diagram. 
\begin{equation*} 
\xymatrix{&&0\ar[d]&0\ar[d]&
\\&&f'_*\mathcal O_{Y'}(A-N)\ar@{=}[r]\ar[d]& f'_*\mathcal O_{Y'}(A-N)\ar[d]&\\
0\ar[r]&\mathcal I_{X^\dag}\ar[r]\ar@{=}[d]&\mathcal O_X\ar[r]\ar[d]
&\mathcal O_{X^\dag}\ar[r]\ar[d]&0\\
0\ar[r]&\mathcal I_{X^\dag}\ar[r]&\mathcal O_{X_{-\infty}}\ar[r]\ar[d]
&\mathcal O_{X^\dag_
{-\infty}}\ar[d]\ar[r]&0\\
&&0&0&
}
\end{equation*} 
Hence $f'\colon (Y', B_{Y'})\to X^\dag$ gives the desired quasi-log scheme 
structure 
on $[X^\dag, \omega^\dag]$. 

We know 
that $[(X^\dag)', \omega^{\dag}|_{(X^\dag)'}]$ and 
$[X', \omega|_{X'}]$ naturally become quasi-log schemes 
by adjunction (see Theorem \ref{d-thm4.6} (i) and 
\cite[Theorem 6.3.5 (i)]{fujino-foundations}). 
Thus it is sufficient to prove the equality $\mathcal I_{(X^\dag)'}=
\mathcal I_{X'}$. As usual, by \cite[Proposition 6.3.1]{fujino-foundations}, 
we may further assume that the union of all strata of 
$(Y, B_Y)$ that are mapped to $X'$, which is denoted by 
$Z$, is a union of some irreducible components of $Y$. 
We note that $Z\geq Y''$. 
We put $Z'=Y-Z$. 
Then it is obvious that $Z'\leq Y'$ holds. 
By the proof of adjunction (see the idea of the 
proof of Theorem \ref{d-thm4.6} (i) and the proof of 
\cite[Theorem 6.3.5 (i)]{fujino-foundations}), 
we see that 
\begin{equation*} 
\mathcal I_{(X^\dag)'}=f'_*\mathcal O_{Z'}(A-N-(Z-Y'')|_{Z'})
=f'_*\mathcal O_{Z'}(A-N-Z|_{Z'})
=\mathcal I_{X'}
\end{equation*}  
holds. 
\end{proof}

By Lemmas \ref{d-lem4.19} and \ref{d-lem4.20}, 
we can abandon unnecessary components 
from $f\colon (Y, B_Y)\to X$. 
The following examples may help the reader 
understand Lemmas \ref{d-lem4.19} and \ref{d-lem4.20}. 

\begin{ex}\label{d-ex4.21} 
Let $L$ be a line on $\mathbb P^3$. 
We take general hyperplanes $H_i$ with $L\subset H_i$ 
for $1\leq i\leq 4$. 
Let $H_0$ be a general hyperplane of $\mathbb P^3$. 
We put $X:=\mathbb P^3$ and 
\begin{equation*}
\Delta:=H_0+\frac{1}{2}H_1+\frac{1}{2}H_2+\frac{2}{3}H_3+\frac{2}{3}H_4  
\end{equation*}
and consider the normal pair $(X, \Delta)$. 
Then the pair $[X, \omega:=K_X+\Delta]$ naturally becomes 
a quasi-log scheme by Example \ref{d-ex4.11}. 
We can easily check that $\Nqlc(X, \omega)=\Nlc(X, \Delta)=L$. 
Let $p\colon X^\flat\to X$ be the blow-up along $L$. 
Then we have 
\begin{equation*}
K_{X^\flat}+p^{-1}_*\Delta+\frac{4}{3}E=p^*(K_X+\Delta), 
\end{equation*} 
where $E$ is the $p$-exceptional divisor on $X^\flat$. 
By construction, the support of $p^{-1}_*\Delta+E$ is a simple 
normal crossing divisor on $X^\flat$. 
We put $X':=H_0\cup L$ and $\omega':=\omega|_{X'}$. 
By adjunction (see Theorem \ref{d-thm4.6} (i) and 
\cite[Theorem 6.3.5 (i)]{fujino-foundations}), 
$[X', \omega']$ naturally has a quasi-log scheme structure 
induced by $[X, \omega]$. More precisely, 
by using $p\colon X^\flat\to X$, 
we can construct a quasi-log scheme 
\begin{equation*}
\left(X', \omega', f\colon (Y, B_Y)\to X'\right)
\end{equation*} 
such that $Y$ is irreducible (see also Remark \ref{d-rem4.12}). 
In this case, $f\colon Y\to X'$ is not surjective. 
Let $q\colon X^\sharp \to X^\flat$ be the blow-up 
along $E\cap H^\flat_4$, 
where $H^\flat_4$ is the strict transform of $H_4$ on $X^\flat$. 
Then we have 
\begin{equation*}
K_{X^\sharp} +(p\circ q) ^{-1}_* \Delta+\frac{4}{3} E^\sharp 
+F 
=(p\circ q)^*(K_X+\Delta), 
\end{equation*} 
where $E^\sharp$ is the strict transform of $E$ on $X^\sharp$ and 
$F$ is the $q$-exceptional divisor on $X^\sharp$. 
By construction, the support of 
$(p\circ q)^{-1}_*\Delta+E^\sharp+F$ 
is a 
simple normal crossing divisor on $X^\sharp$. 
We can easily check that $p\circ q(F)=L$. 
By using $p\circ q\colon X^\sharp \to X$, 
we can construct another quasi-log resolution 
$\tilde{f} \colon (\tilde Y, B_{\tilde Y})\to X'$ of 
$[X', \omega']$ such that 
$\tilde f\colon \tilde Y\to X'$ is surjective. 
In particular, $\tilde Y$ is reducible and there exists an 
irreducible component of $\tilde Y$ which is dominant onto $L$.  
\end{ex}

\begin{ex}\label{d-ex4.22} 
Let $M$ be a smooth variety and let $D_1$ and $D_2$ be 
prime divisors on $M$ such that 
$D_1+D_2$ is a simple normal crossing divisor with $D_1\ne 
D_2$ and $D_1\cap D_2\ne \emptyset$.  
We consider the normal pair $(M, D_1+2D_2)$. 
Then the pair $[M, K_M+D_1+2D_2]$ 
naturally becomes a quasi-log scheme as explained in 
Example \ref{d-ex4.11}. 
We put $X:=D_1+2D_2$. 
Then, by adjunction (see Theorem \ref{d-thm4.6} (i) 
and 
\cite[Theorem 6.3.5 (i)]{fujino-foundations}), 
$[X, \omega]$ 
is a quasi-log scheme, where $\omega:=(K_M+D_1+2D_2)|_X$. 
More precisely, we put $Y:=D_1$ and consider 
$K_Y+B_Y:=(K_M+D_1+2D_2)|_Y$. Then 
$f\colon (Y, B_Y)\to X$ gives a natural 
quasi-log scheme structure on $[X, \omega]$ by adjunction, 
where $f\colon Y\to X$ is a natural closed embedding. We 
note that $f\colon Y\to X$ is not surjective in this case. 
We put $X^\dag:=D_1$ and consider 
$\omega^\dag:=(K_M+D_1+2D_2)|_{X^\dag}$. 
Then $[X^\dag, \omega^\dag]$ has a 
natural quasi-log scheme structure. 
We can see $f'\colon (Y, B_Y)\to X^\dag$ as a quasi-log 
resolution of $[X^\dag, \omega^\dag]$, where $f'$ is 
the identity morphism of $Y=X^\dag$.  
We note that 
$
\mathcal I_{\Nqlc(X^\dag, \omega^\dag)}
=\mathcal I_{\Nqlc(X, \omega)}
$ obviously 
holds. 
\end{ex}

Lemma \ref{d-lem4.23} is almost obvious by definition. 

\begin{lem}\label{d-lem4.23}
Let 
\begin{equation*} 
\left(X, \omega, f\colon  (Y, B_Y)\to X\right)
\end{equation*} 
be 
a quasi-log scheme and let 
$B$ be an effective 
$\mathbb R$-Cartier divisor on $X$, 
that is, a finite $\mathbb R_{>0}$-linear 
combination of effective Cartier divisors on $X$. 
Let $X'$ be the union of $\Nqlc(X, \omega)$ and all qlc centers of $[X, \omega]$ 
contained in $\Supp B$. 
Assume that 
the union of all strata of $(Y, B_Y)$ mapped to $X'$ by $f$, which is 
denoted by $Y'$, is a union of some irreducible components 
of $Y$. 
We put $Y''=Y-Y'$, $K_{Y''}+B_{Y''}=
(K_Y+B_Y)|_{Y''}$, and $f''=f|_{Y''}$. 
We further assume that 
\begin{equation*} 
\left(Y'', B_{Y''}+(f'')^*B\right)
\end{equation*}  
is a globally embedded simple normal crossing pair. 
Then 
\begin{equation*} 
\left(X, \omega+B, f''\colon  (Y'', B_{Y''}+(f'')^*B)\to X\right)
\end{equation*}  
is a quasi-log scheme. 
\end{lem}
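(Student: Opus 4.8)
The plan is to verify directly that the triple $\left(X,\omega+B,f''\colon(Y'',B_{Y''}+(f'')^*B)\to X\right)$ satisfies the three conditions of Definition \ref{d-def4.2}; the reason the statement is ``almost obvious'' is that the machinery already developed for adjunction (Theorem \ref{d-thm4.6} (i)) applies essentially verbatim. First I would dispose of condition (1). Restricting the defining relation $f^*\omega\sim_{\mathbb R}K_Y+B_Y$ to the union of components $Y''$ gives $(f'')^*\omega\sim_{\mathbb R}K_{Y''}+B_{Y''}$ by the very definition of $B_{Y''}$, and adding the effective $\mathbb R$-divisor $(f'')^*B$ to both sides yields
\begin{equation*}
(f'')^*(\omega+B)\sim_{\mathbb R}K_{Y''}+\left(B_{Y''}+(f'')^*B\right).
\end{equation*}
This is immediate.

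The real work is conditions (2) and (3), i.e.\ producing the correct non-qlc locus, whose natural candidate is $X'$ itself. I would show that the candidate defining ideal
\begin{equation*}
\mathcal J:=f''_*\mathcal O_{Y''}\!\left(\lceil-((B_{Y''}+(f'')^*B)^{<1})\rceil-\lfloor(B_{Y''}+(f'')^*B)^{>1}\rfloor\right)
\end{equation*}
coincides with $\mathcal I_{X'}$. Since $X'$ is the union of $\Nqlc(X,\omega)$ with some qlc centers, adjunction (Theorem \ref{d-thm4.6} (i)) already endows $X'$ with a quasi-log structure via $f|_{Y'}$, and its proof furnishes the explicit formula $\mathcal I_{X'}=f''_*\mathcal O_{Y''}(\lceil-(B_{Y''}^{<1})\rceil-\lfloor B_{Y''}^{>1}\rfloor-Y'|_{Y''})$. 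Comparing $\mathcal J$ with this expression is where the argument of Theorem \ref{d-thm4.6} (i) gets reused: after arranging, via \cite[Proposition 6.3.1]{fujino-foundations}, that the relevant strata form unions of irreducible components, the same short exact sequence together with the torsion-freeness theorem \cite[Theorem 5.6.2 (i)]{fujino-foundations} forces the pertinent connecting homomorphisms to vanish, so that $\mathcal J$ is a genuine ideal sheaf on $X$ cutting out $X'$ and $\mathcal O_X/\mathcal J$ carries the quotient structure demanded by condition (2).

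For condition (3) I would identify the qlc strata. Because $Y''=Y-Y'$ is obtained by discarding exactly the strata mapped into $X'$, the strata of $(Y'',B_{Y''}+(f'')^*B)$ whose images avoid $X'$ are precisely the strata of $(Y,B_Y)$ with the same property. Adding the effective divisor $(f'')^*B$ can only raise coefficients along $(f'')^{-1}(\Supp B)$, so any new component of coefficient $\ge 1$ that it creates maps into $\Supp B$ and hence, being a qlc center contained in $\Supp B$ (or lying over $\Nqlc(X,\omega)$), into $X'$. Thus no qlc stratum outside $X'$ is lost or gained, and the qlc strata of the new scheme are exactly the qlc strata of $[X,\omega]$ not contained in $X'$.

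The main obstacle is the bookkeeping hidden in the second paragraph. As divisors on $Y''$ the expression defining $\mathcal J$ and the adjunction formula for $\mathcal I_{X'}$ do not agree termwise: along the double locus $Y'|_{Y''}$ where $(f'')^*B$ happens to vanish (such components necessarily map into $\Nqlc(X,\omega)$), and along the components over $\Supp B$ where a coefficient crosses $1$, the two divisors differ. Reconciling them after taking $f''_*$, that is, checking that these discrepancies are supported over $X'$ and are killed by the torsion-freeness/vanishing input exactly as in the proof of Theorem \ref{d-thm4.6} (i), is the one delicate point; both the hypothesis that $Y'$ is a union of irreducible components and the hypothesis that $(Y'',B_{Y''}+(f'')^*B)$ is a globally embedded simple normal crossing pair are used precisely to make this reduction legitimate.
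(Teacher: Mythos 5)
Your verification of condition (1) matches the paper, and you correctly identify the adjunction formula $\mathcal I_{X'}=f''_*\mathcal O_{Y''}(\lceil-(B_{Y''}^{<1})\rceil-\lfloor B_{Y''}^{>1}\rfloor-Y'|_{Y''})$ as the key input. However, the core of your argument aims at a statement that is false in general: the non-qlc locus of the new structure $[X,\omega+B]$ is \emph{not} $X'$, and the candidate ideal $\mathcal J$ does \emph{not} coincide with $\mathcal I_{X'}$. Adding $(f'')^*B$ pushes coefficients of $B_{Y''}$ above $1$ along divisors lying over $\Supp B$ that need not map into $X'$ at all: take, say, $(X,\Delta)$ log canonical with $\omega=K_X+\Delta$ and $B=2M$ for a general Cartier divisor $M$ containing no lc center; then $X'=\Nqlc(X,\omega)=\emptyset$ while $M\subset \Nqlc(X,\omega+B)$. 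So your assertion in the last paragraph that the discrepancies between the two divisors are ``supported over $X'$'' fails along the components over $\Supp B\setminus X'$ where a coefficient crosses $1$, and the claim in your third paragraph that no qlc stratum is gained is also wrong: a new component of $(B_{Y''}+(f'')^*B)^{=1}$ gives a new qlc center contained in $\Supp B$, but such a center is a qlc center of the \emph{new} structure, not of $[X,\omega]$, so it has no reason to lie in $X'$, which is built from qlc centers of the old structure. (This is precisely why Lemma \ref{d-lem4.24} multiplies $G$ by a small $\varepsilon$ before invoking Lemma \ref{d-lem4.23}: only then does the non-qklt locus stay put.)

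The paper's proof is both simpler and weaker in what it asserts. One only needs the termwise inequality of divisors
\begin{equation*}
\lceil -(B_{Y''}+(f'')^*B)^{<1}\rceil -\lfloor
(B_{Y''}+(f'')^*B)^{>1}\rfloor
\leq \lceil -(B^{<1}_{Y''})\rceil -\lfloor
B^{>1}_{Y''}\rfloor -Y'|_{Y''},
\end{equation*}
which after applying $f''_*$ yields $\mathcal J\subset\mathcal I_{X'}\subset\mathcal O_X$. That containment alone shows $\mathcal J$ is an ideal sheaf; one then \emph{defines} $\Nqlc(X,\omega+B)$ by $\mathcal J$ and declares the qlc strata to be the $f''$-images of strata of $(Y'',B_{Y''}+(f'')^*B)$ not contained in it. No new exact sequence, vanishing, or torsion-freeness argument is needed beyond quoting the adjunction formula for $\mathcal I_{X'}$. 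The conclusion obtained is only $X'\subset\Nqlc(X,\omega+B)$, not equality, and your attempt to prove equality is the gap.
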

\begin{proof}
Since $K_Y+B_Y\sim _{\mathbb R} f^*\omega$, we have 
$K_{Y''}+B_{Y''}\sim _{\mathbb R} (f'')^*\omega$. 
Therefore, $K_{Y''}+B_{Y''}+(f'')^*B\sim 
_{\mathbb R}(f'')^*(\omega+B)$ holds true. 
By the proof of adjunction (see 
the idea of the proof of 
Theorem \ref{d-thm4.6} (i) and 
the proof of \cite[Theorem 6.3.5 (i)]{fujino-foundations}), 
we have 
\begin{equation*} 
\mathcal I_{X'}=f''_*\mathcal O_{X''}(\lceil -(B^{<1}_{Y''})\rceil 
-\lfloor B^{>1}_{Y''}\rfloor-Y'|_{Y''}), 
\end{equation*}  
where $\mathcal I_{X'}$ is the defining ideal sheaf of $X'$ on $X$. 
Note that the following key inequality 
\begin{equation*} 
\lceil -(B_{Y''}+(f'')^*B)^{<1}\rceil -\lfloor 
(B_{Y''}+(f'')^*B)^{>1}\rfloor 
\leq \lceil -(B^{<1}_{Y''})\rceil -\lfloor 
B^{>1}_{Y''}\rfloor -Y'|_{Y''}
\end{equation*} 
holds. 
Therefore, we put 
\begin{equation*} 
\mathcal I_{\Nqlc(X, \omega+B)}:= 
f''_*\mathcal O_{Y''}(\lceil -(B_{Y''}+(f'')^*B)^{<1}\rceil -\lfloor 
(B_{Y''}+(f'')^*B)^{>1}\rfloor) \subset \mathcal I_{X'}\subset 
\mathcal O_X
\end{equation*}  
and define the closed subscheme $\Nqlc(X, \omega+B)$ of $X$ 
by $\mathcal I_{\Nqlc(X, \omega+B)}$. 
Then 
\begin{equation*} 
\left(X, \omega+B, f''\colon  (Y'', B_{Y''}+(f'')^*B)\to X\right)
\end{equation*}  
is a quasi-log scheme. 
Let $W$ be a reduced and irreducible subscheme of $X$. 
As usual, we say that $W$ is a qlc stratum of $[X, \omega+B]$ 
when $W$ is not contained in $\Nqlc(X, \omega+B)$ and is 
the $f''$-image of a stratum of $(Y'', B_{Y''}+(f'')^*B)$. 
By construction, we have $X'\subset \Nqlc(X, \omega+B)$. 
We note that 
$\left(X, \omega+B, f''\colon  (Y'', B_{Y''}+(f'')^*B)\to X\right)$ coincides 
with $\left(X, \omega, f\colon (Y, B_Y)\to X\right)$ outside $\Supp B$. 
\end{proof}

By using Lemma \ref{d-lem4.23}, 
we can prove the following lemma. 

\begin{lem}\label{d-lem4.24}
Let $[X, \omega]$ be a quasi-log scheme and let $G$ 
be an effective $\mathbb R$-Cartier divisor on $X$, that is, 
a finite $\mathbb R_{>0}$-linear combination 
of effective Cartier divisors on $X$. 
Then, for every $0<\varepsilon \ll 1$, 
$[X, \omega+\varepsilon G]$ naturally becomes a 
quasi-log scheme such that $\Nqklt(X, \omega+\varepsilon G)
=\Nqklt(X, \omega)$ holds. 
More precisely, $
\mathcal I_{\Nqklt(X, \omega+\varepsilon G)}=\mathcal I_{\Nqklt(X, \omega)}$ 
holds. 
\end{lem}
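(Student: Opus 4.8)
The plan is to equip $[X,\omega+\varepsilon G]$ with a quasi-log structure via Lemma \ref{d-lem4.23} applied to $B=\varepsilon G$, and then to compare the two $\Nqklt$ defining ideals by an elementary coefficient computation on a common resolution. First I would fix a quasi-log resolution $f\colon (Y,B_Y)\to X$ and, by further blow-ups of the ambient space through \cite[Proposition 6.3.1]{fujino-foundations}, arrange that the union $Y'$ of all strata of $(Y,B_Y)$ mapped by $f$ into
\begin{equation*}
X':=\Nqlc(X,\omega)\cup\bigcup_{W\subset \Supp G}W,
\end{equation*}
where $W$ runs over the qlc centers of $[X,\omega]$ contained in $\Supp G$, is a union of irreducible components of $Y$, and that $\Supp(B_Y+f^*G)$ is a simple normal crossing divisor. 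Since $\Supp(\varepsilon G)=\Supp G$ for every $\varepsilon>0$, a single such resolution serves all $\varepsilon$ at once. Setting $Y''=Y-Y'$, $K_{Y''}+B_{Y''}=(K_Y+B_Y)|_{Y''}$ and $f''=f|_{Y''}$, the hypotheses of Lemma \ref{d-lem4.23} hold with $B=\varepsilon G$, so $[X,\omega+\varepsilon G]$ is a quasi-log scheme with quasi-log resolution $f''\colon (Y'',B_{Y''}+\varepsilon(f'')^*G)\to X$.

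Next I would compute the non-qklt ideals. By adjunction (Corollary \ref{d-cor4.7}; see also Lemma \ref{d-lem4.20}), the defining ideal of the non-qklt locus of a quasi-log scheme is obtained from any quasi-log resolution as the pushforward of $\mathcal O(\lceil -(B^{<1})\rceil-\lfloor B^{\geq 1}\rfloor)=\mathcal O(-\lfloor B\rfloor)$. Put $B''_{Y''}=B_{Y''}+\varepsilon(f'')^*G$. The heart of the matter is the observation that $\lfloor B''_{Y''}\rfloor=\lfloor B_{Y''}\rfloor$ for $0<\varepsilon\ll 1$: on each prime divisor $D$ with coefficient $b$ in $B_{Y''}$ and $g\geq 0$ in $(f'')^*G$ one has $\lfloor b+\varepsilon g\rfloor=\lfloor b\rfloor$ as soon as $\varepsilon g$ is smaller than $\lceil b\rceil-b$ when $b\notin\mathbb Z$ and smaller than $1$ when $b\in\mathbb Z$; in particular a coefficient equal to $1$ moves into the interval $(1,2)$ and still contributes $1$ to $\lfloor\,\cdot\,\rfloor$. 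As there are only finitely many components, a single $\varepsilon$ works simultaneously. Hence
\begin{equation*}
\mathcal I_{\Nqklt(X,\omega+\varepsilon G)}=f''_*\mathcal O_{Y''}(-\lfloor B''_{Y''}\rfloor)=f''_*\mathcal O_{Y''}(-\lfloor B_{Y''}\rfloor).
\end{equation*}

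Finally I would identify $f''_*\mathcal O_{Y''}(-\lfloor B_{Y''}\rfloor)$ with $\mathcal I_{\Nqklt(X,\omega)}=f_*\mathcal O_Y(-\lfloor B_Y\rfloor)$. This is the step I expect to be the main obstacle, since $(Y'',B_{Y''})$ is only the part of the original resolution left after discarding $Y'$. I would argue exactly as in the proofs of Lemmas \ref{d-lem4.19} and \ref{d-lem4.20}: tensoring $0\to\mathcal O_{Y'}(-Y''|_{Y'})\to\mathcal O_Y\to\mathcal O_{Y''}\to 0$ by $\mathcal O_Y(-\lfloor B_Y\rfloor)$ and applying $f_*$ yields
\begin{equation*}
0\to f_*\mathcal O_{Y'}(-\lfloor B_{Y'}\rfloor-Y''|_{Y'})\to f_*\mathcal O_Y(-\lfloor B_Y\rfloor)\to f''_*\mathcal O_{Y''}(-\lfloor B_{Y''}\rfloor)\overset{\delta}{\longrightarrow}R^1f_*\mathcal O_{Y'}(-\lfloor B_{Y'}\rfloor-Y''|_{Y'}),
\end{equation*}
and the generalization of Koll\'ar's torsion-freeness \cite[Theorem 5.6.2]{fujino-foundations} forces both the kernel term to vanish and $\delta=0$, because every component of $Y'$ is mapped into $X'\subseteq\Nqklt(X,\omega)$. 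This gives $f''_*\mathcal O_{Y''}(-\lfloor B_{Y''}\rfloor)=\mathcal I_{\Nqklt(X,\omega)}$, and combined with the previous display produces $\mathcal I_{\Nqklt(X,\omega+\varepsilon G)}=\mathcal I_{\Nqklt(X,\omega)}$. The delicate point is that here the reduced divisor $B_{Y'}^{=1}$ has been subtracted from the boundary (so that $-\lfloor B_{Y'}\rfloor$ replaces the usual $\lceil -(B_{Y'}^{<1})\rceil-\lfloor B_{Y'}^{>1}\rfloor$); I would handle this by first passing, via adjunction (Corollary \ref{d-cor4.7}), to the quasi-log structure on $\Nqklt(X,\omega)$, where the statement reduces to the non-qlc locus and the standard form of \cite[Theorem 5.6.2]{fujino-foundations} applies.
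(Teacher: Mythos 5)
Your first and last ingredients match the paper's: the quasi-log structure on $[X,\omega+\varepsilon G]$ is produced exactly as you say via Lemma \ref{d-lem4.23} with $B=\varepsilon G$, and the heart of the matter is indeed the elementary observation that adding $\varepsilon$ times an effective divisor to the boundary changes no round-down for $0<\varepsilon\ll 1$. The gap is the middle step, where you assert that for \emph{any} quasi-log resolution the ideal $\mathcal I_{\Nqklt}$ is the pushforward of $\mathcal O(-\lfloor B\rfloor)$. No such formula is available: $\mathcal I_{\Nqklt(X,\omega)}$ is only computed through the adjunction procedure of Theorem \ref{d-thm4.6} (i), which first requires arranging, by \cite[Proposition 6.3.1]{fujino-foundations}, that the union $Z'$ of all strata of $(Y,B_Y)$ mapped into $\Nqklt(X,\omega)$ is a union of irreducible components of $Y$, and then yields $\mathcal I_{\Nqklt(X,\omega)}=h_*\mathcal O_{Z''}(\lceil -(B^{<1}_{Z''})\rceil-\lfloor B^{>1}_{Z''}\rfloor-Z'|_{Z''})$ with $Z''=Y-Z'$ and $h=f|_{Z''}$. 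This differs from $f_*\mathcal O_Y(-\lfloor B_Y\rfloor)$ in two ways: the pushforward is taken only from $Z''$, and the correction term is $-Z'|_{Z''}$ rather than $-B^{=1}$. A component of $B^{=1}_Y$ dominating an irreducible component of $X$ (a stratum whose image is a qlc stratum but not a qlc center), or an irreducible component of $Y$ whose image is a qlc center, makes the two sheaves genuinely different. Your proposed repair does not close this: \cite[Theorem 5.6.2]{fujino-foundations} kills the connecting homomorphism $\delta$ but gives no vanishing of the kernel term $f_*\mathcal O_{Y'}(-\lfloor B_{Y'}\rfloor-Y''|_{Y'})$ (in Lemma \ref{d-lem4.19} the analogous vanishing comes from a separate support argument using $f(Y'')\subset X_{-\infty}$, which has no counterpart for $\Nqklt$), and your closing sentence about first passing to the quasi-log structure on $\Nqklt(X,\omega)$ is exactly where the missing argument would have to live.

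The paper avoids all of this by never leaving the adjunction formalism. Besides $Y'$ it introduces the larger union $Z'\geq Y'$ of strata mapped into $\Nqklt(X,\omega)$, again arranged to be a union of irreducible components, and computes \emph{both} ideals as pushforwards from the \emph{same} piece $Z''=Y-Z'$: the old one with correction term $-Z'|_{Z''}$, the new one with $-(Z'-Y')|_{Z''}$ (since the new structure already lives on $Y''=Y-Y'$). The comparison then reduces to precisely the coefficient check you carry out, but applied to two divisors that are known a priori to compute the two ideals. If you add the auxiliary $Z'$ and replace your floor formula by these two adjunction formulas on $Z''$, your argument becomes the paper's.
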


Note that Lemma \ref{d-lem4.24} is almost obvious 
for normal pairs by the definition of multiplier ideal 
sheaves. 

\begin{proof}[Proof of Lemma \ref{d-lem4.24}]
Let $f\colon (Y, B_Y)\to X$ be a proper morphism 
from a globally embedded simple normal crossing  
pair $(Y, B_Y)$ as in Definition \ref{d-def4.2}. 
Let $X'$ be the union of $\Nqlc(X, \omega)$ and 
all qlc centers of $[X, \omega]$ contained 
in $\Supp G$. 
By \cite[Proposition 6.3.1]{fujino-foundations} and 
\cite[Theorem 3.35]{kollar}, 
we may assume that the union of all strata of $(Y, B_Y)$ 
mapped to $X'$ by $f$, which is 
denoted by $Y'$, is a union of some irreducible 
components of $Y$. 
By \cite[Proposition 6.3.1]{fujino-foundations} and 
\cite[Theorem 3.35]{kollar} again, 
we may further assume that 
the union of all strata of $(Y, B_Y)$ mapped to 
$\Nqklt(X, \omega)$ by $f$, which 
is denoted by $Z'$, is a union of 
some irreducible components of $Y$. 
By construction, 
$Y'\leq Z'$ obviously holds. 
As in Lemma \ref{d-lem4.23}, 
we put $Y''=Y-Y'$, $K_{Y''}+B_{Y''}=(K_Y+B_Y)|_{Y''}$, 
and $f''=f|_{Y''}$. 
By \cite[Proposition 6.3.1]{fujino-foundations} and 
\cite[Theorem 3.35]{kollar}, 
we further assume that 
$(Y'', (f'')^*G+\Supp B_{Y''})$ is a globally embedded 
simple normal crossing pair. 
By Lemma \ref{d-lem4.23}, we know that 
\begin{equation*} 
\left(X, \omega+\varepsilon G, f''\colon  
(Y'', B_{Y''}+\varepsilon (f'')^*G)\to X\right)
\end{equation*}  
is a quasi-log scheme for every $\varepsilon >0$. 
We put $Z''=Y-Z'$, $K_{Z''}+B_{Z''}=(K_Y+B_Y)|_{Z''}$, and 
$h=f|_{Z''}$. 
Thus, by the proof of adjunction (see the idea of the proof of 
Theorem \ref{d-thm4.6} (i) and 
the proof of 
\cite[Theorem 6.3.5 (i)]{fujino-foundations}), we have 
\begin{equation*} 
\mathcal I_{\Nqklt(X, \omega)}=h_*\mathcal O_{Z''} 
(\lceil -(B^{<1}_{Z''})\rceil -\lfloor 
B^{>1}_{Z''}\rfloor -Z'|_{Z''}). 
\end{equation*}  
We note that 
\begin{equation*} 
\lceil -(B^{<1}_{Z''})\rceil -\lfloor 
B^{>1}_{Z''}\rfloor -Z'|_{Z''}=\lfloor B_{Z''}\rfloor 
\end{equation*}  
holds by definition. 
On the other hand, 
by the proof of adjunction again 
(see the idea of the proof of 
Theorem \ref{d-thm4.6} (i) and 
the proof of \cite[Theorem 6.3.5 (i)]{fujino-foundations}), 
\begin{equation*} 
\mathcal I_{\Nqklt(X, \omega+\varepsilon G)}
=h_*\mathcal O_{Z''} 
(\lceil -(B_{Z''}+\varepsilon h^*G)^{<1}\rceil -\lfloor 
(B_{Z''}+\varepsilon h^*G)^{>1}\rfloor -(Z'-Y')|_{Z''}) 
\end{equation*} 
for every $0<\varepsilon \ll 1$. 
By direct calculation, for $0<\varepsilon \ll1$, 
\begin{equation*}
\begin{split}
&\lceil -(B_{Z''}+\varepsilon h^*G)^{<1}\rceil -\lfloor 
(B_{Z''}+\varepsilon h^*G)^{>1}\rfloor -(Z'-Y')|_{Z''}
\\&= -\lfloor B_{Z''}\rfloor \\ 
&=\lceil -(B^{<1}_{Z''})\rceil -\lfloor 
B^{>1}_{Z''}\rfloor -Z'|_{Z''}. 
\end{split}
\end{equation*}
Hence we obtain 
\begin{equation*} 
\mathcal I_{\Nqklt(X, \omega+\varepsilon G)}=\mathcal I_{\Nqklt(X, \omega)}. 
\end{equation*}  
This means that 
\begin{equation*} 
\left(X, \omega+\varepsilon G, f''\colon  (Y'', B_{Y''}+\varepsilon 
(f'')^*G)\to X\right)
\end{equation*}  
is a quasi-log scheme with 
\begin{equation*} 
\Nqklt(X, \omega+\varepsilon G)=\Nqklt(X, \omega) 
\end{equation*} 
for 
$0<\varepsilon \ll 1$. 
We finish the proof of Lemma \ref{d-lem4.24}. 
\end{proof}

We need the following lemma in order to 
reduce some problems to the case where 
quasi-log schemes have $\mathbb Q$-structures. 

\begin{lem}\label{d-lem4.25}
Let $\left(X, \omega, f\colon  (Y, B_Y)\to X\right)$ be a quasi-log scheme. 
Then we obtain a $\mathbb Q$-divisor $D_i$ on $Y$, 
a $\mathbb Q$-line bundle $\omega_i$ on $X$, and a positive 
real number $r_i$ for $1\leq i\leq k$ such that 
\begin{itemize}
\item[(i)] $\sum _{i=1}^k r_i=1$, 
\item[(ii)] $\Supp D_i=\Supp B_Y$, 
$D^{=1}_i=B^{=1}_Y$, $\lfloor D^{>1}_i\rfloor=
\lfloor B^{>1}_Y\rfloor$, 
and $\lceil -(D^{<1}_i)\rceil=\lceil -(B^{<1}_Y)\rceil$ 
for every $i$, 
\item[(iii)] $\omega=\sum _{i=1}^k r_i\omega_i$ and 
$B_Y=\sum _{i=1}^k r_i D_i$, and 
\item[(iv)] $\left(X, \omega_i, f\colon (Y, D_i)\to X\right)$ is a quasi-log 
scheme with $K_Y+D_i\sim _{\mathbb Q} f^*\omega_i$ for every $i$. 
\end{itemize}
We note that 
\begin{equation*} 
\Nqlc(X, \omega_i)=\Nqlc(X, \omega)
\end{equation*}  
holds for every $i$. 
We also note that $W$ is a qlc stratum of 
$[X, \omega]$ if and only if 
$W$ is a qlc stratum of $[X, \omega_i]$ for 
every $i$. 
\end{lem}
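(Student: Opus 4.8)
The plan is to realize the admissible perturbations of the pair $(\omega, B_Y)$ as the rational points of a rational polytope and then to decompose $(\omega, B_Y)$ barycentrically. First I would record what the conditions in (ii) really impose on a divisor $D=\sum_j d_j B_j$ supported on $\Supp B_Y=\bigcup_j B_j$: the four operators $\Supp(\cdot)$, $(\cdot)^{=1}$, $\lfloor(\cdot)^{>1}\rfloor$, and $\lceil-(\cdot)^{<1}\rceil$ are preserved simultaneously exactly when each integral coefficient $b_j$ is kept fixed and each non-integral $b_j$ is confined to the open unit interval $(\lfloor b_j\rfloor,\lceil b_j\rceil)$, whose endpoints are integers. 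These are precisely finitely many strict rational inequalities together with finitely many rational equalities, and they cut out a relatively open rational region $\mathcal R$ in $\bigoplus_j\mathbb R B_j$ containing $B_Y$.

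Second, to link $\omega$ to $B_Y$ I would pass to finite dimensions. Since $\omega$ is a finite $\mathbb R$-combination of genuine line bundles, choose a finitely generated subgroup $\Lambda\subset\Pic(X)$ with $\omega\in\Lambda\otimes_{\mathbb Z}\mathbb R$, and a finitely generated subgroup $\Lambda'\subset\Pic(Y)$ containing $f^*\Lambda$, the class of $K_Y$, and the classes $[\mathcal O_Y(B_j)]$ (each $B_j$ is Cartier on $Y$, being the restriction of a component of the simple normal crossing divisor $B$ on the smooth ambient space $M$). Then form the affine map $\Phi\colon(\Lambda\otimes\mathbb R)\times\bigoplus_j\mathbb R B_j\to\Lambda'\otimes\mathbb R$ sending $(\mathcal M,D)\mapsto f^*\mathcal M-[K_Y+D]$. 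Because $f^*$, the map $D\mapsto[\mathcal O_Y(D)]$, and $[K_Y]$ are all defined over $\mathbb Q$, the map $\Phi$ is affine over $\mathbb Q$, so its zero fiber $\Phi^{-1}(0)=\{(\mathcal M,D):f^*\mathcal M\sim_{\mathbb R}K_Y+D\}$ is a rational affine subspace, and it contains $(\omega,B_Y)$ by condition (1) of Definition \ref{d-def4.2}.

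Third, I would run the standard rational-polytope decomposition. The point $(\omega,B_Y)$ lies in the relatively open set $\Phi^{-1}(0)\cap\big((\Lambda\otimes\mathbb R)\times\mathcal R\big)$, hence in the relative interior of a rational polytope $P\subset\Phi^{-1}(0)$ all of whose points satisfy the interval conditions (ii). Writing $(\omega,B_Y)=\sum_i r_i(\omega_i,D_i)$ as a convex combination with $r_i>0$ and $\sum_i r_i=1$ of the (rational) vertices $(\omega_i,D_i)$ of $P$ then yields (i), (iii), the $\mathbb Q$-rationality of $\omega_i$ and $D_i$, and condition (ii) for each $D_i$; rationality of $\Phi$ upgrades $\Phi(\omega_i,D_i)=0$ to $K_Y+D_i\sim_{\mathbb Q}f^*\omega_i$, which is the linear equivalence in (iv).

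Finally I would verify that each $\left(X,\omega_i,f\colon(Y,D_i)\to X\right)$ is genuinely a quasi-log scheme with the asserted strata. Since $\Supp D_i=\Supp B_Y$, I may write $D_i=B^{(i)}|_Y$ for an $\mathbb R$-divisor $B^{(i)}$ on $M$ supported on $\Supp B$, so $(Y,D_i)$ is again globally embedded simple normal crossing with the same ambient space $M$, and condition (1) is the $\mathbb Q$-equivalence just produced. The key economy is that (ii) gives $\lceil-(D_i^{<1})\rceil=\lceil-(B_Y^{<1})\rceil$ and $\lceil-(D_i^{<1})\rceil-\lfloor D_i^{>1}\rfloor=\lceil-(B_Y^{<1})\rceil-\lfloor B_Y^{>1}\rfloor$, so the sheaves appearing in condition (2) are literally unchanged and the same $X_{-\infty}$ works; hence $\Nqlc(X,\omega_i)=\Nqlc(X,\omega)$. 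Likewise $D_i^{=1}=B_Y^{=1}$ forces the strata of $(Y,D_i)$ to coincide with those of $(Y,B_Y)$, so $[X,\omega_i]$ and $[X,\omega]$ share the same qlc strata, giving the last two assertions. I expect the only real obstacle to be the bookkeeping in the first and last steps — isolating exactly the strict rational conditions equivalent to (ii) and confirming that they hold every sheaf and stratum of Definition \ref{d-def4.2} rigidly fixed; the finite-dimensional reduction and the barycentric step are routine once $\Phi$ is seen to be defined over $\mathbb Q$.
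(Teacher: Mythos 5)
Your overall strategy is the one the paper uses: realize the admissible perturbations of $(\omega,B_Y)$ as a relatively open subset of a rational affine subspace cut out by the relation $f^*\mathcal M\sim_{\mathbb R}K_Y+D$, and write $(\omega,B_Y)$ as a convex combination of nearby rational points of that subspace. Your map $\Phi$ is the paper's $\psi$ in coordinate-free form, your open-interval description of (ii) plays the role of the paper's ``take $P_i$ sufficiently close to $P$,'' and your final verification that (ii) freezes the sheaf $f_*\mathcal O_Y(\lceil -(B_Y^{<1})\rceil-\lfloor B_Y^{>1}\rfloor)$ and the strata is the same observation the paper relies on. (Minor point: your ``exactly when'' in the description of (ii) is not quite right --- e.g.\ $d_j=2$ also preserves all four operators when $b_j=5/2$ --- but only the sufficiency direction is used, so this is harmless.)

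There is, however, one concrete flaw. You decompose $B_Y$ into its \emph{irreducible} components $B_j$ and perturb their coefficients independently, asserting that each $B_j$ is Cartier on $Y$ and that any $D_i$ supported on $\Supp B_Y$ can be written as $B^{(i)}|_Y$ for a divisor $B^{(i)}$ on $M$ supported on $\Supp B$. Both assertions fail when $Y$ is reducible, which is the typical case in this theory. A single component $B_M$ of $B$ may meet two components $Y_1,Y_2$ of $Y$; locally $Y=\{xy=0\}\subset\mathbb A^3$ and $B_M=\{z=0\}$, and the irreducible piece $B_M\cap Y_1=\{x=z=0\}$ is not Cartier on $Y$ at the origin, since a local equation would have to vanish along the $z$-axis when restricted to $Y_1$ and be a unit when restricted to $Y_2$. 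Likewise $d_1(B_M\cap Y_1)+d_2(B_M\cap Y_2)$ with $d_1\neq d_2$ is not the restriction of any $\mathbb R$-divisor supported on $B$, so such a $D_i$ does not yield a globally embedded simple normal crossing pair $(Y,D_i)$ in the sense of Definition \ref{d-def4.1}, and your $\Phi$ is not even defined on all of $\bigoplus_j\mathbb R B_j$. The repair is exactly what the paper's notation builds in: group the components of $B_Y$ according to the component (equivalently, the coefficient) of $B$ on $M$ from which they arise, so that each group $B_j=B_{M,j}|_Y$ is Cartier on $Y$ and every perturbed $D$ is automatically a restriction from $M$; then perturb only within the rational subspace spanned by these groups. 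With that modification your argument coincides with the paper's proof.
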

\begin{proof}
Without loss of generality, we may assume that 
$\omega$ is an $\mathbb R$-line bundle. 
We put $B_Y=\sum _j b_j B_j$, where $B_j$ is a simple normal crossing divisor 
on $Y$ for every $j$, $b_{j_1}\ne b_{j_2}$ for $j_1\ne j_2$, and 
$\Supp B_{j_1}$ and $\Supp B_{j_2}$ have no common irreducible components 
for $j_1\ne j_2$. We may assume that $b_j\in \mathbb R\setminus \mathbb Q$ 
for $1\leq j\leq l$ and 
$b_j\in \mathbb Q$ for $j\geq l+1$. 
We put $\omega=\sum _{p=1}^m a_p \omega_p$, where 
$a_p\in \mathbb R$ and $\omega_p$ is a line bundle 
on $X$ for every $p$. 
We can write 
\begin{equation*} 
K_Y+B_Y=\sum _{p=1}^m a_p f^*\omega_p
\end{equation*}  
in $\Pic(Y)\otimes _{\mathbb Z}\mathbb R$. 
We consider the following linear map 
\begin{equation*} 
\psi\colon  \mathbb R^{l+m}  \longrightarrow 
\Pic(Y)\otimes _{\mathbb Z}\mathbb R
\end{equation*} 
defined by 
\begin{equation*} 
\psi(x_1, \ldots, x_{l+m})=\sum _{\alpha=1}^m x_\alpha f^*\omega_\alpha
-\sum _{\beta=1}^l 
x_{m+\beta}B_\beta. 
\end{equation*} 
We note that $\psi$ is defined over $\mathbb Q$. 
By construction, 
\begin{equation*} 
\mathcal A:=\psi^{-1}\left(K_Y+\sum _{j\geq l+1} b_j B_j\right)
\end{equation*}  
is a nonempty affine subspace of $\mathbb R^{l+m}$ defined over 
$\mathbb Q$. We put 
\begin{equation*} 
P:=(a_1, \ldots, a_m, b_1, \ldots, b_l)\in \mathcal A. 
\end{equation*} 
We can take $P_1, \ldots, P_k \in \mathcal A\cap \mathbb Q^{l+m}$ and 
$r_1, \ldots, r_k\in \mathbb R_{>0}$ such that 
$\sum _{i=1}^k r_i=1$ and $\sum _{i=1}^k r_i P_i=P$ in $\mathcal A$. 
Note that we can make $P_i$ arbitrary close to $P$ for every $i$. 
So we may assume that $P_i$ is sufficiently close to $P$ for every $i$. 
For each $P_i$, we obtain 
\begin{equation}\label{d-eq4.1}
K_Y+D_i\sim _{\mathbb Q} f^*\omega_i
\end{equation}
which satisfies (ii) by using $\psi$. 
By construction, (i) and (iii) hold. 
By \eqref{d-eq4.1} and (ii), 
\begin{equation*}
\left(X, \omega_i, f\colon  (Y, D_i)\to X\right)
\end{equation*}  
is a quasi-log scheme with the desired properties for every $i$. 
Therefore, we get (iv). 
\end{proof}

\section{Proof of Theorem \ref{a-thm1.9}}\label{e-sec5}

In this section, we prove Theorem \ref{a-thm1.9}. In some sense, 
Theorem \ref{a-thm1.9} is a generalization 
of \cite[Theorem 1.1]{fujino-haidong}.  

\begin{proof}[Proof of Theorem \ref{a-thm1.9}]
Let $f\colon (Y, B_Y)\to X$ be a proper surjective morphism from a 
quasi-projective globally 
embedded simple normal crossing pair $(Y, B_Y)$ as in 
Definition \ref{d-def4.2} (see Theorem \ref{d-thm4.5}). 
By \cite[Proposition 6.3.1]{fujino-foundations}, 
we may assume that 
$Y$ is quasi-projective and that 
the union of all strata of $(Y, B_Y)$ mapped to $\Nqklt(X, \omega)$, 
which is denoted by $Y''$, is a union of some irreducible 
components of $Y$. 
We put $Y'=Y-Y''$ and $K_{Y'}+B_{Y'}=(K_Y+B_Y)|_{Y'}$. 
Then we obtain the following commutative diagram: 
\begin{equation*} 
\xymatrix{
Y' \ar[d]_{f'}\ar@{^(->}[r]^\iota&Y\ar[d]^f\\ 
V \ar[r]_p& X
}
\end{equation*} 
where $\iota\colon Y'\to Y$ is a natural closed immersion 
and 
\begin{equation*} 
\xymatrix{Y' \ar[r]^{f'}& V \ar[r]^p& X
}
\end{equation*}  
is the Stein factorization of $f\circ \iota\colon Y'\to X$. 
By construction, 
$\iota\colon Y'\to Y$ is an isomorphism 
over the generic point of $X$.
By construction again, the natural map $\mathcal O_V\to 
f'_*\mathcal O_{Y'}$ is an isomorphism 
and every stratum of $Y'$ is dominant onto $V$. 
Therefore, $p$ is birational. 
\begin{claim}\label{e-claim1}
$V$ is normal. 
\end{claim}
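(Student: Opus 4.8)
The plan is to prove normality by exhibiting a factorization of $f'$ through the normalization of $V$ and then comparing structure sheaves. Throughout I use what has already been established before the Claim: $V$ is a reduced irreducible variety, $p\colon V\to X$ is finite and birational, the natural map $\mathcal O_V\to f'_*\mathcal O_{Y'}$ is an isomorphism, and every stratum of $Y'$ — in particular every irreducible component $W$ of $Y'$ and every intersection of two such components — is dominant onto $V$. Recall also that, $Y'$ being a union of components of a simple normal crossing variety, each irreducible component $W$ of $Y'$ is smooth, hence normal.

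Let $\mu\colon \overline V\to V$ denote the normalization; since $p$ is finite and birational, $\mu$ is finite and is an isomorphism over a dense open subset $V_0\subset V$. First I would lift $f'$ to $\overline V$ componentwise: for each irreducible component $W$ of $Y'$ the morphism $f'|_W\colon W\to V$ is dominant and $W$ is normal, so by the universal property of the normalization it factors uniquely as $W\to \overline V\xrightarrow{\mu}V$. The next step is to glue these lifts into a single morphism $\overline f\colon Y'\to \overline V$ with $\mu\circ\overline f=f'$. This amounts to checking that for two components $W,W'$ the two lifts agree on the intersection $W\cap W'$, and here the hypothesis that every stratum dominates $V$ is decisive: each component of $W\cap W'$ is a stratum of $Y'$, hence dominant onto $V$, so the preimage of $V_0$ is dense in it; the two lifts coincide over $V_0$ (where $\mu$ is an isomorphism) and therefore agree on all of $W\cap W'$ by separatedness of $\overline V$ and reducedness of $W\cap W'$. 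Gluing over the reduced components of $Y'$ then yields $\overline f\colon Y'\to\overline V$.

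Finally I would compare structure sheaves. Since $\mu\circ\overline f=f'$ and $\overline f$ is dominant onto the reduced scheme $\overline V$, the unit $\mathcal O_{\overline V}\hookrightarrow \overline f_*\mathcal O_{Y'}$ is injective; applying the left-exact functor $\mu_*$ gives injections $\mu_*\mathcal O_{\overline V}\hookrightarrow \mu_*\overline f_*\mathcal O_{Y'}=f'_*\mathcal O_{Y'}\cong\mathcal O_V$. Composing with the canonical injection $\mathcal O_V\hookrightarrow \mu_*\mathcal O_{\overline V}$ recovers the given isomorphism $\mathcal O_V\xrightarrow{\sim}f'_*\mathcal O_{Y'}$, so both injections are isomorphisms; in particular $\mathcal O_V\xrightarrow{\sim}\mu_*\mathcal O_{\overline V}$. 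As $\mu$ is finite, this forces $\mu$ to be an isomorphism, that is, $V$ is normal.

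I would expect the gluing in the middle paragraph to be the only delicate point; everything else is formal. The entire content is the dominance of every stratum of $Y'$ over $V$: it is what allows the separately constructed lifts $W\to\overline V$ to be matched along the double locus, and it is precisely the property that the construction of $Y'$ (removing all strata mapped into $\Nqklt(X,\omega)$ and then passing to the Stein factorization) was designed to guarantee. I note that this argument needs neither the quasi-log structure on $V$ nor any vanishing or torsion-freeness theorem; it uses only the universal property of the normalization together with the isomorphism $\mathcal O_V\cong f'_*\mathcal O_{Y'}$.
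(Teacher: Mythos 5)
Your overall strategy is the same as the paper's: lift $f'$ through the normalization $\mu\colon \overline V\to V$, deduce $\mathcal O_V\xrightarrow{\sim}\mu_*\mathcal O_{\overline V}$ from the isomorphism $\mathcal O_V\cong f'_*\mathcal O_{Y'}$ by the three-term factorization argument, and conclude that the finite morphism $\mu$ is an isomorphism. That last step is correct and is exactly what the paper does. The difference is in how the lift is produced: the paper notes that $\pi^{-1}\circ f'$ is an honest morphism off a closed subset $\Sigma\subset Y'$ with $\codim_{Y'}\Sigma\geq 2$ (again because every stratum of $Y'$ dominates $V$), passes to the graph $\widetilde Y\to Y'$ of the resulting rational map, and uses the $S_2$ property of the simple normal crossing variety $Y'$ to get $\mathcal O_{Y'}\cong q_*\mathcal O_{\widetilde Y}$; you instead try to build a morphism $\overline f\colon Y'\to \overline V$ defined everywhere, by lifting on each smooth (hence normal, and dominant) component and gluing.

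The gluing is where your argument has a gap. You correctly verify that the componentwise lifts agree on each pairwise intersection $W\cap W'$ (density of the preimage of the isomorphism locus in every stratum, plus reducedness of $W\cap W'$ and separatedness of $\overline V$), but agreement on pairwise intersections is necessary, not sufficient: for a general reduced scheme, compatible morphisms on the irreducible components need not glue to a morphism on the union. For instance, on the union of three concurrent lines in $\mathbb A^2$ the triple of functions $(t,0,0)$ on the three components agrees at the origin but is not the restriction of any regular function on the union, because the linear parts are inconsistent. What saves your argument is the simple normal crossing structure: for an SNC variety the sequence $0\to \mathcal O_{Y'}\to \bigoplus_i\mathcal O_{W_i}\to \bigoplus_{i<j}\mathcal O_{W_i\cap W_j}$ is exact, so a compatible family of ring maps does descend to $\mathcal O_{Y'}$ and the morphisms glue. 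This fact is true but must be invoked explicitly; it is precisely the analogue, in your setup, of the paper's appeal to Serre's condition $S_2$ for $Y'$. With that lemma supplied, your proof is complete.
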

\begin{proof}[Proof of Claim \ref{e-claim1}] 
Let $\pi\colon V^n\to V$ be the normalization. 
Since every stratum of $Y'$ is dominant onto $V$, 
there exists a closed subset $\Sigma$ of $Y'$ such that 
$\codim_{Y'}\Sigma\geq 2$ and that 
$\pi^{-1}\circ f'\colon  Y'\dashrightarrow V^n$ is a morphism 
on $Y'\setminus \Sigma$. 
Let $\widetilde Y$ be the graph of 
$\pi^{-1}\circ f'\colon  Y'\dashrightarrow V^n$. 
Then we have the following commutative 
diagram: 
\begin{equation*} 
\xymatrix{
\widetilde Y \ar[d]_-{\widetilde f}\ar[r]^-q&Y'\ar[d]^-{f'}\\ 
V^n\ar[r]_-\pi& V
}
\end{equation*} 
where $q$ and $\widetilde f$ are natural projections. 
Note that $q\colon \widetilde Y\to Y'$ is an isomorphism 
over $Y'\setminus \Sigma$ by construction. 
Since $Y'$ is a simple normal crossing divisor on a smooth variety, 
$Y'$ satisfies Serre's $S_2$ condition. Hence, by 
$\codim_{Y'}\Sigma\geq 2$, the natural map 
$\mathcal O_{Y'}\to q_*\mathcal O_{\widetilde Y}$ is an isomorphism. 
Therefore, the composition 
\begin{equation*} 
\mathcal O_V\to \pi_*\mathcal O_{V^n}
\to \pi_*\widetilde f_*\mathcal O_{\widetilde Y}
=f'_*q_*\mathcal O_{\widetilde Y}\simeq \mathcal O_V
\end{equation*}  
is an isomorphism. 
Thus we have $\mathcal O_V\simeq \pi_*\mathcal O_{V^n}$. 
This implies that $V$ is normal. 
\end{proof} 
Therefore, $p\colon V\to X$ is nothing but the 
normalization $\nu\colon Z\to X$. 
So we have the following commutative diagram. 
\begin{equation*} 
\xymatrix{
Y' \ar[d]_{f'}\ar@{^(->}[r]^\iota&Y\ar[d]^f\\ 
Z \ar[r]_\nu& X
}
\end{equation*} 
\begin{claim}\label{e-claim2}The natural map 
\begin{equation*} 
\alpha\colon  \mathcal O_Z\to f'_*\mathcal O_{Y'}
(\lceil -(B^{<1}_{Y'})\rceil)
\end{equation*}  
is an isomorphism outside $\nu^{-1}\Nqlc(X, \omega)$. 
\end{claim}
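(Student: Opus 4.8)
The plan is to show that $\alpha$ is injective on all of $Z$ and surjective over $Z^{\circ}:=Z\setminus \nu^{-1}\Nqlc(X,\omega)=\nu^{-1}(X\setminus X_{-\infty})$. Injectivity is immediate: since the coefficients of $B^{<1}_{Y'}$ are less than $1$, the divisor $\lceil -(B^{<1}_{Y'})\rceil$ is effective, so $\mathcal O_{Y'}\hookrightarrow \mathcal O_{Y'}(\lceil -(B^{<1}_{Y'})\rceil)$; applying the left exact functor $f'_*$ and using the identification $\mathcal O_Z\simeq f'_*\mathcal O_{Y'}$ proved above, $\alpha$ is the inclusion of $\mathcal O_Z$ into $f'_*\mathcal O_{Y'}(\lceil -(B^{<1}_{Y'})\rceil)$. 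Hence it suffices to prove that the cokernel of $\alpha$ vanishes over $Z^{\circ}$.

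Over $Z^{\circ}$ I would first discard $\lfloor B^{>1}_{Y'}\rfloor$. By \cite[Remark 6.2.10]{fujino-foundations} the support of $\lfloor B^{>1}_{Y}\rfloor$ is mapped by $f$ into $X_{-\infty}$, hence $f'(\Supp \lfloor B^{>1}_{Y'}\rfloor)\subseteq \nu^{-1}X_{-\infty}$ is disjoint from $Z^{\circ}$; therefore $f'_*\mathcal O_{Y'}(\lceil -(B^{<1}_{Y'})\rceil)$ and $f'_*\mathcal O_{Y'}(D)$ coincide over $Z^{\circ}$, where $D:=\lceil -(B^{<1}_{Y'})\rceil-\lfloor B^{>1}_{Y'}\rfloor$. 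Passing to $D$ is useful because of the identity
\begin{equation*}
D=K_{Y'}+\{B_{Y'}\}+B^{=1}_{Y'}-(K_{Y'}+B_{Y'})\sim _{\mathbb R} K_{Y'}+\{B_{Y'}\}+B^{=1}_{Y'}-(f')^*\nu^*\omega.
\end{equation*}
Since $(Y',B_{Y'})$ is a globally embedded simple normal crossing pair and, by the construction of $Y'$, every stratum of $Y'$ is dominant onto $Z$, the torsion-freeness theorem \cite[Theorem 5.6.2]{fujino-foundations} applied to $f'\colon Y'\to Z$ shows that every associated prime of $f'_*\mathcal O_{Y'}(D)$ is the generic point of $Z$. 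Thus $f'_*\mathcal O_{Y'}(D)$ is a torsion-free sheaf of rank one on the normal variety $Z$ containing $\mathcal O_Z$, so to conclude that $\alpha$ is an isomorphism over $Z^{\circ}$ it is enough, the sheaf being moreover $S_2$, to verify the equality at every codimension-one point of $Z^{\circ}$.

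The remaining, and main, difficulty is the codimension-one comparison. Extra sections of $f'_*\mathcal O_{Y'}(\lceil -(B^{<1}_{Y'})\rceil)$ at a codimension-one point $\eta$ of $Z^{\circ}$ could only arise from a component of $B^{<1}_{Y'}$ with negative coefficient dominating the prime divisor $\overline{\{\eta\}}$. To rule this out I would fall back on the quasi-log structure of $[X,\omega]$ itself: over $X^{\circ}:=X\setminus X_{-\infty}$ the defining isomorphism of Definition \ref{d-def4.2}(2), combined again with the fact that $\lfloor B^{>1}_{Y}\rfloor$ maps into $X_{-\infty}$, shows that the natural map $\mathcal O_X\to f_*\mathcal O_Y(\lceil -(B^{<1}_{Y})\rceil)$ is an isomorphism over $X^{\circ}$, so that no negative-coefficient component of $B^{<1}_{Y}$ is $f$-horizontal onto a divisor meeting $X^{\circ}$. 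Transporting this through the closed immersion $\iota\colon Y'\hookrightarrow Y$ and the Stein factorization $Y'\xrightarrow{f'}Z\xrightarrow{\nu}X$, in which $\mathcal O_Z\simeq f'_*\mathcal O_{Y'}$, then excludes such a component over $Z^{\circ}$ and finishes the proof.

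The delicate point is precisely this last matching of divisorial data between the possibly non-normal $X$ and its normalization $Z$ over $X^{\circ}$: the pushforward $f_*\mathcal O_Y(\lceil -(B^{<1}_{Y})\rceil)$ sits inside $\mathcal O_X$, whereas $\nu_*f'_*\mathcal O_{Y'}(\lceil -(B^{<1}_{Y'})\rceil)$ is only a quotient target of the analogous sheaf on $Y$, so one cannot simply compare the two on $X$ and must instead argue directly on $Z$, using the normality of $Z$ together with the property that every stratum of $Y'$ dominates $Z$. I expect this codimension-one step, rather than the torsion-freeness input, to be where the careful bookkeeping of the Stein factorization constructed above is essential.
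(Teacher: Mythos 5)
Your overall skeleton — injectivity from the effectivity of $\lceil -(B^{<1}_{Y'})\rceil$ together with $\mathcal O_Z\simeq f'_*\mathcal O_{Y'}$, torsion-freeness, and reduction to a codimension-one comparison on the normal variety $Z$ — agrees with the paper's proof. The gap is in the codimension-one step itself. You correctly observe that extra sections at the generic point of a prime divisor $P\subset Z$ can only occur if some (in fact every) component of the fiber over that point lying in $\Supp\lceil -(B^{<1}_{Y'})\rceil$ dominates $P$; but you then try to exclude the existence of \emph{any} negative-coefficient component of $B_{Y'}$ dominating such a $P$, deducing this from the defining isomorphism of Definition \ref{d-def4.2} (2). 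That inference is invalid: an isomorphism $\mathcal O_X\simeq f_*\mathcal O_Y(E)$ with $E\geq 0$ only forces $E$ not to contain whole fibers over codimension-one points, not the absence of components of $E$ dominating divisors (take $E$ equal to one component of a reducible fiber of a fibration: the pushforward is still the structure sheaf although $E$ dominates a divisor downstairs). Since a quasi-log resolution $f$ generally has positive-dimensional fibers, a component of $\lceil -(B^{<1}_{Y'})\rceil$ can perfectly well map onto a divisorial qlc center, i.e., onto a prime divisor $P\subset \nu^{-1}\Nqklt(X,\omega)$ with $P\not\subset\nu^{-1}\Nqlc(X,\omega)$ --- and these are exactly the divisors at which something remains to be proved, the others being handled by the fact that $\nu$ and $f'$ coincide with the identity and with $f$ over $X\setminus\Nqklt(X,\omega)$.

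What is actually needed, and what the paper proves, is the weaker statement that $\lceil -(B^{<1}_{Y'})\rceil$ does not contain the \emph{whole} fiber of $f'$ over the generic point of such a $P$. After replacing $X$ by $X\setminus\Nqlc(X,\omega)$ one has $f_*\mathcal O_Y\simeq \mathcal O_X$, so the fibers of $f$ are connected; since $\nu(P)\subset\Nqklt(X,\omega)$ and the discarded part $Y''$ is exactly the union of the strata lying over $\Nqklt(X,\omega)$, connectedness forces the fiber over a general point of $\nu(P)$ to meet $Y'\cap Y''\subset B^{=1}_{Y'}$, producing an irreducible component of $B^{=1}_{Y'}$ mapping onto $P$. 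As $\lceil -(B^{<1}_{Y'})\rceil$ shares no component with $B^{=1}_{Y'}$, it misses that fiber component, and hence $f'_*\mathcal O_{Y'}(\lceil -(B^{<1}_{Y'})\rceil)=\mathcal O_Z$ at the generic point of $P$. This use of the coefficient-one locus over $\Nqklt(X,\omega)$ is the one idea missing from your proposal; the remaining points (including your detour through $\lfloor B^{>1}_{Y'}\rfloor$, which is not needed since the claim never involves it, and the unproved rank-one assertion, which follows from the isomorphism over the dense open set $X\setminus\Nqklt(X,\omega)$) are consistent with the paper.
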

\begin{proof}[Proof of Claim \ref{e-claim2}]
Note that $\nu\colon  Z\to X$ is an isomorphism 
over $X\setminus \Nqklt (X, \omega)$ by Lemma \ref{d-lem4.8}. 
Moreover, $f'\colon Y'\to Z$ is nothing but 
$f\colon Y\to X$ over 
$Z\setminus \nu^{-1}\Nqklt(X, \omega)$ by construction. 
Therefore, $\alpha$ is an isomorphism outside 
$\nu^{-1}\Nqklt (X, \omega)$. 
By replacing $X$ with $X\setminus \Nqlc(X, \omega)$, 
we may assume that $\Nqlc(X, \omega)=\emptyset$. 
Hence the natural map 
$\mathcal O_X\to f_*\mathcal O_Y(\lceil -(B^{<1}_Y)\rceil)$ is an 
isomorphism. 
Therefore, we have $f_*\mathcal O_Y\simeq \mathcal O_X$. 
Since $Z$ is normal and $f'_*\mathcal O_{Y'}(\lceil -(B^{<1}_{Y'})
\rceil)$ is torsion-free, it is sufficient to see that $\alpha$ is an isomorphism 
in codimension one. 
Let $P$ be any prime divisor on $Z$ such that 
$P\subset \nu^{-1}\Nqklt (X, \omega)$. 
We note that every fiber of $f$ is connected by $f_*\mathcal O_Y
\simeq \mathcal O_X$. 
Then, by construction, there exists an irreducible 
component of $B^{=1}_{Y'}$ which maps onto $P$. 
Therefore, the effective divisor $\lceil -(B^{<1}_{Y'})\rceil$ 
does not contain the whole fiber of $f'$ over the generic 
point of $P$. Thus, 
$\alpha$ is an isomorphism at the generic point of $P$. 
This means that $\alpha$ is an isomorphism. 
\end{proof}
We put $\mathcal S:=f'_*\mathcal O_{Y'}(\lceil -(B^{<1}_{Y'})\rceil 
-\lfloor B^{>1}_{Y'}\rfloor-Y''|_{Y'})$. 
Then we have: 
\begin{claim}\label{e-claim3}
$\mathcal S$ is an ideal sheaf on $Z$. 
\end{claim}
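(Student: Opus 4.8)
The plan is to prove Claim \ref{e-claim3} by mimicking the proof of adjunction (Theorem \ref{d-thm4.6} (i)): I would realize $\mathcal S$ as the kernel sheaf arising when the globally embedded simple normal crossing divisor $Y$ is restricted to its sub-divisor $Y'$, and then push forward by $f$. Write $A=\lceil -(B^{<1}_Y)\rceil$ and $N=\lfloor B^{>1}_Y\rfloor$, so that by the same restriction computation used in the proof of Theorem \ref{d-thm4.6} (i) one has $(A-N)|_{Y'}=\lceil -(B^{<1}_{Y'})\rceil-\lfloor B^{>1}_{Y'}\rfloor$. First I would start from the restriction sequence
\begin{equation*}
0\to \mathcal O_{Y'}(-Y''|_{Y'})\to \mathcal O_Y\to \mathcal O_{Y''}\to 0,
\end{equation*}
and tensor it with the line bundle $\mathcal O_Y(A-N)$ to obtain the exact sequence
\begin{equation*}
0\to \mathcal O_{Y'}(\lceil -(B^{<1}_{Y'})\rceil-\lfloor B^{>1}_{Y'}\rfloor -Y''|_{Y'})\to \mathcal O_Y(A-N)\to \mathcal O_{Y''}((A-N)|_{Y''})\to 0.
\end{equation*}

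Next I would apply $f_*$. Since $f|_{Y'}=\nu\circ f'$, the pushforward of the subsheaf is $f_*\mathcal O_{Y'}(\lceil -(B^{<1}_{Y'})\rceil-\lfloor B^{>1}_{Y'}\rfloor -Y''|_{Y'})=\nu_*f'_*\mathcal O_{Y'}(\lceil -(B^{<1}_{Y'})\rceil-\lfloor B^{>1}_{Y'}\rfloor -Y''|_{Y'})=\nu_*\mathcal S$, while the defining property (2) of the quasi-log scheme $[X,\omega]$ gives $f_*\mathcal O_Y(A-N)=\mathcal I_{X_{-\infty}}\subseteq \mathcal O_X$. Left exactness of $f_*$ therefore produces an injection $\nu_*\mathcal S\hookrightarrow \mathcal I_{X_{-\infty}}\subseteq \mathcal O_X$, exactly as the term $\mathcal I_{X'}$ is shown to be an ideal sheaf on $X$ in the proof of Theorem \ref{d-thm4.6} (i). In particular $\nu_*\mathcal S$ is an ideal sheaf on $X$.

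Finally I would descend this statement to $Z$. Because $\nu\colon Z\to X$ is the normalization it is finite, hence affine, so $\nu_*$ is an exact equivalence between quasi-coherent $\mathcal O_Z$-modules and quasi-coherent $\nu_*\mathcal O_Z$-modules; thus it is enough to verify that the image of $\nu_*\mathcal S$ inside $\mathcal O_X\subseteq \nu_*\mathcal O_Z$ is a $\nu_*\mathcal O_Z$-submodule. The inclusion $\nu_*\mathcal S\hookrightarrow \mathcal O_X$ is only $\mathcal O_X$-linear a priori, but the $\nu_*\mathcal O_Z$-module action carried by $\nu_*\mathcal S$ agrees with multiplication in $\nu_*\mathcal O_Z$ at the generic point of $X$ (where $\nu$ is an isomorphism), and since $\nu_*\mathcal O_Z$ is torsion-free over the integral scheme $X$ the two actions then agree everywhere; hence $\nu_*\mathcal S$ is a $\nu_*\mathcal O_Z$-submodule of $\nu_*\mathcal O_Z$, and applying the equivalence $\nu_*^{-1}$ shows that $\mathcal S$ is an ideal sheaf on $Z$. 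I expect this descent to be the main subtlety: the natural containment is produced on $X$, so one must upgrade $\mathcal O_X$-linearity to $\nu_*\mathcal O_Z$-compatibility before invoking the equivalence $\nu_*$, whereas the pushforward step itself is formal once the restriction identity $(A-N)|_{Y'}=\lceil -(B^{<1}_{Y'})\rceil-\lfloor B^{>1}_{Y'}\rfloor$ is in hand.
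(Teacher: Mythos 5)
Your proposal is correct and follows essentially the same route as the paper: both establish $\nu_*\mathcal S\subset\mathcal O_X$ via the adjunction-type pushforward computation (the paper actually identifies $\nu_*\mathcal S=\mathcal I_{\Nqklt(X,\omega)}$, while you only use left-exactness to get the inclusion), and both then use finiteness of $\nu$ together with torsion-freeness and the generic identification to descend the containment to $Z$. Your final descent step is a careful unpacking of what the paper dispatches in one line via the surjection $\nu^*\nu_*\mathcal S\to\mathcal S$.
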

\begin{proof}[Proof of Claim \ref{e-claim3}]
By definition, $\mathcal S$ is a torsion-free coherent sheaf on $Z$. 
By the proof of \cite[Theorem 6.3.5 (i)]{fujino-foundations} 
(see also the idea of the proof of Theorem \ref{d-thm4.6} (i)), 
we have 
\begin{equation*} 
\nu_*\mathcal S=f_*\mathcal O_{Y'}(\lceil -(B^{<1}_{Y'})\rceil 
-\lfloor B^{>1}_{Y'}\rfloor-Y''|_{Y'})=\mathcal I_{\Nqklt(X, \omega)}
\subset \mathcal O_X. 
\end{equation*}  
Since $\nu$ is finite, 
\begin{equation*} 
\nu^*\nu_*\mathcal S\to \mathcal S
\end{equation*}  
is surjective. 
This implies that $\mathcal S$ is an ideal sheaf on $Z$. 
\end{proof}
We put $\mathcal T:=f'_*\mathcal O_{Y'}(\lceil -(B^{<1}_{Y'})\rceil 
-\lfloor B^{>1}_{Y'}\rfloor)$. Then we have: 
\begin{claim}\label{e-claim4}
$\mathcal T$ is an ideal sheaf on $Z$. 
\end{claim}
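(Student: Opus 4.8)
The plan is to exhibit $\mathcal T$ as a rank-one torsion-free fractional ideal inside the function field $K(Z)$ and then to verify $\mathcal T\subseteq\mathcal O_Z$ one codimension-one point at a time, using the normality of $Z$. Put $\mathcal F:=f'_*\mathcal O_{Y'}(\lceil -(B^{<1}_{Y'})\rceil)$, so that $\mathcal S\subseteq\mathcal T\subseteq\mathcal F$, the inclusions coming from $-\lfloor B^{>1}_{Y'}\rfloor\le 0$ and $-Y''|_{Y'}\le 0$. As in the proof of Claim~\ref{e-claim2}, $\mathcal F$ is torsion-free, and Claim~\ref{e-claim2} applied at the generic point $\eta$ of $Z$ (which lies outside $\nu^{-1}\Nqlc(X,\omega)$) gives $\mathcal F_\eta=\mathcal O_{Z,\eta}=K(Z)$; hence $\mathcal T$ is torsion-free of rank one, $\mathcal T_\eta=\mathcal O_{Z,\eta}$, and $\mathcal T$ embeds canonically into the constant sheaf $K(Z)$. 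Since $Z$ is normal, $\mathcal O_Z=\bigcap_P\mathcal O_{Z,\eta_P}$ and $\mathcal T$ injects into its reflexive hull $\bigcap_P\mathcal T_{\eta_P}$, the intersections running over all prime divisors $P$ on $Z$; so it suffices to prove $\mathcal T_{\eta_P}\subseteq\mathcal O_{Z,\eta_P}$ for every such $P$. For a prime divisor $P$ with $P\not\subseteq\nu^{-1}\Nqlc(X,\omega)$ this is immediate: by Claim~\ref{e-claim2} the map $\alpha$ is an isomorphism at $\eta_P$, so $\mathcal T_{\eta_P}\subseteq\mathcal F_{\eta_P}=\mathcal O_{Z,\eta_P}$.

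The hard case is a prime divisor $P\subseteq\nu^{-1}\Nqlc(X,\omega)$. Here I would work in the discrete valuation ring $\mathcal O_{Z,\eta_P}$ and analyze a local section of $\mathcal T$, which under the embedding $\mathcal T\subseteq K(Z)$ is the pull-back $f'^*\bar\phi$ of some $\bar\phi\in K(Z)$. The idea is that the fibre of $f'$ over $\eta_P$ should contain a prime divisor $D$ of $Y'$ with $\mult_D B_{Y'}\ge 0$ — ideally one with $\mult_D B_{Y'}>1$ witnessing the divisorial part of the non-qlc locus — so that the term $\lceil -(B^{<1}_{Y'})\rceil$ forbids a pole of $f'^*\bar\phi$ along $D$ while $-\lfloor B^{>1}_{Y'}\rfloor$ forces vanishing there. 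Writing $\operatorname{ord}_D(f'^*\bar\phi)=e_D\cdot\operatorname{ord}_P(\bar\phi)$ with $e_D\ge 1$ the ramification index of $D\to P$ then yields $\operatorname{ord}_P(\bar\phi)\ge 0$, i.e.\ $\bar\phi\in\mathcal O_{Z,\eta_P}$, which is exactly the desired inclusion.

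The main obstacle is precisely the production of such a divisor $D$ on $Y'$ over each codimension-one component of the non-qlc locus. A coefficient-$>1$ divisor of $B_Y$ lying over $\Gamma=\nu(P)$ may a priori sit on a component of $Y$ that was discarded into $Y''$, while the components of $Y'$ dominating $Z$ could carry only coefficients $<0$ over $P$; the latter would make $\mathcal F$ genuinely larger than $\mathcal O_Z$ there and would be incompatible with Claim~\ref{e-claim4} unless the compensating vanishing is still available on $Y'$. Ruling this out is where I expect to need the interplay of the simple normal crossing structure of $(Y,B_Y)$ with the torsion-freeness (no associated primes) result \cite[Theorem 5.6.2 (i)]{fujino-foundations} already used for Claims~\ref{e-claim2} and~\ref{e-claim3}, together with the fact that every stratum of $Y'$ dominates $Z$. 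Once the codimension-one estimate is secured in this remaining case, combining it with the two easy cases gives $\mathcal T\subseteq\mathcal O_Z$, so $\mathcal T$ is an ideal sheaf on $Z$.
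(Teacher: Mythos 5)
Your reduction to a codimension-one check on the normal variety $Z$, and your treatment of prime divisors $P\not\subseteq\nu^{-1}\Nqlc(X,\omega)$ via Claim \ref{e-claim2}, match the paper. But the case $P\subseteq\nu^{-1}\Nqlc(X,\omega)$ is left genuinely open: your strategy requires producing a prime divisor $D$ of $Y'$ dominating $P$ with $\mult_D B_{Y'}\ge 0$, and, as you yourself note, nothing in the construction guarantees such a $D$ exists on $Y'$ --- the coefficient-$\ge 1$ divisors of $B_Y$ over $\nu(P)$ may all lie on the discarded part $Y''$ or fail to dominate $P$, leaving only components with $\mult_D B_{Y'}<0$ over $\eta_P$. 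So the proposal identifies the obstacle but does not overcome it; the pole-order argument cannot be completed as stated.

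The paper's resolution does not try to find such a $D$ at all. It splits on whether the effective divisor $\lceil -(B^{<1}_{Y'})\rceil$ contains the whole fiber of $f'$ over the generic point $\eta_P$. If it does not, then some component of the fiber avoids it, and since $\mathcal O_Z\to f'_*\mathcal O_{Y'}$ is an isomorphism, the map $\alpha\colon \mathcal O_Z\to f'_*\mathcal O_{Y'}(\lceil -(B^{<1}_{Y'})\rceil)$ is an isomorphism at $\eta_P$ (this works even for $P$ inside $\nu^{-1}\Nqlc(X,\omega)$, which is exactly your worry about $\mathcal F$ being ``genuinely larger than $\mathcal O_Z$''), so $\mathcal T\subseteq\mathcal F=\mathcal O_{Z,\eta_P}$. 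If it does contain the whole fiber --- precisely the configuration you could not handle --- then every component of the fiber has $\mult_D B_{Y'}<0$, hence lies on no component of $Y''|_{Y'}$ (the two divisors $\lceil -(B^{<1}_{Y'})\rceil$ and $Y''|_{Y'}$ have no common components), so the correction term $-Y''|_{Y'}$ is invisible at $\eta_P$ and $\mathcal T=\mathcal S$ there; one then invokes Claim \ref{e-claim3}, which was proved by the global identity $\nu_*\mathcal S=\mathcal I_{\Nqklt(X,\omega)}$ together with the surjectivity of $\nu^*\nu_*\mathcal S\to\mathcal S$ for the finite morphism $\nu$. The missing idea in your write-up is thus to compare $\mathcal T$ with the already-controlled sheaf $\mathcal S$ at $\eta_P$, rather than to bound $\mathcal T$ by $\mathcal O_Z$ directly through a divisorial valuation on $Y'$.
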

\begin{proof}[Proof of Claim \ref{e-claim4}]
Outside $\nu^{-1}\Nqlc(X, \omega)$, it is 
obvious that 
$\mathcal T=f'_*\mathcal O_{Y'}(\lceil 
-(B^{<1}_{Y'})\rceil)$ holds. 
Therefore, we obtain $\mathcal T=\mathcal O_Z$ outside 
$\nu^{-1}\Nqlc(X, \omega)$ by Claim \ref{e-claim2}. 
Since $\mathcal T$ is torsion-free and 
$Z$ is normal, it is sufficient to show that 
$\mathcal T$ is an ideal sheaf in codimension one. 
Let $Q$ be any prime divisor on $X$ such that 
$Q\subset \Nqlc(X, \omega)$. 
We take a prime divisor $P$ on $Z$ such that 
$\nu(P)=Q$. 

If $\lceil -(B^{<1}_{Y'})\rceil$ does not contain the 
whole fiber of $f'$ over the generic point of $P$, 
then the natural map 
\begin{equation*} 
\alpha\colon \mathcal O_Z\to f'_*\mathcal O_{Y'}(\lceil -(B^{<1}_{Y'})\rceil)
\end{equation*}  
is an isomorphism at the generic point of $P$ since 
the natural map $\mathcal O_Z\to f'_*\mathcal O_{Y'}$ is 
an isomorphism by construction. 
Then $f'_*\mathcal O_{Y'}(\lceil -(B^{<1}_{Y'})\rceil 
-\lfloor B^{>1}_{Y'}\rfloor)$ is an ideal sheaf at the generic point of $P$. 

If $\lceil -(B^{<1}_{Y'})\rceil$ contains the 
whole fiber of $f'$ over the generic point of $P$, then 
$\mathcal S=\mathcal T$ holds over the generic point of 
$P$ because 
$\lceil -(B^{<1}_{Y'})\rceil$ and $Y''|_{Y'}$ have 
no common irreducible components. Therefore, 
$\mathcal T$ is an ideal sheaf at the generic point of $P$ by 
Claim \ref{e-claim3}

Hence $\mathcal T$ is an ideal sheaf on $Z$. This is what we wanted. 
\end{proof}
By construction, 
\begin{equation*} 
K_{Y'}+B_{Y'}\sim _{\mathbb R} f'^*\nu^*\omega
\end{equation*} 
obviously holds. 
We can define $\Nqlc(Z, \nu^*\omega)$ by the ideal 
sheaf 
$f'_*\mathcal O_{Y'}(\lceil -(B^{<1}_{Y'})\rceil 
-\lfloor B^{>1}_{Y'}\rfloor)$ (see Claim \ref{e-claim4}). 
Hence 
\begin{equation*} 
\left( Z, \nu^*\omega, f'\colon (Y', B_{Y'})\to Z\right)
\end{equation*}  
naturally becomes a quasi-log scheme. By Claim \ref{e-claim3} and 
its proof and \cite[Propositions 6.3.1 and 6.3.2]{fujino-foundations},  
\begin{equation*} 
\mathcal I_{\Nqklt(Z, \nu^*\omega)}=f'_*
\mathcal O_{Y'}(\lceil -(B^{<1}_{Y'})\rceil 
-\lfloor B^{>1}_{Y'}\rfloor-Y''|_{Y'})
\end{equation*} satisfies 
\begin{equation*} 
\nu_*\mathcal I_{\Nqklt(Z, \nu^*\omega)}=\mathcal I
_{\Nqklt(X, \omega)}. 
\end{equation*}
Hence 
\begin{equation*}
\left(Z, \nu^*\omega, f'\colon  (Y', B_{Y'})\to Z\right)
\end{equation*}  
is a quasi-log scheme with the desired properties. 
\end{proof}

\section{On basic slc-trivial fibrations}\label{f-sec6}

In this section, we quickly explain {\em{basic slc-trivial fibrations}}. 
For the details, see \cite{fujino-slc-trivial} and \cite{fujino-fujisawa-liu}. 
Let us start with the definition of {\em{potentially nef divisors}}. 

\begin{defn}[{Potentially nef divisors, see 
\cite[Definition 2.5]{fujino-slc-trivial}}]\label{f-def6.1} 
Let $X$ be a normal 
variety and let $D$ be a divisor on $X$. 
If there exist a completion $X^\dag$ of $X$, 
that is, $X^\dag$ is a complete normal 
variety and contains $X$ as a dense Zariski open set, and 
a nef divisor $D^\dag$ on $X^\dag$ such that 
$D=D^\dag|_X$, then $D$ is called 
a {\em{potentially nef}} divisor on $X$. 
A finite $\mathbb Q_{>0}$-linear (resp.~$\mathbb R_{>0}$-linear) 
combination of potentially nef divisors is called 
a {\em{potentially nef}} $\mathbb Q$-divisor (resp.~$\mathbb R$-divisor). 
\end{defn}

It is convenient to use {\em{b-divisors}} to explain 
several results on basic slc-trivial fibrations. 
Here we do not repeat the definition of b-divisors. 
For the details, see \cite[2.3.2 b-divisors]{corti} and 
\cite[Section 2]{fujino-slc-trivial}. 

\begin{defn}[Canonical b-divisors]\label{f-def6.2}
Let $X$ be a normal variety and let 
$\omega$ be a top rational differential 
form of $X$. 
Then $(\omega)$ defines a 
b-divisor $\mathbf K$. We call $\mathbf K$ 
the {\em{canonical b-divisor}} of $X$. 
\end{defn}

\begin{defn}[$\mathbb Q$-Cartier closures]\label{f-def6.3}
The {\em{$\mathbb Q$-Cartier 
closure}} of a $\mathbb Q$-Cartier $\mathbb Q$-divisor 
$D$ on a normal variety $X$ is the $\mathbb Q$-b-divisor 
$\overline D$ with trace 
\begin{equation*} 
\overline D _Y=f^*D, 
\end{equation*} 
where $f \colon Y\to X$ is a proper birational morphism 
from a normal variety $Y$. 
\end{defn}

We use the following definition in order to state  
the main result of \cite{fujino-slc-trivial}. 

\begin{defn}[{\cite[Definition 2.12]{fujino-slc-trivial}}]\label{f-def6.4}
Let $X$ be a normal variety. 
A $\mathbb Q$-b-divisor $\mathbf D$ of $X$ 
is {\em{b-potentially nef}} 
(resp.~{\em{b-semi-ample}}) if there 
exists a proper birational morphism $X'\to X$ from a normal 
variety $X'$ such that $\mathbf D=\overline {\mathbf D_{X'}}$, that 
is, $\mathbf D$ is the $\mathbb Q$-Cartier closure of $\mathbf D_{X'}$, 
and that 
$\mathbf D_{X'}$ is potentially nef 
(resp.~semi-ample). 
A $\mathbb Q$-b-divisor $\mathbf D$ of $X$ is {\em{$\mathbb Q$-b-Cartier}} 
if there is a proper birational morphism $X'\to X$ from a normal 
variety $X'$ such that $\mathbf D=\overline{\mathbf D_{X'}}$. 
\end{defn}

Roughly speaking, a basic slc-trivial fibration is a canonical bundle 
formula for simple normal crossing pairs. 

\begin{defn}[Simple normal crossing pairs]\label{f-def6.5} 
We say that the pair $(X, B)$ is a {\em{simple normal 
crossing pair}} if $(X, B)$ is Zariski locally a globally embedded simple 
normal crossing pair at any point $x\in X$. 
Let $(X, B)$ be a simple normal crossing pair and 
let $\nu\colon X^\nu \to X$ be the normalization. 
We define $\Theta$ by 
\begin{equation*} 
K_{X^\nu}+\Theta=\nu^*(K_X+B), 
\end{equation*}  
that is, $\Theta$ is the sum of the inverse images of 
$B$ and the singular locus of $X$. 
Then a {\em{stratum}} of $(X, B)$ is an irreducible 
component of $X$ or the $\nu$-image of some log canonical 
center of $(X^\nu, \Theta)$. 
\end{defn}

We note that a globally embedded simple normal crossing 
pair is obviously a simple normal crossing pair by 
definition. 
We also note that the definition of strata of 
$(X, B)$ in Definition \ref{f-def6.5} coincides with 
the one in Definition \ref{d-def4.1} when 
$(X, B)$ is a globally embedded simple normal crossing 
pair. 

\begin{rem}\label{f-rem6.6} 
Let $(X, B)$ be a simple normal crossing pair. 
A {\em{stratum}} of $X$ means a stratum of $(X, 0)$. 
Let $X=\bigcup_{i\in I}X_i$ be the irreducible decomposition 
of $X$. 
Then we can easily check that $W$ is a stratum of $X$ if and only 
if $W$ is an irreducible component of $X_{i_1}\cap 
\cdots \cap X_{i_k}$ for some $\{i_1, \ldots, i_k\}\subset I$. 
\end{rem} 

We introduce the notion of basic slc-trivial fibrations. 

\begin{defn}[{Basic slc-trivial fibrations, 
see \cite[Definition 4.1]{fujino-slc-trivial}}]\label{f-def6.7}
A {\em{pre-basic slc-trivial fibration}} $f \colon (X, B)\to Y$ consists of 
a projective surjective morphism 
$f \colon X\to Y$ and a simple normal crossing pair $(X, B)$ satisfying 
the following properties: 
\begin{itemize}
\item[(1)] $Y$ is a normal variety,   
\item[(2)] every stratum of $X$ is dominant onto $Y$ and 
$f_*\mathcal O_X\simeq \mathcal O_Y$, 
\item[(3)] $B$ is a $\mathbb Q$-divisor such that $B=B^{\leq 1}$ holds 
over 
the generic point of $Y$, and 
\item[(4)] there exists 
a $\mathbb Q$-Cartier $\mathbb Q$-divisor $D$ on $Y$ such that 
\begin{equation*} 
K_X+B\sim _{\mathbb Q}f^*D. 
\end{equation*}  
\end{itemize}
If a pre-basic slc-trivial fibration $f \colon (X, B)\to Y$ also satisfies 
\begin{itemize}
\item[(5)] $\rank f_*\mathcal O_X(\lceil -(B^{<1})\rceil)=1$, 
\end{itemize}
then it is called a {\em{basic slc-trivial fibration}}. 
\end{defn}

If $X$ is irreducible 
and $(X, B)$ is sub kawamata log terminal 
(resp.~sub log canonical) over the generic point of $Y$ in 
Definition \ref{f-def6.7}, 
then it is a klt-trivial fibration (resp.~an lc-trivial fibration). 
For the details of lc-trivial fibrations, see \cite{fujino-some}, 
\cite{fujino-gongyo2}, and so on. 

\medskip 

In order to define discriminant $\mathbb Q$-b-divisors and 
moduli $\mathbb Q$-b-divisors for basic slc-trivial fibrations, 
we need the notion of {\em{induced $($pre-$)$basic slc-trivial fibrations}}. 

\begin{defn}[{Induced (pre-)basic slc-trivial 
fibrations, see \cite[4.3]{fujino-slc-trivial}}]\label{f-def6.8}
Let $f \colon (X, B)\to Y$ be a 
(pre-)basic slc-trivial fibration 
and let $\sigma \colon Y'\to Y$ be a generically finite 
surjective morphism from a normal variety $Y'$. 
Then we have an {\em{induced {\em{(}}pre-{\em{)}}basic slc-trivial fibration}} 
$f' \colon (X', B_{X'})\to Y'$, where 
$B_{X'}$ is defined by $\mu^*(K_X+B)=K_{X'}+B_{X'}$, with 
the following commutative diagram: 
\begin{equation*} 
\xymatrix{
   (X', B_{X'}) \ar[r]^{\mu} \ar[d]_{f'} & (X, B)\ar[d]^{f} \\
   Y' \ar[r]_{\sigma} & Y, 
} 
\end{equation*} 
where $X'$ coincides with 
$X\times _{Y}Y'$ over a nonempty Zariski open set of $Y'$. 
More precisely, $(X', B_{X'})$ is a simple normal crossing pair with a morphism 
$X'\to X\times _Y Y'$ that is an isomorphism over 
a nonempty Zariski open set of $Y'$ such that 
$X'$ is projective over $Y'$ and that every stratum of $X'$ is dominant onto 
$Y'$. 
\end{defn}

Now we are ready to define {\em{discriminant 
$\mathbb Q$-b-divisors}} and 
{\em{moduli $\mathbb Q$-b-divisors}} for basic slc-trivial fibrations. 

\begin{defn}[{Discriminant and 
moduli $\mathbb Q$-b-divisors, 
see \cite[4.5]{fujino-slc-trivial}}]\label{f-def6.9} 
Let $f \colon (X, B)\to Y$ be a (pre-)basic 
slc-trivial fibration as in Definition \ref{f-def6.7}. 
Let $P$ be a prime divisor on $Y$. 
By shrinking $Y$ around the generic point of $P$, 
we assume that $P$ is Cartier. We set 
\begin{equation*} 
b_P=\max \left\{t \in \mathbb Q\, \left|\, 
\begin{array}{l}  {\text{$(X^\nu, \Theta+t\nu^*f^*P)$ is sub log canonical}}\\
{\text{over the generic point of $P$}} 
\end{array}\right. \right\},  
\end{equation*}  
where $\nu \colon X^\nu\to X$ is the normalization and 
$K_{X^\nu}+\Theta=\nu^*(K_X+B)$, that is, 
$\Theta$ is the sum of the inverse images of $B$ and the singular 
locus of $X$, and 
set 
\begin{equation*} 
B_Y=\sum _P (1-b_P)P, 
\end{equation*}  
where $P$ runs over prime divisors on $Y$. 
Then it is easy to  see that 
$B_Y$ is a well-defined $\mathbb Q$-divisor on 
$Y$ and is called the {\em{discriminant 
$\mathbb Q$-divisor}} of $f \colon (X, B)\to Y$. We set 
\begin{equation*} 
M_Y=D-K_Y-B_Y
\end{equation*}  
and call $M_Y$ the {\em{moduli $\mathbb Q$-divisor}} of $f \colon 
(X, B)\to Y$. 
By definition, we have 
\begin{equation*} 
K_X+B\sim _{\mathbb Q}f^*(K_Y+B_Y+M_Y). 
\end{equation*} 

Let $\sigma\colon Y'\to Y$ be a proper birational morphism 
from a normal variety $Y'$ and let $f' \colon (X', B_{X'})\to Y'$ be 
an induced (pre-)basic slc-trivial fibration 
by $\sigma \colon Y'\to Y$.  
We can define $B_{Y'}$, $K_{Y'}$ and $M_{Y'}$ such that 
$\sigma^*D=K_{Y'}+B_{Y'}+M_{Y'}$, 
$\sigma_*B_{Y'}=B_Y$, $\sigma _*K_{Y'}=K_Y$ 
and $\sigma_*M_{Y'}=M_Y$. We note that 
$B_{Y'}$ is independent of the choice of $(X', B_{X'})$, 
that is, $B_{Y'}$ is well defined. Hence 
there exist a unique $\mathbb Q$-b-divisor $\mathbf B$ 
such that 
$\mathbf B_{Y'}=B_{Y'}$ for every $\sigma \colon Y'\to Y$ and a unique 
$\mathbb Q$-b-divisor $\mathbf M$ such that $\mathbf M_{Y'}=M_{Y'}$ for 
every $\sigma \colon Y'\to Y$. 
Note that $\mathbf B$ is called the {\em{discriminant $\mathbb Q$-b-divisor}} and 
that $\mathbf M$ is called 
the {\em{moduli $\mathbb Q$-b-divisor}} associated to $f \colon (X, B)\to Y$. 
We sometimes simply say that $\mathbf M$ is 
the {\em{moduli part}} of $f \colon (X, B)\to Y$. 
\end{defn}

Let us see the main result of \cite{fujino-slc-trivial}. 

\begin{thm}[{\cite[Theorem 1.2]{fujino-slc-trivial}}]\label{f-thm6.10}
Let $f\colon (X, B)\to Y$ be a basic slc-trivial fibration and 
let $\mathbf B$ and $\mathbf M$ be the induced 
discriminant and moduli $\mathbb Q$-b-divisors 
associated to $f\colon (X, B)\to Y$ respectively. 
Then we have the following properties: 
\begin{itemize}
\item[(i)] $\mathbf K+\mathbf B$ is $\mathbb Q$-b-Cartier, 
where $\mathbf K$ is the canonical 
b-divisor of $Y$, and 
\item[(ii)] $\mathbf M$ is b-potentially nef, that is,  
there exists a proper birational morphism $\sigma\colon Y'\to Y$ 
from a normal variety $Y'$ such that 
$\mathbf M_{Y'}$ is a potentially nef $\mathbb Q$-divisor on $Y'$ and 
that $\mathbf M=\overline{\mathbf M_{Y'}}$. 
\end{itemize}
\end{thm}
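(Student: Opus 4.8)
The plan is to produce a single proper birational morphism $\sigma\colon Y'\to Y$ on which both b-divisors descend, i.e.\ $\mathbf K+\mathbf B=\overline{K_{Y'}+B_{Y'}}$ and $\mathbf M=\overline{\mathbf M_{Y'}}$ with $\mathbf M_{Y'}$ potentially nef. Since both assertions concern the traces over codimension-one points of arbitrarily high models of $Y$, I would first reduce to understanding the local behavior of $f\colon(X,B)\to Y$ over the generic point of a prime divisor $P$. So the first step is a preparation step: by taking a generically finite base change (via induced basic slc-trivial fibrations, Definition \ref{f-def6.8}) and then modifying $X$ birationally, I would arrange that in codimension one the discriminant locus has simple normal crossing support, that $f$ is semistable, and that the local monodromies of the relevant local system around each component are unipotent. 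Throughout, the condition $B=B^{\le 1}$ over the generic point of $Y$ together with the rank-one condition $\rank f_*\mathcal O_X(\lceil-(B^{<1})\rceil)=1$ from Definition \ref{f-def6.7} is what keeps this in the realm of a genuine canonical bundle formula.

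For (i), after the preparation I would show that for every further proper birational $\tau\colon Y''\to Y'$ one has $K_{Y''}+B_{Y''}=\tau^*(K_{Y'}+B_{Y'})$. Concretely, the coefficient $1-b_P$ of each prime divisor $P$ is a log canonical threshold (Definition \ref{f-def6.9}), and once the fibration is semistable with unipotent monodromy in codimension one, these thresholds are compatible with the discrepancy formula for $K$ under blow-ups of the base. Hence $\mathbf K+\mathbf B=\overline{K_{Y'}+B_{Y'}}$ descends to $Y'$ and is $\mathbb Q$-b-Cartier.

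For (ii), the heart is to identify $\mathbf M_{Y'}$, up to $\mathbb Q$-linear equivalence, with the lowest piece of the Hodge filtration of a variation of mixed Hodge structure built from $f$ over the smooth locus of the base. Because $X$ is reducible and $(X,B)$ carries a boundary, the relevant cohomology underlies a mixed (not pure) Hodge structure; I would extract the bottom Hodge bundle, which is a line bundle by the rank-one hypothesis, and take its canonical (Deligne) extension across the discriminant, where unipotency of the monodromy makes the extension well behaved. The key external input is the semipositivity theorem for such canonically extended Hodge bundles in the mixed setting, due to \cite{fujino-fujisawa}, \cite{fujino-fujisawa-saito}, and \cite{fujino-fujisawa-liu}: it yields that $\mathbf M_{Y'}$ is nef on a suitable completion, hence potentially nef on $Y'$, so that $\mathbf M=\overline{\mathbf M_{Y'}}$ is b-potentially nef in the sense of Definition \ref{f-def6.4}.

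The main obstacle is exactly the Hodge-theoretic step in (ii): constructing the correct variation of mixed Hodge structure in the reducible/slc situation, controlling its weight filtration and graded quotients, and proving that the canonical extension of the bottom Hodge bundle is nef across the boundary. This is where the argument goes far beyond divisor bookkeeping and relies essentially on \cite{fujino-slc-trivial} and \cite{fujino-fujisawa-liu}; the stabilization of the discriminant in (i), while nontrivial, is comparatively formal once semistability and unipotent monodromy in codimension one have been arranged.
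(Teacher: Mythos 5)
The paper does not prove Theorem \ref{f-thm6.10}: it is quoted verbatim from \cite[Theorem 1.2]{fujino-slc-trivial}, and Section \ref{f-sec6} only records the statement together with the remark that its proof depends on the theory of variations of mixed Hodge structure developed in \cite{fujino-fujisawa} and \cite{fujino-fujisawa-saito}. So there is no internal argument to compare yours against. That said, your outline is consistent with the actual strategy of \cite{fujino-slc-trivial}: one localizes at the generic point of a prime divisor of the base via induced basic slc-trivial fibrations and arranges normal crossing discriminant, semistability, and unipotent local monodromies in codimension one; part (i) then follows from the compatibility of the log canonical thresholds defining $b_P$ with pullback to higher models; and part (ii) identifies $\mathbf M_{Y'}$, up to $\mathbb Q$-linear equivalence, with the canonical extension of the bottom Hodge filtration piece (a line bundle by the rank-one condition (5) of Definition \ref{f-def6.7}) of a variation of mixed Hodge structure on cohomology with compact support attached to $f$, whose nefness on a suitable completion is the semipositivity theorem of \cite{fujino-fujisawa}; the $\dim Y=1$ refinement of Theorem \ref{f-thm6.11} comes from \cite{fujino-fujisawa-liu}. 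Be aware, though, that your text is a roadmap rather than a proof: constructing the correct mixed Hodge module/VMHS for a reducible simple normal crossing pair with boundary, controlling its weight filtration, and establishing semipositivity of the canonical extension are precisely the content of \cite{fujino-slc-trivial} and cannot be recovered at this level of detail.
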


When $\dim Y=1$ in Theorem \ref{f-thm6.10}, we have: 

\begin{thm}[{\cite[Corollary 1.4]{fujino-fujisawa-liu}}]\label{f-thm6.11}
In Theorem \ref{f-thm6.10}, 
we further assume that $\dim Y=1$. 
Then the moduli $\mathbb Q$-divisor 
$M_Y$ of $f\colon (X, B)\to Y$ is semi-ample. 
\end{thm}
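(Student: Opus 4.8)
The plan is to turn the b-theoretic statement of Theorem \ref{f-thm6.10} into a concrete positivity question on a single smooth projective curve, and then to feed in the Hodge-theoretic description of the moduli part. First I would record the reductions that $\dim Y=1$ forces. Since $Y$ is a normal variety of dimension one it is a smooth curve, and every proper birational morphism $\sigma\colon Y'\to Y$ from a normal variety is an isomorphism. Hence in Definition \ref{f-def6.4} the defining model may be taken to be $Y$ itself, so that $\mathbf M=\overline{M_Y}$ and, by Theorem \ref{f-thm6.10} (ii), the trace $M_Y=\mathbf M_Y$ is already a \emph{potentially nef} $\mathbb Q$-divisor on $Y$ in the sense of Definition \ref{f-def6.1}. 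If $Y$ is not complete, then $Y$ is affine (a smooth curve omitting at least one point of its projective model), and every line bundle on an affine scheme is globally generated, so $M_Y$ is semi-ample with nothing to prove. I may therefore assume $Y$ is the smooth projective model, and then potential nefness says exactly that $M_Y$ is a nef $\mathbb Q$-divisor on the projective curve $Y$, i.e. $\deg M_Y\ge 0$.

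Next I would reduce semi-ampleness to a dichotomy. On a smooth projective curve a $\mathbb Q$-divisor $M_Y$ with $\deg M_Y>0$ is ample, hence semi-ample; and a $\mathbb Q$-divisor with $\deg M_Y=0$ is semi-ample if and only if it is torsion in $\Pic(Y)\otimes \mathbb Q$. Thus the entire content of the theorem is the implication that $\deg M_Y=0$ forces $M_Y$ to be torsion, and everything else is formal.

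This rigidity is where the theory of variations of Hodge structure enters, and it is the main obstacle. By the construction of the canonical bundle formula for a basic slc-trivial fibration (the results of \cite{fujino-slc-trivial}, refined over a curve in \cite{fujino-fujisawa-liu}), over a dense open subset $U\subseteq Y$ the moduli part is realized, up to a positive multiple, by the bottom Hodge piece of a polarizable variation of (mixed) Hodge structure attached to the cohomology of the fibers of $f$, extended across the finitely many punctures $Y\setminus U$ by Deligne's canonical extension. The monodromies around the punctures are quasi-unipotent and the Hodge metric has semipositive curvature, which re-proves $\deg M_Y\ge 0$ analytically as a sum of nonnegative global and local contributions. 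The delicate step is the vanishing case: when $\deg M_Y=0$ all these nonnegative curvature and singularity contributions must vanish simultaneously, and Schmid's asymptotic analysis together with the semipositivity and norm estimates established in \cite{fujino-fujisawa-liu} force the underlying local system to become unitary with finite monodromy. Consequently the associated line bundle, and hence $M_Y$, is torsion, which closes the degree-zero case of the dichotomy and completes the proof. The reductions and the positive-degree case are routine; all of the analytic difficulty is concentrated in this degree-zero rigidity statement imported from the theory of variations of Hodge structure.
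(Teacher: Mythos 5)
The paper itself offers no proof of Theorem \ref{f-thm6.11}: it is imported wholesale from \cite[Corollary 1.4]{fujino-fujisawa-liu}, with only the remark that the proof rests on the theory of variations of mixed Hodge structure. So there is no internal argument to compare against, and the right question is whether your outline faithfully reflects the cited proof. It does. Your reductions are correct: since a normal one-dimensional $Y$ is a smooth curve, every proper birational morphism onto it from a normal variety is an isomorphism, so by Theorem \ref{f-thm6.10} (ii) and Definition \ref{f-def6.4} the b-divisor $\mathbf M$ descends to $Y$ and $M_Y$ is potentially nef; the affine case is trivial; on the projective model potential nefness is nefness, i.e.\ $\deg M_Y\geq 0$; positive degree gives ampleness; and the entire content is the rigidity statement that $\deg M_Y=0$ forces some positive multiple of $M_Y$ to be linearly trivial. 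That rigidity is precisely the substance of \cite[Corollary 1.4]{fujino-fujisawa-liu}, and the mechanism you describe (canonical extension of the lowest Hodge piece, semipositivity of the Hodge metric, degree zero forcing unitarity and finite monodromy, hence torsion) is the standard one going back to Ambro's treatment of lc-trivial fibrations over curves. The one caveat is that your final step is an appeal to the semipositivity and norm estimates of \cite{fujino-fujisawa-liu} rather than an argument: in the slc-trivial setting the relevant local system is a summand extracted via mixed Hodge structures on cohomology with compact support, not naively the cohomology of the fibres, and making the ``all nonnegative contributions vanish, hence finite monodromy'' step precise is the technical content of that paper. Since the present paper also treats the theorem as a black box, this is an accurate placement of the difficulty rather than a gap.
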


The proof of Theorems \ref{f-thm6.10} and \ref{f-thm6.11} 
heavily depends on the theory of variations 
of mixed Hodge structure discussed in \cite{fujino-fujisawa} 
(see also \cite{fujino-fujisawa-saito}). 
For some related topics, see \cite{fujino-higher}, 
\cite{fujino-some}, \cite{fujino-gongyo2}, and so on. 

In \cite{fujino-hashizume2}, 
we will generalize the framework of basic slc-trivial 
fibrations for $\mathbb R$-divisors and establish a generalization 
of Theorem \ref{f-thm6.10} for $\mathbb R$-divisors. 

\section{On normal quasi-log schemes}\label{g-sec7}

In this section, we treat the following deep result on 
the structure of normal quasi-log schemes. 
It is a generalization of \cite[Theorem 1.7]{fujino-slc-trivial}. 
The proof of Theorem \ref{g-thm7.1} uses 
Theorems \ref{f-thm6.10} and 
\ref{f-thm6.11}. 

\begin{thm}\label{g-thm7.1}
Let $[X, \omega]$ be a quasi-log scheme such that 
$X$ is a normal variety. 
Then there exists a projective birational morphism 
$p\colon X'\to X$ from a smooth quasi-projective variety 
$X'$ such that 
\begin{equation*} 
K_{X'}+B_{X'}+M_{X'}=p^*\omega, 
\end{equation*}  
where $B_{X'}$ is an $\mathbb R$-divisor such that 
$\Supp B_{X'}$ is a simple normal crossing divisor and 
that $B^{<0}_{X'}$ is $p$-exceptional, and 
$M_{X'}$ is a potentially nef $\mathbb R$-divisor on $X'$. 
Furthermore, we can make $B_{X'}$ satisfy 
$p(B^{\geq 1}_{X'})=\Nqklt(X, \omega)$ set theoretically. 
When $X$ is a curve, we can make $M_{X'}$ semi-ample 
in the above statement. 

We further assume that 
$[X, \omega]$ has a $\mathbb Q$-structure. 
Then we can make $B_{X'}$ and $M_{X'}$ $\mathbb Q$-divisors in 
the above statement.  
\end{thm}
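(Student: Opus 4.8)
The plan is to produce the decomposition as the canonical bundle formula attached to a basic slc-trivial fibration over $X$, and to reduce the general $\mathbb R$-coefficient case to the $\mathbb Q$-structure case by convex combinations.

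\textbf{Reduction to the $\mathbb Q$-structure case.} First I would apply Lemma \ref{d-lem4.25} to write $\omega=\sum_{i=1}^{k}r_i\omega_i$ with $r_i\in\mathbb R_{>0}$, $\sum_i r_i=1$, each $\omega_i$ a $\mathbb Q$-line bundle, and $B_Y=\sum_i r_iD_i$, so that each $(X,\omega_i,f\colon (Y,D_i)\to X)$ is a quasi-log scheme with a $\mathbb Q$-structure having the same qlc strata and the same $\Nqlc$ as $[X,\omega]$. Granting the theorem for every $[X,\omega_i]$, I would take a single smooth model $p\colon X'\to X$ dominating all the resulting models, pull everything back, and set $B_{X'}=\sum_i r_iB_{X',i}$ and $M_{X'}=\sum_i r_iM_{X',i}$. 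A positive $\mathbb R$-combination of potentially nef divisors is potentially nef, so $M_{X'}$ is potentially nef and $K_{X'}+B_{X'}+M_{X'}=p^*\omega$ holds. Because each $B_{X',i}$ has non-negative coefficients along non-$p$-exceptional divisors and, by Lemma \ref{d-lem4.25}(ii), the loci $\{B_{X',i}\ge 1\}$ are independent of $i$, the properties that $B_{X'}^{<0}$ be $p$-exceptional and $p(B_{X'}^{\ge 1})=\Nqklt(X,\omega)$ are inherited. Hence I may assume that $[X,\omega]$ has a $\mathbb Q$-structure.

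\textbf{Construction of the fibration and application of Theorem \ref{f-thm6.10}.} Since $X$ is already normal, I would apply Theorem \ref{a-thm1.9} with $\nu=\mathrm{id}_X$ to obtain a quasi-log resolution $f'\colon (Y',B_{Y'})\to X$ for which every stratum of $Y'$ is dominant onto $X$ and $f'_*\mathcal O_{Y'}\simeq\mathcal O_X$, with $K_{Y'}+B_{Y'}\sim_{\mathbb Q}(f')^*\omega$ and with $\Nqklt(X,\omega)$ read off from $B_{Y'}$. As $X$ is a variety, its generic point avoids $X_{-\infty}$ and every qlc center, so $B_{Y'}=B_{Y'}^{\le 1}$ over the generic point of $X$; combined with $f'_*\mathcal O_{Y'}\simeq\mathcal O_X$ this gives $\rank f'_*\mathcal O_{Y'}(\lceil -(B_{Y'}^{<1})\rceil)=1$. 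Therefore $f'\colon (Y',B_{Y'})\to X$ is a basic slc-trivial fibration in the sense of Definition \ref{f-def6.7} with $D=\omega$. I would then invoke Theorem \ref{f-thm6.10}: the moduli $\mathbb Q$-b-divisor $\mathbf M$ is b-potentially nef, so there is a proper birational $p\colon X'\to X$ from a smooth quasi-projective variety with $\mathbf M=\overline{\mathbf M_{X'}}$ and $M_{X'}:=\mathbf M_{X'}$ potentially nef, and, setting $B_{X'}:=\mathbf B_{X'}$,
\[
K_{X'}+B_{X'}+M_{X'}=p^*(K_X+\mathbf B_X+\mathbf M_X)=p^*\omega ,
\]
after one further blow-up making $\Supp B_{X'}$ simple normal crossing. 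The conditions that $\Supp B_{X'}$ be simple normal crossing and that $B_{X'}^{<0}$ be $p$-exceptional are arranged exactly as in the quasi-log canonical case of \cite[Theorem 1.7]{fujino-slc-trivial}, by taking $p$ high enough and using that over the generic point of $X$ the pair $(Y',B_{Y'})$ is sub log canonical with $B_{Y'}^{<0}$ being $f'$-exceptional. When $X$ is a curve we have $\dim X=1$ and $X'=X$, and Theorem \ref{f-thm6.11} shows directly that $M_X=M_{X'}$ is semi-ample.

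\textbf{Identification of the $\ge 1$ locus, and the main obstacle.} The essential new point, going beyond the quasi-log canonical case, is the equality $p(B_{X'}^{\ge 1})=\Nqklt(X,\omega)$ set-theoretically. Here I would translate the definition of the discriminant $\mathbf B$: for a prime divisor $E$ over $X$ its coefficient in $\mathbf B$ equals $1-b_E$, where $b_E$ measures the sub log canonical threshold of the fibre of $(Y',B_{Y'})$ in the direction of $E$; a coefficient equal to $1$ (that is $b_E=0$) corresponds, via adjunction for quasi-log schemes (Theorem \ref{d-thm4.6}(i) and Corollary \ref{d-cor4.7}), to a qlc center, while a coefficient $>1$ (that is $b_E<0$) corresponds to a component of $X_{-\infty}$, so that $p(B_{X'}^{\ge 1})$ is precisely the union of the qlc centers with $X_{-\infty}$, namely $\Nqklt(X,\omega)$. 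I expect this dictionary between the coefficients $b_E$ and the qlc/non-qlc structure to be the genuine difficulty: one must control not only the divisors on $X$ itself but also those extracted on the high model $p\colon X'\to X$, and one must check that this identification is compatible both with the reduction to the $\mathbb Q$-case through Lemma \ref{d-lem4.25} and with the passage to a common model. That compatibility, rather than the formal application of Theorem \ref{f-thm6.10}, is where the argument needs the most care.
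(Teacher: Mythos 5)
Your overall strategy coincides with the paper's: reduce to the $\mathbb Q$-structure case by the convex-combination trick of Lemma \ref{d-lem4.25}, use (the proof of) Theorem \ref{a-thm1.9} to replace the quasi-log resolution by one all of whose strata dominate $X$, observe that this is a basic slc-trivial fibration, and read off $B_{X'}$ and $M_{X'}$ from Theorem \ref{f-thm6.10} (with Theorem \ref{f-thm6.11} for the curve case, and \cite[Lemma 11.2]{fujino-slc-trivial} giving the effectivity of $\mathbf B_X$, hence that $B_{X'}^{<0}$ is $p$-exceptional). The only structural difference is the order of the two reductions, which is immaterial.

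However, you explicitly leave open the one step that is genuinely new here, namely the set-theoretic equality $p(B_{X'}^{\geq 1})=\Nqklt(X,\omega)$, calling it ``the genuine difficulty'' without resolving it; as written this is a gap. The paper closes it as follows. The inclusion $p(B_{X'}^{\geq 1})\subset \Nqklt(X,\omega)$ is immediate from the definition of the discriminant. For the converse, one first notes that $f\bigl((B^v_Y)^{\geq 1}\bigr)=\Nqklt(X,\omega)$ by the construction of the quasi-log structure; the issue is that a component of $(B^v_Y)^{\geq 1}$ may map to a locus of high codimension in $X$, which the discriminant $\mathbf B_X$ on $X$ itself cannot see. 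The missing ingredient is the flattening theorem of Raynaud--Gruson \cite[Th\'eor\`eme (5.2.2)]{raynaud-g}: one chooses the birational model $p\colon X'\to X$ (and the induced fibration $f'\colon (Y',B_{Y'})\to X'$) so that every irreducible component of $q^{-1}_*\bigl((B^v_Y)^{\geq 1}\bigr)$ dominates a \emph{prime divisor} contained in the chosen simple normal crossing divisor $\Sigma_{X'}$. Over such a prime divisor $P$ the log canonical threshold $b_P$ in Definition \ref{f-def6.9} is $\leq 0$, so $P$ appears in $B_{X'}$ with coefficient $1-b_P\geq 1$; this yields $\Nqklt(X,\omega)\subset p(B_{X'}^{\geq 1})$. (A minor related omission: before invoking the slc-trivial machinery the paper also performs further blow-ups of $Y$ so that $(B^h_Y)^{=1}$ is Cartier and every stratum of $(Y,(B^h_Y)^{=1})$ dominates $X$, which is needed for the induced fibrations over $X'$ to be well behaved.)
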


Let us prove Theorem \ref{g-thm7.1}. 

\begin{proof}[Proof of Theorem \ref{g-thm7.1}]
We divide the proof into several steps. 
\setcounter{step}{0}
\begin{step}\label{g-step7.1}
Although this step is essentially the same as the proof of 
Theorem \ref{a-thm1.9}, 
we explain it again with some remarks on $\Nqlc(X, \omega)$ 
for the reader's convenience. 
Let $f\colon (Y, B_Y)\to X$ be a proper surjective morphism from a 
quasi-projective globally 
embedded simple normal crossing pair $(Y, B_Y)$ as in 
Definition \ref{d-def4.2} (see Theorem \ref{d-thm4.5}). 
By \cite[Proposition 6.3.1]{fujino-foundations}, 
we may assume that 
the union of all strata of $(Y, B_Y)$ mapped to $\Nqklt(X, \omega)$, 
which is denoted by $Y''$, is a union of some irreducible 
components of $Y$. 
We put $Y'=Y-Y''$ and $K_{Y'}+B_{Y'}=(K_Y+B_Y)|_{Y'}$. 
By the proof of Theorem \ref{a-thm1.9}, we obtain 
the following commutative diagram: 
\begin{equation*} 
\xymatrix{
Y' \ar[d]_{f'}\ar@{^(->}[r]^\iota&Y\ar[d]^f\\ 
X\ar@{=}[r]& X
}
\end{equation*} 
where $\iota\colon Y'\to Y$ is a natural closed immersion 
such that the natural map $\mathcal O_X\to 
f'_*\mathcal O_{Y'}$ is an isomorphism 
and that every stratum of $Y'$ is dominant onto $X$. 
By Theorem \ref{a-thm1.9} and its proof,  
\begin{equation*} 
\left(X, \omega, f'\colon (Y', B_{Y'})\to X\right)
\end{equation*}  
is a quasi-log scheme with 
\begin{equation*} 
\mathcal I_{\Nqklt(X, \omega, f'\colon (Y', B_{Y'})\to X)}=
\mathcal I_{\Nqklt(X, \omega, f\colon (Y, B_{Y})\to X)}. 
\end{equation*}  
We note that 
if 
\begin{equation*} 
\left(X, \omega, f\colon (Y, B_Y)\to X\right)
\end{equation*} 
has a 
$\mathbb Q$-structure then it is obvious that 
\begin{equation*} 
\left(X, \omega, f'\colon (Y', B_{Y'})\to X\right)
\end{equation*}  
also has a $\mathbb Q$-structure by construction. 
Therefore, by replacing $f\colon (Y, B_Y)\to X$ with 
$f'\colon (Y', B_{Y'})\to X$, we may assume 
that every stratum of $Y$ is mapped onto $X$ by $f$. 
By construction, we can easily see that 
\begin{equation*} 
\Nqlc(X, \omega, f\colon (Y', B_{Y'})\to X)\subset 
\Nqlc(X, \omega, f\colon (Y, B_Y)\to X)
\end{equation*}  
holds set theoretically. 
However, the relationship 
between 
$
\Nqlc(X, \omega, f\colon (Y', B_{Y'})\to X)$ and  
$\Nqlc(X, \omega, f\colon (Y, B_Y)\to X)
$ is not clear. 
We note that all we need in this proof is the fact that 
\begin{equation*} 
\Nqklt(X, \omega, f\colon (Y', B_{Y'})\to X)
=\Nqklt(X, \omega, f\colon (Y, B_Y)\to X)
\end{equation*}  
holds set theoretically.  
\end{step}
\begin{step}\label{g-step7.2}
By Step \ref{g-step7.1}, 
we may assume that 
$f\colon (Y, B_Y)\to X$ is a projective
surjective 
morphism from a simple normal crossing 
pair $(Y, B_Y)$ such that every stratum of 
$Y$ is dominant onto $X$. 
By taking some more blow-ups, 
we may further assume that $(B^h_Y)^{=1}$ is 
Cartier and that every stratum of $(Y, (B^h_Y)^{=1})$ 
is dominant onto $X$ (see, for example, 
\cite[Theorem 1.4 and Section 8]{bierstone-vera} 
and \cite[Lemma 2.11]{fujino-projectivity}). 
\end{step}
\begin{step}\label{g-step7.3}
In this step, we treat the case where 
$[X, \omega]$ has a $\mathbb Q$-structure. 
We note that 
\begin{equation*}
\mathcal O_X\to f_*\mathcal O_Y(\lceil 
-(B^{<1}_Y)\rceil)
\end{equation*} 
is an isomorphism outside 
$\Nqlc(X, \omega)$. Hence $\rank f_*\mathcal O_Y(\lceil 
-(B^{<1}_Y)\rceil)=1$ holds. Therefore, 
we can check that 
$f\colon (Y, B_Y)\to X$ is a basic slc-trivial 
fibration (see Definition \ref{f-def6.7}). 
Let $\mathbf B$ be the discriminant $\mathbb Q$-b-divisor and let 
$\mathbf M$ be the moduli $\mathbb Q$-b-divisor associated to 
$f\colon (Y, B_Y)\to X$. 
By \cite[Lemma 11.2]{fujino-slc-trivial}, we obtain that 
$\mathbf B_X$ is an effective $\mathbb Q$-divisor on $X$. 
By definition, 
we have $f((B^v_Y)^{\geq 1})=\Nqklt (X, \omega)$. 
We take a projective 
birational morphism $p\colon X'\to X$ from a smooth quasi-projective 
variety $X'$. 
Let $f'\colon  (Y', B_{Y'})\to X'$ be an induced basic slc-trivial 
fibration with the following commutative diagram. 
\begin{equation*} 
\xymatrix{
(Y, B_Y)\ar[d]_-f & (Y', B_{Y'})\ar[d]^-{f'}\ar[l]_-q\\
X & \ar[l]^-p X' 
}
\end{equation*}  
By Theorem \ref{f-thm6.10}, 
we may assume that 
there exists a simple normal crossing divisor $\Sigma_{X'}$ on $X'$ 
such that $\mathbf M=\overline{\mathbf M_{X'}}$, 
$\Supp \mathbf M_{X'}$ and 
$\Supp \mathbf B_{X'}$ are contained in $\Sigma_{X'}$, 
and that every stratum of 
$(Y', \Supp B^h_{Y'})$ is smooth over $X'\setminus \Sigma_{X'}$. 
Of course, 
we may assume that 
$M_{X'}:=\mathbf M_{X'}$ is potentially nef by 
Theorem \ref{f-thm6.10}. 
When $X$ is a curve, we may further assume that 
$M_{X'}$ is semi-ample 
by Theorem \ref{f-thm6.11}. 
We may assume that every irreducible 
component of $q^{-1}_*\left((B^v_Y)^{\geq 1}\right)$ is mapped onto a prime 
divisor in $\Sigma_{X'}$ 
with the aid of the flattening theorem 
(see \cite[Th\'eor\`eme (5.2.2)]{raynaud-g}). 
We put $B_{X'}:=\mathbf B_{X'}$. 
In the above setup, 
$f'(q^{-1}_*(B^v_Y)^{\geq 1})\subset B^{\geq 1}_{X'}$ by the definition of 
$\mathbf B$. 
Thus, we get $\Nqklt (X, \omega)\subset p(B^{\geq 1}_{X'})$. 
On the other hand, we can easily see that 
$p(B^{\geq 1}_{X'})\subset \Nqklt(X, \omega)$ by definition. 
Therefore, $p(B^{\geq 1}_{X'})=\Nqklt (X, \omega)$ holds. 
Since $p_*B_{X'}=\mathbf B_X$ and $\mathbf B_X$ is effective, 
$B^{<0}_{X'}$ is $p$-exceptional. 
Hence, $B_{X'}$ and $M_{X'}$ satisfy the desired properties. 
We note that $B_{X'}$ and $M_{X'}$ are obviously $\mathbb Q$-divisors 
by construction. 
\end{step}
\begin{step}\label{g-step7.4}
In this step, we treat the general case. 
We first use Lemma \ref{d-lem4.25} and get positive real numbers 
$r_i$ and $\left(X, \omega_i, 
f\colon (Y, D_i)\to X\right)$ for $1\leq i\leq k$ with 
the properties in Lemma \ref{d-lem4.25}. 
Then we apply the argument in 
Step \ref{g-step7.3} to 
\begin{equation*} 
\left(X, \omega_i, f\colon (Y, D_i)\to X\right)
\end{equation*} 
for 
every $i$.  
By Theorem \ref{f-thm6.10}, we 
can take a projective birational morphism 
$p\colon X'\to X$ from a smooth quasi-projective variety $X'$ 
which works for 
\begin{equation*}
\left(X, \omega_i, f\colon (Y, D_i)\to X\right)
\end{equation*} 
for every $i$. 
By summing them up with weight $r_i$, we get 
$\mathbb R$-divisors $B_{X'}$ and $M_{X'}$ with the desired properties. 
\end{step} 
We finish the proof of Theorem \ref{g-thm7.1}. 
\end{proof}

\section{Proof of Theorem \ref{a-thm1.10}}\label{p-sec8}

In this section, we prove Theorem \ref{a-thm1.10} as 
an application of Theorem \ref{g-thm7.1}. 
Then, by using Theorem \ref{a-thm1.10}, we prove Corollary \ref{p-cor8.2} and 
Lemma \ref{p-lem8.3}, which will play an important 
role in Section \ref{i-sec9}. 
Let us start with the following elementary 
lemma for the proof of Theorem \ref{a-thm1.10}. 

\begin{lem}\label{p-lem8.1} 
Let $X$ be a quasi-projective variety and let $H$ be an 
ample $\mathbb R$-divisor on $X$. 
Let $p\colon X'\to X$  be a projective 
birational morphism from a smooth quasi-projective 
variety $X'$ and let $F$ be an effective $p$-exceptional 
Cartier divisor on $X'$ such that $-F$ is $p$-ample. 
Let $M'$ be a potentially nef $\mathbb R$-divisor 
on $X'$. 
Then $p^*H-\varepsilon F+M'$ is ample 
for any $0<\varepsilon \ll 1$. 
\end{lem}

\begin{proof} 
We can write $H=\sum _{i=0}^k a_i H_i$ such that 
$a_i$ is a positive real number and $H_i$ is an ample 
Cartier divisor on $X$ for every $i$. 
If $0<\varepsilon \ll 1$, 
then we can take $\varepsilon _i$ 
such that 
$p^*H_i-\varepsilon _i F$ is ample 
for every $i$ with 
$\sum _{i=0}^k a_i \varepsilon _i=\varepsilon$. 
Since $M'$ is a potentially nef $\mathbb R$-divisor 
on $X'$, we can construct 
a smooth 
projective completion $X^\dag$ of 
$X'$ and a nef $\mathbb R$-divisor 
$M^\dag$ on $X^\dag$ such that 
$M^\dag|_{X'}=X'$. 
By taking a suitable birational modification 
of $X^\dag$, 
we may further assume that there is an ample 
$\mathbb R$-divisor $A$ on $X^\dag$ such 
that $A|_{X'}=p^*H_0-\varepsilon _0 F$ holds. 
Hence $a_0p^*H_0-a_0\varepsilon _0F+M'$ is ample. 
Therefore, $p^*H-\varepsilon F+M'$ is ample 
for any $0<\varepsilon \ll 1$. 
This is what we wanted. 
\end{proof}

Let us start the proof of Theorem \ref{a-thm1.10}. 

\begin{proof}[Proof of Theorem \ref{a-thm1.10}] 
By Theorem \ref{g-thm7.1}, 
there is a projective 
birational morphism $p\colon X'\to X$ from a smooth quasi-projective 
variety $X'$ such that 
\begin{equation*}
K_{X'}+B_{X'}+M_{X'}=p^*\omega, 
\end{equation*}  
where $B_{X'}$ is an $\mathbb R$-divisor on $X'$ whose 
support is a simple normal crossing divisor, $B^{<0}_{X'}$ is $p$-exceptional, 
$M_{X'}$ is a potentially nef $\mathbb R$-divisor on $X'$, 
and $p(B^{\geq 1}_{X'})=\Nqklt (X, \omega)$. 
By taking some more blow-ups, 
we may further assume that there is an effective 
$p$-exceptional Cartier 
divisor $F$ on $X'$ such that $-F$ is $p$-ample and 
that $\Supp 
F\cup \Supp B_{X'}$ is contained in 
a simple normal crossing divisor on $X'$. Then 
$p^*H-\varepsilon F+M_{X'}$ is ample 
for any $0<\varepsilon \ll 1$ by Lemma 
\ref{p-lem8.1}. We 
take a general effective $\mathbb R$-divisor $G$ on $X'$ such that $G\sim 
_{\mathbb R} p^*H-\varepsilon F+M_{X'}$ with $0<\varepsilon \ll 1$, 
$\Supp G \cup \Supp B_{X'}\cup \Supp F$ 
is contained in a simple normal crossing divisor on $X'$, 
and $(B_{X'}+\varepsilon F+G)^{\geq 1}=B^{\geq 1}_{X'}$ holds set 
theoretically. 
Then we have 
\begin{equation*}
\begin{split}
K_{X'}+B_{X'}+M_{X'}+p^*H&=K_{X'}+B_{X'}+\varepsilon 
F +p^*H-\varepsilon F +M_{X'}\\&\sim _{\mathbb R} 
K_{X'}+B_{X'} +\varepsilon F +G. 
\end{split}
\end{equation*}
We put $\Delta:=p_*(B_{X'}+\varepsilon F+G)$. 
By construction, $K_X+\Delta\sim _{\mathbb R}
\omega+H$. 
By construction again, we have 
\begin{equation*} 
\Nklt(X, \Delta)=
p\left((B_{X'}+\varepsilon F+G)^{\geq 1}\right)=p\left(B^{\geq 1}_{X'}
\right)=\Nqklt(X, \omega)
\end{equation*}  
set theoretically. 

When $[X, \omega]$ has a $\mathbb Q$-structure, 
we can make $B_{X'}$ and $M_{X'}$ $\mathbb Q$-divisors 
by Theorem \ref{g-thm7.1}. 
Then it is easy to see that we can make $\Delta$ a $\mathbb Q$-divisor 
on $X$ such that 
$K_X+\Delta\sim _{\mathbb Q} \omega+H$ 
when $H$ is an ample $\mathbb Q$-divisor and $[X, \omega]$ has a $\mathbb 
Q$-structure by the above construction of $\Delta$. 

Finally, if $X$ is a curve in the above argument, then $p\colon X'\to X$ 
is an isomorphism and $M_{X'}$ is semi-ample 
(see Theorem \ref{g-thm7.1}). 
Hence we can take $\Delta$ such that 
\begin{equation*} 
K_X+\Delta\sim _{\mathbb R} \omega
\end{equation*}  
with the desired properties. 
\end{proof}

For some related results, see \cite{fujino-gongyo}, 
\cite{fujino-subadjunction}, and so on. 
By applying Theorem \ref{a-thm1.10} to normal pairs, 
we have the following useful result. 

\begin{cor}\label{p-cor8.2}
Let $X$ be a normal variety and let $\Delta$ be an effective 
$\mathbb R$-divisor 
on $X$ such that $K_X+\Delta$ is $\mathbb R$-Cartier. 
Let $C$ be a log canonical center 
of $(X, \Delta)$ such that $C$ is a smooth 
curve. 
Then 
\begin{equation*} 
(K_X+\Delta)|_C\sim _{\mathbb R}K_C+\Delta_C
\end{equation*}  
holds for some effective $\mathbb R$-divisor 
$\Delta_C$ such that 
\begin{equation*} 
\Supp \Delta^{\geq 1}_C=
C\cap \left( \Nlc(X, \Delta) \cup \bigcup _{C\not\subset W}W\right), 
\end{equation*} 
where $W$ runs over lc centers of $(X, \Delta)$ 
which do not contain $C$, 
holds set theoretically. 
When $K_X+\Delta$ is $\mathbb Q$-Cartier, 
we can make $\Delta_C$ a 
$\mathbb Q$-divisor such that 
\begin{equation*} 
(K_X+\Delta)|_C\sim _{\mathbb Q}K_C+\Delta_C
\end{equation*} 
in the above statement. 
\end{cor}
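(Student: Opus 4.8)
The plan is to transport the statement to the intrinsic quasi-log geometry of the curve $C$ and then invoke the curve case of Theorem \ref{a-thm1.10}. First I would set $\omega:=K_X+\Delta$ and equip $X$ with the quasi-log scheme structure of Example \ref{d-ex4.11}, so that $\Nqlc(X,\omega)=\Nlc(X,\Delta)$ and the qlc centers of $[X,\omega]$ are exactly the lc centers of $(X,\Delta)$. Since $C$ is an lc center, it is a qlc center of $[X,\omega]$; in particular $C\not\subset\Nqlc(X,\omega)$.

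Next I would isolate $C$. By adjunction (Theorem \ref{d-thm4.6} (i)) the union $X':=C\cup\Nqlc(X,\omega)$ carries an induced quasi-log structure with $\Nqlc(X',\omega|_{X'})=\Nqlc(X,\omega)$, whose qlc centers are the lc centers strictly contained in $C$ (all of them points, as $C$ is a curve). Because $C\not\subset\Nqlc(X,\omega)$, the closure $\overline{X'\setminus\Nqlc(X',\omega|_{X'})}$ is precisely $C$, so Lemma \ref{d-lem4.20} endows the \emph{normal} (indeed smooth) curve $C$ with a quasi-log scheme structure $[C,\omega|_C]$ and gives $\mathcal I_{\Nqklt(C,\omega|_C)}=\mathcal I_{\Nqklt(X',\omega|_{X'})}$. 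Reading this set-theoretically yields $\Nqklt(C,\omega|_C)=C\cap\Nqklt(X',\omega|_{X'})=(C\cap\Nlc(X,\Delta))\cup\bigcup_{W'\subsetneq C}W'$, the union running over lc centers $W'$ strictly contained in $C$.

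Then I would run the curve case of Theorem \ref{a-thm1.10} on $[C,\omega|_C]$: since $C$ is a smooth (hence automatically quasi-projective) curve, it produces an effective $\mathbb R$-divisor $\Delta_C$ with $K_C+\Delta_C\sim_{\mathbb R}\omega|_C=(K_X+\Delta)|_C$ and $\Nklt(C,\Delta_C)=\Nqklt(C,\omega|_C)$ set-theoretically, and on a smooth curve $\Nklt(C,\Delta_C)=\Supp\Delta_C^{\geq 1}$. Finally I would rewrite the index set: by Corollary \ref{d-cor4.14} every intersection $C\cap W$ with $W$ an lc center not containing $C$ is a union of lc centers strictly inside $C$, while conversely each lc center $W'\subsetneq C$ is itself such a $W$ with $W'\cap C=W'$; hence $\bigcup_{W'\subsetneq C}W'=C\cap\bigcup_{C\not\subset W}W$, which gives the asserted description of $\Supp\Delta_C^{\geq 1}$. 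The $\mathbb Q$-Cartier case is identical: then $[X,\omega]$, and therefore $[C,\omega|_C]$, carries a $\mathbb Q$-structure, and the $\mathbb Q$-version of Theorem \ref{a-thm1.10} returns a $\mathbb Q$-divisor $\Delta_C$ with $(K_X+\Delta)|_C\sim_{\mathbb Q}K_C+\Delta_C$.

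I expect the main obstacle to be the bookkeeping in the middle paragraph: verifying that adjunction genuinely descends to $C$ alone (not merely to $C\cup\Nqlc(X,\omega)$) with the correct non-qlc locus, and matching $\Nqklt(C,\omega|_C)$ on the nose with the two equivalent descriptions of the bad locus. Once the quasi-log structure on $C$ is pinned down, everything else is a direct application of the cited results.
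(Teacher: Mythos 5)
Your proposal follows the paper's own argument essentially step for step: endow $[X,K_X+\Delta]$ with the quasi-log structure of Example \ref{d-ex4.11}, pass to $C\cup\Nlc(X,\Delta)$ by adjunction (Theorem \ref{d-thm4.6} (i)), descend to $C$ itself via Lemma \ref{d-lem4.20}, identify $\Nqklt(C,\omega|_C)$ with $C\cap\bigl(\Nlc(X,\Delta)\cup\bigcup_{C\not\subset W}W\bigr)$ using Theorem \ref{d-thm4.10} and Corollary \ref{d-cor4.14}, and then apply the curve case of Theorem \ref{a-thm1.10}. The argument is correct, and your explicit remark that $\Nklt(C,\Delta_C)=\Supp\Delta_C^{\geq 1}$ on a smooth curve only makes a step the paper leaves implicit.
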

\begin{proof}
As we saw in Example \ref{d-ex4.11}, 
$[X, K_X+\Delta]$ naturally becomes a quasi-log scheme. 
By construction, $\Nqlc(X, K_X+\Delta)=\Nlc(X, \Delta)$, 
$W$ is a qlc center of $[X, K_X+\Delta]$ if and only if 
$W$ is a log canonical center of $(X, \Delta)$. 
Hence we can see that $C$ is a qlc center of $[X, K_X+\Delta]$. 
Therefore, by adjunction (see 
Theorem \ref{d-thm4.6} (i) and 
\cite[Theorem 6.3.5 (i)]{fujino-foundations}), 
$[C', (K_X+\Delta)|_{C'}]$ is a quasi-log scheme, where 
$C'=C\cup \Nlc(X, \Delta)$. 
By Lemma \ref{d-lem4.20}, 
we see that 
$[C, (K_X+\Delta)|_C]$ is also a quasi-log scheme such that 
\begin{equation*} 
\Nqklt(C, (K_X+\Delta)|_C)=\Nqklt(C', (K_X+\Delta)|_{C'})\cap C
\end{equation*}  
holds set theoretically. 
By construction, we can easily see 
that 
\begin{equation*} 
\Nqklt(C', (K_X+\Delta)|_{C'})\cap C=
C\cap \left( \Nlc(X, \Delta) \cup \bigcup _{C\not\subset W}W\right), 
\end{equation*} 
where $W$ runs over lc centers of $(X, \Delta)$ 
which do not contain $C$, 
holds set theoretically (see Theorem \ref{d-thm4.10} 
and Corollary \ref{d-cor4.14}). 
By applying Theorem \ref{a-thm1.10} to 
$[C, (K_X+\Delta)|_C]$, we can find an effective 
$\mathbb R$-divisor $\Delta_C$ on $C$ such that 
\begin{equation*} 
(K_X+\Delta)|_C\sim _{\mathbb R} K_C+\Delta_C
\end{equation*}  
with 
\begin{equation*} 
\Supp \Delta^{\geq 1}_C=\Nqklt(C, (K_X+\Delta)|_C)=
C\cap \left( \Nlc(X, \Delta) \cup \bigcup _{C\not\subset W}W\right). 
\end{equation*}  
Of course, if $K_X+\Delta$ is 
$\mathbb Q$-Cartier, then we can make $\Delta_C$ 
a $\mathbb Q$-divisor 
such that 
\begin{equation*} 
(K_X+\Delta)|_C\sim _{\mathbb Q}K_C+\Delta_C
\end{equation*}  
in the above statement. 
\end{proof} 

We will use the following lemma in Section \ref{i-sec9}. 

\begin{lem}\label{p-lem8.3}
Let $\varphi\colon X\to Y$ be a proper surjective morphism 
between normal varieties such that 
$R^1\varphi_*\mathcal O_X=0$ and 
that $\dim \varphi^{-1}(y)\leq 1$ holds 
for every closed point $y\in Y$. 
Let $C$ be a projective curve 
on $X$ such that 
$\varphi(C)$ is a point. 
Then 
\begin{equation*} 
C\simeq \mathbb P^1. 
\end{equation*}  

Let $\Delta$ be an effective $\mathbb R$-divisor 
on $X$ such that $K_X+\Delta$ is $\mathbb R$-Cartier. 
If $C\not\subset \Nlc(X, \Delta)$ and 
\begin{equation*} 
C\cap \left( \Nlc(X, \Delta) \cup \bigcup _{C\not\subset W}W\right)\ne 
\emptyset, 
\end{equation*} 
where $W$ runs over lc centers of $(X, \Delta)$ 
which do not contain $C$, 
then the following inequality 
\begin{equation*} 
-(K_X+\Delta)\cdot C\leq 1
\end{equation*}  
holds. 
\end{lem}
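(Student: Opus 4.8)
The plan is to prove the two assertions separately, deducing the inequality from Corollary \ref{p-cor8.2} (equivalently, from Theorem \ref{a-thm1.10} through the quasi-log adjunction) after first arranging that $C$ is a qlc stratum.

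\emph{First assertion ($C\simeq \mathbb P^1$).} Put $y=\varphi(C)$ and let $F=\varphi^{-1}(y)$ be the scheme-theoretic fibre, so $\dim F\leq 1$ and $C\subset F$. I would invoke the theorem on formal functions: since $R^1\varphi_*\mathcal O_X=0$, the inverse limit of $H^1(F_n,\mathcal O_{F_n})$ over the infinitesimal neighbourhoods $F_n$ of $F$ vanishes. From the exact sequences $0\to \mathcal I^n/\mathcal I^{n+1}\to \mathcal O_{F_{n+1}}\to \mathcal O_{F_n}\to 0$, together with the fact that $\mathcal I^n/\mathcal I^{n+1}$ is supported on the at most one-dimensional set $F$ (so its $H^2$ vanishes by Grothendieck vanishing), the transition maps $H^1(F_{n+1},\mathcal O_{F_{n+1}})\to H^1(F_n,\mathcal O_{F_n})$ are surjective; as the limit is zero this forces $H^1(F,\mathcal O_F)=0$. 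Restricting to the reduced component $C$ via $0\to \mathcal J\to \mathcal O_F\to \mathcal O_C\to 0$ and using $H^2(F,\mathcal J)=0$ (again $\dim F\leq 1$) yields $H^1(C,\mathcal O_C)=0$. Since $C$ is an integral projective curve this means $p_a(C)=0$, so its normalization is $\mathbb P^1$ and $C$ is smooth, i.e. $C\simeq \mathbb P^1$.

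\emph{Second assertion, main idea.} By the first part $\deg K_C=-2$. The key observation is that, because $\varphi(C)$ is a point, for every $\mathbb R$-Cartier divisor $B$ on $Y$ one has $\varphi^*B\cdot C=B\cdot \varphi_*[C]=0$; hence we may add a pullback $\varphi^*B$ to $\Delta$ without changing $(K_X+\Delta)\cdot C$. Suppose we have found an effective Cartier divisor $B$ on $Y$ through $y$ and a real number $c>0$ such that $(X,\Delta+c\varphi^*B)$ is log canonical at the generic point of $C$ and $C$ is an lc center of $(X,\Delta+c\varphi^*B)$. Then Corollary \ref{p-cor8.2} applies to the smooth curve $C$ and produces an effective $\mathbb R$-divisor $\Delta_C$ with $(K_X+\Delta+c\varphi^*B)|_C\sim_{\mathbb R}K_C+\Delta_C$ and $\Supp\Delta_C^{\geq 1}=C\cap(\Nlc(X,\Delta+c\varphi^*B)\cup\bigcup_{C\not\subset W}W)$. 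I would then check that this locus is nonempty: the original point of $C\cap(\Nlc(X,\Delta)\cup\bigcup_{C\not\subset W}W)$ survives, since enlarging the boundary only enlarges the non-lc locus and does not enlarge a given lc center as a subvariety, while $C\not\subset\Nlc(X,\Delta+c\varphi^*B)$ because $C$ is an lc center. Hence $\Delta_C$ carries a point of coefficient $\geq 1$, so $\deg\Delta_C\geq 1$, and therefore $(K_X+\Delta)\cdot C=(K_X+\Delta+c\varphi^*B)\cdot C=\deg K_C+\deg\Delta_C\geq -2+1=-1$, which is the desired bound.

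\emph{The main obstacle.} The crux is the reduction step: producing $B$ and $c$ so that $C$ itself becomes a one-dimensional lc center of $(X,\Delta+c\varphi^*B)$. This is exactly where the hypothesis $\dim\varphi^{-1}(y)\leq 1$ is essential, since the whole fibre $F$ containing $C$ is at most a curve; taking $B$ with sufficiently high multiplicity at $y$ and letting $c$ be the log canonical threshold, the non-klt locus created over the generic point of $C$ is one-dimensional and can be cut down, by the standard tie-breaking device for creating lc centers, to be precisely $C$ — the same mechanism used in the proof of Theorem \ref{a-thm1.12}. Equivalently, in the quasi-log language one makes $C$ a qlc stratum of $[X,(K_X+\Delta)+c\varphi^*B]$ via Lemma \ref{d-lem4.23}, applies adjunction (Theorem \ref{d-thm4.6}~(i)) and Lemma \ref{d-lem4.20} to endow $C$ with a quasi-log structure, and concludes with Theorem \ref{a-thm1.10} in place of Corollary \ref{p-cor8.2}. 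Verifying that the created center is $C$ itself rather than a point of it or a larger subvariety, and that the intersection condition persists for the enlarged pair, is the part that will require the most care.
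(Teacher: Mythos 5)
Your proposal follows the paper's proof essentially verbatim: the second assertion is obtained exactly as in the paper by adding $c\,\varphi^*B$ for general very ample divisors through $\varphi(C)$ so that $C$ becomes a log canonical center, and then applying Corollary \ref{p-cor8.2} together with $\deg K_C=-2$ and $\deg \Delta_C\geq 1$ (the paper is just as terse as you are about verifying that the created center is exactly $C$). The only, immaterial, difference is in the first assertion, where the paper deduces $H^1(C,\mathcal O_C)=0$ directly from the surjection $R^1\varphi_*\mathcal O_X\to R^1\varphi_*\mathcal O_C$ coming from $R^2\varphi_*\mathcal I_C=0$, rather than via the theorem on formal functions.
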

\begin{proof} 
In Step \ref{p-step8.1}, we will prove that 
$C\simeq \mathbb P^1$ holds. 
In Step \ref{p-step8.2}, we will prove that 
$-(K_X+\Delta)\cdot C\leq 1$ by Corollary \ref{p-cor8.2}. 
\setcounter{step}{0}
\begin{step}\label{p-step8.1}
Although the argument in this step is well known, 
we will explain it in detail for the reader's convenience.  
Let us consider the following short exact sequence 
\begin{equation*} 
0\to \mathcal I_C\to \mathcal O_X\to \mathcal O_C\to 0, 
\end{equation*}  
where $\mathcal I_C$ is the defining ideal sheaf of 
$C$ on $X$. 
Since $\dim \varphi^{-1}(y)\leq 1$ for every $y\in Y$ by 
assumption, $R^2\varphi_*\mathcal I_C=0$ holds. 
Therefore, we get the following surjection 
\begin{equation*} 
R^1\varphi_*\mathcal O_X\to R^1\varphi_*\mathcal O_C\to 0. 
\end{equation*}  
By assumption, $R^1\varphi_*\mathcal O_X=0$. 
Hence $R^1\varphi_*\mathcal O_C=0$ holds. 
Since $\varphi(C)$ is a point by assumption, 
$H^1(C, \mathcal O_C)=0$ holds. 
This means that 
$C\simeq \mathbb P^1$. 
\end{step}
\begin{step}\label{p-step8.2}
By shrinking $Y$ around $\varphi(C)$, we may assume that 
$Y$ is quasi-projective. 
Let $B_1, \ldots, B_{n+1}$ be general very ample Cartier 
divisors on $Y$ passing through 
$\varphi(C)$ with $n=\dim X$. 
Then it is well known that 
\begin{equation*} 
\left(X, \Delta+\sum _{i=1}^{n+1} \varphi^*B_i\right)
\end{equation*}  
is not log canonical at any point of $C$ 
(see, for example, \cite[Lemma 13.2]{fujino-fund}) 
such that 
\begin{equation*}
\Nklt\left(X, \Delta+(1-\varepsilon)\sum _{i=1}^{n+1}\varphi^*B_i\right)
=\Nklt(X, \Delta)
\end{equation*}  
holds outside $\varphi^{-1}(\varphi(C))$ for 
every $0<\varepsilon \leq 1$. 
Hence we can take $0\leq c<1$ such that 
$C$ is a log canonical center of $(X, \Delta+\varphi^*B)$, where 
$B=c\sum_{i=1}^{n+1}B_i$. 
Since $B$ is effective, we see that 
\begin{equation*} 
C\cap \left ( \Nlc (X, \Delta+\varphi^*B)\cup \bigcup _{C\not 
\subset W} W\right)\ne \emptyset, 
\end{equation*}  
where $W$ runs over lc centers of $(X, \Delta+\varphi^*B)$ which do not 
contain $C$. 
By Corollary \ref{p-cor8.2}, we can take an effective 
$\mathbb R$-divisor $\Delta_C$ on $C$ such that 
\begin{equation*} 
(K_X+\Delta)|_C\sim _{\mathbb R} (K_X+\Delta+\varphi^*B)|_C
\sim _{\mathbb R} K_C+\Delta_C
\end{equation*}  
and that 
\begin{equation*} 
\Supp \Delta^{\geq 1}_C=
C\cap \left ( \Nlc (X, \Delta+\varphi^*B)\cup \bigcup _{C\not 
\subset W} W\right)\ne \emptyset
\end{equation*}  
holds. 
This implies that 
\begin{equation*} 
-(K_X+\Delta)\cdot C=-\deg (K_C+\Delta_C)=2-\deg \Delta_C\leq 1. 
\end{equation*} 
\end{step}
We finish the proof of Lemma \ref{p-lem8.3}. 
\end{proof}

\section{Proof of Theorem \ref{a-thm1.8}}\label{i-sec9}

In this section, we prove Theorem \ref{a-thm1.8}. 
Let us start with the following proposition, 
which is a consequence of 
the cone and contraction theorem for normal pairs (see \cite[Theorem 1.1]
{fujino-fund}) 
with the aid of Lemma \ref{p-lem8.3}. 
This is essentially due to \cite[Proposition 5.2]{svaldi}. 

\begin{prop}[{\cite[Proposition 5.2]{svaldi} and 
\cite[Proposition 7.1]{fujino-subadjunction}}]\label{i-prop9.1}
Let $\pi\colon X\to S$ be a projective morphism from a normal 
$\mathbb Q$-factorial 
variety $X$ onto a scheme $S$. Let $\Delta=\sum _i d_i 
\Delta_i$ be an effective 
$\mathbb R$-divisor on $X$, where the $\Delta_i$'s are the distinct 
prime components of $\Delta$ for all $i$, such that 
\begin{equation*} 
\left(X, \Delta':=\sum _{d_i<1}d_i\Delta_i
+\sum _{d_i\geq 1} \Delta_i\right)
\end{equation*} 
is dlt. 
Assume that 
$(K_X+\Delta)|_{\Nklt(X, \Delta)}$ is nef over $S$. 
Then $K_X+\Delta$ is nef over $S$ or 
there exists a non-constant morphism 
\begin{equation*} 
f\colon \mathbb A^1\longrightarrow X\setminus \Nklt(X, \Delta)
\end{equation*} 
such that 
$\pi\circ f(\mathbb A^1)$ is a point. 
More precisely, the curve $C$, the closure of $f(\mathbb A^1)$ in 
$X$, is a {\em{(}}possibly singular{\em{)}} rational curve 
with 
\begin{equation*} 
0<-(K_X+\Delta)\cdot C\leq 2\dim X. 
\end{equation*}  
Moreover, if $C\cap \Nklt(X, \Delta)\ne \emptyset$, then we can make 
$C$ satisfy a sharper estimate 
\begin{equation*} 
0<-(K_X+\Delta)\cdot C\leq 1. 
\end{equation*} 
\end{prop}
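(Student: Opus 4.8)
The plan is to deduce everything from the cone and contraction theorem for normal pairs (\cite[Theorem 1.1]{fujino-fund}) and then refine the rational curves it produces. First I would apply that theorem to $(X,\Delta)$ and $\pi\colon X\to S$ to obtain
\[
\NE(X/S)=\NE(X/S)_{(K_X+\Delta)\geq 0}+\NE(X/S)_{-\infty}+\sum_j R_j,
\]
where $\NE(X/S)_{-\infty}$ is the image of $\NE(\Nlc(X,\Delta)/S)$ and each $R_j$ is spanned by a rational curve $C_j$ with $0<-(K_X+\Delta)\cdot C_j\leq 2\dim X$. Since $\Nlc(X,\Delta)\subset \Nklt(X,\Delta)$ and $(K_X+\Delta)|_{\Nklt(X,\Delta)}$ is nef over $S$ by hypothesis, every class coming from a curve inside $\Nklt(X,\Delta)$ is $(K_X+\Delta)$-nonnegative. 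Hence, if there are no rays $R_j$, then $\NE(X/S)=\NE(X/S)_{(K_X+\Delta)\geq 0}$ and $K_X+\Delta$ is nef over $S$; this is the first alternative. Otherwise I fix a $(K_X+\Delta)$-negative extremal ray $R=R_j$, its contraction $\varphi_R\colon X\to Y$ over $S$, and a spanning rational curve $C$ as above. Because $(K_X+\Delta)\cdot C<0$ while $(K_X+\Delta)|_{\Nklt(X,\Delta)}$ is nef over $S$, the curve $C$ cannot be contained in $\Nklt(X,\Delta)$, and more generally $\varphi_R$ is finite on $\Nklt(X,\Delta)$.

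Next I would produce the $\mathbb A^1$. Normalizing gives $\nu\colon \mathbb P^1\to C$, and since $C\not\subset\Nklt(X,\Delta)$ the set $\nu^{-1}(\Nklt(X,\Delta))$ is finite. The point is to arrange that it consists of at most one point, so that $\mathbb A^1:=\mathbb P^1\setminus\nu^{-1}(\Nklt(X,\Delta))$ (removing an arbitrary point when the set is empty) maps into $X\setminus\Nklt(X,\Delta)$ and yields a non-constant $f\colon\mathbb A^1\to X\setminus\Nklt(X,\Delta)$ with $\pi\circ f$ constant. Choosing $C$ of minimal anticanonical degree within $R$ among the curves meeting $\Nklt(X,\Delta)$ and applying bend and break—deforming $C$ while fixing two of its intersection points with $\Nklt(X,\Delta)$—forces the curve to degenerate whenever it meets $\Nklt(X,\Delta)$ in two or more points of the normalization, producing a component that still meets $\Nklt(X,\Delta)$ with strictly smaller degree and contradicting minimality. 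This gives the main statement with the bound $2\dim X$.

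For the sharper estimate, assume $C\cap\Nklt(X,\Delta)\neq\emptyset$; I would invoke Lemma \ref{p-lem8.3}. Its divisorial hypotheses hold: $C\not\subset\Nlc(X,\Delta)$, and since $C$ is contained in no lc center, the locus $\Nlc(X,\Delta)\cup\bigcup_{C\not\subset W}W$ appearing there is exactly $\Nklt(X,\Delta)$, which $C$ meets. It remains to exhibit a proper surjective morphism with at most one-dimensional fibers and vanishing $R^1$ through which $C$ is contracted: the relative vanishing theorem for the contraction of a $(K_X+\Delta)$-negative extremal ray of the dlt pair (Theorem \ref{d-thm4.6} (ii)) gives $R^1(\varphi_R)_*\mathcal O_X=0$, and restricting to the locus where $\varphi_R$ has one-dimensional fibers—the relevant case for a curve meeting the finite locus $\varphi_R(\Nklt(X,\Delta))$—puts us in the setting of Lemma \ref{p-lem8.3}. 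The lemma then yields $C\simeq\mathbb P^1$ and, via adjunction to the smooth curve $C$ (Corollary \ref{p-cor8.2}), the inequality $0<-(K_X+\Delta)\cdot C\leq 1$.

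I expect the main obstacle to be precisely the verification of the hypotheses of Lemma \ref{p-lem8.3} in the sharp-bound case—controlling the fiber dimension of the relevant contraction—together with the bend and break bookkeeping needed to guarantee that $C$ meets $\Nklt(X,\Delta)$ in a single point of its normalization, so that the complement is genuinely $\mathbb A^1$ rather than a more heavily punctured rational curve.
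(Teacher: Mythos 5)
Your skeleton (cone theorem, dichotomy, finiteness of $\varphi_R$ on $\Nklt(X,\Delta)$, reduction to Lemma \ref{p-lem8.3}) matches the paper's, but the two steps you yourself flag as ``the main obstacle'' are exactly where the proof lives, and your proposed mechanisms for them do not work. First, the single-point property: you want the normalization of $C$ to meet $\Nklt(X,\Delta)$ in at most one point and propose to get this by bend and break, deforming $C$ with two marked points on $\Nklt(X,\Delta)$ fixed. On a normal $\mathbb Q$-factorial (in general singular) $X$ there is no dimension estimate guaranteeing such a deformation exists, and even granting one, the degeneration argument with your minimality setup does not yield a contradiction. The paper obtains this property by a completely different and much softer mechanism: since $(X,\Delta^{<1})$ and $(X,\{\Delta'\})$ are klt and $-(K_X+\Delta^{<1})$, $-(K_X+\Delta')$ are $\varphi_R$-ample (this uses that $(K_X+\Delta)|_{\Nklt(X,\Delta)}$ is nef over $S$), the relative Kawamata--Viehweg vanishing theorem gives $R^1\varphi_{R*}\mathcal O_X(-\lfloor\Delta'\rfloor)=0$, whence $\varphi_{R*}\mathcal O_X\to\varphi_{R*}\mathcal O_{\lfloor\Delta'\rfloor}$ is surjective and $\Supp\Delta^{\geq 1}=\Nklt(X,\Delta)$ is \emph{connected} in a neighborhood of every fiber; combined with its finiteness over $Y$, the intersection $C\cap\Nklt(X,\Delta)$ is a single point. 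Note also that this same vanishing (applied to $(X,\Delta^{<1})$) is how the paper gets $R^1\varphi_{R*}\mathcal O_X=0$; your citation of Theorem \ref{d-thm4.6} (ii) is not the right tool, since its hypotheses (nef and log big) are not verified here.

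Second, the fiber-dimension hypothesis of Lemma \ref{p-lem8.3}: you cannot simply ``restrict to the locus where $\varphi_R$ has one-dimensional fibers,'' because you must first know that the fiber through your curve is one-dimensional. The paper proves $\dim\varphi_R^{-1}(y)\leq 1$ for all relevant $y$ as follows: in the cases where $C\cap\Nklt(X,\Delta)\ne\emptyset$ is possible (Fano contraction, or birational with $\Exc(\varphi_R)\cap\Nklt(X,\Delta)\ne\emptyset$), one has $\Delta^{\geq 1}\cdot R>0$, so by $\mathbb Q$-factoriality and $\rho(X/Y)=1$ the divisor $\Delta^{\geq 1}$ is $\varphi_R$-ample; a fiber component of dimension $\geq 2$ would then meet $\Supp\Delta^{\geq 1}$ in positive dimension, contradicting the finiteness of $\Nklt(X,\Delta)$ over $Y$. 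This $\varphi_R$-ampleness is also what guarantees $C\cap\Supp\Delta^{\geq 1}\ne\emptyset$ for \emph{every} contracted curve, which is needed before Lemma \ref{p-lem8.3} can be applied. The remaining case (birational with exceptional locus disjoint from $\Nklt(X,\Delta)$) is handled by the klt cone theorem and produces a curve disjoint from $\Nklt(X,\Delta)$ with the weaker bound $2\dim X$ --- consistent with the statement, which only promises the sharp bound when $C$ meets $\Nklt(X,\Delta)$. Without the connectedness argument and the $\varphi_R$-ampleness of $\Delta^{\geq 1}$, your proof is incomplete.
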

\begin{proof}
We note that $\Nklt(X, \Delta)$ coincides with $(\Delta')^{=1}=\lfloor 
\Delta'\rfloor$, 
$\Delta^{\geq 1}$, and $\lfloor \Delta\rfloor$ set theoretically 
because $(X, \Delta')$ is dlt by assumption. 
It is sufficient to construct a non-constant morphism 
\begin{equation*} 
f\colon \mathbb A^1\longrightarrow X\setminus \Nklt(X, \Delta)
\end{equation*} 
such that 
$\pi\circ f(\mathbb A^1)$ is a point with the desired 
properties when 
$K_X+\Delta$ is not nef over $S$. 
When $(X, \Delta)$ is kawamata log terminal, 
that is, $\lfloor \Delta\rfloor=0$, 
the statement is well known 
(see, for example, \cite[Theorem 1.1]{fujino-fund}, 
Theorem \ref{a-thm1.12},  
or Corollary \ref{l-cor12.3} below). 
Therefore, we may assume that 
$(X, \Delta)$ is not kawamata log terminal. 
By shrinking $S$ suitably, we may assume that $S$ and $X$ are 
both quasi-projective. 
By the cone and contraction theorem for 
normal pairs (see \cite[Theorem 
1.1]{fujino-fund}), we can take a $(K_X+\Delta)$-negative 
extremal ray $R$ of $\NE(X/S)$ and the 
associated extremal contraction morphism 
$\varphi:=\varphi_R\colon X\to Y$ over $S$ since 
$(K_X+\Delta)|_{\Nklt(X, \Delta)}$ is nef over $S$. 
Note that $(K_X+\Delta^{<1})\cdot R<0$ and 
$(K_X+\Delta')\cdot R<0$ hold because 
$(K_X+\Delta)|_{\Nklt(X, \Delta)}$ is nef over $S$. 
Since $(X, \Delta^{<1})$ is kawamata log terminal and $-(K_X+\Delta^{<1})$ is 
$\varphi$-ample, 
we get $R^i\varphi_*\mathcal O_X=0$ for every $i>0$ 
by the relative Kawamata--Viehweg vanishing theorem 
(see \cite[Corollary 5.7.7]{fujino-foundations}). 
By construction, $\varphi\colon  \Nklt(X, \Delta)\to \varphi(\Nklt(X, \Delta))$ is 
finite. 
We have the following 
short exact sequence  
\begin{equation*} 
0\to \mathcal O_X(-\lfloor \Delta'\rfloor)\to 
\mathcal O_X\to \mathcal O_{\lfloor \Delta'\rfloor}\to 0. 
\end{equation*}  
Since $-\lfloor \Delta'\rfloor -(K_X+\{\Delta'\})=-(K_X+\Delta')$ 
is $\varphi$-ample and $(X, \{\Delta'\})$ is kawamata log terminal, 
$R^i\varphi_*\mathcal O_X(-\lfloor \Delta'\rfloor)=0$ holds 
for every $i>0$ by the 
relative Kawamata--Viehweg vanishing theorem again (see 
\cite[Corollary 5.7.7]{fujino-foundations}).   
Therefore, 
\begin{equation*} 
0\to \varphi_*\mathcal O_X(-\lfloor \Delta'\rfloor)\to 
\mathcal O_Y\to \varphi_*\mathcal O_{\lfloor \Delta'\rfloor}\to 0
\end{equation*}  
is exact. 
This implies that 
$\Supp \lfloor \Delta'\rfloor=\Supp \Delta^{\geq 1}$ is connected 
in a neighborhood of any fiber of $\varphi$. 
\begin{case}\label{i-case9.1.1}
Assume that $\varphi$ is a Fano contraction, that is, 
$\dim Y<\dim X$. 
Then we see that $\Delta^{\geq 1}$ is $\varphi$-ample 
and that $\dim Y=\dim X-1$. Note that 
$\Supp \Delta^{\geq 1}$ is finite over $Y$ 
since no curves in $\Supp \Delta^{\geq 1}$ are 
contracted by $\varphi$. 

Assume that there exists a closed subvariety $\Sigma$ on $X$ with 
$\dim \Sigma\geq 2$ such that 
$\varphi(\Sigma)$ is a point. 
Then 
\begin{equation*} 
\dim \left(\Sigma\cap \Supp \Delta^{\geq 1}\right)\geq 1
\end{equation*}  
holds since 
$\Delta^{\geq 1}$ is $\varphi$-ample. 
This is a contradiction because 
$\Supp \Delta^{\geq 1}$ is finite over $Y$. 
Hence we obtain that $\dim \varphi^{-1}(y)=1$ for 
every closed point $y\in Y$. 

Let $C$ be any projective curve on $X$ such that 
$\varphi(C)$ is a point. 
Then $(X, \Delta)$ is log canonical at the generic point of 
$C$, equivalently, $C\not\subset \Nlc(X, \Delta)$, 
since 
$\Supp \Delta^{\geq 1}$ is finite over $Y$. 
More precisely, since $\Supp \Delta^{\geq 1}=\Nklt(X, \Delta)$ is finite 
over $Y$, $C\not\subset \Nklt(X, \Delta)$ and 
\begin{equation*} 
\Supp \Delta^{\geq 1}=\Nklt(X, \Delta)=\left( \Nlc(X, \Delta)\cup 
\bigcup _{C\not \subset W} W\right)
\end{equation*} 
holds, 
where $W$ runs over lc centers of $(X, \Delta)$ 
which do not contain $C$. On the other hand, 
\begin{equation*} 
C\cap \Supp \Delta^{\geq 1}
\ne \emptyset
\end{equation*}  
because $\Delta^{\geq 1}$ is $\varphi$-ample. 
Hence, by Lemma \ref{p-lem8.3}, 
we obtain that $C\simeq \mathbb P^1$ and 
that $-(K_X+\Delta)\cdot C\leq 1$. 

By the connectedness of $\Supp\Delta^{\geq 1}$ discussed 
above, $C\cap \Supp \Delta^{\geq 1}$ is a point. 
Therefore, we can find 
a non-constant morphism 
\begin{equation*} 
f\colon \mathbb A^1\longrightarrow X\setminus \Nklt(X, \Delta)
\end{equation*} 
such that 
$\pi\circ f(\mathbb A^1)$ is a point and that 
$0<-(K_X+\Delta)\cdot C\leq 1$ holds, 
where $C$ is the closure of $f(\mathbb A^1)$ in $X$. 
\end{case}
\begin{case}\label{i-case9.1.2}
Assume that $\varphi$ is a birational contraction and that 
the exceptional locus $\Exc(\varphi)$ of $\varphi$ 
is disjoint from $\Nklt(X, \Delta)$. 
In this situation, we can find a rational curve $C$ in a fiber of $\varphi$ 
with $0<-(K_X+\Delta)\cdot C\leq 2\dim X$ by the 
cone theorem for kawamata log terminal pairs 
(see \cite[Theorem 1.1]{fujino-fund}, Theorem \ref{a-thm1.12},  
or Corollary \ref{l-cor12.3} below). 
It is obviously disjoint from $\Nklt(X, \Delta)$. 
Therefore, we can take a non-constant morphism 
\begin{equation*} 
f\colon \mathbb A^1\longrightarrow X\setminus \Nklt(X, \Delta)
\end{equation*}  
such that the closure of $f(\mathbb A^1)$ is $C$. 
\end{case}
\begin{case}\label{i-case9.1.3}
Assume that $\varphi$ is a birational 
contraction and that 
$\Exc(\varphi)\cap \Nklt(X, \Delta)\ne \emptyset$. 
In this situation, as in Case \ref{i-case9.1.1}, 
we see that $\Delta^{\geq 1}$ is $\varphi$-ample and 
that $\dim \varphi^{-1}(y)\leq 1$ for every $y\in Y$. 
Let $C$ be any projective curve $C$ on $X$ such that 
$\varphi(C)$ is a point. Then, $C\cap \Supp \Delta^{\geq 1}\ne 
\emptyset$ holds since 
$\Delta^{\geq 1}$ is $\varphi$-ample, and 
$C\cap \Supp \Delta^{\geq 1}$ is a point 
by the connectedness of $\Supp \Delta^{\geq 1}$ discussed above. 
In particular, we obtain $C\not\subset \Nklt(X, \Delta)$ and 
\begin{equation*} 
C\cap \Supp \Delta^{\geq 1}
\ne \emptyset,  
\end{equation*}   
and 
\begin{equation*} 
\Supp \Delta^{\geq 1}=\Nklt(X, \Delta)=\left(\Nlc(X, \Delta)\cup 
\bigcup _{C\not\subset W}W\right), 
\end{equation*}  
where $W$ runs over lc centers of $(X, \Delta)$ which do not 
contain $C$. 
Hence, by Lemma \ref{p-lem8.3}, 
$C\simeq \mathbb P^1$ with $-(K_X+\Delta)\cdot 
C\leq 1$. 
Since $C\cap \Supp \Delta^{\geq 1}$ is a point, 
we get a non-constant morphism 
\begin{equation*} 
f\colon \mathbb A^1\longrightarrow X\setminus \Nklt(X, \Delta)
\end{equation*} 
such that 
$f(\mathbb A^1)=C\cap (X\setminus \Nklt(X, \Delta))$. 
\end{case} 
Therefore, we get the desired statement. 
\end{proof}

Let us prove Theorem \ref{a-thm1.8} as an application of 
Proposition \ref{i-prop9.1}. 

\begin{proof}[Proof of Theorem \ref{a-thm1.8}]
By shrinking $S$ suitably, 
we may assume that 
$X$ and $S$ are both quasi-projective. 
By Lemma \ref{c-lem3.10}, we can construct a 
projective birational morphism 
$g\colon Y\to X$ from a normal $\mathbb Q$-factorial 
variety $Y$ satisfying 
(i), (ii), and (iv) in Lemma \ref{c-lem3.10}. 
Let us consider $\pi\circ g\colon  Y\to S$. 
Note that $K_Y+\Delta_Y$ is not nef over $S$ 
since $K_Y+\Delta_Y=g^*(K_X+\Delta)$ holds. 
It is obvious that $(K_Y+\Delta_Y)|_{\Nklt(Y, \Delta_Y)}$ is nef 
over $S$ by (iv) because so is $(K_X+\Delta)|_{\Nklt(X, \Delta)}$. 
Therefore, by Proposition \ref{i-prop9.1}, 
we have a non-constant morphism 
\begin{equation*} 
h\colon \mathbb A^1\longrightarrow Y\setminus \Nklt (Y, \Delta_Y)
\end{equation*} 
such that 
$(\pi\circ g)\circ h(\mathbb A^1)$ is a point 
and that 
\begin{equation*} 
0<-(K_Y+\Delta_Y)\cdot C_Y\leq 2\dim Y=2\dim X
\end{equation*} 
holds, 
where $C_Y$ is the closure of $h(\mathbb A^1)$ in $Y$. 
Since $K_Y+\Delta_Y=h^*(K_X+\Delta)$ holds, $g$ does not 
contract $C_Y$ to a pont. 
This implies that 
\begin{equation*} 
f:=g\circ h\colon  \mathbb A^1\longrightarrow X\setminus \Nklt(X, \Delta)
\end{equation*}  
is a desired non-constant morphism 
such that $\pi\circ f(\mathbb A^1)$ is a point by 
(iv). 
\end{proof}

For the proof of Theorem \ref{a-thm1.6}, we prepare the following somewhat 
artificial statement as an application of 
Theorem \ref{a-thm1.8}. 

\begin{thm}\label{i-thm9.2}
Let $\pi\colon X\to S$ be a proper surjective morphism 
from a normal quasi-projective variety $X$ onto a scheme 
$S$. 
Let $\mathcal P$ be an $\mathbb R$-Cartier 
divisor on $X$ and let $H$ be an ample Cartier divisor 
on $X$. 
Let $\Sigma$ be a closed subset of $X$. 
Assume that $\pi$ is not finite, 
$-\mathcal P$ is $\pi$-ample, and 
$\pi\colon  \Sigma\to \pi(\Sigma)$ is finite. 
We further assume 
\begin{itemize}
\item $\{\varepsilon _i\}_{i=1}^\infty$ is 
a set of positive real numbers with $\varepsilon _i\searrow 
0$ for $i\nearrow \infty$, and 
\item for every $i$, there exists an effective $\mathbb R$-divisor 
$\Delta_i$ on $X$ such that 
\begin{equation*} 
\mathcal P+\varepsilon _i H\sim _{\mathbb R} K_X+\Delta_i
\end{equation*}  
and that 
\begin{equation*} 
\Sigma =\Nklt(X, \Delta_i)
\end{equation*} 
holds set theoretically. 
\end{itemize} 
Then there exists a non-constant morphism 
\begin{equation*} 
f\colon \mathbb A^1\longrightarrow X\setminus \Sigma 
\end{equation*}  
such that 
$\pi\circ f(\mathbb A^1)$ is a point and that 
the curve $C$, the closure of $f(\mathbb A^1)$ in $X$, 
is a rational curve with 
\begin{equation*} 
0< -\mathcal P\cdot C\leq 2\dim X. 
\end{equation*}  
\end{thm}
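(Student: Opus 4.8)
The plan is to apply Theorem \ref{a-thm1.8} to each pair $(X,\Delta_i)$ and then let $i\to\infty$, using the sequence $\varepsilon_i\searrow 0$ to sharpen the length bound from $-(\mathcal P+\varepsilon_i H)$ to $-\mathcal P$. First I would note that $\pi$ is in fact projective: the ample divisor $H$ on the quasi-projective variety $X$ is $\pi$-ample, so the proper morphism $\pi$ is projective and Theorem \ref{a-thm1.8} applies. Fix $i$ large. Since $\Nklt(X,\Delta_i)=\Sigma$ set theoretically and $\pi|_\Sigma$ is finite, no complete curve contained in $\Sigma$ is contracted by $\pi$, so $(K_X+\Delta_i)|_\Sigma\sim_{\mathbb R}(\mathcal P+\varepsilon_i H)|_\Sigma$ is vacuously nef over $S$. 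On the other hand, choosing a complete curve $C_0$ in a positive-dimensional fiber of $\pi$ (one exists because $\pi$ is not finite), $\pi$-ampleness of $-\mathcal P$ gives $\mathcal P\cdot C_0<0$, hence $(\mathcal P+\varepsilon_i H)\cdot C_0<0$ once $\varepsilon_i<-\mathcal P\cdot C_0/(H\cdot C_0)$; thus $K_X+\Delta_i$ is not nef over $S$ for all large $i$. Theorem \ref{a-thm1.8} then yields a non-constant morphism $f_i\colon\mathbb A^1\to X\setminus\Sigma$ with $\pi\circ f_i$ constant, whose image closure $C_i$ is a rational curve satisfying $0<-(\mathcal P+\varepsilon_i H)\cdot C_i\le 2\dim X$.

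The heart of the argument is a boundedness statement: the intersection numbers $H\cdot C_i$ remain bounded as $i\to\infty$. I would prove this by contradiction. Writing $a_i=-\mathcal P\cdot C_i$ and $b_i=H\cdot C_i>0$, the length bound reads $0<a_i-\varepsilon_i b_i\le 2\dim X$. Suppose $b_i\to\infty$ along a subsequence; dividing by $b_i$ gives $a_i/b_i-\varepsilon_i\to 0$, so $a_i/b_i\to 0$. The classes $\gamma_i:=[C_i]/b_i$ lie on the compact slice $\{H=1\}\cap\NE(X/S)$ (compact since $H$ is ample), so after a further subsequence $\gamma_i\to\gamma_\infty$ with $\gamma_\infty\in\NE(X/S)$ and $H\cdot\gamma_\infty=1$, in particular $\gamma_\infty\ne 0$. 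But $-\mathcal P\cdot\gamma_\infty=\lim a_i/b_i=0$, contradicting the relative Kleiman criterion, which forces $-\mathcal P>0$ on $\NE(X/S)\setminus\{0\}$ since $-\mathcal P$ is $\pi$-ample. Hence $\{b_i\}$ is bounded, and consequently $a_i\le 2\dim X+\varepsilon_i b_i$ is bounded as well.

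Finally I would extract the desired curve. As $H$ is an ample Cartier divisor, the values $b_i=H\cdot C_i$ are positive integers lying in a bounded set, hence finite in number; and for each fixed $H$-degree the $1$-cycles on fibers of the projective morphism $\pi$ of that degree form a bounded family, so only finitely many numerical classes occur among $\{[C_i]\}$. By the pigeonhole principle some class $[C]$ is represented by $C_i$ for infinitely many $i$. For those $i$ the numbers $a=-\mathcal P\cdot C_i$ and $b=H\cdot C_i$ are constant, and passing to the limit in $a-\varepsilon_i b\le 2\dim X$ along $\varepsilon_i\to 0$ yields $a\le 2\dim X$, while $a=-\mathcal P\cdot C>0$ by $\pi$-ampleness of $-\mathcal P$. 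Taking $f:=f_i$ and $C:=C_i$ for any single such $i$ produces the required non-constant morphism $f\colon\mathbb A^1\to X\setminus\Sigma$ with $\pi\circ f(\mathbb A^1)$ a point and $0<-\mathcal P\cdot C\le 2\dim X$.

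The main obstacle is precisely the boundedness step in the second paragraph: a single index $i$ only gives $-\mathcal P\cdot C_i\le 2\dim X+\varepsilon_i H\cdot C_i$, which strictly exceeds $2\dim X$, so the sharp bound with respect to the limiting divisor $\mathcal P$ genuinely requires a uniform control on $H\cdot C_i$ together with the stabilization of the numerical class $[C_i]$ along the sequence. The remaining steps are routine once these compactness and finiteness inputs are in place.
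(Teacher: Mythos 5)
Your proposal is correct and follows the same overall skeleton as the paper's proof: apply Theorem \ref{a-thm1.8} to each pair $(X,\Delta_i)$, obtain rational curves $C_i$ with $0<-(\mathcal P+\varepsilon_i H)\cdot C_i\le 2\dim X$, show that the $C_i$ form a bounded family, stabilize along a subsequence, and pass to the limit. You are in fact somewhat more careful than the paper in two places: you verify the hypotheses of Theorem \ref{a-thm1.8} explicitly (nefness on $\Sigma$ is vacuous by finiteness of $\pi|_\Sigma$, non-nefness for large $i$ from a contracted curve), and in the extraction step you only require the numerical class $[C_i]$ to stabilize rather than the curve itself. The one point where you genuinely diverge is the boundedness step, which you rightly identify as the heart of the matter. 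The paper's argument is a short intersection-number trick: choose an ample $\mathbb Q$-divisor $A$ with $-(\mathcal P+A+\varepsilon_i H)$ still $\pi$-ample, so that
\begin{equation*}
0<A\cdot C_i=(\mathcal P+\varepsilon_i H+A)\cdot C_i-(\mathcal P+\varepsilon_i H)\cdot C_i<0+2\dim X,
\end{equation*}
which bounds the $A$-degree of the $C_i$ in one line. Your argument instead normalizes by the $H$-degree, uses compactness of the slice $\{H=1\}\cap \NE(X/S)$, and derives a contradiction with the positivity of $-\mathcal P$ on $\NE(X/S)\setminus\{0\}$. Both are valid here --- the appeal to Kleiman's criterion (ample implies positive on the closed cone minus the origin) is legitimate because $X$ is quasi-projective and $\pi$ is projective --- but the paper's version is more elementary and sidesteps that nontrivial input, whereas yours isolates the conceptual reason the degrees cannot blow up: a limiting nonzero class on which the $\pi$-ample divisor $-\mathcal P$ would vanish.
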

\begin{proof}
We take an ample $\mathbb Q$-divisor $A$ on $X$ such that 
$-(\mathcal P+A)$ is $\pi$-ample. 
Without loss of generality, we may assume that 
$-(\mathcal P+A+\varepsilon_i H)$ is $\pi$-ample for every $i$ because 
$\varepsilon _i\searrow 0$ for $i\nearrow \infty$. 
By assumption, 
\begin{equation*} 
\mathcal P+\varepsilon _iH\sim _{\mathbb R} K_X+\Delta_i
\end{equation*}  
with 
\begin{equation*} 
\Nklt(X, \Delta_i)=\Sigma 
\end{equation*}  
for every $i$. Hence, by Theorem \ref{a-thm1.8}, there is 
a non-constant morphism 
\begin{equation*} 
f_i\colon  \mathbb A^1\longrightarrow X\setminus \Sigma
\end{equation*} 
such 
that $\pi\circ f_i(\mathbb A^1)$ is a point and that 
\begin{equation*} 
0<-(K_X+\Delta_i)\cdot C_i =
-(\mathcal P+\varepsilon _i H)\cdot C_i \leq 2\dim X, 
\end{equation*}  
where $C_i$ is the closure of $f_i(\mathbb A^1)$ in $X$. 
We note that 
\begin{equation*} 
0<A\cdot C_i =\left( (\mathcal P +\varepsilon _i H+A)
-(\mathcal P+\varepsilon _i H)\right)\cdot C_i<2\dim X. 
\end{equation*}  
It follows that the curves $C_i$ belong to a bounded family. 
Thus, possibly passing to 
a subsequence, 
we may assume that $f_i$ and $C_i$ are constant, 
that is, there is a non-constant morphism 
\begin{equation*} 
f\colon \mathbb A^1\longrightarrow X\setminus \Sigma
\end{equation*}  
such that $C_i=C$ for every $i$, 
where $C$ is the closure of 
$f(\mathbb A^1)$ in $X$. 
Therefore, we get 
\begin{equation*} 
0<-\mathcal P\cdot C=
\lim _{i\to \infty}-(\mathcal P+\varepsilon_i H)\cdot C 
=\lim _{i\to \infty} -(\mathcal P+\varepsilon _iH)\cdot 
C_i\leq 2\dim X.  
\end{equation*}  
We finish the proof of Theorem \ref{i-thm9.2}. 
\end{proof}

\section{Proof of Theorems \ref{a-thm1.4}, 
\ref{a-thm1.5}, and \ref{a-thm1.6}}\label{j-sec10}

In this section, we prove Theorems \ref{a-thm1.4}, 
\ref{a-thm1.5} and 
\ref{a-thm1.6}. Since Theorem \ref{a-thm1.4} 
is an easy consequence of Theorem \ref{a-thm1.5} and 
Theorem \ref{a-thm1.5} can be seen as a very 
special case of Theorem \ref{a-thm1.6} by Example 
\ref{d-ex4.11}, it is sufficient to 
prove Theorem \ref{a-thm1.6}. 
Let us start with the proof of Theorem \ref{a-thm1.6}. 

\begin{proof}[Proof of Theorem \ref{a-thm1.6}]
We note that (i) and (ii) 
were already established in \cite[Theorem 
6.7.4]{fujino-foundations}. 
Therefore, it is sufficient to prove (iii). 
From Step \ref{j-step1.6.1} to Step \ref{j-step1.6.4}, 
we will reduce the problem to the case where 
$X$ is a normal variety. 
Then, in Step \ref{j-step1.6.5}, 
we will obtain a desired non-constant morphism 
from $\mathbb A^1$ by Theorem \ref{i-thm9.2}. 
\setcounter{step}{0}
\begin{step}\label{j-step1.6.1} 
Let $\varphi_{R_j}\colon  X\to Y$ be the extremal contraction 
associated to $R_j$ (see Theorem \ref{d-thm4.18} and 
\cite[Theorems 6.7.3 and 6.7.4]{fujino-foundations}). 
We note that 
\begin{equation*} 
\varphi_{R_j}\colon \Nqlc(X, \omega)\to 
\varphi_{R_j}\left(\Nqlc(X, \omega)\right)
\end{equation*} 
is finite. 
By replacing $\pi\colon X\to S$ with $\varphi_{R_j}\colon  X\to Y$, we may 
assume that 
$-\omega$ is $\pi$-ample and that 
$\NE(X/S)_{-\infty}=\{0\}$. 
\end{step}
\begin{step}\label{j-step1.6.2}
We take a qlc stratum $W$ of $[X, \omega]$ such that 
$\pi\colon W\to \pi(W)$ is not finite 
and that $\pi\colon W^\dag\to \pi(W^\dag)$ is finite for 
every qlc center $W^\dag$ of $[X, \omega]$ with 
$W^\dag\subsetneq W$. 
We put $W'=W\cup \Nqlc(X, \omega)$. 
Then, by adjunction (see Theorem 
\ref{d-thm4.6} (i) and 
\cite[Theorem 6.3.5 (i)]{fujino-foundations}), 
$[W', \omega|_{W'}]$ naturally becomes a quasi-log scheme. 
By replacing $[X, \omega]$ with $[W', \omega|_{W'}]$, 
we may further assume that $X\setminus X_{-\infty}$ is irreducible and that 
\begin{equation*} 
\pi\colon \Nqklt(X, \omega)\to \pi\left(\Nqklt(X, \omega)\right)
\end{equation*}  
is finite. 
\end{step}
\begin{step}\label{j-step1.6.3} 
By Lemma \ref{d-lem4.20}, we may replace 
$X$ with $\overline {X\setminus X_{-\infty}}$ and 
assume that 
$X$ is a variety. 
We note that the finiteness of 
\begin{equation*} 
\pi\colon \Nqklt(X, \omega)\to \pi\left(\Nqklt(X, \omega)\right)
\end{equation*}  
still holds. 
\end{step}
\begin{step}\label{j-step1.6.4}
Let $\nu\colon Z\to X$ be the normalization. 
Then $[Z, \nu^*\omega]$ naturally becomes a quasi-log scheme 
by Theorem \ref{a-thm1.9}. 
Since $\Nqklt(Z, \nu^*\omega)=\nu^{-1}\Nqklt(X, \omega)$ 
by Theorem \ref{a-thm1.9}, we may assume 
that $X$ is normal by replacing $[X, \omega]$ with 
$[Z, \nu^*\omega]$. 
\end{step}
\begin{step}\label{j-step1.6.5} 
By shrinking $S$ suitably, 
we may further assume that 
$X$ and $S$ are both quasi-projective. 
Hence we have the following properties: 
\begin{itemize}
\item[(a)] $\pi\colon X\to S$ is a projective morphism 
from a normal quasi-projective variety $X$ to a scheme $S$, 
\item[(b)] $-\omega$ is $\pi$-ample, and 
\item[(c)] $\pi\colon \Sigma\to \pi(\Sigma)$ is finite, 
where $\Sigma: =\Nqklt(X, \omega)$. 
\end{itemize}
Let $H$ be an ample Cartier divisor on $X$ 
and let $\{\varepsilon _i \}_{i=1}^\infty$ be 
a set of positive real numbers such 
that $\varepsilon _i\searrow 0$ for $i\nearrow \infty$. 
Then, by Theorem \ref{a-thm1.10}, 
we have: 
\begin{itemize}
\item[(d)] there exists an effective $\mathbb R$-divisor 
$\Delta_i$ on $X$ such that 
\begin{equation*} 
K_X+\Delta_i\sim _{\mathbb R} \omega+\varepsilon _i H
\end{equation*}  
with 
\begin{equation*} 
\Nklt(X, \Delta_i)=\Sigma
\end{equation*}  
for every $i$. 
\end{itemize}
Thus, by Theorem \ref{i-thm9.2}, 
we have a desired non-constant morphism 
\begin{equation*} 
f\colon  \mathbb A^1\longrightarrow X\setminus \Nqklt(X, \omega). 
\end{equation*}  
\end{step}
We finish the proof of Theorem \ref{a-thm1.6}. 
\end{proof}

As we already mentioned above, 
Theorem \ref{a-thm1.5} is a very special case of 
Theorem \ref{a-thm1.6}. 

\begin{proof}[Proof of Theorem \ref{a-thm1.5}]
By Example \ref{d-ex4.11}, $[X, K_X+\Delta]$ naturally becomes 
a quasi-log scheme. Then, by Theorem \ref{a-thm1.6}, 
the desired cone theorem holds for $(X, \Delta)$. 
\end{proof}

Theorem \ref{a-thm1.4} easily follows from Theorem \ref{a-thm1.5}. 

\begin{proof}[Proof of Theorem \ref{a-thm1.4}]
Since $(X, \Delta)$ is Mori hyperbolic by assumption, 
there is no $(K_X+\Delta)$-negative 
extremal ray of $\NE(X)$ that is 
rational and relatively ample at infinity (see Theorem \ref{a-thm1.5}). 
By assumption, $(K_X+\Delta)|_{\Nlc(X, \Delta)}$ is nef. 
Hence the subcone 
$\NE(X)_{-\infty}$ is included in 
$\NE(X)_{(K_X+\Delta)\geq 0}$. 
This implies that 
\begin{equation*} 
\NE(X)=\NE(X)_{(K_X+\Delta)\geq 0}
\end{equation*} 
holds by Theorem \ref{a-thm1.5}. 
Thus $K_X+\Delta$ is nef. 
\end{proof}

The author thinks that 
the proof of Theorems \ref{a-thm1.4}, 
\ref{a-thm1.5} and \ref{a-thm1.6} 
shows that 
the framework of quasi-log schemes established in 
\cite[Chapter 6]{fujino-foundations} and 
\cite{fujino-slc-trivial} is very powerful and 
useful even for the study of normal pairs. 

\section{Ampleness criterion for quasi-log schemes}\label{k-sec11}

The main purpose of this section is to establish the 
following ampleness criterion for quasi-log schemes. 
Then we will see that Theorem \ref{a-thm1.11} is a very special 
case of Theorem \ref{k-thm11.1}. 

\begin{thm}[Ampleness criterion for 
quasi-log schemes]\label{k-thm11.1}
Let $[X, \omega]$ be a quasi-log scheme 
and let $\pi\colon X\to S$ be a projective morphism 
between schemes. 
Assume that $\omega|_{\Nqlc(X, \omega)}$ is ample over 
$S$ and that $\omega$ is log big over $S$ with respect 
to $[X, \omega]$. 
We further assume that 
there is no non-constant morphism 
\begin{equation*} 
f\colon \mathbb A^1\longrightarrow U 
\end{equation*}  
such that $\pi\circ f(\mathbb A^1)$ is a point, 
where $U$ is any open qlc stratum of $[X, \omega]$. 
Then $\omega$ is ample over $S$. 
\end{thm}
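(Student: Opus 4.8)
The plan is to reduce ampleness to nefness by means of the cone theorem, then upgrade nefness to semi-ampleness, and finally use Mori hyperbolicity to force the associated contraction to be finite. First I would apply Theorem \ref{a-thm1.6} (i) to write
\begin{equation*}
\NE(X/S)=\NE(X/S)_{\omega\geq 0}+\NE(X/S)_{-\infty}+\sum_j R_j,
\end{equation*}
where the $R_j$ are the $\omega$-negative extremal rays that are rational and relatively ample at infinity. By Theorem \ref{a-thm1.6} (iii) each such $R_j$ would produce a non-constant morphism $f_j\colon \mathbb{A}^1\to U_j$ into an open qlc stratum with $\pi\circ f_j(\mathbb{A}^1)$ a point, which the hypothesis forbids; hence the collection $\{R_j\}$ is empty. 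Since $\omega|_{\Nqlc(X,\omega)}$ is ample over $S$, every nonzero class in $\NE(X/S)_{-\infty}=\mathrm{Im}(\NE(\Nqlc(X,\omega)/S)\to \NE(X/S))$ is $\omega$-positive, so $\NE(X/S)_{-\infty}\subset \NE(X/S)_{\omega\geq 0}$. Therefore $\NE(X/S)=\NE(X/S)_{\omega\geq 0}$, i.e.\ $\omega$ is nef over $S$.

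Next, $\omega$ is nef over $S$ and log big over $S$ with respect to $[X,\omega]$, so I would invoke the basepoint-free theorem of Reid--Fukuda type for quasi-log schemes (\cite{fujino-reid-fukuda}) to conclude that $\omega$ is semi-ample over $S$. This yields a projective morphism $\phi\colon X\to V$ over $S$ with connected fibers and a $\pi$-ample $\mathbb{R}$-line bundle $\mathcal{A}$ on $V$ such that $\omega\sim_{\mathbb{R},\pi}\phi^*\mathcal{A}$. Consequently $\omega$ is ample over $S$ if and only if $\phi$ is finite, and from the ampleness of $\omega|_{\Nqlc(X,\omega)}$ over $S$ one sees that $\phi$ is automatically finite on $\Nqlc(X,\omega)$. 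So the whole problem reduces to proving that $\phi$ is finite.

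Assuming $\phi$ is not finite, I would choose, as in Remark \ref{a-rem1.7}, a qlc stratum $W$ such that $\phi\colon \overline W\to \phi(\overline W)$ is not finite while $\phi\colon W^\dag\to \phi(W^\dag)$ is finite for every qlc center $W^\dag\subsetneq \overline W$. By adjunction (Theorem \ref{d-thm4.6} (i)) and Lemma \ref{d-lem4.20} I can replace $[X,\omega]$ first by $[\overline W\cup \Nqlc(X,\omega),\omega|_{\cdots}]$ and then by $[\overline W,\omega|_{\overline W}]$, and by Theorem \ref{a-thm1.9} pass to the normalization $\nu\colon Z\to \overline W$. This produces a normal quasi-log scheme $[Z,\nu^*\omega]$ with $\nu^*\omega$ nef over $S$, big (by log bigness), $\phi_Z:=\phi\circ\nu$ birational, and $\Nqklt(Z,\nu^*\omega)$ finite over $S$. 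Applying Theorem \ref{a-thm1.10} then realizes $\nu^*\omega$ through an honest normal pair $(Z,\Delta)$ with $K_Z+\Delta\sim_{\mathbb{R}}\nu^*\omega+\varepsilon H$ and $\Nklt(Z,\Delta)=\Nqklt(Z,\nu^*\omega)$ finite over $S$, after a dlt blow-up (Lemma \ref{c-lem3.10}) arranging that $(Z,\Delta^{<1})$ is kawamata log terminal.

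The hard part is the final step: extracting a rational curve in the exceptional locus of $\phi_Z$ that meets the open stratum, so as to build a non-constant $\mathbb{A}^1\to U$ and contradict Mori hyperbolicity. The difficulty is that the $\phi_Z$-contracted curves are $\nu^*\omega$-trivial rather than $\nu^*\omega$-negative, so neither bend-and-break nor Theorem \ref{a-thm1.12} applies directly. Instead I would argue exactly as in Proposition \ref{i-prop9.1}, Case \ref{i-case9.1.3}: because $\nu^*\omega$ is $\phi_Z$-trivial and $\phi_Z$ is birational with $\Exc(\phi_Z)$ meeting $\Nqklt(Z,\nu^*\omega)$, the boundary $\Delta^{\geq 1}$ is $\phi_Z$-ample along the exceptional locus, every contracted curve is a $\mathbb{P}^1$ meeting $\Nqklt$, and the connectedness of $\lfloor\Delta'\rfloor$ gives a non-constant $f\colon \mathbb{A}^1\to Z\setminus \Nklt(Z,\Delta)$ with $\phi_Z\circ f(\mathbb{A}^1)$ a point. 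Transporting this morphism back through $\nu$ and the adjunction identifications lands it in an open qlc stratum $U$ of the original $[X,\omega]$ with $\pi\circ f(\mathbb{A}^1)$ a point, contradicting the Mori hyperbolicity hypothesis. Hence $\phi$ is finite and $\omega$ is ample over $S$.
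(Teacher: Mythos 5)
Your first two steps (nefness via Theorem \ref{a-thm1.6} (iii), then semi-ampleness via the Reid--Fukuda basepoint-free theorem, and the reduction to a normal variety through adjunction, Lemma \ref{d-lem4.20} and Theorem \ref{a-thm1.9}) agree with the paper. The gap is in the final step, where you try to produce the forbidden $\mathbb A^1$ by arguing ``exactly as in Proposition \ref{i-prop9.1}, Case \ref{i-case9.1.3}.'' That case does not apply here. In Proposition \ref{i-prop9.1} the morphism $\varphi$ is the contraction of a $(K_X+\Delta)$-\emph{negative} extremal ray: the claim that $\Delta^{\geq 1}$ is $\varphi$-ample rests on $\rho(X/Y)=1$ together with the finiteness of $\varphi$ on $\Supp\Delta^{\geq 1}$, and the subsequent use of Lemma \ref{p-lem8.3} needs $R^1\varphi_*\mathcal O_X=0$, which is obtained from the relative Kawamata--Viehweg vanishing theorem because $-(K_X+\Delta^{<1})$ is $\varphi$-ample. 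In your situation $\phi_Z$ is the semi-ample contraction defined by $m\nu^*\omega$, so $\rho(Z/V)$ can be arbitrary, and a $\phi_Z$-contracted curve $C$ satisfies $(K_Z+\Delta)\cdot C=(\nu^*\omega+\varepsilon H)\cdot C=\varepsilon H\cdot C>0$; that is, $K_Z+\Delta$ is $\phi_Z$-\emph{ample}, with the wrong sign for every ingredient of Case \ref{i-case9.1.3} (the extremal-ray dichotomy, the vanishing, and the degree bound). As you yourself observe, the contracted curves are $\omega$-trivial, and no choice of $\varepsilon H$ with $H$ ample can make them adjoint-negative.

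The paper's resolution is a perturbation in the opposite direction: since $\Phi$ is birational and $Y$ is quasi-projective, one takes an effective Cartier divisor $G$ on $X$ with $-G$ $\Phi$-ample, so that $\omega+\varepsilon G$ is \emph{negative} on every $\Phi$-contracted curve. Lemma \ref{d-lem4.24} guarantees that for $0<\varepsilon\ll 1$ the pair $[X,\omega+\varepsilon G]$ is again a quasi-log scheme with $\Nqklt(X,\omega+\varepsilon G)=\Nqklt(X,\omega)$, and then Theorem \ref{a-thm1.6} (iii) applied to $[X,\omega+\varepsilon G]$ directly produces a non-constant morphism $f\colon\mathbb A^1\to X\setminus\Nqklt(X,\omega)$ with $\pi\circ f(\mathbb A^1)$ a point, giving the contradiction. (The paper also first reduces to a $\mathbb Q$-structure via Lemma \ref{d-lem4.25} before invoking the Reid--Fukuda theorem, and organizes the global induction over the filtration $[X_{i+1},\omega_{i+1}]=[\Nqklt(X_i,\omega_i),\omega_i|_{\Nqklt(X_i,\omega_i)}]$ rather than over a minimal non-finite stratum, but those are organizational differences; the missing idea in your argument is the $\varepsilon G$ perturbation together with Lemma \ref{d-lem4.24}.)
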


Let us treat a special case of Theorem \ref{k-thm11.1}. 

\begin{thm}\label{k-thm11.2}
Let $[X, \omega]$ be a quasi-log scheme such that 
$X$ is a normal variety. 
Let $\pi\colon X\to S$ be a projective morphism 
onto a scheme $S$. 
Assume that $\omega|_{\Nqklt(X, \omega)}$ is ample 
over $S$ and that there is no non-constant morphism 
\begin{equation*} 
f\colon \mathbb A^1\longrightarrow X\setminus \Nqklt(X, \omega)
\end{equation*} 
such that 
$\pi\circ f(\mathbb A^1)$ is a point. 
We further assume that 
$\omega$ is big over $S$. 
Then $\omega$ is ample over $S$. 
\end{thm}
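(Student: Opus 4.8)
The plan is to prove the statement in three stages: first that $\omega$ is nef over $S$, then that it is semi-ample over $S$, and finally that the induced contraction is finite, which forces $\omega$ to be ample. Throughout put $\Sigma:=\Nqklt(X,\omega)$, and note that since $X$ is an irreducible normal variety its only qlc stratum not contained in $\Sigma$ is $X$ itself (every qlc center lies in $\Sigma$), and that $\Sigma\subsetneq X$, since otherwise $\omega=\omega|_\Sigma$ would already be ample. For nefness I would argue by contradiction against the hypothesis. Fix an ample Cartier divisor $H$ on $X$. For each small $\varepsilon>0$, Theorem \ref{a-thm1.10} produces an effective $\mathbb R$-divisor $\Delta_\varepsilon$ with $K_X+\Delta_\varepsilon\sim_{\mathbb R}\omega+\varepsilon H$ and $\Nklt(X,\Delta_\varepsilon)=\Sigma$ set-theoretically. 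Since $\omega|_\Sigma$ is ample over $S$, for $\varepsilon$ small $(\omega+\varepsilon H)|_\Sigma=(K_X+\Delta_\varepsilon)|_{\Nklt(X,\Delta_\varepsilon)}$ is ample, hence nef, over $S$. If $\omega+\varepsilon H$ failed to be nef over $S$, then Theorem \ref{a-thm1.8} would give a non-constant morphism $\mathbb A^1\to X\setminus\Nklt(X,\Delta_\varepsilon)=X\setminus\Sigma$ contracted by $\pi$, contradicting the hypothesis. Thus $\omega+\varepsilon H$ is nef over $S$ for all small $\varepsilon$, and letting $\varepsilon\to 0$ shows that $\omega$ is nef over $S$.

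Next, $\omega$ is nef and, because $\omega|_\Sigma$ is ample and $\omega$ is big over $S$ (and all qlc centers lie in $\Sigma$), log big over $S$ with respect to $[X,\omega]$. The base-point-free theorem of Reid--Fukuda type for quasi-log schemes (\cite{fujino-reid-fukuda}) then makes $\omega$ semi-ample over $S$, so it induces a contraction $\psi\colon X\to X^{*}$ over $S$ with $\omega\sim_{\mathbb R}\psi^{*}A$ for an $\mathbb R$-line bundle $A$ ample over $S$; bigness of $\omega$ forces $\psi$ to be birational. Since $\omega|_\Sigma=\psi^{*}A|_\Sigma$ is ample over $S$, the morphism $\psi|_\Sigma$ is finite. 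It therefore suffices to show that $\psi$ itself is finite, for then $\omega=\psi^{*}A$ is ample over $S$.

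Suppose $\psi$ is not finite. The key difficulty is that $\omega$ is numerically trivial on the fibres of $\psi$, so neither the cone theorem nor the uniruledness result applies to $[X,\omega]$ directly. I would remedy this by a small relatively anti-ample perturbation: choosing an effective $\mathbb R$-Cartier divisor $G$ with $-G$ being $\psi$-ample and $0<\delta\ll 1$, Lemma \ref{d-lem4.24} shows that $[X,\omega+\delta G]$ is a quasi-log scheme with $\Nqklt(X,\omega+\delta G)=\Sigma$, while now $-(\omega+\delta G)$ is $\psi$-ample. Applying the cone theorem (Theorem \ref{a-thm1.6}) to $\psi\colon X\to X^{*}$: since $\psi$ is not finite and $\NE(X/X^{*})_{-\infty}=\{0\}$ (no $\psi$-contracted curve lies in $\Nqlc(X,\omega)\subseteq\Sigma$, on which $\psi$ is finite), there is an $(\omega+\delta G)$-negative extremal ray $R$ that is rational and relatively ample at infinity. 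By Theorem \ref{a-thm1.6}(iii) and Remark \ref{a-rem1.7}, the associated contraction is not finite on the closure $\overline{U}$ of the corresponding open qlc stratum $U$, so $\overline{U}\not\subseteq\Sigma$; as the only qlc stratum not contained in $\Sigma$ is $X$, we conclude $\overline{U}=X$ and $U=X\setminus\Sigma$. The resulting non-constant morphism $\mathbb A^{1}\to U=X\setminus\Sigma$, with image contracted by $\psi$ and hence by $\pi$, contradicts the hypothesis. Therefore $\psi$ is finite and $\omega$ is ample over $S$.

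The main obstacle is precisely this last step: converting a positive-dimensional fibre of the semi-ample contraction---along which $\omega$ is trivial rather than negative---into a forbidden affine line. The device that makes it work is the relatively anti-ample perturbation $\omega\rightsquigarrow\omega+\delta G$ together with the finiteness of $\psi|_\Sigma$ (coming from the ampleness of $\omega|_\Sigma$), which forces the open qlc stratum produced by the cone theorem to be the top stratum $X\setminus\Sigma$. Some care is needed in securing the effective perturbation $G$ when $\psi$ is a small contraction: there one would first pass to a $\mathbb Q$-factorial small modification so that a $\psi$-anti-ample effective divisor becomes available, or argue instead via Nakai--Moishezon (Lemma \ref{b-lem2.7}) on a minimal-dimensional subvariety where $\omega$ is numerically degenerate, using the Kodaira decomposition $\omega\sim_{\mathbb R}(\text{ample over }S)+(\text{effective})$ furnished by bigness to supply the required negativity.
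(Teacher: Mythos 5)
Your proposal follows the same three-stage route as the paper's proof of Theorem \ref{k-thm11.2}: nefness, then semi-ampleness via the basepoint-free theorem of Reid--Fukuda type, then a relatively anti-ample perturbation of the resulting birational contraction to manufacture a forbidden $\mathbb A^1$, so I will only record the points of divergence. For nefness the paper simply invokes Theorem \ref{a-thm1.6} (iii) (any $\omega$-negative extremal ray is relatively ample at infinity because $\omega|_{\Nqklt(X, \omega)}$ is ample over $S$, and its open qlc stratum is forced to be $X\setminus \Nqklt(X, \omega)$), whereas you re-derive it from Theorems \ref{a-thm1.10} and \ref{a-thm1.8} with the $\varepsilon H$-perturbation; this is exactly the mechanism inside the proof of Theorem \ref{a-thm1.6} (iii) for normal $X$ (see Theorem \ref{i-thm9.2}), so it is fine, provided you first shrink $S$ so that $X$ becomes quasi-projective, as Theorem \ref{a-thm1.10} requires. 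The one substantive omission is that you apply the Reid--Fukuda theorem directly to the $\mathbb R$-line bundle $\omega$: the paper's Step 1 first uses Lemma \ref{d-lem4.25} to write $\omega=\sum_i r_i\omega_i$ with each $[X, \omega_i]$ carrying a $\mathbb Q$-structure and satisfying the same hypotheses, and proves ampleness for each $\omega_i$; without some such reduction, ``$\omega$ is semi-ample over $S$'' is not literally what \cite[Theorem 1.1]{fujino-reid-fukuda} delivers for an $\mathbb R$-line bundle. This is routine to repair but should be said. Finally, your closing worry about small contractions is unnecessary: every projective birational morphism onto a quasi-projective normal variety is the blow-up of a coherent ideal sheaf, so an effective Cartier divisor $G$ with $-G$ relatively ample always exists (it need not be exceptional), and the paper uses exactly this with no $\mathbb Q$-factorial modification.
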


\begin{proof} 
We divide the proof into several small steps. 
\setcounter{step}{0}
\begin{step}\label{k-step11.2.1}
By Lemma \ref{d-lem4.25}, 
we can obtain quasi-log schemes 
\begin{equation*} 
\left(X, \omega_i, f\colon (Y, D_i)\to X\right)
\end{equation*}  
for $1\leq i\leq k$ with the following properties: 
\begin{itemize}
\item[(a)] $[X, \omega_i]$ has a $\mathbb Q$-structure 
for every $i$, 
\item[(b)] $\Nqlc (X, \omega_i)=\Nqlc(X, \omega)$ holds for 
every $i$, 
\item[(c)] $W$ is a qlc stratum of $[X, \omega]$ if and 
only if $W$ is a qlc stratum of $[X, \omega_i]$ 
for every $i$, and 
\item[(d)] there exist positive real numbers $r_i$ for 
$1\leq i\leq k$ such that 
$\omega=\sum _{i=1}^k r_i \omega_i$ with $\sum _{i=1}^k r_i=1$. 
\end{itemize} 
By construction, we can make $\omega_i$ sufficiently 
close to 
$\omega$ (see the proof of Lemma \ref{d-lem4.25}). 
Therefore, we may assume that 
$\omega_i|_{\Nqklt(X, \omega_i)}$ is ample over $S$ for every $i$ by 
(b) and (c). 
Thus $[X, \omega_i]$ satisfies all the assumptions for $[X, \omega]$ 
in Theorem \ref{k-thm11.2}. 
Hence, by replacing $[X, \omega]$ with $[X, \omega_i]$, 
it is sufficient to prove the ampleness of $\omega$ under 
the extra assumption that $[X, \omega]$ has a 
$\mathbb Q$-structure by (a) and (d). 
\end{step}
\begin{step}\label{k-step11.2.2} 
By assumption and Theorem \ref{a-thm1.6} (iii), 
$\omega$ is nef over $S$. 
Since $\omega|_{\Nqklt(X, \omega)}$ is ample over $S$ by 
assumption, 
$\omega$ is nef and log big over $S$ with respect to $[X, \omega]$. 
Therefore, by 
\cite[Theorem 1.1]{fujino-reid-fukuda}, 
we obtain that $\omega$ is semi-ample over $S$. 
Hence $m\omega$ gives a birational contraction morphism 
$\Phi\colon X\to Y$ between normal varieties over $S$, 
where $m$ is a sufficiently large and divisible positive integer. 
\end{step}
\begin{step}\label{k-step11.2.3} 
In this step, we will get a contradiction under 
the assumption that $\Phi$ is not an isomorphism. 

By shrinking $S$, we may assume that 
$S$, $X$, and $Y$ are quasi-projective. 
By construction, 
\begin{equation*}
\Phi\colon \Nqklt(X, \omega)\to \Phi(\Nqklt(X, \omega))
\end{equation*}  
is finite. Since $\Phi$ is birational and $Y$ is 
quasi-projective, 
we can take an effective Cartier divisor $G$ on $X$ such that 
$-G$ is $\Phi$-ample. 
By Lemma \ref{d-lem4.24}, for $0<\varepsilon \ll 1$, $[X, \omega+
\varepsilon G]$ is a quasi-log scheme such that 
\begin{equation*} 
\Nqklt(X, \omega+\varepsilon G)=\Nqklt(X, \omega)
\end{equation*}  
holds.  
By the cone theorem (see Theorem \ref{a-thm1.6} (iii)), 
we can find a non-constant morphism 
\begin{equation*} 
f\colon \mathbb A^1\longrightarrow X\setminus 
\Nqklt(X, \omega+\varepsilon G)
=X\setminus \Nqklt(X, \omega)
\end{equation*}  
such that $\pi\circ f(\mathbb A^1)$ is a point and 
that 
$0<-(\omega+\varepsilon G)\cdot C\leq 2\dim X$ holds, 
where $C$ is the closure of $f(\mathbb A^1)$ in $X$. 
This is a contradiction. 
\end{step}
Hence $\Phi$ is an isomorphism. 
Therefore, we obtain that $\omega$ is ample over 
$S$. This is what we wanted. 
\end{proof}

Once we know Theorem \ref{k-thm11.2}, 
it is not difficult to prove Theorem \ref{k-thm11.1}. 

\begin{proof}[Proof of Theorem \ref{k-thm11.1}]
By Theorem \ref{a-thm1.6} (iii), $\omega$ is nef and 
log big over $S$ with 
respect to $[X, \omega]$. 
We put 
\begin{equation*} 
[X_0, \omega_0]:=[X, \omega]
\end{equation*}  
and 
\begin{equation*} 
[X_{i+1}, \omega_{i+1}]:=[\Nqklt(X_i, \omega_i), 
\omega_i|_{\Nqklt(X_i, \omega_i)}]
\end{equation*}  
for $i\geq 0$. 
Then there exists $k\geq 0$ such that 
\begin{equation*} 
\Nqklt(X_k, \omega_k)=\Nqlc(X_k, \omega_k)=
\Nqlc(X, \omega). 
\end{equation*}  
We note that $\Nqlc(X, \omega)$ may be empty. 
By assumption, $\omega_k|_{\Nqklt(X_k, \omega_k)}$ is ample 
over $S$. 
We want to show by inverse induction on $i$ that $\omega_i$ 
is ample over $S$. 
Therefore, it is sufficient to prove 
the following claim. 
\begin{claim-n}
Let $[X, \omega]$ be a quasi-log scheme and let 
$\pi\colon X\to S$ be a projective morphism 
between schemes such that 
$\omega|_{\Nqklt(X, \omega)}$ is ample over $S$ 
and that $\omega$ is nef and log big over $S$ 
with respect to $[X, \omega]$. 
Assume that there is no non-constant morphism 
\begin{equation*} 
f\colon \mathbb A^1\longrightarrow X\setminus \Nqklt(X, \omega)
\end{equation*}  
such that $\pi\circ f(\mathbb A^1)$ is a point. 
Then $\omega$ is ample over $S$. 
\end{claim-n}
\begin{proof}[Proof of Claim]
By adjunction (see Theorem \ref{d-thm4.6} (i) and 
\cite[Theorem 6.3.5 (i)]{fujino-foundations}), 
we may assume that $\overline {X\setminus X_{-\infty}}$ is irreducible. 
By Lemma \ref{d-lem4.20}, 
we may further assume that 
$X$ is irreducible. 
Then, by Theorem \ref{a-thm1.9}, we can reduce the problem 
to the case where $X$ is a normal variety. 
Hence $\omega$ is ample over $S$ by Theorem \ref{k-thm11.2}. 
\end{proof}
As we have already mentioned above, 
by applying Claim inductively, 
we obtain the desired relative ampleness 
of $\omega=\omega_0$. 
\end{proof}

We close this section with the proof of Theorem \ref{a-thm1.11}. 

\begin{proof}[Proof of Theorem \ref{a-thm1.11}]
By Example \ref{d-ex4.11}, 
$[X, K_X+\Delta]$ naturally becomes a quasi-log scheme. 
We apply Theorem \ref{k-thm11.1} to $[X, K_X+\Delta]$. 
Then we obtain that $K_X+\Delta$ is ample. 
This is what we wanted. 
\end{proof}

The author knows no proof of 
Theorem \ref{a-thm1.11} that does not use the framework of quasi-log schemes. 
Note that a similar result for 
dlt pairs was already established in \cite[Theorem 5.1]{fujino-maximal}, 
whose proof is much 
easier than that of Theorem \ref{a-thm1.11} and 
depends on the basepoint-free 
theorem of Reid--Fukuda type 
for dlt pairs (see \cite[Theorem 0.1]{fujino-rf}). 
We recommend the interested reader to 
see \cite[Theorem 5.1]{fujino-maximal} and 
\cite[Theorem 0.1]{fujino-rf}. 

\section{Proof of Theorems \ref{a-thm1.12} and \ref{a-thm1.13}}\label{l-sec12}

In this section, we prove Theorems \ref{a-thm1.12} and \ref{a-thm1.13}, 
and explain an application for normal pairs. 
For the basic properties of uniruled varieties, 
see \cite[Chapter IV.~1]{kollar-rational}. 
Let us start with the following lemma, which is a generalization of 
\cite[Lemma]{kawamata1}. 

\begin{lem}\label{l-lem12.1}
Let $[X, \omega]$ be a quasi-log scheme and let 
$\varphi\colon  X\to W$ be a projective morphism between 
schemes. 
Let $P$ be an arbitrary closed point 
of $W$. 
Let $E$ be a positive-dimensional 
irreducible component of $\varphi^{-1}(P)$ such that  
$E\not\subset X_{-\infty}$ and 
let $\nu\colon \overline E\to E$ be the normalization. 
Then, for every ample $\mathbb R$-divisor 
$H$ on $\overline E$, 
there exists an effective $\mathbb R$-divisor $\Delta_{\overline E, H}$ on 
$\overline E$ such that 
\begin{equation*} 
\nu^*\omega+H\sim _{\mathbb R} K_{\overline E}+\Delta_{\overline E, H} 
\end{equation*} 
holds. 
Therefore, 
\begin{equation*} 
\mathcal A^{\dim E-1}\cdot \omega\cdot E\geq \left(\nu^*\mathcal A\right)
^{\dim E-1} \cdot K_{\overline E}
\end{equation*}  
holds for every $\varphi$-ample line bundle $\mathcal A$ on $X$. 

In the above statement, if $[X, \omega]$ 
has a $\mathbb Q$-structure and $H$ is an ample 
$\mathbb Q$-divisor on $\overline E$, then 
we can make $\Delta_{\overline E, H}$ an effective 
$\mathbb Q$-divisor on $\overline {E}$ with 
\begin{equation*} 
\nu^*\omega+H\sim _{\mathbb Q} K_{\overline E}+\Delta_{\overline E, H}. 
\end{equation*} 
\end{lem}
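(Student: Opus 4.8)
The plan is to apply the adjunction and subadjunction machinery already developed in the paper to the quasi-log structure on $E$. First I would observe that since $E$ is a positive-dimensional irreducible component of $\varphi^{-1}(P)$ with $E\not\subset X_{-\infty}$, the subvariety $E$ is a qlc stratum of $[X,\omega]$ after passing to an appropriate quasi-log structure; more precisely, I would choose an effective $\mathbb R$-Cartier divisor $B$ on $W$ passing through $P$ so that, after pulling back by $\varphi$ and applying Lemma \ref{d-lem4.24} and Lemma \ref{d-lem4.23}, the variety $E$ becomes a qlc stratum of $[X,\omega+\varphi^*B]$ with $\omega+\varphi^*B\sim_{\mathbb R}\omega$ along $E$ (the pullback $\varphi^*B$ restricts trivially to $E$ since $\varphi(E)=P$). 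This reduces the problem to a statement about a qlc stratum of a quasi-log scheme.

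Next I would invoke adjunction for quasi-log schemes. Let $E'=E\cup\Nqlc(X,\omega+\varphi^*B)$; by Theorem \ref{d-thm4.6} (i) this carries a natural quasi-log structure, and by Lemma \ref{d-lem4.20} the closure $\overline E$ together with the pullback of $\omega$ inherits a quasi-log structure as well. Passing to the normalization $\nu\colon\overline E\to E$ and using Theorem \ref{a-thm1.9}, I obtain a quasi-log structure $[\overline E,\nu^*\omega]$ on the normal variety $\overline E$. The key point is that $\overline E$ is now a normal quasi-log variety, so Theorem \ref{a-thm1.10} applies directly: for the given ample $\mathbb R$-divisor $H$ on $\overline E$, there exists an effective $\mathbb R$-divisor $\Delta_{\overline E,H}$ with
\begin{equation*}
\nu^*\omega+H\sim_{\mathbb R}K_{\overline E}+\Delta_{\overline E,H}.
\end{equation*}
This is exactly the desired relation, and the $\mathbb Q$-structure statement follows from the corresponding $\mathbb Q$-version in Theorem \ref{a-thm1.10}.

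For the intersection-number inequality, I would take a $\varphi$-ample line bundle $\mathcal A$ on $X$ and set $H=\delta\,\nu^*\mathcal A$ for a small $\delta>0$, noting that $\nu^*\mathcal A$ is ample on $\overline E$ since $\varphi(E)$ is a point; then intersecting the $\mathbb R$-linear equivalence with $(\nu^*\mathcal A)^{\dim E-1}$, using the projection formula to rewrite $(\nu^*\mathcal A)^{\dim E-1}\cdot\nu^*\omega=\mathcal A^{\dim E-1}\cdot\omega\cdot E$, and letting $\delta\to 0$ together with the effectivity of $\Delta_{\overline E,H}$ (which makes its contribution nonnegative against the ample class), yields
\begin{equation*}
\mathcal A^{\dim E-1}\cdot\omega\cdot E\geq(\nu^*\mathcal A)^{\dim E-1}\cdot K_{\overline E}.
\end{equation*}
The main obstacle I anticipate is verifying cleanly that $E$ genuinely becomes a qlc stratum of the perturbed quasi-log structure $[X,\omega+\varphi^*B]$ — that is, choosing $B$ so that the irreducible component $E$ of the fiber is cut out as an lc-type center rather than being swallowed into the non-qlc locus — and tracking that the adjunction in Theorem \ref{d-thm4.6} together with the normalization step of Theorem \ref{a-thm1.9} correctly identifies $\nu^*\omega$ as the quasi-log class on $\overline E$. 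The rest is a formal application of Theorem \ref{a-thm1.10} and a routine limiting argument for the intersection inequality.
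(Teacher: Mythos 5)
Your proposal is correct and follows essentially the same route as the paper: perturb $\omega$ by $\varphi^*B$ for a suitable $B$ through $P$ (the paper uses $B=c\sum_{i=1}^{n+1}B_i$ with $B_i$ general very ample through $P$ and $c$ chosen via \cite[Lemma 6.3.13]{fujino-foundations} together with Lemma \ref{d-lem4.23}) so that $E$ becomes a qlc stratum, then apply adjunction, Lemma \ref{d-lem4.20}, Theorem \ref{a-thm1.9}, and Theorem \ref{a-thm1.10}, and deduce the intersection inequality by letting the ample class tend to zero. The only slight imprecision is the citation of Lemma \ref{d-lem4.24}, which preserves $\Nqklt$ under small perturbations rather than producing the new qlc stratum $E$; the correct tool for that step is the one just named.
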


\begin{proof}
Our approach is different from Kawamata's in \cite{kawamata1}. 
A key ingredient of this proof is Theorem \ref{a-thm1.10}. 
\setcounter{step}{0}
\begin{step}\label{l-step12.1.1}
If $E$ is a qlc stratum of $[X, \omega]$, then we put $B=0$ 
and go to Step \ref{l-step12.1.3}. 
\end{step}
\begin{step}\label{l-step12.1.2}
By Step \ref{l-step12.1.1}, 
we may assume that $E$ is not a qlc stratum of $[X, \omega]$. 
Without loss of generality, we may assume that 
$W$ is quasi-projective by shrinking $W$ around $P$. 
Let $B_1, \ldots, B_{n+1}$ be general very ample Cartier divisors on 
$W$ passing through $P$ with 
$n=\dim X$. 
Let $f\colon (Y, B_Y)\to X$ be a proper morphism 
from a globally embedded simple normal crossing 
pair $(Y, B_Y)$ as in Definition \ref{d-def4.2}. 
Let $X'$ be the union of $X_{-\infty}=
\Nqlc(X, \omega)$ and all qlc strata of $[X, \omega]$ 
mapped to $P$ by $\varphi$. 
By \cite[Proposition 6.3.1]{fujino-foundations} 
and 
\cite[Theorem 3.35]{kollar}, 
we may assume that the union of all strata of $(Y, B_Y)$ mapped to 
$X'$ by $f$, which is denoted by $Y'$, is 
a union of some irreducible components of $Y$. As usual, 
we put $Y''=Y-Y'$, $K_{Y''}+B_{Y''}=(K_Y+B_Y)|_{Y''}$, 
and $f''=f|_{Y''}$. By 
\cite[Proposition 6.3.1]{fujino-foundations} 
and 
\cite[Theorem 3.35]{kollar} again, 
we may further assume that 
\begin{equation*} 
\left(Y'', (f'')^*\varphi^*\sum _{i=1}^{n+1}B_i+\Supp B_{Y''}\right)
\end{equation*}  
is a globally embedded simple normal crossing pair. 
By \cite[Lemma 6.3.13]{fujino-foundations}, 
we can take $0< c<1$ such that 
\begin{equation*} 
f''\left(\left(B_{Y''}+c(f'')^*\varphi^*\sum _{i=1}^{n+1}B_i\right)^{>1}\right)
\not\supset E
\end{equation*}  
and that there exists an irreducible component $G$ of 
\begin{equation*} 
\left(B_{Y''}+c(f'')^*\varphi^*\sum _{i=1}^{n+1}B_i\right)^{=1}
\end{equation*} 
with $f''(G)=E$. 
By Lemma \ref{d-lem4.23}, we obtain that  
\begin{equation*} 
\left(X, \omega+B, f''\colon  (Y'', B_{Y''}+(f'')^*B)\to X\right), 
\end{equation*}  
where $B=\varphi^*\left(c\sum _{i=1}^{n+1}B\right)$, 
is a quasi-log scheme such that 
$E$ is a qlc stratum of this quasi-log scheme. 
\end{step}
\begin{step}\label{l-step12.1.3}
We put $E'=E\cup \Nqlc(X, \omega+B)$. Then, by adjunction (see 
Theorem \ref{d-thm4.6} (i) and 
\cite[Theorem 6.3.5 (i)]{fujino-foundations}), 
$[E', (\omega+B)|_{E'}]$ is a quasi-log scheme. 
By Lemma \ref{d-lem4.20}, $[E, (\omega+B)|_E]$ is also a quasi-log 
scheme. 
We note that $(\omega+B)|_E\sim _{\mathbb R}\omega|_E$ since 
$\varphi(E)=P$. 
Hence $[E, \omega|_E]$ is a quasi-log scheme. 
By Theorem \ref{a-thm1.9}, 
$[\overline E, \nu^*\omega]$ naturally becomes a quasi-log scheme. 
By Theorem \ref{a-thm1.10}, 
there exists an effective $\mathbb R$-divisor 
$\Delta_{\overline E, H}$ on $\overline E$ such that 
\begin{equation*} 
\nu^*\omega+H\sim _{\mathbb R} K_{\overline E}+
\Delta_{\overline E, H}. 
\end{equation*}  
This implies that 
\begin{equation*} 
(\nu^*\mathcal A)^{\dim E-1}\cdot 
(\nu^*\omega+H)\cdot \overline E 
=(\nu^*\mathcal A)^{\dim E-1}(K_{\overline E} +\Delta_{\overline E, H}) 
\geq (\nu^*\mathcal A)^{\dim E-1} \cdot K_{\overline E}. 
\end{equation*}  
Since the above inequality holds for every ample $\mathbb R$-divisor 
$H$ on $\overline E$, 
we obtain 
\begin{equation*} 
\mathcal A^{\dim E-1}\cdot \omega\cdot E=(\nu^*\mathcal A)
^{\dim E-1} \cdot \nu^*\omega\cdot \overline E 
\geq (\nu^*\mathcal A)^{\dim E-1} \cdot K_{\overline E}. 
\end{equation*}  
This is what we wanted. 
By the above proof, it is easy to see that 
we can make $\Delta_{\overline E, H}$ an effective 
$\mathbb Q$-divisor on $\overline E$ if $[X, \omega]$ 
has a $\mathbb Q$-structure and $H$ is an ample 
$\mathbb Q$-divisor on $\overline E$. 
\end{step}
We finish the proof of Lemma \ref{l-lem12.1}. 
\end{proof}

\begin{rem}\label{l-rem12.2} 
In the proof of \cite[Lemma]{kawamata1}, 
Kawamata uses a relative Kawamata--Viehweg 
vanishing theorem for projective bimeromorphic 
morphisms between complex analytic spaces. 
His argument does not work for quasi-log schemes. 
\end{rem}

Let us prove Theorem \ref{a-thm1.12}. 

\begin{proof}[Proof of Theorem \ref{a-thm1.12}]
In this proof, we will freely use the notation of Lemma \ref{l-lem12.1}. 
\setcounter{case}{0}
\begin{case}
We will treat the case where $\dim E=1$. 

We take an ample $\mathbb Q$-divisor $H$ on $\overline E$ such that 
$-(\nu^*\omega+H)$ is still ample. 
Then, by Lemma \ref{l-lem12.1}, 
$-K_{\overline E}$ is ample since 
$\Delta_{\overline E, H}$ is effective. 
This means that $\overline E\simeq \mathbb P^1$. 
By Lemma \ref{l-lem12.1} again, we have 
\begin{equation*} 
0<-\omega\cdot E\leq -\deg K_{\overline E}=2. 
\end{equation*} 
\end{case}
\begin{case}
We will treat the case where $\dim E\geq 2$. 

We take a $\varphi$-ample line bundle 
$\mathcal A$ such that $\nu^*\mathcal A$ is very ample. 
We put $C=D_1\cap \cdots \cap D_{\dim E-1}$, 
where $D_i$ is a general member of $|\nu^*\mathcal A|$ for 
every $i$. 
Then $C$ is a smooth irreducible curve on $\overline E$ such 
that $C$ lies in the smooth locus of $\overline E$. 
By Lemma \ref{l-lem12.1}, 
we obtain 
\begin{equation*} 
C\cdot K_{\overline E}\leq \mathcal A^{\dim E-1}\cdot 
\omega\cdot E<0
\end{equation*}  
because $-\omega$ is $\varphi$-ample. 
We note that 
\begin{equation*}
\begin{split}
0>\nu^*\omega\cdot C &=\nu^*\omega\cdot 
(\nu^*\mathcal A)^{\dim E-1} \cdot \overline E \\ 
&= \omega\cdot \mathcal A^{\dim E-1} \cdot E \\ 
&\geq (\nu^*\mathcal A)^{\dim E-1} \cdot K_{\overline E} 
\\ &=C\cdot K_{\overline E}. 
\end{split}
\end{equation*}
Therefore, for any given point $x\in C$, there exists 
a rational curve $\Gamma$ on $\overline E$ passing through $x$ 
with 
\begin{equation*}
\begin{split}
0<-\nu^*\omega\cdot \Gamma &\leq 2\dim \overline E\cdot 
\frac{-\nu^*\omega\cdot C}{-K_{\overline E}\cdot C}
\\ &\leq 2\dim \overline E.  
\end{split}
\end{equation*}
This is essentially due to Miyaoka--Mori (see \cite{miyaoka-mori}). 
We note that $\overline E$ is not always smooth but 
it is smooth in a neighborhood of 
$C$. 
Hence we can use the argument of \cite{miyaoka-mori}. 
For the details, see \cite[Chapter II.~5.8 Theorem]{kollar-rational}. 
Thus, $E$ is covered by rational curves $\ell:=\nu_*\Gamma$ with 
\begin{equation*} 
0<-\omega\cdot \ell \leq 2\dim E. 
\end{equation*} 
\end{case}
Hence, by \cite[Chapter IV.~1.4 Proposition--Definition]{kollar-rational}, 
$E$ is uniruled. 
We finish the proof of Theorem \ref{a-thm1.12}. 
\end{proof}

We prove Theorem \ref{a-thm1.13}. 

\begin{proof}[Proof of Theorem \ref{a-thm1.13}]
Since $\varphi_R$ is the contraction morphism 
associated to $R$, 
\begin{equation*} 
\varphi_R\colon  \Nqlc(X, \omega)
\to \varphi_R(\Nqlc(X, \omega))
\end{equation*} 
is finite. 
We apply Theorem \ref{a-thm1.12} to $\varphi_R\colon  X\to W$, 
we can take a rational curve $\ell$ on $X$ 
such that $\varphi_R(\ell)$ is a point with 
\begin{equation*} 
0<-\omega\cdot \ell \leq 2d. 
\end{equation*}  
We finish the proof of Theorem \ref{a-thm1.13}. 
\end{proof}

We explain an application of Theorems \ref{a-thm1.12} and 
\ref{a-thm1.13} for normal pairs, which is a generalization of 
\cite[Theorem 1]{kawamata1}. 

\begin{cor}\label{l-cor12.3}
Let $X$ be a normal variety and let $\Delta$ be an effective 
$\mathbb R$-divisor on $X$ such that 
$K_X+\Delta$ is $\mathbb R$-Cartier. Let 
$\pi\colon X\to S$ be a projective morphism 
between schemes. 
Let $R$ be a $(K_X+\Delta)$-negative extremal ray of 
$\NE(X/S)$ that 
are rational and relatively ample at infinity. 
Let $\varphi_R\colon X\to W$ be the contraction morphism 
over $S$ associated to $R$. 
We put 
\begin{equation*} 
d=\min _E \dim E, 
\end{equation*}  
where $E$ runs over positive-dimensional 
irreducible components of $\varphi^{-1}_R(P)$ for all $P\in W$. 
Then $R$ is spanned by a {\em{(}}possibly singular{\em{)}} rational 
curve $\ell$ with 
\begin{equation*} 
0<-(K_X+\Delta)\cdot \ell\leq 2d. 
\end{equation*} 
Furthermore, if $\varphi_R$ is birational and 
$(X, \Delta)$ is kawamata log terminal, 
then $R$ is spanned by a {\em{(}}possibly singular{\em{)}} rational 
curve $\ell$ with 
\begin{equation*} 
0<-(K_X+\Delta)\cdot \ell< 2d. 
\end{equation*} 

Let $V$ be an irreducible component 
of the degenerate locus 
\begin{equation*} 
\left\{x\in X\, | \, \text{$\varphi_R$ is not an isomorphism 
at $x$}\right\} 
\end{equation*}  
of $\varphi_R$. 
Then $V$ is uniruled. 
\end{cor}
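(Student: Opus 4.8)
The plan is to read the entire statement off the quasi-log machinery already developed, using Example \ref{d-ex4.11} to equip the normal pair $(X,\Delta)$ with its natural quasi-log structure $[X,\omega]$, $\omega:=K_X+\Delta$. Under this identification $\Nqlc(X,\omega)=\Nlc(X,\Delta)$, the qlc centers of $[X,\omega]$ are exactly the lc centers of $(X,\Delta)$, and the $(K_X+\Delta)$-negative extremal ray $R$ that is rational and relatively ample at infinity becomes an $\omega$-negative such ray with the same contraction $\varphi_R\colon X\to W$; the integer $d$ is literally the same in both formulations. The first assertion, that $R$ is spanned by a rational curve $\ell$ with $0<-(K_X+\Delta)\cdot\ell\le 2d$, is then nothing but Theorem \ref{a-thm1.13} applied to $[X,\omega]$.

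For the uniruledness of $V$, I first note that relative ampleness at infinity forces $\varphi_R\colon\Nlc(X,\Delta)\to\varphi_R(\Nlc(X,\Delta))$ to be finite, so no positive-dimensional fiber component is contained in $X_{-\infty}$. I would take a general point $v$ of the irreducible component $V$ of the degenerate locus, put $P=\varphi_R(v)$, and let $E$ be the irreducible component of $\varphi_R^{-1}(P)$ through $v$. Since $v$ lies in the degenerate locus, $\dim E>0$, and every point of the contracted $E$ again lies in the degenerate locus, so by irreducibility of $V$ and generality of $v$ we get $E\subseteq V$. Theorem \ref{a-thm1.12} shows $E$ is covered by rational curves, and by Theorem \ref{d-thm4.18}(i) every curve contracted by $\varphi_R$ has class in $R$; letting $v$ vary over general points of $V$ produces a family of rational curves sweeping out $V$, whence $V$ is uniruled.

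The new content is the strict bound $<2d$ in the birational kawamata log terminal case, and this is where I expect the real work. Here $\Nlc(X,\Delta)=\emptyset$ and $X$ is the only qlc stratum, so $[X,\omega]$ is quasi-log canonical. I would rerun the proof of Theorem \ref{a-thm1.12} through Lemma \ref{l-lem12.1} on a fiber $E$ realizing $\dim E=d$. When $d=1$ the curve case of Theorem \ref{a-thm1.10} gives directly $\nu^*\omega\sim_{\mathbb R}K_{\overline E}+\Delta_{\overline E}$ on $\overline E\cong\mathbb P^1$, so $-\omega\cdot E=2-\deg\Delta_{\overline E}$ and strictness is exactly $\Delta_{\overline E}\neq 0$. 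When $d\ge 2$, passing to $\nu\colon\overline E\to E$ and choosing a $\varphi_R$-ample $\mathcal A$ with $\nu^*\mathcal A$ very ample, I take $C$ to be a general complete intersection of $d-1$ members of $|\nu^*\mathcal A|$; Lemma \ref{l-lem12.1} yields $(\nu^*\mathcal A)^{d-1}\cdot K_{\overline E}\le\nu^*\omega\cdot C<0$, and Miyaoka--Mori bend and break produces a rational curve $\Gamma$ through a general point of $C$ with
\begin{equation*}
0<-\nu^*\omega\cdot\Gamma\le 2d\cdot\frac{-\nu^*\omega\cdot C}{-K_{\overline E}\cdot C},
\end{equation*}
so that $\ell:=\nu_*\Gamma$ satisfies $\varphi_R(\ell)=P$ and spans $R$. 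The ratio is $\le 1$ because the slack $(\nu^*\omega-K_{\overline E})\cdot C=\big(\nu^*\omega\cdot C\big)-\big(K_{\overline E}\cdot C\big)$ is nonnegative, and the final bound is strict as soon as this slack is positive, i.e. as soon as the different on $\overline E$ is nontrivial.

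The main obstacle is to show that the different is nonzero precisely when $\varphi_R$ is birational. By Theorem \ref{a-thm1.10}, $\nu^*\omega+H\sim_{\mathbb R}K_{\overline E}+\Delta_{\overline E,H}$ with $\Delta_{\overline E,H}$ effective for every ample $H$, so the slack measures the nontriviality of the resulting boundary on $\overline E$; it can vanish only if $\nu^*\omega\equiv K_{\overline E}$, in which case $-K_{\overline E}$ is ample and $(\overline E,\nu^*\omega)$ behaves like a fiber of a genuine fiber-type (dimension-dropping) contraction rather than of a birational one. The model case is a smooth surface, where $-\omega\cdot E=2$ forces $E\cong\mathbb P^1$ with $E^2=0$, a moving curve that cannot be contracted birationally. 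I would rule out $\nu^*\omega\equiv K_{\overline E}$ using the strict negativity of the discrepancies forced by the birational contraction $\varphi_R$ of the kawamata log terminal pair (equivalently, the contracted fiber is negative, so its different is nonzero), thereby obtaining $0<-(K_X+\Delta)\cdot\ell<2d$. This is the classical point isolated by Kawamata and is the only step that goes beyond a direct transcription of Theorems \ref{a-thm1.12} and \ref{a-thm1.13}.
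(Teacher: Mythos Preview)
Your handling of the first bound (via Theorem~\ref{a-thm1.13}) is exactly what the paper does. For the uniruledness of $V$, your idea is right but the paper is more careful: rather than arguing informally that rational curves through general points sweep out $V$, it shrinks $W$ around the generic point of $\varphi_R(V)$, chooses a sufficiently ample $H$ on $W$ so that $-(K_X+\Delta)+\varphi_R^*H$ is ample, and then applies Theorem~\ref{a-thm1.12} to get rational curves in every fiber of $V\to\varphi_R(V)$ with degree bounded against this fixed ample class. This uniform degree bound, together with a projective completion, feeds directly into \cite[Chapter~IV.~1.4 Proposition--Definition]{kollar-rational} and gives uniruledness cleanly. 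Your version would need an extra sentence to justify why the curves you produce have bounded degree against something ample on $X$, not just on each individual fiber.

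The real gap is in your treatment of the strict inequality in the birational klt case. Your plan is to rerun Lemma~\ref{l-lem12.1} and Miyaoka--Mori and argue that the slack $(\nu^*\omega-K_{\overline E})\cdot C$ is strictly positive because ``the different is nonzero'', which you in turn want to deduce from discrepancies being strictly negative along the contracted locus. This is not proved; the passage from klt discrepancies of $(X,\Delta)$ to nontriviality of $\Delta_{\overline E,H}$ on the normalized fiber is delicate, and your surface heuristic ($E^2=0$ forces a moving curve) does not generalize in any obvious way. The paper bypasses this entirely with a one-line trick you have missed: because $\varphi_R$ is birational and $W$ can be taken quasi-projective, there is an effective $\mathbb Q$-divisor $G$ on $X$ with $-G$ $\varphi_R$-ample and $(X,\Delta+G)$ still klt. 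Applying Theorem~\ref{a-thm1.12} to $[X,K_X+\Delta+G]$ gives a rational curve $\ell$ with $0<-(K_X+\Delta+G)\cdot\ell\le 2d$; since $-G$ is $\varphi_R$-ample and $\ell$ is contracted, $G\cdot\ell<0$, whence $-(K_X+\Delta)\cdot\ell\le 2d+G\cdot\ell<2d$. This perturbation is the entire content of the strict inequality, and it avoids any analysis of the different on $\overline E$.
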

\begin{proof}
We divide the proof into three small steps. 
\setcounter{step}{0}
\begin{step}\label{l-step12.3.1}
By Example \ref{d-ex4.11}, $[X, K_X+\Delta]$ 
naturally becomes a quasi-log scheme. 
By applying Theorem \ref{a-thm1.13} to $[X, K_X+\Delta]$, 
we see that $R$ is spanned by a rational curve $\ell$ 
with 
\begin{equation*} 
0<-(K_X+\Delta)\cdot \ell \leq 2d. 
\end{equation*}  
\end{step}
\begin{step}\label{l-step12.3.2}
When $(X, \Delta)$ is kawamata log terminal and 
$\varphi_R$ is a birational contraction, we take 
a $d$-dimensional irreducible component $E$ 
of $\varphi^{-1}_R(P)$ for some 
$P\in W$. 
By shrinking $W$ around $P$, we may assume that 
$W$ is affine. 
Since $\varphi_R$ is birational, there exists an effective 
$\mathbb Q$-divisor $G$ on $X$ such that 
$(X, \Delta+G)$ is kawamata log terminal and 
that $-G$ is $\varphi_R$-ample. 
By applying Theorem \ref{a-thm1.12} to $[X, K_X+\Delta+G]$, 
we see that $E$ is covered by rational curves $\ell$ with 
\begin{equation*} 
0<-(K_X+\Delta+G)\cdot \ell \leq 2d. 
\end{equation*}  
Since $-G$ is $\varphi_R$-ample, we have 
\begin{equation*} 
0<-(K_X+\Delta)\cdot \ell< 2d. 
\end{equation*} 
This implies that $R$ is spanned by a rational curve 
$\ell$ with 
\begin{equation*} 
0<-(K_X+\Delta)\cdot \ell <2d 
\end{equation*}  
when $(X, \Delta)$ is kawamata log terminal and $\varphi_R$ is 
birational. 
\end{step}
\begin{step}\label{l-step12.3.3}
From now on, we will check that $V$ is uniruled. 
We shrink $W$ around the generic point of $\varphi_R(V)$ 
and assume that $W$ is quasi-projective. 
Then we take a sufficiently ample Cartier divisor $H$ on 
$W$ such that $-(K_X+\Delta)+\varphi^*_RH$ is ample. 
By Theorem \ref{a-thm1.12}, 
$V\cap \varphi^{-1}_R(P)$ is covered by rational curves $\ell$ 
of $-\left((K_X+\Delta)+\varphi^*_RH\right)$-degree at 
most $2\dim V$ for every $P\in \varphi_R(V)
\subset W$. 
We take a suitable projective completion $\overline X$ of $X$ and 
apply \cite[Chapter IV.~1.4 Proposition--Definition]{kollar-rational}. 
Then we obtain that $V$ is uniruled.  
\end{step}
We finish the proof of Corollary \ref{l-cor12.3}. 
\end{proof}

\section{Proof of Theorem \ref{a-thm1.14}}\label{m-sec13}

In this section, we prove Theorem \ref{a-thm1.14}. 
Let us start with the following definition. 

\begin{defn}[{\cite[Definition 6.8.1]{fujino-foundations}}]\label{m-def13.1} 
Let $[X, \omega]$ be a quasi-log scheme and 
let $\pi\colon X\to S$ be a projective morphism between schemes. 
If $-\omega$ is ample over $S$, then $[X, \omega]$ 
is called a {\em{relative quasi-log Fano scheme over $S$}}. 
When $S$ is a point, we simply say that 
$[X, \omega]$ is a {\em{quasi-log Fano scheme}}. 
\end{defn}

We recall an easy consequence of the vanishing theorem 
(see Theorem \ref{d-thm4.6} (ii)), which is missing in 
\cite[Section 6.8]{fujino-foundations}. 

\begin{lem}\label{m-lem13.2}
Let $[X, \omega]$ be a quasi-log scheme and let 
$\pi\colon X\to S$ be a proper morphism between schemes with 
$\pi_*\mathcal O_X\simeq \mathcal O_S$. 
Assume that $-\omega$ is nef and log big over $S$ with 
respect to $[X, \omega]$. 
Then $X_{-\infty}\cap \pi^{-1}(P)$ is connected for 
every closed point $P\in S$. 
\end{lem}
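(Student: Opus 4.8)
<br>

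The plan is to use the vanishing theorem for quasi-log schemes (Theorem \ref{d-thm4.6} (ii)) to control $H^0$ of the structure sheaves along fibers of $\pi$, and then deduce connectedness of the non-qlc locus from a cohomology-and-base-change style argument. First I would invoke Theorem \ref{d-thm4.6} (ii) with the choice $X'=X_{-\infty}$ and $L=0$ (the trivial line bundle), which is admissible precisely because $L-\omega=-\omega$ is nef and log big over $S$ with respect to $[X,\omega]$. This yields $R^i\pi_*(\mathcal I_{X_{-\infty}})=0$ for every $i>0$, where $\mathcal I_{X_{-\infty}}$ is the defining ideal sheaf of $X_{-\infty}$ on $X$.

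Next I would consider the short exact sequence
\begin{equation*}
0\to \mathcal I_{X_{-\infty}}\to \mathcal O_X\to \mathcal O_{X_{-\infty}}\to 0
\end{equation*}
and apply $\pi_*$. Using $\pi_*\mathcal O_X\simeq \mathcal O_S$ together with the vanishing $R^1\pi_*\mathcal I_{X_{-\infty}}=0$ just obtained, the long exact sequence gives a surjection
\begin{equation*}
\mathcal O_S\simeq \pi_*\mathcal O_X\twoheadrightarrow \pi_*\mathcal O_{X_{-\infty}}\to 0,
\end{equation*}
so $\pi_*\mathcal O_{X_{-\infty}}$ is a quotient of $\mathcal O_S$. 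The key consequence is that the natural map $\mathcal O_S\to \pi_*\mathcal O_{X_{-\infty}}$ is surjective. This forces the fibers of $\pi|_{X_{-\infty}}$ to be connected: concretely, the surjectivity of $\mathcal O_S\to (\pi|_{X_{-\infty}})_*\mathcal O_{X_{-\infty}}$ means that for every closed point $P\in S$ the $\mathbb C$-algebra $H^0(\pi^{-1}(P)\cap X_{-\infty}, \mathcal O)$ has no nontrivial idempotents coming from a disconnection, so $X_{-\infty}\cap \pi^{-1}(P)$ is connected (whenever it is nonempty).

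The main obstacle is the passage from the sheaf-level surjection $\mathcal O_S\twoheadrightarrow \pi_*\mathcal O_{X_{-\infty}}$ to the genuine connectedness of each individual fiber $X_{-\infty}\cap \pi^{-1}(P)$. This is a standard Stein-factorization or cohomology-and-base-change argument, but it requires a little care since $X_{-\infty}$ may be highly singular and reducible, and $\pi|_{X_{-\infty}}$ need not be flat. I would handle it by restricting to a suitable affine neighborhood of $P$ in $S$ and arguing that the surjection onto $(\pi|_{X_{-\infty}})_*\mathcal O_{X_{-\infty}}$ prevents the fiber from splitting as a disjoint union; equivalently, one checks that the Stein factorization of $\pi|_{X_{-\infty}}$ has connected fibers because $\mathcal O_S$ already surjects onto the pushforward. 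The remaining verification — that the restriction of a surjection $\mathcal O_S\to \pi_*\mathcal O_{X_{-\infty}}$ to the fiber over $P$ still detects connectedness — is routine given that $S$ is reduced of finite type over $\mathbb C$, and I would phrase it via the semicontinuity/base-change formalism to avoid flatness hypotheses.
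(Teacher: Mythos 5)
Your proposal is correct and follows essentially the same route as the paper: apply the vanishing theorem (Theorem \ref{d-thm4.6} (ii)) with $X'=X_{-\infty}$ and $L=0$ to get $R^1\pi_*\mathcal I_{X_{-\infty}}=0$, deduce the surjection $\mathcal O_S\simeq\pi_*\mathcal O_X\twoheadrightarrow\pi_*\mathcal O_{X_{-\infty}}$ from the ideal-sheaf exact sequence, and conclude connectedness of the fibers via Stein factorization. The paper states this in three lines; your extra care about the final base-change/Stein step is sound but not needed beyond the standard argument.
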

\begin{proof}
By Theorem \ref{d-thm4.6} (ii), 
$R^1\pi_*\mathcal I_{X_{-\infty}}=0$. 
Therefore, the restriction map 
\begin{equation*} 
\mathcal O_S\simeq \pi_*\mathcal O_X\to \pi_*\mathcal O_{X_{-\infty}} 
\end{equation*}  
is surjective. This implies that 
$X_{-\infty}\cap \pi^{-1}(P)$ is connected 
for every closed point $P\in S$. 
\end{proof}

Lemma \ref{m-lem13.2} should have been stated in 
\cite[Lemma 6.8.3]{fujino-foundations}, which 
plays an important role in this section. 
The main ingredient of the proof of Theorem \ref{a-thm1.14} 
is the following theorem. 

\begin{thm}[{\cite[Theorem 1]{zhang}, 
\cite{hacon-mckernan}, and \cite[Corollary 1.4]{bp}}]
\label{m-thm13.3}
Let $X$ be a normal projective variety and let $\Delta$ be 
an effective $\mathbb R$-divisor on $X$ such that 
$K_X+\Delta$ is $\mathbb R$-Cartier. 
Assume that $-(K_X+\Delta)$ is ample. 
Then $X$ is rationally chain connected modulo 
$\Nklt(X, \Delta)$. 
\end{thm}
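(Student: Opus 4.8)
The plan is to analyze the maximal rationally chain connected (MRCC) fibration of $X$ and to show that either $X$ itself is rationally chain connected or its MRCC base is dominated by $\Nklt(X,\Delta)$; in either case the conclusion follows directly from the definition of rational chain connectedness modulo a closed subset. First I would invoke the existence of the MRCC fibration $\psi\colon X\dashrightarrow T$ (Koll\'ar--Miyaoka--Mori, Campana): after replacing $X$ by a suitable resolution on which $\psi$ becomes a morphism $g\colon X'\to T$ with $T$ smooth projective, the very general fibre of $g$ is rationally chain connected and is maximal with this property. Setting $V:=\Nklt(X,\Delta)$, I would argue by contradiction, assuming $X$ is not rationally chain connected modulo $V$. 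This forces $\dim T>0$ and, moreover, that $V$ does not dominate $T$, so the very general fibre $F$ of $g$ avoids $V$, i.e.\ lies in the kawamata log terminal locus of the pair.

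The next step is to transfer the negativity of $-(K_X+\Delta)$ to the base. Pulling $\Delta$ back to $X'$ and applying the canonical bundle formula for the fibration $g$, one writes $K_{X'}+\Delta_{X'}\sim_{\mathbb R}g^*(K_T+\Delta_T+M_T)$, where $\Delta_T$ is the discriminant divisor and $M_T$ the moduli part. The crucial positivity input is that $M_T$ is pseudo-effective: this is precisely the b-potential-nefness of the moduli part furnished by Theorem \ref{f-thm6.10} through the basic slc-trivial fibration machinery of Section \ref{f-sec6} (after reducing to a $\mathbb Q$-structure in the usual way). Since $-(K_{X'}+\Delta_{X'})$ is big, being the pullback of the ample class $-(K_X+\Delta)$ minus an exceptional divisor, pushing down shows that $-(K_T+\Delta_T+M_T)$ is big; combined with the effectivity of $\Delta_T$ and the pseudo-effectivity of $M_T$ this yields that $-K_T$ is big, whence $K_T$ is not pseudo-effective.

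By the theorem of Boucksom--Demailly--P\u{a}un--Peternell, a smooth projective variety whose canonical class is not pseudo-effective is uniruled, so $T$ is uniruled. This contradicts the theorem of Graber--Harris--Starr (together with Koll\'ar's identification of rationally chain connected smooth proper fibres with rationally connected ones) that the base of the MRCC fibration is never uniruled. Therefore $\dim T=0$, so $X$ is rationally chain connected, or else $V$ dominates $T$; in the latter case every fibre of $g$ meets $V$ and, being rationally chain connected, supplies the required chains of rational curves joining an arbitrary point of $X$ to $\Nklt(X,\Delta)$.

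I expect the main obstacle to be the bookkeeping in the subadjunction step: one must verify that the discriminant $\Delta_T$ remains effective and the moduli part $M_T$ pseudo-effective exactly under the hypothesis that the general fibre misses $\Nklt(X,\Delta)$, and one must pass from the generic statement controlled by the MRCC fibration to the requirement in Definition \ref{b-def2.13} that the chains exist through \emph{every} point $P\in X$. The dichotomy ``$\dim T=0$ or $V$ dominates $T$'' is the conceptual heart of the argument, and isolating it cleanly is where the care is needed.
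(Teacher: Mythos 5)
Your proposal takes a completely different route from the paper, and its central step cannot work as written. The paper's own proof of Theorem \ref{m-thm13.3} is a short assembly of known results: it replaces $\Delta$ by an effective $\mathbb Q$-divisor $\Delta'$ with $-(K_X+\Delta')$ ample and $\Nklt(X,\Delta')=\Nklt(X,\Delta)$, quotes \cite[Theorem 1]{zhang} in the klt case, quotes \cite[Corollary 1.4]{bp} in the non-klt case to get a rational curve through a \emph{general} point meeting $\Nklt(X,\Delta')$, and then uses \cite[Chapter II.~2.4 Corollary]{kollar-rational} to upgrade ``general point'' to ``every point''. You are instead attempting to reprove the Zhang/Hacon--McKernan/Broustet--Pacienza input from scratch via the MRCC fibration.

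The decisive gap is the canonical bundle formula step. Since $K_{X'}+\Delta_{X'}=\mu^*(K_X+\Delta)$ for the resolution $\mu\colon X'\to X$, the class $-(K_{X'}+\Delta_{X'})$ is nef and big; a big class can never be $\mathbb R$-linearly equivalent to a pullback from a lower-dimensional variety (its volume would vanish), so the identity $K_{X'}+\Delta_{X'}\sim_{\mathbb R}g^*(K_T+\Delta_T+M_T)$ is simply impossible once $\dim T<\dim X'$. The discriminant/moduli machinery of Section \ref{f-sec6} applies only to fibrations whose log canonical divisor is relatively trivial, which is exactly what fails here. To salvage the approach one must first add an auxiliary effective divisor $\Theta\sim_{\mathbb R}-(K_{X'}+\Delta_{X'})+g^*H$, chosen so that the new pair remains sub-klt over the generic point of $T$, and apply the formula to $K_{X'}+\Delta_{X'}+\Theta\sim_{\mathbb R}g^*H$; even then one only obtains $K_T+B_T+M_T\sim_{\mathbb R}H$ with $H$ an arbitrarily small ample class, hence merely that $-K_T$ is pseudo-effective. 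Combined with Boucksom--Demailly--P\u{a}un--Peternell and Graber--Harris--Starr this gives $K_T\equiv 0$, not a contradiction, so an additional positivity argument (Zhang's computation showing $K_T$ is negative on a general complete intersection curve) is still required. Finally, ``every fibre of $g$ \ldots being rationally chain connected'' is only known for very general fibres; extending to all fibres, and to points of $X$ over which $\mu$ is not an isomorphism, needs precisely the specialization result from \cite[Chapter II.~2.4 Corollary]{kollar-rational} that the paper invokes, and which you flag as an obstacle but do not resolve.
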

\begin{proof}
We take an effective $\mathbb Q$-divisor 
$\Delta'$ on $X$ such that 
$K_X+\Delta'$ is $\mathbb Q$-Cartier, $-(K_X+\Delta')$ 
is ample, and $\Nklt(X, \Delta')=\Nklt(X, \Delta)$ holds. 
If $\Nklt(X, \Delta')=\emptyset$, that is, $(X, \Delta')$ is 
kawamata log terminal, then $X$ is 
rationally connected by \cite[Theorem 1]{zhang}. 
In particular, $X$ is rationally chain connected 
by Lemma \ref{b-lem2.12}. 
When $\Nklt(X, \Delta')\ne\emptyset$, 
by applying \cite[Corollary 1.4]{bp} to $(X, \Delta')$, 
we obtain that for any general point $x$ of $X$ there exists 
a rational curve $R_x$ passing through $x$ and 
intersecting $\Nklt(X, \Delta')$. 
By \cite[Chapter II.~2.4 Corollary]{kollar-rational}, 
for every $x\in X$, we can find a chain of rational curves 
$R_x$ such that 
$x\in R_x$ and $R_x\cap \Nklt(X, \Delta')\ne\emptyset$. 
Hence $X$ is rationally chain connected modulo 
$\Nklt(X, \Delta)$. 
We note that if $-(K_X+\Delta)$ is an ample $\mathbb Q$-divisor 
then the proof of \cite[Theorem 1.2 and Corollary 1.4]{bp} 
becomes much simpler than the general case. 
Hence we obtain that $X$ is rationally chain connected 
modulo $\Nklt(X, \Delta)$. 
\end{proof}

We prepare one useful lemma. 

\begin{lem}\label{m-lem13.4}
Let $[X, \omega]$ be a projective 
quasi-log canonical pair such that 
$\Nqklt(X, \omega)=\emptyset$, $-\omega$ is 
ample, and $X$ is connected. 
Then $X$ is rationally connected. 
Hence $X$ is rationally chain connected. 
\end{lem}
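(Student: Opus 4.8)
The plan is to reduce the statement to the normal case and then invoke Theorem \ref{m-thm13.3}. Since $[X, \omega]$ is a projective quasi-log canonical pair with $\Nqklt(X, \omega)=\emptyset$, every qlc stratum of $[X, \omega]$ is an irreducible component of $X$ and $X_{-\infty}=\emptyset$. Hence Lemma \ref{d-lem4.8} applies and tells us that $X$ is normal. Being normal and connected, $X$ is in fact irreducible, so $X$ is a normal projective variety.

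Next I would apply Theorem \ref{a-thm1.10} to $[X, \omega]$. Taking an ample $\mathbb R$-divisor $H$ on $X$ small enough that $-(\omega+H)$ remains ample, Theorem \ref{a-thm1.10} produces an effective $\mathbb R$-divisor $\Delta$ on $X$ with
\begin{equation*}
K_X+\Delta\sim _{\mathbb R}\omega+H
\end{equation*}
and with $\Nklt(X, \Delta)=\Nqklt(X, \omega)=\emptyset$ set theoretically. The vanishing of $\Nklt(X, \Delta)$ means precisely that $(X, \Delta)$ is kawamata log terminal. By the choice of $H$, the divisor $-(K_X+\Delta)\sim _{\mathbb R}-(\omega+H)$ is ample.

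Now Theorem \ref{m-thm13.3} applies to the klt pair $(X, \Delta)$ with $-(K_X+\Delta)$ ample: it gives that $X$ is rationally chain connected modulo $\Nklt(X, \Delta)=\emptyset$. By Definition \ref{b-def2.13} (1), being rationally chain connected modulo the empty set is exactly rational chain connectedness. However, the statement asks for the stronger conclusion that $X$ is rationally connected. For this I would instead invoke the klt case of Theorem \ref{m-thm13.3} directly: as recorded in its proof, when $\Nklt(X, \Delta')=\emptyset$ the variety $X$ is rationally connected by \cite[Theorem 1]{zhang}, and rational chain connectedness then follows from Lemma \ref{b-lem2.12}. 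So after reducing to the klt pair $(X, \Delta)$ with $-(K_X+\Delta)$ ample, rational connectedness is immediate from Zhang's theorem.

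The only genuine subtlety, and the step I would handle most carefully, is the reduction to the normal case: one must verify that $\Nqklt(X, \omega)=\emptyset$ really does force $X_{-\infty}=\emptyset$ and that every qlc stratum is a component of $X$, so that Lemma \ref{d-lem4.8} is applicable — this is where the quasi-log structure is used essentially. Once normality (hence, with connectedness, irreducibility) is in hand, the remaining steps are a direct chain of citations: Theorem \ref{a-thm1.10} to pass to a klt pair with anti-ample log canonical divisor, and then \cite[Theorem 1]{zhang} (packaged inside Theorem \ref{m-thm13.3}) together with Lemma \ref{b-lem2.12} to conclude rational connectedness and rational chain connectedness.
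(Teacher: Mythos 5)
Your proposal is correct and follows essentially the same route as the paper: Lemma \ref{d-lem4.8} to get normality (the paper additionally cites \cite[Theorem 6.3.11 (i)]{fujino-foundations} to get irreducibility, where you instead use that a connected normal scheme is irreducible — both work), then Theorem \ref{a-thm1.10} with a small ample $H$ to produce a klt pair $(X, \Delta)$ with $-(K_X+\Delta)$ ample, and finally Zhang's theorem for rational connectedness.
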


\begin{proof}
By Lemma \ref{d-lem4.8} and 
\cite[Theorem 6.3.11 (i)]{fujino-foundations}, 
$X$ is a normal variety. By Theorem \ref{a-thm1.10}, 
we can find an effective $\mathbb R$-divisor 
$\Delta$ on $X$ such that 
$-(K_X+\Delta)$ is ample 
with $\Nklt(X, \Delta)=\emptyset$. 
Hence $X$ is rationally connected by \cite[Theorem 1]{zhang} 
(see the proof of Theorem \ref{m-thm13.3}). 
\end{proof}

By the framework of quasi-log schemes, 
we can prove the following lemma 
as a generalization of Theorem \ref{m-thm13.3} 
without any difficulties. We note that 
if $\Nqlc(X, \omega)=\emptyset$ in Lemma \ref{m-lem13.5} then 
it is nothing but \cite[Theorem 1.7]{fujino-subadjunction}. 
For semi-log canonical Fano pairs, we recommend the reader 
to see 
\cite{fujino-wenfei}. 

\begin{lem}\label{m-lem13.5} 
Let $[X, \omega]$ be a projective quasi-log scheme such that 
$X$ is connected. 
Assume that $-\omega$ is ample. 
Then $X$ is rationally chain connected modulo $X_{-\infty}$. 
\end{lem}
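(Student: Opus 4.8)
The plan is to funnel the reducible, highly singular scheme $X$ into the normal projective pair situation covered by Theorem \ref{m-thm13.3}, and then to climb a tower of induced quasi-log Fano strata using transitivity of rational chain connectedness modulo a subset. The central step I would isolate is: \emph{if $[V,\eta]$ is a projective quasi-log scheme with $-\eta$ ample, then $V$ is rationally chain connected modulo $\Nqklt(V,\eta)$.} To prove it, I would first replace $V$ by $\overline{V\setminus V_{-\infty}}$ using Lemma \ref{d-lem4.20}; this changes neither the qlc strata nor $\Nqklt$, and the discarded points lie in $V_{-\infty}\subseteq \Nqklt(V,\eta)$, so they are automatically connected to $\Nqklt$. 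Now every irreducible component $V_i$ of $V$ is a qlc stratum. Applying adjunction (Theorem \ref{d-thm4.6} (i)) to $V_{-\infty}\cup V_i$ and Lemma \ref{d-lem4.20} once more, $[V_i,\eta|_{V_i}]$ becomes a quasi-log scheme with $V_i$ irreducible and $\Nqklt(V_i,\eta|_{V_i})\subseteq \Nqklt(V,\eta)$. Let $\nu_i\colon Z_i\to V_i$ be the normalization; by Theorem \ref{a-thm1.9}, $[Z_i,\nu_i^*(\eta|_{V_i})]$ is a quasi-log scheme on a normal projective variety with $\Nqklt(Z_i,\nu_i^*\eta)=\nu_i^{-1}\Nqklt(V_i,\eta|_{V_i})$. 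Choosing an ample $H$ on $Z_i$ with $-(\nu_i^*\eta+H)$ still ample and invoking Theorem \ref{a-thm1.10}, I obtain an effective $\Delta_i$ with $K_{Z_i}+\Delta_i\sim_{\mathbb R}\nu_i^*\eta+H$ and $\Nklt(Z_i,\Delta_i)=\Nqklt(Z_i,\nu_i^*\eta)$; since $-(K_{Z_i}+\Delta_i)$ is then ample, Theorem \ref{m-thm13.3} shows $Z_i$ is rationally chain connected modulo $\Nklt(Z_i,\Delta_i)$. Pushing these chains forward along the finite morphism $\nu_i$ and recalling $\Nqklt(V_i,\eta|_{V_i})\subseteq \Nqklt(V,\eta)$ yields the claim.

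Next I would record two auxiliary facts. The elementary one is transitivity: if $A\subseteq B\subseteq C$ are closed with $C$ rationally chain connected modulo $B$ and $B$ rationally chain connected modulo $A$, then $C$ is rationally chain connected modulo $A$, obtained by concatenating a chain from a point of $C$ to $B$ with a chain from its endpoint to $A$. The second is a connectedness statement: if $V$ is connected then so is $\Nqklt(V,\eta)$. This follows verbatim from the proof of Lemma \ref{m-lem13.2}: $-\eta$ ample is nef and log big with respect to $[V,\eta]$, so Theorem \ref{d-thm4.6} (ii) applied with $X'=\Nqklt(V,\eta)$ and $L=0$ gives $R^1\pi_*\mathcal I_{\Nqklt(V,\eta)}=0$, whence the surjection $\mathbb C\simeq H^0(\mathcal O_V)\to H^0(\mathcal O_{\Nqklt(V,\eta)})$ forces $\Nqklt(V,\eta)$ to be connected.

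Finally I would assemble the tower. Set $Y_0=X$, $\omega_0=\omega$, and inductively $Y_{j+1}:=\Nqklt(Y_j,\omega_j)$, $\omega_{j+1}:=\omega_j|_{Y_{j+1}}$, which is again a quasi-log scheme by Corollary \ref{d-cor4.7}, with $-\omega_j$ ample and $\Nqlc(Y_j,\omega_j)=X_{-\infty}$ throughout (the non-qlc locus is preserved by adjunction). Since passing to $\Nqklt$ strictly lowers the dimension, after finitely many steps $Y_{k+1}=\Nqlc(Y_k,\omega_k)=X_{-\infty}$. The central step gives that $Y_j$ is rationally chain connected modulo $Y_{j+1}$ for each $j$, and the connectedness fact shows each $Y_j$ is connected. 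If $X_{-\infty}\ne\emptyset$, repeated transitivity directly yields that $X=Y_0$ is rationally chain connected modulo $X_{-\infty}$. If $X_{-\infty}=\emptyset$, then the bottom layer $Y_k$ satisfies $\Nqklt(Y_k,\omega_k)=\emptyset$, so it is a connected quasi-log canonical Fano scheme, hence rationally chain connected by Lemma \ref{m-lem13.4}; climbing back up through the transitivity steps then gives that $X$ is plainly rationally chain connected, which is rational chain connectedness modulo $X_{-\infty}=\emptyset$.

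The main obstacle I anticipate is the central step, where one must descend from a possibly reducible, non-normal quasi-log scheme to a normal pair without losing control of $\Nqklt$; this is precisely where Theorems \ref{a-thm1.9} and \ref{a-thm1.10} carry the load. A secondary delicate point is the degenerate case $X_{-\infty}=\emptyset$, where ``modulo $X_{-\infty}$'' becomes genuine rational chain connectedness and the argument must supply a connected, rationally connected base at the bottom of the tower via Lemma \ref{m-lem13.4} — this is what makes the connectedness of each $Y_j$ indispensable.
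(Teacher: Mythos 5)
Your proposal is correct and follows essentially the same route as the paper's proof: the same tower $Y_{j+1}=\Nqklt(Y_j,\omega_j)$ terminating at $X_{-\infty}$, the same reduction of the key step to a normal projective pair via adjunction, Lemma \ref{d-lem4.20}, Theorem \ref{a-thm1.9}, and Theorem \ref{a-thm1.10} followed by Theorem \ref{m-thm13.3}, with Lemma \ref{m-lem13.2} supplying connectedness of each layer and Lemma \ref{m-lem13.4} handling the bottom layer when $X_{-\infty}=\emptyset$. The one point to tighten is that your central step should carry the hypotheses that $V$ is connected and $\Nqklt(V,\eta)\ne\emptyset$ (as the paper's Claim does), and you should note, as the paper does via \cite[Theorem 6.8.3]{fujino-foundations}, that every qlc stratum then meets $\Nqklt(V,\eta)$ --- otherwise a component $V_i$ with $\Nqklt(V_i,\eta|_{V_i})=\emptyset$ would only be shown rationally chain connected, not chained to $\Nqklt(V,\eta)$.
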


\begin{proof}
As in the proof of Theorem \ref{k-thm11.1}, 
we put 
\begin{equation*} 
[X_0, \omega_0]:=[X, \omega]
\end{equation*}  
and 
\begin{equation*} 
[X_{i+1}, \omega_{i+1}]:=[\Nqklt(X_i, \omega_i), 
\omega_i|_{\Nqklt(X_i, \omega_i)}]
\end{equation*}  
for $i\geq 0$. 
Then there exists $k\geq 0$ such that 
\begin{equation*} 
\Nqklt(X_k, \omega_k)=\Nqlc(X_k, \omega_k)=
\Nqlc(X, \omega)=X_{-\infty}. 
\end{equation*}  
We note that if $X_{-\infty}=\emptyset$, that is, 
$[X, \omega]$ is quasi-log canonical, then 
$X_k$ is the unique minimal qlc stratum of $[X, \omega]$ 
by \cite[Theorem 6.8.3 (ii)]{fujino-foundations}. 
By applying Lemma \ref{m-lem13.4} to $[X_k, \omega_k]$, we 
obtain that $X_k$ is rationally connected when $X_{-\infty}=\emptyset$. 
We want to show by inverse induction on $i$ that $X_{i+1}$ is 
rationally chain connected modulo $X_{-\infty}
=\Nqlc(X, \omega)$. 
Note that we want to show that $X$ is rationally chain 
connected modulo $X_k$ when 
$X_{-\infty}=\emptyset$. 
We also note that $X_i$ is connected by 
Lemma \ref{m-lem13.2} and  
\cite[Theorem 6.8.3]{fujino-foundations}. 
Hence it is sufficient to prove the following claim. 
\begin{claim-n}
Let $[X, \omega]$ be a quasi-log scheme such that 
$X$ is connected, $\Nqklt(X, \omega)\ne \emptyset$, and 
$-\omega$ is 
ample. Then $X$ is rationally chain connected modulo $\Nqklt(X, \omega)$. 
\end{claim-n}
\begin{proof}[Proof of Claim]
By adjunction (see Theorem \ref{d-thm4.6} 
(i) and \cite[Theorem 6.3.5 (i)]{fujino-foundations}) and 
\cite[Theorem 6.8.3]{fujino-foundations}, 
we may assume that $\overline {X\setminus X_{-\infty}}$ is irreducible. 
We note that every qlc stratum of $[X, \omega]$ 
intersects with $\Nqklt(X, \omega)$ 
(see \cite[Theorem 6.8.3]{fujino-foundations}). 
By Lemma \ref{d-lem4.20}, we may further assume that 
$X$ itself is irreducible. 
Then, by Theorem \ref{a-thm1.9}, 
we can further reduce the problem to the case where $X$ 
is a normal variety. 
Then, by Theorem \ref{a-thm1.10}, 
we can take an ample $\mathbb R$-divisor 
$H$ on $X$ such that 
$-(\omega+H)$ is still ample and that 
\begin{equation*} 
K_X+\Delta\sim _{\mathbb R} \omega+H
\end{equation*}  
holds for some effective $\mathbb R$-divisor $\Delta$ 
on $X$ with 
\begin{equation*} 
\Nklt(X, \Delta)=\Nqklt(X, \omega). 
\end{equation*}  
By applying Theorem \ref{m-thm13.3} to 
$(X, \Delta)$, 
we obtain that $X$ is rationally chain connected modulo 
$\Nqklt(X, \omega)$. We finish the proof of Claim. 
\end{proof}
By using Claim inductively, we can check that 
$X$ is rationally chain connected modulo $X_{-\infty}=
\Nqlc(X, \omega)$. 
\end{proof}

Let us prove Theorem \ref{a-thm1.14}. 

\begin{proof}[Proof of Theorem \ref{a-thm1.14}]
When $\pi^{-1}(P)\cap X_{-\infty}=\emptyset$, we may assume that 
$X_{-\infty}=\emptyset$ by shrinking 
$X$ around $\pi^{-1}(P)$. 
We divide the proof into several steps. 
\setcounter{step}{0} 
\begin{step}\label{m-step1.13.1} 
Let $X_0$ be the union of $X_{-\infty}$ and all qlc strata of 
$[X, \omega]$ contained in $\pi^{-1}(P)$. 
By Lemma \ref{m-lem13.2} and 
\cite[Theorem 6.8.3]{fujino-foundations}, 
$X_0\cap \pi^{-1}(P)$ is connected. 

\setcounter{case}{0}
\begin{case}\label{m-case1.13.1}
If $X_0\ne X_{-\infty}$, then $[X_0, \omega_0]$, where 
$\omega_0=\omega|_{X_0}$, is a quasi-log scheme 
by adjunction (see Theorem \ref{d-thm4.6} (i) and 
\cite[Theorem 6.3.5 (i)]{fujino-foundations}). 
Let us consider $X^\dag_0=\overline 
{X_0\setminus \Nqlc(X_0, \omega_0)}$. 
Then $[X^\dag_0, \omega^\dag_0]$, 
where $\omega^\dag_0=\omega|_{X^\dag_0}$, is a 
quasi-log scheme by Lemma \ref{d-lem4.20}. 
By construction, $-\omega^\dag_0$ is ample 
since $\pi(X^\dag_0)=P$. 
Therefore, by Lemma \ref{m-lem13.5}, $X^\dag_0$ is rationally 
chain connected modulo $\Nqlc(X^\dag_0, \omega^\dag_0)$. 
This means that 
$X_0\cap \pi^{-1}(P)$ is rationally chain connected modulo $\pi^{-1}(P)\cap 
X_{-\infty}$. 
\end{case}
\begin{case}\label{m-case1.13.2}
If $X_0=X_{-\infty}$, that is, there is no qlc stratum of $[X, \omega]$ 
contained in $\pi^{-1}(P)$, 
then $X_0\cap \pi^{-1}(P)$ is obviously rationally chain 
connected modulo $\pi^{-1}(P)\cap X_{-\infty}$ because 
$X_0\cap \pi^{-1}(P)=\pi^{-1}(P)\cap X_{-\infty}$. 
Note that $X_0=X_{-\infty}=\emptyset$ may happen. 
\end{case}
\end{step}
Hence $\pi^{-1}(P)$ is rationally chain connected modulo 
$\pi^{-1}(P)\cap X_{-\infty}$ when $\pi^{-1}(P)\subset X_0$. 
Thus, from now on, 
we may assume that $\pi^{-1}(P)\not \subset X_0$. 
\begin{step}\label{m-step1.13.2}
Without loss of generality, we may assume that 
$S$ is quasi-projective by shrinking $S$ around $P$. 
We take general very ample Cartier divisors $B_1, \ldots, B_{n+1}$ 
passing through $P$ with $n=\dim X$. 
Let $f\colon (Y, B_Y)\to X$ be a proper morphism 
from a globally embedded simple normal crossing 
pair $(Y, B_Y)$ as in Definition \ref{d-def4.2}. 
By \cite[Proposition 6.3.1]{fujino-foundations} 
and 
\cite[Theorem 3.35]{kollar}, 
we may assume that the union of all strata of $(Y, B_Y)$ mapped to 
$X_0$ by $f$, which is denoted by $Y'$, is 
a union of some irreducible components of $Y$. As usual, 
we put $Y''=Y-Y'$, $K_{Y''}+B_{Y''}=(K_Y+B_Y)|_{Y''}$, 
and $f''=f|_{Y''}$. By 
\cite[Proposition 6.3.1]{fujino-foundations} 
and 
\cite[Theorem 3.35]{kollar} again, 
we may further assume that 
\begin{equation*} 
\left(Y'', (f'')^*\pi^*\sum _{i=1}^{n+1}B_i+\Supp B_{Y''}\right)
\end{equation*}  
is a globally embedded simple normal crossing pair. 
By \cite[Lemma 6.3.13]{fujino-foundations}, 
we can take $0<c_1<1$ such that 
\begin{equation*} 
f''\left(\left(B_{Y''}+c_1(f'')^*\pi^*
\sum _{i=1}^{n+1}B_i\right)^{>1}\right)
=X_0 
\end{equation*} 
holds set theoretically 
and that there exists an irreducible component $G$ of 
\begin{equation*} 
\left(B_{Y''}+c_1(f'')^*\pi^*\sum _{i=1}^{n+1}B_i\right)^{=1}
\end{equation*} 
with $f''(G)\not\subset X_0$. 
By Lemma \ref{d-lem4.23}, 
\begin{equation*} 
\left(X, \omega+c_1B, f''\colon  (Y'', B_{Y''}+c_1(f'')^*B)\to X\right), 
\end{equation*}  
where $B=\pi^*\left(\sum _{i=1}^{n+1}B\right)$, 
is a quasi-log scheme. 

Let $X_1$ be the union of $\Nqlc(X, \omega+c_1B)$ and all 
qlc strata of $[X, \omega+c_1B]$ contained in 
$\pi^{-1}(P)$. 
By construction, $\Nqlc(X, \omega+c_1B)=X_0$ holds set theoretically. 
Therefore, by Case \ref{m-case1.13.1} in 
Step \ref{m-step1.13.1}, 
$X_1\cap \pi^{-1}(P)$ is rationally chain connected modulo 
$X_0\cap \pi^{-1}(P)$. 
We note that by Step \ref{m-step1.13.1} 
$X_0\cap \pi^{-1}(P)$ is rationally chain connected 
modulo $\pi^{-1}(P)\cap X_{-\infty}$. 
This means that $X_1\cap \pi^{-1}(P)$ is rationally chain connected 
modulo $\pi^{-1}(P)\cap X_{-\infty}$. 
\end{step}
\begin{step}\label{m-step1.13.3}
By repeating the argument in Step \ref{m-step1.13.2}, 
we can construct a finite increasing 
sequence of positive real numbers 
\begin{equation*} 
0<c_1<\cdots <c_k<1 
\end{equation*}  
and closed subschemes 
\begin{equation*} 
X_1\subsetneq \cdots \subsetneq X_k
\end{equation*}  
of $X$ with the following properties: 
\begin{itemize}
\item[(a)] $[X_i, \omega_i]$ is a quasi-log scheme, 
where $\omega_i=(\omega+c_iB)|_{X_i}$, for every $i$, 
\item[(b)] $\Nqlc(X_{i+1}, \omega_{i+1})=X_i$ holds 
set theoretically for every $i$, 
\item[(c)] $\pi^{-1}(P)\subset X_k$ holds, and 
\item[(d)] $X_{i+1}\cap \pi^{-1}(P)$ is rationally chain connected 
modulo $X_i\cap \pi^{-1}(P)$ for every $i$. 
\end{itemize}
Hence we obtain that 
$\pi^{-1}(P)=\pi^{-1}(P)\cap X_k$ is rationally chain connected 
modulo $\pi^{-1}(P)\cap X_{-\infty}$. 
\end{step} 
We finish the proof of Theorem \ref{a-thm1.14}. 
\end{proof}

\section{Towards Conjecture \ref{a-conj1.15}}\label{n-sec14}

In this final section, 
we treat several results related to Conjecture \ref{a-conj1.15}. 
This section needs some deep results on 
the theory of minimal models 
for higher-dimensional algebraic varieties. 
Let us start with the following special case of 
the flip conjecture II. 

\begin{conj}[Termination of flips]\label{n-conj14.1} 
Let $(X, \Delta)$ be a $\mathbb Q$-factorial 
klt pair and let $\pi\colon X\to S$ be a projective surjective morphism 
between normal quasi-projective varieties such that 
$K_X+\Delta$ is not pseudo-effective over $S$. 
Let 
\begin{equation*} 
(X, \Delta)=:(X_0, \Delta_0)\dashrightarrow 
(X_1, \Delta_1)\dashrightarrow \cdots \dashrightarrow 
(X_i, \Delta_i)\dashrightarrow \cdots
\end{equation*}  
be a sequence of flips over $S$ starting from 
$(X, \Delta)$. 
Then it terminates after finitely many steps. 
\end{conj}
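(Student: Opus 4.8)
The plan is to reduce Conjecture \ref{n-conj14.1} to the finiteness of minimal models for klt pairs with big boundary, which is one of the main outputs of \cite{bchm}. First I would fix an ample $\mathbb R$-divisor $A$ on $X$ over $S$, large enough that $(X, \Delta+A)$ is klt and $K_X+\Delta+A$ is nef over $S$; since $K_X+\Delta$ is not pseudo-effective over $S$, the pseudo-effective threshold $\lambda:=\inf\{t\geq 0 \mid K_X+\Delta+tA \text{ is pseudo-effective over } S\}$ is then a well-defined real number with $0<\lambda\leq 1$. Because every flip in the sequence is an isomorphism in codimension one and does not affect $\pi$-pseudo-effectivity, each pair $(X_i,\Delta_i)$ again fails to be pseudo-effective over $S$, and the boundary $\Delta+A$ transforms to a big boundary $\Delta_i+A_i$ on $X_i$, where $A_i$ denotes the strict transform of $A$. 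So the whole sequence stays permanently inside the range of boundaries to which \cite{bchm} applies directly.

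The key step is to observe that each individual flip $(X_i,\Delta_i)\dashrightarrow (X_{i+1},\Delta_{i+1})$ is, for all sufficiently small $t>0$, also a step of a $(K_{X_i}+\Delta_i+tA_i)$-MMP over $S$: the flipping ray $R_i$ satisfies $(K_{X_i}+\Delta_i)\cdot R_i<0$, hence $(K_{X_i}+\Delta_i+tA_i)\cdot R_i<0$ for $0<t\ll 1$. Since $\Delta_i+tA_i$ is big over $S$, the pair $(X_i,\Delta_i+tA_i)$ admits only finitely many minimal models over $S$, and the rational polytope decomposition of the parameter interval $(0,\lambda)$ into finitely many chambers on which the minimal model is constant should force the birational models $X_i$ to stabilize. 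I would therefore argue that infinitely many genuine flips would produce infinitely many distinct weak log canonical models of pairs with big boundary over $S$, contradicting finiteness.

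The hard part will be passing from this finiteness, which controls MMPs run \emph{with scaling} of $A$, to the termination of an \emph{arbitrary} sequence of flips as in the statement. The threshold below which a given flip is compatible with $K_{X_i}+\Delta_i+tA_i$ depends a priori on $i$, so the main obstacle is to produce a \emph{uniform} lower bound, or equivalently a global decomposition of the relevant finite-dimensional space of boundaries (living over $\Mov(X/S)$) into finitely many rational polyhedral chambers indexed by the finitely many models; this is precisely the mechanism by which finiteness of models yields termination in the non-pseudo-effective, big-boundary setting. I would spend most of the effort establishing this uniformity, after which special termination in lower dimension is not needed, because the non-pseudo-effective hypothesis keeps us inside the region where the finiteness theorem of \cite{bchm} can be invoked without further input.
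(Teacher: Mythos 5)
The statement you are trying to prove is stated in the paper as a \emph{conjecture} (a special case of the flip conjecture II), not as a theorem: the paper offers no proof of it, and the whole point of Section \ref{n-sec14} is to prove Theorem \ref{n-thm14.2} \emph{conditionally} on it. So there is no proof in the paper to compare against, and your proposal should be judged on whether it would actually close an open problem. It would not.

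The gap is exactly the one you flag in your last paragraph, and it is not a technical uniformity issue that more effort would resolve --- it is the entire content of the conjecture. Finiteness of (weak log canonical) models for klt pairs with big boundary, as in \cite{bchm}, yields termination of the MMP \emph{with scaling} because in that process each intermediate model $X_i$ is itself a weak log canonical model of $(X,\Delta+\lambda_i A)$ for the non-increasing scaling coefficients $\lambda_i$, so an infinite sequence would produce infinitely many distinct models of pairs drawn from a fixed rational polytope of boundaries, contradicting finiteness. In an \emph{arbitrary} sequence of flips, the intermediate $X_i$ are not weak log canonical models of $(X,\Delta+tA)$ for any $t$: the $i$-th flip is $(K_{X_i}+\Delta_i+t A_i)$-negative only for $0<t\ll 1$ with the bound depending on $i$ and tending to $0$ in general, and $K_{X_i}+\Delta_i+tA_i$ need not be nef for any positive $t$ at any stage. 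Consequently an infinite sequence of flips produces no infinite collection of models to which the finiteness theorem applies, and no contradiction arises. The ``global rational polyhedral chamber decomposition over $\Mov(X/S)$'' you invoke is available (Shokurov/BCHM geography of models), but it only controls those birational models that \emph{are} models of some boundary in the polytope; it says nothing about the models visited by an unconstrained flip sequence. This is precisely why termination of arbitrary klt flips remains open even in the non-pseudo-effective, big-boundary setting, and why the author states it as Conjecture \ref{n-conj14.1} and proves Theorem \ref{n-thm14.2} only under that hypothesis.
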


In this section, we establish the following theorem, 
which is a precise version of Theorem \ref{a-thm1.16}. 

\begin{thm}[see Theorem \ref{a-thm1.16}]\label{n-thm14.2} 
Assume that Conjecture \ref{n-conj14.1} holds true in 
dimension at most $\dim \pi^{-1}(P)$. 
Then Conjecture \ref{a-conj1.15} holds true. 
\end{thm}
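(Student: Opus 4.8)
The plan is to prove Theorem \ref{n-thm14.2} by reducing Conjecture \ref{a-conj1.15} to a statement about normal pairs, following exactly the reduction machinery already established for Theorem \ref{a-thm1.6} (iii), and then invoking the termination hypothesis (Conjecture \ref{n-conj14.1}) to run a minimal model program that produces the sharp length estimate $0<-\omega\cdot C\leq 1$. First I would set up the situation: given the quasi-log scheme $[X, \omega]$ with $\pi\colon X\to S$ projective, $-\omega$ $\pi$-ample, and $\pi\colon \Nqklt(X, \omega)\to \pi(\Nqklt(X, \omega))$ finite, together with the closed point $P\in S$ and the curve $C^\dag\subset \pi^{-1}(P)$ meeting $\Nqklt(X, \omega)$. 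Exactly as in Steps \ref{j-step1.6.1}--\ref{j-step1.6.4} of the proof of Theorem \ref{a-thm1.6}, I would replace $[X, \omega]$ by $[W', \omega|_{W'}]$ for a suitable qlc stratum $W$, then by $\overline{X\setminus X_{-\infty}}$ via Lemma \ref{d-lem4.20}, and finally pass to the normalization via Theorem \ref{a-thm1.9}, so that we may assume $X$ is a \emph{normal quasi-projective variety} with $\Sigma:=\Nqklt(X, \omega)$ finite over its image and $C^\dag$ still meeting $\Sigma$.

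Next I would convert the quasi-log data into honest pair data. By Theorem \ref{a-thm1.10}, for each small $\varepsilon>0$ and an ample Cartier divisor $H$ there is an effective $\mathbb R$-divisor $\Delta_\varepsilon$ with $K_X+\Delta_\varepsilon\sim_{\mathbb R}\omega+\varepsilon H$ and $\Nklt(X, \Delta_\varepsilon)=\Sigma$ set-theoretically. The point is that now $-(K_X+\Delta_\varepsilon)$ is $\pi$-ample (after absorbing $\varepsilon H$), $\Nklt(X, \Delta_\varepsilon)\to \pi(\Nklt(X, \Delta_\varepsilon))$ is finite, and $C^\dag$ meets $\Nklt(X, \Delta_\varepsilon)$. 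This is precisely the hypothesis of Theorem \ref{a-thm1.17}; so the key point is that \emph{Theorem \ref{a-thm1.17}, under the assumption of termination of klt flips in the relevant dimension, yields the sharp estimate}. Concretely, I would perform a dlt blow-up (Lemma \ref{c-lem3.10}) to make the pair $\mathbb Q$-factorial dlt, run a $(K+\Delta_\varepsilon)$-MMP over $S$, and track the curve through the program. Since $-(K_X+\Delta_\varepsilon)$ is ample over $S$, the relevant log canonical divisor is not pseudo-effective over $S$, so Conjecture \ref{n-conj14.1} guarantees that the MMP terminates after finitely many flips and contractions; the dimension bound $\dim\pi^{-1}(P)$ is exactly what controls the fibers in which the flips occur.

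The heart of the argument, and the main obstacle, is extracting the length-one rational curve $C$ meeting $\Nqklt(X,\omega)$ from the output of the MMP. The idea is that when the program reaches a contraction or a fiber of bounded dimension, the connectedness of $\Nklt$ near the fiber (as in the proof of Proposition \ref{i-prop9.1}, via the relative Kawamata--Viehweg vanishing and the finiteness of $\Nklt$ over the base) forces a curve $C\not\subset\Sigma$ with $C\cap\Sigma\neq\emptyset$ and $-(K_X+\Delta_\varepsilon)\cdot C\leq 1$, in the spirit of the "sharper estimate" half of Proposition \ref{i-prop9.1} and Lemma \ref{p-lem8.3}. Passing to a limit as $\varepsilon\searrow 0$ — using boundedness of the family of such curves exactly as in the proof of Theorem \ref{i-thm9.2} — yields a single non-constant morphism $f\colon\mathbb A^1\to (X\setminus\Nqklt(X,\omega))\cap\pi^{-1}(P)$ whose closure $C$ satisfies $C\cap\Nqklt(X,\omega)\neq\emptyset$ and $0<-\omega\cdot C\leq 1$. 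The delicate part is ensuring that the termination hypothesis is only ever invoked for MMPs whose ambient fiber dimension is at most $\dim\pi^{-1}(P)$, so that the dimension restriction in the statement is genuinely sufficient; this requires carefully confining the MMP to the fiber over $P$ after the reductions, which the finiteness of $\Nqklt(X,\omega)$ over $S$ makes possible.
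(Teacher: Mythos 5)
Your overall strategy---reduce to a normal variety via adjunction, Lemma \ref{d-lem4.20}, and Theorem \ref{a-thm1.9}; convert $\omega+\varepsilon H$ into a pair $(X,\Delta_\varepsilon)$ via Theorem \ref{a-thm1.10}; run an MMP whose termination is guaranteed by Conjecture \ref{n-conj14.1}; and pass to the limit $\varepsilon\searrow 0$ by the boundedness argument of Theorem \ref{i-thm9.2}---is the same as the paper's, and the MMP step you sketch is essentially Theorem \ref{n-thm14.3}. (Note that the paper does not invoke Theorem \ref{a-thm1.17}, which belongs to the forthcoming joint paper; it proves the needed variant directly via Lemma \ref{c-lem3.10} and Proposition \ref{i-prop9.1}.)

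However, there is a genuine gap at exactly the point you flag as ``delicate.'' You run the $(K_X+\Delta_\varepsilon)$-MMP on $X$ over $S$, so the flips that occur are flips of $\dim X$-dimensional varieties, and Conjecture \ref{n-conj14.1} assumed only in dimension at most $\dim\pi^{-1}(P)$ does not control them when $\dim S>0$. The finiteness of $\Nqklt(X,\omega)$ over $S$ does not by itself confine the MMP to the fiber over $P$, and you give no mechanism for doing so. The paper supplies this mechanism in Lemma \ref{n-lem14.5}: one takes general very ample divisors $B_1,\dots,B_{n+1}$ on $S$ through $P$ and a threshold $0<c<1$ so that $[X,\omega+\pi^*B]$ with $B=c\sum_{i}B_i$ acquires a positive-dimensional qlc center $W$ contained in $\pi^{-1}(P)$, while $\Nqlc(X,\omega+\pi^*B)$ stays finite over $S$ and $\Nqklt(X,\omega)\cap\pi^{-1}(P)$ is absorbed into $\Nqklt(W^\nu,\nu^*\omega)$; this last containment is the Claim inside Lemma \ref{n-lem14.5} and requires the connectedness results of Lemma \ref{m-lem13.2} and \cite[Theorem 6.8.3]{fujino-foundations}. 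Since $(\pi^*B)|_W\sim_{\mathbb R}0$, the resulting $[W^\nu,\nu^*\omega]$ is a quasi-log Fano scheme over a point, of dimension at most $\dim\pi^{-1}(P)$, and only then are Theorems \ref{a-thm1.10} and \ref{n-thm14.4} applied; hence the termination hypothesis is invoked only in the allowed dimension. Without this cutting-and-restriction step your argument needs termination of klt flips in dimension $\dim X$, a strictly stronger assumption than the theorem's.
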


For the proof of Theorem \ref{n-thm14.2}, 
we prepare a variant of Theorem \ref{a-thm1.8}. 
We need the termination of flips in this theorem. 

\begin{thm}\label{n-thm14.3}  
Let $X$ be a normal variety and let $\Delta$ be an effective 
$\mathbb R$-divisor on $X$ such that 
$K_X+\Delta$ is $\mathbb R$-Cartier. 
Assume that Conjecture \ref{n-conj14.1} holds true in $\dim X$. 
Let $\pi\colon X\to S$ be a projective morphism 
onto a scheme $S$ such that 
$-(K_X+\Delta)$ is $\pi$-ample with $\dim S<\dim X$.  
We assume that $\Nklt(X, \Delta)$ is not empty such that 
\begin{equation*} 
\pi\colon \Nklt(X, \Delta)\to \pi(\Nklt(X, \Delta))
\end{equation*}  
is finite. 
Then there exists a non-constant morphism 
\begin{equation*} 
f\colon  \mathbb A^1\longrightarrow X\setminus \Nklt(X, \Delta)
\end{equation*}  
such that $\pi\circ f(\mathbb A^1)$ is a point 
and that 
the curve $C$, the closure of $f(\mathbb A^1)$ in $X$, 
is a {\em{(}}possibly singular{\em{)}} rational curve 
satisfying  
$C\cap \Nklt(X, \Delta)\ne \emptyset$ with 
\begin{equation*} 
0<-(K_X+\Delta)\cdot C\leq 1. 
\end{equation*} 
\end{thm}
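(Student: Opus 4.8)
The plan is to deduce Theorem \ref{n-thm14.3} from the cone and contraction theorem for $\mathbb{Q}$-factorial dlt pairs together with the adjunction-to-a-curve package of Corollary \ref{p-cor8.2} and Lemma \ref{p-lem8.3}, running a minimal model program whose termination is supplied by Conjecture \ref{n-conj14.1}. First I would shrink $S$ so that $S$ and $X$ are quasi-projective, and apply the dlt blow-up of Lemma \ref{c-lem3.10} to obtain a projective birational morphism $g\colon Y\to X$ from a normal $\mathbb{Q}$-factorial variety with $K_Y+\Delta_Y:=g^*(K_X+\Delta)$, such that $(Y,\Delta_Y')$ is dlt and $g^{-1}\Nklt(X,\Delta)=\Nklt(Y,\Delta_Y)$ holds set-theoretically. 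Since $-(K_Y+\Delta_Y)=g^*(-(K_X+\Delta))$ is the pullback of a $\pi$-ample divisor, it is nef and big over $S$. As $g$ cannot contract a curve $C_Y$ with $-(K_Y+\Delta_Y)\cdot C_Y>0$, it suffices to produce a rational curve $C_Y$ on $Y$, lying in a fiber of $\pi\circ g$, meeting $\Nklt(Y,\Delta_Y)$ but not contained in it, with $0<-(K_Y+\Delta_Y)\cdot C_Y\leq 1$; its image $g(C_Y)$ will then be the desired curve on $X$.

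Next I would run a $(K_Y+\Delta_Y^{<1})$-MMP over $S$. The pair $(Y,\Delta_Y^{<1})$ is $\mathbb{Q}$-factorial kawamata log terminal, and
\[
-(K_Y+\Delta_Y^{<1})=-(K_Y+\Delta_Y)+\Delta_Y^{\geq 1}
\]
is big over $S$, being the sum of a nef and big divisor and an effective one; hence $K_Y+\Delta_Y^{<1}$ is not pseudo-effective over $S$. By the termination hypothesis (Conjecture \ref{n-conj14.1} in dimension $\dim X$), this program terminates with a Mori fiber space $\phi\colon Y^\dagger\to Z$ over $S$, on which $-(K_{Y^\dagger}+\Delta_{Y^\dagger}^{<1})$ is $\phi$-ample and $\dim Z<\dim Y^\dagger$. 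Throughout, I would track the pushforward of the whole boundary $\Delta_Y$, and in particular of the non-klt part $\Delta_Y^{\geq 1}$, whose support is $\Nklt(Y,\Delta_Y)$. The role of the termination of flips is precisely this: a single extremal contraction, as in Proposition \ref{i-prop9.1}, may be a birational contraction whose exceptional locus is disjoint from the non-klt locus and yields only the weak bound $2\dim X$, whereas running the full program forces a genuine fiber-type contraction, which is the case in which the sharp length estimate is available.

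Finally, on the Mori fiber space I would locate a fiber of $\phi$ meeting the non-klt locus and extract the curve there. Cutting such a fiber with general members of a $\phi$-ample linear system reduces matters to a contraction with one-dimensional fibers, for which $R^1\phi_*\mathcal{O}=0$ by relative Kawamata--Viehweg vanishing applied to the klt pair $(Y^\dagger,\Delta_{Y^\dagger}^{<1})$; applying Lemma \ref{p-lem8.3} (equivalently, adjunction to the curve via Corollary \ref{p-cor8.2}) to a curve $C$ in such a fiber that meets the non-klt locus yields $C\simeq\mathbb{P}^1$ with $-(K_{Y^\dagger}+\Delta_{Y^\dagger})\cdot C\leq 1$, exactly as in Cases \ref{i-case9.1.1} and \ref{i-case9.1.3} of Proposition \ref{i-prop9.1}. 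The hard part will be the bookkeeping needed to descend this curve to $X$: because the $(K_Y+\Delta_Y^{<1})$-MMP is not $(K_Y+\Delta_Y)$-crepant, one must control the $(K+\Delta)$-degree across the flips and divisorial contractions and ensure that the resulting curve corresponds to a curve on $Y$ which $g$ does not contract and which still meets $\Nklt(X,\Delta)$. Here the finiteness of $\pi\colon\Nklt(X,\Delta)\to\pi(\Nklt(X,\Delta))$ is what prevents the non-klt locus from being absorbed into the fibers of the intermediate contractions, and the hypothesis $\dim S<\dim X$ guarantees that the terminal object is a nontrivial fiber space rather than an isomorphism.
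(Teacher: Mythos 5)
Your overall strategy (dlt blow-up via Lemma \ref{c-lem3.10}, an MMP whose termination comes from Conjecture \ref{n-conj14.1}, and curve extraction via Proposition \ref{i-prop9.1}/Lemma \ref{p-lem8.3}) is the right one, but there is a genuine gap at the central step: you run the MMP with respect to $K_Y+\Delta_Y^{<1}$, whereas the degree bound you need is for $K_Y+\Delta_Y$. A $(K_Y+\Delta_Y^{<1})$-negative extremal ray need not be $(K_Y+\Delta_Y)$-negative, so the negativity lemma applied to your program controls only the $(K+\Delta^{<1})$-degree of strict transforms; the term $\Delta^{\geq 1}$ is merely pushed forward and its intersection with the relevant curves can jump arbitrarily across flips. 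Consequently the Mori fiber space you reach at the end is a $(K+\Delta^{<1})$-negative fibration on whose fibers $K+\Delta$ may fail to be negative, and the curve produced by Lemma \ref{p-lem8.3} then satisfies only $-(K+\Delta)\cdot C\leq 1$, not $-(K+\Delta)\cdot C>0$ --- and a curve with $(K+\Delta)$-degree zero may well be contracted by $g$, so it does not descend to $X$. You explicitly flag this bookkeeping as ``the hard part,'' but it is not bookkeeping; it is the missing idea.

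The paper's proof resolves exactly this point by running the MMP with respect to $K_Y+\Delta_Y$ itself (each step being legitimized by \cite{bchm} for the klt pair as long as the contraction is an isomorphism near $\Nklt(Y_i,\Delta_{Y_i})$), and then splitting into two cases. If every step avoids the non-klt locus, the program coincides with a $(K_Y+\Delta_Y^{<1})$-MMP --- this is where Conjecture \ref{n-conj14.1} enters --- and must end in a Fano contraction, where Case \ref{i-case9.1.1} of Proposition \ref{i-prop9.1} produces a $\mathbb P^1$ meeting $\Nklt$ in a single point with $(K+\Delta)$-degree in $(0,1]$; the negativity lemma for the $(K+\Delta)$-MMP then transports the bound back to $Y$. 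If instead some step $\phi_{i_0}$ first touches the non-klt locus, one stops there and extracts the curve from that birational contraction via Case \ref{i-case9.1.3} --- so, contrary to your remark, the sharp estimate is not reserved for fiber-type contractions. Without this dichotomy your argument cannot guarantee either the positivity of $-(K+\Delta)\cdot C$, the persistence of the intersection with $\Nklt$, or the descent to $X$, so as written the proof does not close.
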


\begin{proof} 
By shrinking $S$ suitably, we may assume that 
$X$ and $S$ are both quasi-projective. 
By Lemma \ref{c-lem3.10}, we can construct a projective 
birational morphism $g\colon  Y\to X$ from a normal 
$\mathbb Q$-factorial variety satisfying (i), (ii), and (iv) in Lemma 
\ref{c-lem3.10}. Since $K_Y+\Delta_Y=g^*(K_X+\Delta)$, 
$(K_Y+\Delta_Y)|_{\Nklt(Y, \Delta_Y)}$ is nef 
over $S$ by Lemma \ref{c-lem3.10} (iv). 
Let us consider $\pi\circ g\colon  Y\to S$. 
By construction, $(Y, \Delta^{<1}_Y)$ is a $\mathbb Q$-factorial 
klt pair. 
Since $-(K_X+\Delta)$ is $\pi$-ample by assumption, 
$K_Y+\Delta_Y$ is not pseudo-effective over $S$. 
Hence $K_Y+\Delta^{< 1}_Y$ is not pseudo-effective over $S$. 
Since $(K_Y+\Delta_Y)|_{\Nklt(Y, \Delta_Y)}$ is nef over 
$S$, 
the cone theorem 
\begin{equation*} 
\NE(Y/S)=\NE(Y/S)_{(K_Y+\Delta_Y)\geq 0}
+\sum _j R_j 
\end{equation*}  
holds by \cite[Theorem 1.1]{fujino-fund} (see also 
Theorem \ref{a-thm1.5} (i)). 
Since $K_Y+\Delta_Y$ is not pseudo-effective over 
$S$, $K_Y+\Delta_Y$ is not nef over $S$. 
Hence we have a $(K_Y+\Delta_Y)$-negative extremal ray 
$R$ of $\NE(Y/S)$. 
Then we consider the contraction morphism 
$\varphi_R\colon Y\to W$ over $S$ associated to 
$R$ (see \cite[Theorem 1.1]{fujino-fund} and 
Theorem \ref{d-thm4.18}). 
We note that $-(K_Y+\Delta^{<1}_Y)\cdot R>0$ 
since $(K_Y+\Delta_Y)|_{\Nklt(Y, \Delta_Y)}$ is nef over 
$S$. 
If $\varphi_R$ is an isomorphism in a neighborhood of 
$\Nklt(Y, \Delta_Y)$, then we can run a minimal model program with 
respect to $K_Y+\Delta_Y$ over $S$ by \cite{bchm}. 
Thus we run a minimal model program with respect to $K_Y+\Delta_Y$ over 
$S$. Then 
we have a sequence of flips and divisorial contractions 
\begin{equation*} 
\xymatrix{
Y=:Y_0\ar@{-->}[r]^-{\phi_0}
& Y_1\ar@{-->}[r]^-{\phi_1} & \cdots \ar@{-->}[r]^-{\phi_{i-1}}
& Y_i \ar@{-->}[r]^-{\phi_i}&\cdots 
} 
\end{equation*} 
over $S$. 
As usual, we put 
$(Y_0, \Delta_{Y_0}):=(Y, \Delta_Y)$ and 
$\Delta_{Y_{i+1}}={\phi_i}_*\Delta_{Y_i}$ for 
every $i$. 
\setcounter{case}{0}
\begin{case}\label{n-case14.3.1}
We assume that $\phi_i$ is an isomorphism in a neighborhood of 
$\Nklt(Y_i, \Delta_{Y_i})$ for every $i$. Then this minimal model 
program is a minimal model program with respect to 
$K_Y+\Delta^{<1}_Y$. Hence, by Conjecture \ref{n-conj14.1}, 
we finally get the following diagram 
\begin{equation*} 
\xymatrix{
Y=:Y_0\ar@{-->}[r]^-{\phi_0}\ar[ddr]_-{\pi\circ g}
& Y_1\ar@{-->}[r]^-{\phi_1} & \cdots \ar@{-->}[r]^-{\phi_{k-1}}
& Y_k\ar[d]^-p\\
& & &  Z\ar[lld] \\ 
& S &&
}
\end{equation*} 
where $\phi_i$ is a flip or a divisorial contraction 
for every $i$ and 
$p\colon Y_k\to Z$ is a Fano contraction over $S$. 
We note that 
$(K_{Y_k}+\Delta_{Y_k})|_{\Nklt(Y_k, \Delta_{Y_k})}$ is nef over 
$S$. 
By Case \ref{i-case9.1.1} in the proof of Proposition \ref{i-prop9.1}, 
we can find a curve $C_{Y_k}\simeq \mathbb P^1$ 
on $Y_k$ such that 
$p(C_{Y_k})$ is a point, 
$C_{Y_k}\cap \Nklt(Y_k, \Delta_{Y_k})$ is a point, 
and $0<-(K_{Y_k}+\Delta_{Y_k})\cdot C_{Y_k}\leq 1$ holds. 
By using the negativity lemma, 
we can check that 
\begin{equation*} 
-(K_Y+\Delta_Y)\cdot C_Y\leq 
-(K_{Y_k}+\Delta_{Y_k})\cdot C_{Y_k}\leq 1
\end{equation*}  
holds, where $C_Y$ is the strict transform of $C_{Y_k}$ on $Y$. 
Note that $C_Y\cap \Nklt(Y, \Delta_Y)$ 
is a point since $\phi_i$ is an isomorphism in a neighborhood of 
$\Nklt(Y_i, \Delta_{Y_i})$ for every $i$. 
Therefore, $C=g(C_Y)$ is a curve on $X$ such that 
$C\cap \Nklt(X, \Delta)$ is a point by Lemma 
\ref{c-lem3.10} (iv) with 
$0<-(K_X+\Delta)\cdot C\leq 1$. 
Hence we can construct a morphism 
\begin{equation*} 
f\colon \mathbb A^1\longrightarrow X\setminus \Nklt(X, \Delta) 
\end{equation*}  
such that $f(\mathbb A^1)= C\cap (X\setminus \Nklt(X, \Delta))$. 
This is a desired morphism. 
\end{case}
\begin{case}\label{n-case14.3.2}
We assume that there exists $i_0$ such that 
$\phi_i$ is an isomorphism in a neighborhood of 
$\Nklt(Y_i, \Delta_{Y_i})$ for 
$0\leq i<i_0$ and $\phi_{i_0}$ is not an isomorphism 
in a neighborhood of $\Nklt(Y_{i_0}, \Delta_{Y_{i_0}})$. 
Then, by using Case \ref{i-case9.1.3} in the proof of 
Proposition \ref{i-prop9.1}, we can find a curve 
$C_{Y_{i_0}}\simeq \mathbb P^1$ on $Y_{i_0}$ such that 
$C_{Y_{i_0}}\cap \Nklt(Y_{i_0}, \Delta_{Y_{i_0}})$ is a 
point, $C_{Y_{i_0}}$ is mapped to a point on $S$, and 
$0<-(K_{Y_{i_0}}+\Delta_{Y_{i_0}})\cdot 
C_{Y_{i_0}}\leq 1$ holds.  
By the same argument as in Case \ref{n-case14.3.1} above, we get 
a desired morphism 
\begin{equation*} 
f\colon  \mathbb A^1\longrightarrow X\setminus \Nklt(X, \Delta). 
\end{equation*} 
\end{case}
We finish the proof of Theorem \ref{n-thm14.3}. 
\end{proof}

By Theorem \ref{n-thm14.3}, we have:  

\begin{thm}\label{n-thm14.4} 
In Theorem \ref{i-thm9.2}, 
we further assume that 
$\dim S<\dim X$ and that $\Sigma\ne \emptyset$. 
If Conjecture \ref{n-conj14.1} holds true in 
$\dim X$, then there exists a non-constant morphism 
\begin{equation*} 
f\colon \mathbb A^1\longrightarrow X\setminus \Sigma 
\end{equation*}  
such that 
$\pi\circ f(\mathbb A^1)$ is a point and that 
the curve $C$, the closure of $f(\mathbb A^1)$ in $X$, 
is a rational curve satisfying $C\cap \Sigma\ne\emptyset$ with 
\begin{equation*} 
0< -\mathcal P\cdot C\leq 1. 
\end{equation*}  
\end{thm}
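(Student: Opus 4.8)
The plan is to rerun the proof of Theorem \ref{i-thm9.2} essentially verbatim, replacing the single application of Theorem \ref{a-thm1.8} by an application of Theorem \ref{n-thm14.3}, which is where the extra hypotheses $\dim S<\dim X$, $\Sigma\ne\emptyset$, and the termination of flips in dimension $\dim X$ (Conjecture \ref{n-conj14.1}) enter. Since $-\mathcal P$ is $\pi$-ample, the proper morphism $\pi$ is in fact projective, so the hypotheses of Theorem \ref{n-thm14.3} make sense. The only genuinely new point to check is that the relevant anti-canonical divisor stays $\pi$-ample after the small perturbation by $\varepsilon_i H$.

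First I would observe that, because $\varepsilon_i\searrow 0$ and $\pi$-ampleness is an open condition for $\mathbb R$-divisors, there is an index $i_0$ such that $-(K_X+\Delta_i)\sim_{\mathbb R}-(\mathcal P+\varepsilon_iH)$ is $\pi$-ample for every $i\ge i_0$; here I use that $-\mathcal P$ lies in the interior of the relative nef cone while $\varepsilon_iH\to 0$. For such $i$ the pair $(X,\Delta_i)$ satisfies all the assumptions of Theorem \ref{n-thm14.3}: $K_X+\Delta_i$ is $\mathbb R$-Cartier with $-(K_X+\Delta_i)$ $\pi$-ample, $\dim S<\dim X$, and $\Nklt(X,\Delta_i)=\Sigma$ is nonempty and finite over $S$ (so that $(K_X+\Delta_i)|_\Sigma$ is automatically nef over $S$). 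Invoking Conjecture \ref{n-conj14.1} in dimension $\dim X$, Theorem \ref{n-thm14.3} produces a non-constant morphism $f_i\colon\mathbb A^1\to X\setminus\Sigma$ with $\pi\circ f_i(\mathbb A^1)$ a point, whose closure $C_i$ is a rational curve satisfying $C_i\cap\Sigma\ne\emptyset$ and
\begin{equation*}
0<-(K_X+\Delta_i)\cdot C_i=-(\mathcal P+\varepsilon_iH)\cdot C_i\le 1.
\end{equation*}

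To pass to the limit I would copy the boundedness argument of Theorem \ref{i-thm9.2}: choose an ample $\mathbb Q$-divisor $A$ on $X$ with $-(\mathcal P+A)$ $\pi$-ample, so that $-(\mathcal P+A+\varepsilon_iH)$ is $\pi$-ample for $i\gg 0$ and hence $(\mathcal P+A+\varepsilon_iH)\cdot C_i<0$. Subtracting the displayed inequality gives
\begin{equation*}
0<A\cdot C_i=(\mathcal P+A+\varepsilon_iH)\cdot C_i-(\mathcal P+\varepsilon_iH)\cdot C_i\le 1,
\end{equation*}
so the $C_i$ have bounded $A$-degree and therefore lie in a bounded family. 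Passing to a subsequence I may assume $f_i=f$ and $C_i=C$ are constant; then $C\cap\Sigma\ne\emptyset$ is preserved, and $-\mathcal P\cdot C=\lim_{i\to\infty}-(\mathcal P+\varepsilon_iH)\cdot C\le 1$. Strict positivity $-\mathcal P\cdot C>0$ is immediate because $-\mathcal P$ is $\pi$-ample and $\pi(C)$ is a point, giving the desired $f$ with $0<-\mathcal P\cdot C\le 1$.

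The substantive difficulty is entirely packaged into Theorem \ref{n-thm14.3} --- in particular the minimal model program run there and the appeal to termination of flips --- so once that is granted the argument here is routine. The only subtlety I would be careful about is the persistence of $\pi$-ampleness of $-(K_X+\Delta_i)$ and of $-(\mathcal P+A+\varepsilon_iH)$ under the perturbation by $\varepsilon_iH$, which is why I discard the finitely many small initial indices and work with the tail of the sequence.
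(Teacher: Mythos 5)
Your proposal is correct and is essentially the paper's own proof: the paper disposes of Theorem \ref{n-thm14.4} in one line by saying that one substitutes Theorem \ref{n-thm14.3} for Theorem \ref{a-thm1.8} in the proof of Theorem \ref{i-thm9.2}, which is exactly what you do, with the perturbation, boundedness, and limiting steps spelled out. Your extra care about the persistence of $\pi$-ampleness of $-(\mathcal P+\varepsilon_i H)$ for $i\gg 0$ is a legitimate (and correctly handled) detail that the paper leaves implicit.
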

\begin{proof} 
We use Theorem \ref{n-thm14.3} instead of Theorem \ref{a-thm1.8}. 
Then the proof of Theorem \ref{i-thm9.2} implies 
the existence of 
\begin{equation*} 
f\colon \mathbb A^1\longrightarrow X\setminus \Sigma
\end{equation*} 
with the desired properties. 
\end{proof}

For the proof of Theorem \ref{n-thm14.2}, 
we establish the following somewhat technical lemma. 

\begin{lem}\label{n-lem14.5} 
Let $\pi\colon X\to S$ be a projective surjective morphism 
between normal quasi-projective 
varieties with $\pi_*\mathcal O_X\simeq 
\mathcal O_S$ and $\dim S>0$ 
and let $[X, \omega]$ be a quasi-log scheme 
such that 
\begin{equation*} 
\pi\colon \Nqklt(X, \omega)\to \pi(\Nqklt(X, \omega))
\end{equation*}  
is finite. Let $P$ be a closed point of $S$ such that 
$\dim \pi^{-1}(P)>0$. 
Then there exists an effective $\mathbb R$-Cartier divisor 
$B$ on $S$ such that 
$[X, \omega+\pi^*B]$ is a quasi-log 
scheme with the following properties: 
\begin{itemize}
\item[(i)] $\Nqklt(X, \omega)\subset \Nqklt(X, \omega+
\pi^*B)$, 
\item[(ii)] $\Nqklt(X, \omega)=\Nqklt(X, \omega+\pi^*B)$ holds 
outside $\pi^{-1}(P)$, 
\item[(iii)] 
$\pi\colon \Nqlc(X, \omega+\pi^*B)\to 
\pi(\Nqlc(X, \omega+\pi^*B))$ is finite, and 
\item[(iv)] there exists a positive-dimensional 
qlc center of $[X, \omega+\pi^*B]$ in $\pi^{-1}(P)$. 
\end{itemize} 
We further assume that 
$-\omega$ is $\pi$-ample. 
Let $W$ be a positive-dimensional qlc center of $[X, \omega+
\pi^*B]$ with $\pi(W)=P$. 
Let $\nu\colon W^\nu\to W$ be the normalization. 
Then 
$[W^\nu, \nu^*\omega]$ naturally becomes a quasi-log 
Fano scheme such that 
\begin{equation*} 
\nu^{-1}\left(\Nqklt(X, \omega)\cap \pi^{-1}(P)\right)\subset 
\Nqklt(W^\nu, \nu^*\omega)
\end{equation*}  
holds set theoretically. 
\end{lem}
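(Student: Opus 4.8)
The plan is to establish the lemma in two halves, following the well-trodden technique of creating a log canonical center over a prescribed point of the base by pulling back general very ample divisors and passing to the canonical threshold. This is exactly the device used in Step \ref{m-step1.13.2} of the proof of Theorem \ref{a-thm1.14} and in Step \ref{l-step12.1.2} of Lemma \ref{l-lem12.1}, and the first half of the statement is a relative variant of it. The second half—the quasi-log Fano structure on $W^\nu$—will then follow formally from adjunction (Theorem \ref{d-thm4.6} (i)), Lemma \ref{d-lem4.20}, and Theorem \ref{a-thm1.9}, once one notices that $\pi^*B$ restricts trivially to any subscheme contained in the fiber $\pi^{-1}(P)$.

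First I would record that $\pi^{-1}(P)\not\subset \Nqklt(X,\omega)$: since $\pi\colon \Nqklt(X,\omega)\to \pi(\Nqklt(X,\omega))$ is finite while $\dim\pi^{-1}(P)>0$, the fiber cannot be swallowed by $\Nqklt(X,\omega)$, so some qlc stratum of $[X,\omega]$ meets $\pi^{-1}(P)$ in positive dimension. Next, after shrinking $S$ so that it is quasi-projective, I would take general very ample Cartier divisors $B_1,\dots,B_{n+1}$ on $S$ through $P$ with $n=\dim X$, and put $B_S=\sum_i B_i$. Arranging a quasi-log resolution $f\colon (Y,B_Y)\to X$ by \cite[Proposition 6.3.1]{fujino-foundations} and \cite[Theorem 3.35]{kollar} as in Lemma \ref{d-lem4.23}, I would apply \cite[Lemma 6.3.13]{fujino-foundations} to find a threshold $0<c<1$ such that the $>1$-part of $B_{Y''}+c(f'')^*\pi^*B_S$ does not dominate the chosen stratum while its $=1$-part acquires a component mapping onto a positive-dimensional subvariety $W\subset\pi^{-1}(P)$; here $Y''$, $B_{Y''}$, $f''$ are the usual data of Lemma \ref{d-lem4.23}. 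Set $B=cB_S$. Lemma \ref{d-lem4.23} then makes $[X,\omega+\pi^*B]$ a quasi-log scheme and produces the center $W$, giving (iv). Property (i) is the monotonicity $\mathcal I_{\Nqklt(X,\omega+\pi^*B)}\subset \mathcal I_{\Nqklt(X,\omega)}$, immediate from the key inequality of Lemma \ref{d-lem4.23}. For (ii) and (iii) I would use that the $B_i$ are general through $P$ and $c<1$: away from $\pi^{-1}(P)$ the divisors $B_i$ are in general position, so no new qlc or non-qlc locus is created there (this is the standard computation behind \cite[Lemma 13.2]{fujino-fund}), and because $c$ is chosen as the threshold, nothing over $P$ is pushed past coefficient one, so $\Nqlc(X,\omega+\pi^*B)=\Nqlc(X,\omega)$ holds set-theoretically and is therefore finite over $S$.

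For the second half, assume $-\omega$ is $\pi$-ample and let $W$ be a positive-dimensional qlc center of $[X,\omega+\pi^*B]$ with $\pi(W)=P$. By adjunction (Theorem \ref{d-thm4.6} (i)) the union $W\cup\Nqlc(X,\omega+\pi^*B)$ carries an induced quasi-log structure, and Lemma \ref{d-lem4.20} passes it to $[W,(\omega+\pi^*B)|_W]$. Since $\pi(W)$ is the single point $P$, the restriction $(\pi^*B)|_W=(\pi|_W)^*B$ is $\mathbb R$-linearly trivial, so $(\omega+\pi^*B)|_W\sim_{\mathbb R}\omega|_W$ and $[W,\omega|_W]$ is a quasi-log scheme. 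As $W$ is irreducible, Theorem \ref{a-thm1.9} applies to its normalization $\nu\colon W^\nu\to W$ and yields a quasi-log structure on $[W^\nu,\nu^*\omega]$ with $\Nqklt(W^\nu,\nu^*\omega)=\nu^{-1}\Nqklt(W,\omega|_W)$ set-theoretically. Because $-\omega$ is $\pi$-ample and $\pi(W)=P$, the restriction $-\omega|_W$ is ample, hence so is $-\nu^*\omega$ (pullback of an ample divisor along the finite morphism $\nu$); thus $[W^\nu,\nu^*\omega]$ is a quasi-log Fano scheme.

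It remains to prove the inclusion $\nu^{-1}\!\left(\Nqklt(X,\omega)\cap\pi^{-1}(P)\right)\subset \Nqklt(W^\nu,\nu^*\omega)$, and this is where I expect the real work to lie. By the previous paragraph it suffices to show $\Nqklt(X,\omega)\cap\pi^{-1}(P)\cap W\subset \Nqklt(W,\omega|_W)$, where, by Theorem \ref{d-thm4.6} (i), the right-hand side is the union of $\Nqlc(X,\omega+\pi^*B)\cap W$ with all qlc centers of $[X,\omega+\pi^*B]$ strictly contained in $W$. Fix such a point $x$; by (i) it lies in $\Nqklt(X,\omega+\pi^*B)$, so either $x\in\Nqlc(X,\omega+\pi^*B)$—and we are done—or $x$ lies on a qlc center of the new pair. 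The delicate point is to rule out $x$ lying in the open qlc stratum $U_W$ associated to $W$ (Definition \ref{d-def4.3}), for only then would the minimal qlc stratum through $x$ fail to be strictly smaller than $W$. This I would rule out because $\Nqklt(X,\omega)\cap\pi^{-1}(P)$ is finite while $W$ is positive-dimensional, so $W\not\subset\Nqklt(X,\omega)$ and $x$ is a proper, non-generic, point of $W$; at such a point the minimal qlc stratum of $[X,\omega+\pi^*B]$ is either contained in $\Nqlc(X,\omega+\pi^*B)$ or is a qlc center strictly inside $W$ (here one invokes Theorem \ref{d-thm4.10} in the quasi-log canonical locus $W\setminus\Nqlc(X,\omega+\pi^*B)$ to intersect strata). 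Carrying out this last comparison carefully, at the level of the resolution-theoretic description of $\Nqklt$ and its behavior under adjunction and normalization, is the main obstacle; the remaining assertions are formal consequences of the cited results.
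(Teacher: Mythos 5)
Your construction of $B$ (general very ample divisors through $P$, the threshold $c$ from \cite[Lemma 6.3.13]{fujino-foundations}, and Lemma \ref{d-lem4.23}) and your derivation of the quasi-log Fano structure on $[W^\nu,\nu^*\omega]$ (adjunction, Lemma \ref{d-lem4.20}, triviality of $(\pi^*B)|_W$, Theorem \ref{a-thm1.9}) coincide with the paper's argument. But there are two connected problems in the remainder.

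First, your justification of (iii) is wrong: you assert $\Nqlc(X,\omega+\pi^*B)=\Nqlc(X,\omega)$ set-theoretically. By the very construction of Lemma \ref{d-lem4.23}, the new non-qlc locus contains $X'$, the union of $\Nqlc(X,\omega)$ with \emph{all qlc centers of $[X,\omega]$ mapped to $P$}, and these centers need not lie in the old non-qlc locus; moreover the whole point of choosing $c$ below the threshold where the $>1$-part dominates is only to keep the \emph{new} contribution to the non-qlc locus over $P$ zero-dimensional, not empty. Property (iii) still holds, but because the added locus is contained in $\Nqklt(X,\omega)$ (finite over $S$ by hypothesis) together with a zero-dimensional piece over $P$ --- not because the non-qlc locus is unchanged.

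Second, the final inclusion $\nu^{-1}(\Nqklt(X,\omega)\cap\pi^{-1}(P))\subset\Nqklt(W^\nu,\nu^*\omega)$ is where you concede the real work lies, and your sketch does not close it. Your reason for excluding $x$ from the open stratum $U_W$ --- that $x$ is a ``proper, non-generic point of $W$'' --- is not a reason at all: $U_W$ is a dense open subset of $W$ and contains plenty of non-generic points. The actual argument splits into cases. If $x$ is itself a (zero-dimensional) qlc center of $[X,\omega]$ or lies in $\Nqlc(X,\omega)$, then $x\in X'\subset\Nqlc(X,\omega+\pi^*B)$ --- precisely the fact your incorrect claim about (iii) denies, so your own first branch (``$x\in\Nqlc(X,\omega+\pi^*B)$ and we are done'') is unavailable to you as stated. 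If instead $x$ lies on a positive-dimensional qlc center $V$ of $[X,\omega]$, then $\pi(V)\ne P$ (finiteness of $\Nqklt(X,\omega)$ over $S$), so $V$ survives as a qlc center of $[X,\omega+\pi^*B]$ by (ii), and one must then use the connectedness of $\Nqlc(X,\omega+\pi^*B)\cap\pi^{-1}(P)$ (Lemma \ref{m-lem13.2}), \cite[Theorem 6.8.3]{fujino-foundations}, and \cite[Theorem 6.3.11 (i)]{fujino-foundations} to conclude $x\in V\cap W$, a union of qlc strata strictly contained in $W$. Without this case analysis the inclusion is not established.
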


\begin{proof} 
Let $B_1, \ldots, B_{n+1}$ be general very ample Cartier divisors on 
$S$ passing through $P$ with 
$n=\dim X$. 
Let $f\colon (Y, B_Y)\to X$ be a proper morphism 
from a globally embedded simple normal crossing 
pair $(Y, B_Y)$ as in Definition \ref{d-def4.2}. 
Let $X'$ be the union of $\Nqlc(X, \omega)$ and 
all qlc centers of $[X, \omega]$ mapped to $P$ by $\pi$. 
By \cite[Proposition 6.3.1]{fujino-foundations} 
and 
\cite[Theorem 3.35]{kollar}, 
we may assume that the union of all strata of $(Y, B_Y)$ mapped to 
$X'$ by $f$, which is denoted by $Y'$, is 
a union of some irreducible components of $Y$. As usual, 
we put $Y''=Y-Y'$, $K_{Y''}+B_{Y''}=(K_Y+B_Y)|_{Y''}$, 
and $f''=f|_{Y''}$. By 
\cite[Proposition 6.3.1]{fujino-foundations} 
and 
\cite[Theorem 3.35]{kollar} again, 
we may further assume that 
\begin{equation*} 
\left(Y'', (f'')^*\pi^*\sum _{i=1}^{n+1}B_i+\Supp B_{Y''}\right)
\end{equation*}  
is a globally embedded simple normal crossing pair. 
By \cite[Lemma 6.3.13]{fujino-foundations}, 
we can take $0< c<1$ such that 
\begin{itemize}
\item[(a)] we have 
\begin{equation*} 
f''\left(\left(B_{Y''}+c(f'')^*\pi^*\sum _{i=1}^{n+1}B_i\right)^{>1}\right)
\cap \pi^{-1}(P)=\emptyset
\end{equation*}   
or 
\begin{equation*} 
\dim \left(f''\left(\left(B_{Y''}+c(f'')^*\pi^*\sum _{i=1}^{n+1}
B_i\right)^{>1}\right)
\cap \pi^{-1}(P)\right)=0, 
\end{equation*} 
and 
\item[(b)] the following inequality 
\begin{equation*} 
\dim \left(f''\left(\left(B_{Y''}+c(f'')^*\pi^*\sum _{i=1}^{n+1}
B_i\right)^{=1}\right)
\cap \pi^{-1}(P)\right)\geq 1
\end{equation*}  
holds. 
\end{itemize}
By Lemma \ref{d-lem4.23}, we obtain that  
\begin{equation*} 
\left(X, \omega+\pi^*B, f''\colon  (Y'', B_{Y''}+(f'')^*\pi^*B)\to X\right), 
\end{equation*}  
where $B=c\sum _{i=1}^{n+1}B_i$, 
is a quasi-log scheme. 
By construction, we see that (i) holds true and 
$\Nqklt(X, \omega+\pi^*B)$ coincides with 
$\Nqklt(X, \omega)$ outside $\pi^{-1}(P)$ since 
$B_1, \ldots, B_{n+1}$ are general 
very ample Cartier divisors on $S$. 
Hence we have (ii). 
Therefore, (iii) and (iv) follow from (a) and (b), respectively. 

From now on, we further assume that 
$-\omega$ is $\pi$-ample. 
As usual, we put 
\begin{equation*} 
W'=W\cup \Nqlc(X, \omega+\pi^*B). 
\end{equation*} 
By adjunction (see Theorem \ref{d-thm4.6} (i) 
and \cite[Theorem 6.3.5 (i)]{fujino-foundations}), 
$[W', (\omega+\pi^*B)|_{W'}]$ is a quasi-log scheme. 
By Lemma \ref{d-lem4.20}, 
$[W, (\omega+\pi^*B)|_W]$ naturally becomes a quasi-log 
scheme. 
We note that $(\pi^*B)|_W\sim _{\mathbb R}0$ since 
$\pi(W)=P$. 
Therefore, by replacing $(\omega+\pi^*B)|_W$ with 
$\omega|_W$, we see that 
$[W, \omega|_W]$ is a quasi-log scheme. 
By Theorem \ref{a-thm1.9}, 
$[W^\nu, \nu^*\omega]$ becomes 
a quasi-log scheme. 
Note that $-\nu^*\omega$ is ample since 
$\pi\circ \nu(W^\nu)=P$. 
\begin{claim-n} 
We have 
\begin{equation*}
\begin{split}
&\Nqklt(X, \omega)\cap \pi^{-1}(P)\\ &\subset 
\Nqklt(W', (\omega+\pi^*B)|_{W'})\cap \pi^{-1}(P) 
\\&=\Nqklt(W, (\omega+\pi^*B)|_W)
\\&=\Nqklt 
(W, \omega|_W)
\end{split}
\end{equation*}
set theoretically. 
\end{claim-n}

\begin{proof}[Proof of Claim]
We divide the proof into several steps. 
\setcounter{step}{0}
\begin{step}\label{n-claim-step14.5.1} 
By (iii) and Lemma \ref{m-lem13.2}, 
$\Nqlc(X, \omega+\pi^*B)\cap \pi^{-1}(P)$ is empty or a 
point. 
By \cite[Theorem 6.8.3 (i)]{fujino-foundations}, 
every qlc center of $[X, \omega+\pi^*B]$ in $\pi^{-1}(P)$ 
contains $\Nqlc(X, \omega+\pi^*B)\cap \pi^{-1}(P)$ when 
$\Nqlc(X, \omega+\pi^*B)\cap \pi^{-1}(P)\ne \emptyset$. 
When $\Nqlc(X, \omega+\pi^*B)\cap \pi^{-1}(P)=\emptyset$, 
the set of all qlc centers intersecting $\pi^{-1}(P)$ 
has a unique minimal element with respect to the inclusion 
by \cite[Theorem 6.8.3 (ii)]{fujino-foundations}. 
\end{step}

\begin{step}\label{n-claim-step14.5.2} 
In this step, we will check that 
\begin{equation*} 
\Nqklt(X, \omega)\cap \pi^{-1}(P) \subset 
\Nqklt(W', (\omega+\pi^*B)|_{W'})\cap 
\pi^{-1}(P)
\end{equation*} 
holds set theoretically.

If $\Nqklt(X, \omega)\cap \pi^{-1}(P)=\emptyset$, then it is 
obvious. Hence we may assume that 
$\Nqklt(X, \omega)\cap \pi^{-1}(P)\ne \emptyset$. By assumption, 
$\Nqklt(X, \omega)\cap \pi^{-1}(P)$ is zero-dimensional. 
We take $Q\in \Nqklt(X, \omega)\cap \pi^{-1}(P)$. If $Q$ 
is a qlc center of $[X, \omega]$ or $Q\in \Nqlc (X, \omega)$, 
then $Q\in \Nqlc(X, \omega+\pi^*B)$ by the construction of the quasi-log 
scheme structure of $[X, \omega+\pi^*B]$. 
Then we have 
\begin{equation*} 
Q\in \Nqlc(W', (\omega+\pi^*B)|_{W'})\subset 
\Nqklt(W', (\omega+\pi^*B)|_{W'}). 
\end{equation*}  
Therefore, we have 
\begin{equation*} 
Q\in \Nqklt(W', (\omega+\pi^*B)|_{W'})\cap \pi^{-1}(P). 
\end{equation*}  
From now on, we assume that 
$Q$ is not a qlc center of $[X, \omega]$ and that $Q\not\in \Nqlc(X, \omega)$. 
Then, there exists a positive-dimensional qlc center $V$ of $[X, \omega]$ such 
that $Q\in V\cap \pi^{-1}(P)$. 
Since $\Nqklt(X, \omega)=\Nqklt(X, \omega+\pi^*B)$ holds 
outside $\pi^{-1}(P)$ (see (ii)), 
$V$ is also a qlc center of $[X, \omega+\pi^*B]$. 
If $\Nqlc(X, \omega+\pi^*B)\cap \pi^{-1}(P)\ne 
\emptyset$, then $\Nqlc(X, \omega+\pi^*B)\cap \pi^{-1}(P)$ 
is a point by (iii) and Lemma \ref{m-lem13.2}. 
In this case, by \cite[Theorem 6.8.3 (i)]{fujino-foundations}, 
we have $Q\in V\cap \pi^{-1}(P)\cap \Nqlc(X, \omega+\pi^*B)$. 
Hence $Q\in \Nqlc(W', (\omega+\pi^*B)|_{W'})\cap 
\pi^{-1}(P)$. 
This implies that $Q\in \Nqklt(W', (\omega+\pi^*B)|_{W'})\cap 
\pi^{-1}(P)$. 
Thus we further assume that 
$\Nqlc(X, \omega+\pi^*B)\cap \pi^{-1}(P)=\emptyset$. 
By shrinking $X$ around $\pi^{-1}(P)$, we may assume that 
$[X, \omega+\pi^*B]$ is quasi-log canonical. 
Then $Q\in V\cap W\cap \pi^{-1}(P)$ by Step \ref{n-claim-step14.5.1} 
(see also \cite[Theorem 6.8.3 (ii)]{fujino-foundations}). 
Hence $Q\in \Nqklt(W', (\omega+\pi^*B)|_{W'})\cap \pi^{-1}(P)$ 
by \cite[Theorem 6.3.11 (i)]{fujino-foundations}. 
More precisely, $Q$ is a qlc center of 
$[W', (\omega+\pi^*B)|_{W'}]$. 

In any case, we obtain 
\begin{equation*} 
\Nqklt(X, \omega)\cap \pi^{-1}(P) \subset 
\Nqklt(W', (\omega+\pi^*B)|_{W'})\cap 
\pi^{-1}(P)
\end{equation*} 
set theoretically.
\end{step}

\begin{step}\label{n-claim-step14.5.3}
By Step \ref{n-claim-step14.5.1} and 
Lemma \ref{d-lem4.20}, 
\begin{equation*} 
\Nqklt(W', (\omega+\pi^*B)|_{W'})\cap \pi^{-1}(P)=
\Nqklt(W, (\omega+\pi^*B)|_W)
\end{equation*}  
holds set theoretically. By the definition of 
the quasi-log scheme structure of $[W, \omega|_W]$, 
\begin{equation*} 
\Nqklt(W, (\omega+\pi^*B)|_W)=\Nqklt(W, \omega|_W) 
\end{equation*} 
obviously holds. 
\end{step} 
We finish the proof of Lemma \ref{n-lem14.5}. 
\end{proof}
Hence by Claim 
\begin{equation*} 
\nu^{-1}\left(\Nqklt(X, \omega)\cap \pi^{-1}(P)\right)\subset 
\Nqklt(W^\nu, \nu^*\omega)
\end{equation*}  
holds set theoretically since $\nu^{-1}\left(\Nqklt(W, \omega|_W)\right)
=\Nqklt(W^\nu, \nu^*\omega)$ by 
Theorem \ref{a-thm1.9}.
\end{proof}

Let us prove Theorem \ref{n-thm14.2}, which is 
stronger than Theorem \ref{a-thm1.16}. 

\begin{proof}[Proof of Theorem \ref{n-thm14.2}] 
We first use the reduction as in Steps 
\ref{j-step1.6.2}, \ref{j-step1.6.3}, and \ref{j-step1.6.4} in 
the proof of Theorem \ref{a-thm1.6}. 
Let us explain it more precisely for the reader's convenience. 
\setcounter{step}{0}
\begin{step}\label{n-step14.2.1} 
We take an irreducible component $W$ of $X$ such that 
$C^\dag\subset W$. 
We put $X'=W\cup \Nqlc(X, \omega)$. 
Then, by adjunction (see Theorem \ref{d-thm4.6} (i) and 
\cite[Theorem 6.3.5 (i)]{fujino-foundations}), 
$[X', \omega'=\omega|_{X'}]$ is a quasi-log 
scheme. 
By replacing $[X, \omega]$ with $[X', \omega']$, 
we may assume that $X\setminus X_{-\infty}$ 
is irreducible. By Lemma \ref{d-lem4.20}, 
we may replace $X$ with $\overline {X\setminus X_{-\infty}}$ and 
assume that $X$ is a variety. 
Then, by taking the normalization, we may further assume that 
$X$ is a normal variety (see Theorem \ref{a-thm1.9}). 
\end{step}
\begin{step}\label{n-step14.2.2} 
By taking the Stein factorization, we may further assume that 
$\pi_*\mathcal O_X\simeq \mathcal O_S$. 
We put $\Sigma=\Nqklt(X, \omega)$. 
It is sufficient to find a non-constant morphism 
\begin{equation*} 
f\colon \mathbb A^1\longrightarrow (X\setminus \Sigma)\cap 
\pi^{-1}(P)
\end{equation*}  
such that 
the curve $C$, the closure of $f(\mathbb A^1)$ in $X$, 
is a (possibly singular) rational curve satisfying 
$C\cap \Sigma\ne \emptyset$ with 
\begin{equation*} 
0<-\omega\cdot C\leq 1. 
\end{equation*}  
Without loss of generality, we may assume that 
$X$ and $S$ are quasi-projective by shrinking $S$ suitably.
\end{step}
\begin{step}\label{n-step14.2.3} 
By assumption, 
$\dim \pi^{-1}(P)>0$ and 
$\pi^{-1}(P)\cap \Sigma\ne \emptyset$. 
When $\dim S>0$, by 
Lemma \ref{n-lem14.5}, we take an effective 
$\mathbb R$-Cartier divisor $B$ on $S$ such that 
$[X, \omega+\pi^*B]$ is a quasi-log scheme satisfying 
the properties (i), (ii), (iii), and (iv) in Lemma \ref{n-lem14.5}. 
Then we take a positive-dimensional 
qlc center $W$ of $[X, \omega+\pi^*B]$ in 
$\pi^{-1}(P)$ such that 
there is no positive-dimensional qlc center $W^\dag
\subsetneq W$ of $[X, \omega+\pi^*B]$. 
By Lemma \ref{n-lem14.5}, $[W^\nu, \nu^*\omega]$ naturally 
becomes a quasi-log Fano scheme, where 
$\nu\colon W^\nu\to W$ is the normalization. 
When $\dim S=0$, it is sufficient to put $B=0$ and $W=X$. 
By construction, 
$\Nqklt(W^\nu, \nu^*\omega)$ is finite. 
On the other hand, 
$\Nqklt(W^\nu, \nu^*\omega)$ is connected 
(see Lemma \ref{m-lem13.2} and \cite[Theorem 6.8.3]{fujino-foundations}). 
By Lemma \ref{n-lem14.5}, we obtain 
\begin{equation*} 
\emptyset\ne \nu^{-1}\left(\Sigma \cap \pi^{-1}(P)\right) 
\subset \Nqklt(W^\nu, \nu^*\omega). 
\end{equation*}  
Hence $\Nqklt(W^\nu, \nu^*\omega)$ is a point such that 
$\nu^{-1}\left(\Sigma\cap \pi^{-1}(P)\right)=
\Nqklt(W^\nu, \nu^*\omega)$ holds 
set theoretically. 
By applying Theorems \ref{a-thm1.10} and 
\ref{n-thm14.4} to $[W^\nu, \nu^*\omega]$ as in 
Step \ref{j-step1.6.5} in the proof of Theorem \ref{a-thm1.6}, 
we obtain a non-constant 
morphism 
\begin{equation*} 
h\colon  \mathbb A^1\longrightarrow W^\nu \setminus 
\Nqklt(W^\nu, \nu^*\omega)
\end{equation*} 
such that 
$C'$, the closure of $h(\mathbb A^1)$ in $W^\nu$, is 
a (possibly singular) rational curve 
satisfying $C'\cap \Nqklt(W^\nu, \nu^*\omega)\ne \emptyset$ with 
$0<-\nu^*\omega\cdot C'\leq 1$. 
Then 
\begin{equation*} 
f:=\iota\circ \nu\circ h\colon 
\mathbb A^1\longrightarrow (X\setminus \Sigma)
\cap \pi^{-1}(P), 
\end{equation*} 
where $\iota\colon  W\hookrightarrow X$ is a natural 
inclusion, is a desired morphism. 
\end{step}
We finish the proof of Theorem \ref{n-thm14.2}. 
\end{proof}

For the proof of Theorem \ref{a-thm1.20}, we prepare the 
following theorem. The proof of Theorem \ref{n-thm14.6} 
uses a deep result on the existence problem of minimal models 
in \cite{hashizume}. 

\begin{thm}\label{n-thm14.6} 
Let $(X, \Delta)$ be a dlt pair and let $\pi\colon X\to S$ be 
a projective morphism between normal varieties 
such that 
$-(K_X+\Delta)$ is $\pi$-ample. 
We assume that $\Nklt(X, \Delta)$ is not empty such that 
$
\pi\colon \Nklt(X, \Delta)\to \pi(\Nklt(X, \Delta))
$ 
is finite and that 
there exists a curve $C^\dag$ on $X$ such that 
$\pi(C^\dag)$ is a point with $C^\dag\cap 
\Nklt(X, \Delta)\ne \emptyset$. 
Then there exists a non-constant morphism 
\begin{equation*} 
f\colon  \mathbb A^1\longrightarrow X\setminus \Nklt(X, \Delta)
\end{equation*}  
such that $\pi\circ f(\mathbb A^1)$ is a point 
and that 
the curve $C$, the closure of $f(\mathbb A^1)$ in $X$, 
is a {\em{(}}possibly singular{\em{)}} rational curve 
satisfying  
$C\cap \Nklt(X, \Delta)\ne \emptyset$ with  
\begin{equation*} 
0<-(K_X+\Delta)\cdot C\leq 1. 
\end{equation*} 
\end{thm}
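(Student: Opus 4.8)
The plan is to follow the proof of Theorem \ref{n-thm14.3} as closely as possible, replacing its appeal to the termination of flips (Conjecture \ref{n-conj14.1}) by the existence of minimal models and Mori fibre spaces established in \cite{hashizume}. First I would shrink $S$ around $P:=\pi(C^\dag)$ so that $X$ and $S$ are quasi-projective, and then apply Lemma \ref{c-lem3.10} to obtain a projective birational morphism $g\colon Y\to X$ from a normal $\mathbb Q$-factorial variety with $K_Y+\Delta_Y=g^*(K_X+\Delta)$, with $(Y,\Delta'_Y)$ dlt and $g^{-1}\Nklt(X,\Delta)=\Nklt(Y,\Delta_Y)$ set-theoretically. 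Since $\pi|_{\Nklt(X,\Delta)}$ is finite, every curve in $\Nklt(Y,\Delta_Y)$ contracted over $S$ is already $g$-contracted, so $(K_Y+\Delta_Y)|_{\Nklt(Y,\Delta_Y)}$ is nef over $S$. Taking the strict transform $C^\dag_Y$ of $C^\dag$, which meets $\Nklt(Y,\Delta_Y)$ and satisfies $(K_Y+\Delta_Y)\cdot C^\dag_Y=(K_X+\Delta)\cdot C^\dag<0$, we see that $K_Y+\Delta_Y$ is not nef over $S$.

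Next I would run a $(K_Y+\Delta_Y)$-minimal model program over $S$ with scaling of a general ample divisor, keeping $(Y_i,\Delta_{Y_i})$ $\mathbb Q$-factorial dlt at each step. The crucial point is that, in place of termination of flips, I would invoke \cite{hashizume} to guarantee that this program terminates after finitely many flips and divisorial contractions, ending either with a relative minimal model or with a Mori fibre space $p\colon Y_k\to Z$ over $S$. At each step $\phi_i$ I would record whether $\phi_i$ is an isomorphism in a neighbourhood of $\Nklt(Y_i,\Delta_{Y_i})$, exactly as in the two cases of the proof of Theorem \ref{n-thm14.3}.

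Then comes the case analysis. If some step $\phi_{i_0}$ fails to be an isomorphism near $\Nklt(Y_{i_0},\Delta_{Y_{i_0}})$ (take $i_0$ minimal), its associated $(K_{Y_{i_0}}+\Delta_{Y_{i_0}})$-negative extremal contraction meets $\Nklt$, and Case \ref{i-case9.1.3} of the proof of Proposition \ref{i-prop9.1} yields a curve $C_{Y_{i_0}}\simeq\mathbb P^1$ with $C_{Y_{i_0}}\cap\Nklt(Y_{i_0},\Delta_{Y_{i_0}})$ a point and $0<-(K_{Y_{i_0}}+\Delta_{Y_{i_0}})\cdot C_{Y_{i_0}}\le1$. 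If instead every step is an isomorphism near $\Nklt$, then each step's flipping/exceptional locus is disjoint from $\Nklt$, so no curve meeting $\Nklt$ is ever affected; in particular the transform of $C^\dag_Y$ survives to the end. The program then cannot terminate with a relative minimal model, since the negativity lemma (Lemma \ref{c-lem3.8}) would force the surviving transform of $C^\dag_Y$ to have negative degree against the (nef over $S$) $K+\Delta$ of that model, which is absurd. Hence it ends with a Mori fibre space $p\colon Y_k\to Z$, the surviving transform of $C^\dag_Y$ lies in a positive-dimensional $p$-fibre meeting $\Nklt(Y_k,\Delta_{Y_k})$, and Case \ref{i-case9.1.1} of Proposition \ref{i-prop9.1} supplies $C_{Y_k}\simeq\mathbb P^1$ in that fibre with $C_{Y_k}\cap\Nklt$ a point and $0<-(K_{Y_k}+\Delta_{Y_k})\cdot C_{Y_k}\le1$. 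In either case, all earlier steps being isomorphisms near $\Nklt$, the negativity lemma gives a curve $C_Y$ on $Y$ with $C_Y\cap\Nklt(Y,\Delta_Y)$ a point and $-(K_Y+\Delta_Y)\cdot C_Y\le1$; as in Theorem \ref{n-thm14.3}, $C_Y$ is not $g$-contracted, so $C:=g(C_Y)$ is a genuine rational curve with $C\cap\Nklt(X,\Delta)\ne\emptyset$, the positivity $0<-(K_X+\Delta)\cdot C$ being automatic from the $\pi$-ampleness of $-(K_X+\Delta)$, and $C$ furnishes the desired $f\colon\mathbb A^1\to X\setminus\Nklt(X,\Delta)$.

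The hard part will be the termination step: extracting from \cite{hashizume} precisely the statement that a $(K_Y+\Delta_Y)$-MMP over $S$ with scaling terminates with a minimal model or a Mori fibre space, without Conjecture \ref{n-conj14.1}. This is exactly where Theorems \ref{n-thm14.3} and \ref{n-thm14.6} diverge. A secondary technical point needing care is the passage to Case \ref{i-case9.1.1}, where one must ensure that the relevant Fano fibre genuinely meets $\Nklt$ so that the sharp bound $\le1$ holds together with $C\cap\Nklt\ne\emptyset$; this, together with the disposal of the pseudo-effective/non-pseudo-effective dichotomy over $S$ (controlled in Theorem \ref{n-thm14.3} by $\dim S<\dim X$), is precisely what the surviving lifted curve $C^\dag_Y$ guarantees.
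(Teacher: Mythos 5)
Your proposal is correct and follows essentially the same route as the paper: dlt modification via Lemma \ref{c-lem3.10}, an MMP over $S$, the dichotomy according to whether each step is an isomorphism near $\Nklt$, reduction to Cases \ref{i-case9.1.1} and \ref{i-case9.1.3} of the proof of Proposition \ref{i-prop9.1}, and transport of the resulting curve back to $X$ by the negativity lemma. The one point you flag as hard --- termination without Conjecture \ref{n-conj14.1} --- is settled in the paper by splitting on whether $K_Y+\Delta_Y$ is pseudo-effective over $S$: if $\dim S<\dim X$ it is not, and \cite{bchm} already gives an MMP terminating at a Mori fibre space, while if $\dim S=\dim X$ one uses that $-(K_Y+\Delta_Y)$ is relatively semi-ample to choose an effective $A$ with $K_Y+\Delta_Y+A\sim_{\mathbb R,\,\pi\circ g}0$ and $(Y,\Delta_Y+A)$ dlt, which is exactly the setting of \cite[Theorems 1.1 and 2.11]{hashizume} guaranteeing a good minimal model and termination of the MMP with scaling.
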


\begin{proof} 
By shrinking $S$ suitably, we may assume that 
$X$ and $S$ are both quasi-projective. 
By Lemma \ref{c-lem3.10}, we can construct a projective 
birational morphism $g\colon  Y\to X$ from a normal 
$\mathbb Q$-factorial variety satisfying (i), (ii), and (iv) in Lemma 
\ref{c-lem3.10}. Since $K_Y+\Delta_Y=g^*(K_X+\Delta)$, 
$(K_Y+\Delta_Y)|_{\Nklt(Y, \Delta_Y)}$ is nef 
over $S$ by Lemma \ref{c-lem3.10} (iv). 
Let us consider $\pi\circ g\colon  Y\to S$. 
By construction, $(Y, \Delta_Y)$ is a $\mathbb Q$-factorial 
dlt pair. 

If $\dim S=\dim X$, then $K_Y+\Delta_Y$ is pseudo-effective 
over $S$. 
In this case, we can take an effective $\mathbb R$-divisor 
$A$ on $Y$ such that 
$
K_Y+\Delta_Y+A\sim _{\mathbb R, \pi\circ g}0
$ 
and that $(Y, \Delta_Y+A)$ is dlt 
since $-(K_Y+\Delta_Y)=-g^*(K_X+\Delta)$ is $(\pi\circ g)$-semi-ample. 
Hence $(Y, \Delta_Y)$ has a good minimal model 
over $S$ by \cite[Theorem 1.1]{hashizume} and 
any $(K_Y+\Delta_Y)$-minimal model program 
over $S$ with scaling of an ample 
divisor 
terminates (see \cite[Theorem 2.11]{hashizume}). 

If $\dim S<\dim X$, then $K_Y+\Delta_Y$ is 
not pseudo-effective over $S$ since $-(K_X+\Delta)$ is ample 
over $S$. 
In this case, we have a $(K_Y+\Delta_Y)$-minimal model 
program 
which terminates at a Mori fiber space by \cite{bchm}. 

Therefore, we have a finite 
sequence of flips and divisorial contractions 
\begin{equation*} 
\xymatrix{
Y=:Y_0\ar@{-->}[r]^-{\phi_0}
& Y_1\ar@{-->}[r]^-{\phi_1} & \cdots \ar@{-->}[r]^-{\phi_{i-1}}
& Y_i \ar@{-->}[r]^-{\phi_i}&\cdots \ar@{-->}[r]^-{\phi_{k-1}}& Y_k
} 
\end{equation*} 
starting from $(Y_0, \Delta_{Y_0}):=(Y, \Delta_Y)$ 
over $S$ such that $(Y_k, \Delta_{Y_k})$ is a good minimal model 
of $(Y, \Delta_Y)$ over $S$ or 
$p\colon Y_k\to Z$ is a Mori fiber space with 
respect to $K_{Y_k}+\Delta_{Y_k}$ over $S$,  
where 
$\Delta_{Y_{i+1}}={\phi_i}_*\Delta_{Y_i}$ for 
every $i$. By assumption, we can take a curve $C'$ on $Y$ such that 
$-(K_Y+\Delta_Y)\cdot C'>0$ with 
$C'\cap \Nklt(Y, \Delta_Y)\ne\emptyset$. 
If $\phi_i$ is an isomorphism 
in a neighborhood of 
$\Nklt(Y_i, \Delta_{Y_i})$ for $0\leq i<l$, 
then 
\begin{equation}\label{n-eq14.1}
0<-(K_Y+\Delta_Y)\cdot C'\leq -(K_{Y_l}+\Delta_{Y_l})\cdot 
C'_{Y_l}
\end{equation} 
holds by the negativity lemma, where $C'_{Y_l}$ is the strict transform of 
$C'$ on $Y_l$. 
\setcounter{case}{0}
\begin{case}\label{n-case14.6.1}
We assume that $\phi_i$ is an isomorphism in a neighborhood of 
$\Nklt(Y_i, \Delta_{Y_i})$ for every $i$. 
Then, by \eqref{n-eq14.1}, the final model $Y_k$ has a Mori fiber 
space structure $p\colon Y_k\to Z$ over $S$. 
We note that 
$(K_{Y_k}+\Delta_{Y_k})|_{\Nklt(Y_k, \Delta_{Y_k})}$ is nef over 
$S$. 
Hence the argument in Case \ref{n-case14.3.1} 
in the proof of Theorem \ref{n-thm14.3} works without any changes. 
Then we get a non-constant morphism 
\begin{equation*} 
f\colon \mathbb A^1\longrightarrow X\setminus \Nklt(X, \Delta) 
\end{equation*}  
with the desired properties. 
\end{case}
\begin{case}\label{n-case14.6.2}
By Case \ref{n-case14.6.1}, we may assume that there exists $i_0$ such that 
$\phi_i$ is an isomorphism in a neighborhood of 
$\Nklt(Y_i, \Delta_{Y_i})$ for 
$0\leq i<i_0$ and $\phi_{i_0}$ is not an isomorphism 
in a neighborhood of $\Nklt(Y_{i_0}, \Delta_{Y_{i_0}})$. 
The argument in Case \ref{n-case14.3.2} in the proof of 
Theorem \ref{n-thm14.3} works without any changes. 
Then we get 
a non-constant morphism 
\begin{equation*} 
f\colon  \mathbb A^1\longrightarrow X\setminus \Nklt(X, \Delta) 
\end{equation*}  
with the desired properties. 
\end{case}
We finish the proof of Theorem \ref{n-thm14.6}. 
\end{proof}

We close this section with the proof of Theorem \ref{a-thm1.20}. 
Since adjunction works well for dlt pairs, 
Theorem \ref{a-thm1.20} 
directly follows from Theorem \ref{n-thm14.6}. 

\begin{proof}[Proof of Theorem \ref{a-thm1.20}] 
We put $W=\overline {U_j}$. Then $W$ is an lc 
stratum of $(X, \Delta)$. 
By adjunction, it is well known that we have 
\begin{equation*} 
(K_X+\Delta)|_W=K_W+\Delta_W
\end{equation*}  
such that $(W, \Delta_W)$ is dlt and that the lc centers of 
$(W, \Delta_W)$ are exactly the lc centers of $(X, \Delta)$ that 
are strictly included in $W$ (see, 
for example, \cite[Proposition 3.9.2]{fujino-what}). 
By replacing $\pi\colon X\to S$ with the Stein factorization of 
$\varphi_{R_j}\colon  W\to \varphi_{R_j}(W)$, 
we may assume that $\pi\colon \Nklt(X, \Delta)\to 
\pi(\Nklt(X, \Delta))$ is finite and 
that there exists a curve $C^\dag$ on $X$ such that 
$\pi(C^\dag)$ is a point with $C^\dag\cap \Nklt(X, \Delta)
\ne \emptyset$. 
By Theorem \ref{n-thm14.6}, we obtain a desired non-constant morphism 
\begin{equation*} 
f\colon  \mathbb A^1\to X\setminus \Nklt(X, \Delta)
\end{equation*}  
with the desired properties. 
\end{proof}

As we have already mentioned, we will completely prove 
Conjecture \ref{a-conj1.15} in a joint paper 
with Kenta Hashizume (see \cite{fujino-hashizume1}), 
where we use some deep results on the minimal 
model program for log canonical pairs. 
We strongly recommend the interested reader to 
see \cite{fujino-hashizume1}. 
%%%%%%%%%%%%%%%

\end{document}